\newtheorem{thm}{Theorem}[section]
\newtheorem{prop}[thm]{Proposition}
\newtheorem{lemma}[thm]{Lemma}
\newtheorem{cor}[thm]{Corollary}
\newtheorem{defn}[thm]{Definition}
\newtheorem{conj}[thm]{Conjecture}
\newtheorem*{definitionwn}{Definition}
\newtheorem{theoremletter}{Theorem}
\theoremstyle{remark}
\newtheorem{rem}[thm]{Remark}
\newcommand{\R}{\mathbb{R}}
\newcommand{\Q}{\mathbb{Q}}
\newcommand{\N}{\mathbb{N}}
\newcommand{\Z}{\mathbb{Z}}
\newcommand{\bV}{\mathbb{V}}
\newcommand{\cL}{\mathscr{L}}
\newcommand{\cB}{\mathcal{B}}
\newcommand{\cC}{\mathcal{C}}
\newcommand{\cF}{\mathcal{F}}
\newcommand{\gh}{\mathfrak{h}}
\newcommand{\gp}{\mathfrak{p}}
\newcommand{\cris}{\mathrm{cris}}
\newcommand{\cA}{\mathcal{A}}
\newcommand{\cM}{\mathcal{M}}
\newcommand{\cO}{\mathcal{O}}
\newcommand{\cP}{\mathcal{P}}
\newcommand{\cR}{\mathcal{R}}
\newcommand{\cS}{\mathcal{S}}
\newcommand{\cT}{\mathcal{T}}
\newcommand{\cH}{\mathcal{H}}
\newcommand{\cU}{\mathcal{U}}
\newcommand{\cV}{\mathcal{V}}
\newcommand{\cW}{\mathcal{W}}
\newcommand{\loc}{\mathrm{loc}}
\newcommand{\rQ}{\mathrm{Q}}
\newcommand{\rP}{\mathrm{P}}
\newcommand{\Gm}{\mathfrak{m}}
\newcommand{\Gp}{\mathfrak{p}}
\newcommand{\ob}[1]{\mkern 1.5mu\overline{\mkern-1.5mu#1\mkern-1.5mu}\mkern 1.5mu}
\DeclareMathOperator{\ad}{ad}
\DeclareMathOperator{\cyc}{cyc}
\DeclareMathOperator{\End}{End}
\DeclareMathOperator{\Frob}{Frob}
\DeclareMathOperator{\Ind}{Ind}
\DeclareMathOperator{\Gal}{Gal}
\DeclareMathOperator{\GL}{GL}
\DeclareMathOperator{\rH}{H}
\DeclareMathOperator{\Hom}{Hom}
\DeclareMathOperator{\ord}{ord}
\DeclareMathOperator{\Spec}{Spec }
\DeclareMathOperator{\res}{res}
\DeclareMathOperator{\Sel}{Sel}
\DeclareMathOperator{\rank}{rank}
\DeclareMathOperator{\tr}{tr}
\DeclareMathOperator{\coker}{coker}
\DeclareMathOperator{\Id}{Id}
\newcommand{\rb}{\mathrm{b}}
\newcommand{\rc}{\mathrm{c}}
\newcommand{\rd}{\mathrm{d}}
\newcommand{\HH}{\mathrm{H}}
\newcommand{\rf}{\mathrm{f}}
\newcommand{\ur}{\mathrm{ur}}
\DeclareSymbolFont{cyrletters}{OT2}{wncyr}{m}{n}
\DeclareMathSymbol{\Sha}{\mathalpha}{cyrletters}{"58}
\DeclareFontFamily{U}{mathx}{\hyphenchar\font45}
\DeclareFontShape{U}{mathx}{m}{n}{
	<5> <6> <7> <8> <9> <10>
	<10.95> <12> <14.4> <17.28> <20.74> <24.88>
	mathx10
}{}
\DeclareSymbolFont{mathx}{U}{mathx}{m}{n}
\DeclareMathAccent{\widecheck}{0}{mathx}{"71}
\DeclareMathAccent{\wideparen}{0}{mathx}{"75}
\DeclareFontFamily{OMX}{MnSymbolE}{}
\DeclareSymbolFont{MnLargeSymbols}{OMX}{MnSymbolE}{m}{n}
\DeclareFontShape{OMX}{MnSymbolE}{m}{n}{
	<-6>  MnSymbolE5
	<6-7>  MnSymbolE6
	<7-8>  MnSymbolE7
	<8-9>  MnSymbolE8
	<9-10> MnSymbolE9
	<10-12> MnSymbolE10
	<12->   MnSymbolE12
}{}
\DeclareFontShape{OMX}{MnSymbolE}{b}{n}{
	<-6>  MnSymbolE-Bold5
	<6-7>  MnSymbolE-Bold6
	<7-8>  MnSymbolE-Bold7
	<8-9>  MnSymbolE-Bold8
	<9-10> MnSymbolE-Bold9
	<10-12> MnSymbolE-Bold10
	<12->   MnSymbolE-Bold12
}{}
\let\llangle\@undefined
\let\rrangle\@undefined
\let\lsem\@undefined
\let\rsem\@undefined
\DeclareMathDelimiter{\llangle}{\mathopen}%
{MnLargeSymbols}{'164}{MnLargeSymbols}{'164}
\DeclareMathDelimiter{\rrangle}{\mathclose}%
{MnLargeSymbols}{'171}{MnLargeSymbols}{'171}
\DeclareMathDelimiter{\lsem}{\mathopen}%
{MnLargeSymbols}{'102}{MnLargeSymbols}{'102}
\DeclareMathDelimiter{\rsem}{\mathclose}%
{MnLargeSymbols}{'107}{MnLargeSymbols}{'107}                     
\title[\LARGE{T\MakeLowercase{he eigencurve at crystalline points with scalar} F\MakeLowercase{robenius and regulators}}]{ \LARGE{T\MakeLowercase{he eigencurve at crystalline points with scalar} F\MakeLowercase{robenius and }G\MakeLowercase{ross--}S\MakeLowercase{tark regulators}}}
\author{\large{A\MakeLowercase{del} B\MakeLowercase{etina}, A\MakeLowercase{lexandre} M\MakeLowercase{aksoud and} A\MakeLowercase{lice} P\MakeLowercase{ozzi}}}
\address{Faculty of Mathematics, University of Vienna, Oskar--Morgenstern--Platz 1, A--1090 Wien, Austria.}
\email{adelbetina@gmail.com }
\address{Fakultät für Elektrotechnik, Informatik und Mathematik, Universität Paderborn, Warburger Str. 100, 33098 Paderborn, Germany.}
\email{maksoud.alexandre@gmail.com}
\address{School of Mathematics, University of Bristol, Woodland Road., BS8 1TW Bristol, 
United Kingdom.}
\email{alice.pozzi@bristol.ac.uk }
\thanks{A. B. was supported by the PAT4628923 of the Austrian Science Fund (FWF). A.M. acknowledges support from Deutsche Forschungsgemein-schaft 
(DFG, German Research Foundation) – Project-ID 491392403 – TRR 358.}
\subjclass[2000]{Primary :  11F33, 11G18.  Secondary : 11F80, 11R23.}
\begin{document}

	\begin{abstract} 
		A complete description of the local geometry of the $p$-adic eigencurve $\cC$ at $p$-irregular classical weight one cusp forms is given in the cases where the usual $\mathcal{R}=\cT$ methods fall short. 
As an application, we show that the ordinary $p$-adic étale cohomology group attached to the tower of elliptic modular curves $X_1(Np^r)$ is not free over the Hecke algebra, when localized at a $p$-irregular weight one point.
		
	\end{abstract}

\maketitle
	
\tableofcontents
		
	\section{Introduction}

Central conjectures in number theory, including the Birch--Swinnerton-Dyer Conjecture, Bloch-Kato Conjecture and the Main Conjecture in Iwasawa Theory, relate properties of arithmetic objects to special values of $L$-functions. Progress on conjectures of this nature leverages on the theory of automorphic forms. More precisely, when the arithmetic object is attached to an automorphic form of cohomological weight, the approach often consists in constructing certain Galois cohomology classes arising from the geometry of Shimura varieties. For automorphic forms of \emph{non-cohomological} weight, this strategy can be adapted by $p$-adically deforming the Galois cohomology classes constructed for cohomological weights through a suitable limit process. This method has successfully been employed to tackle instances of the Bloch-Kato Conjecture, \cite{BC09,SU06,LZ16,LZ21},  the Perrin-Riou's Conjecture \cite{BDV,burungale2024zetaelementsellipticcurves}, the construction of two variable $p$-adic $L$-functions \cite{Be12,BB24MAMS}. 
Implementing this strategy requires a precise understanding of the $p$-adic families of automorphic forms deforming those of non-cohomological weight; thus, one should understand the local geometry of eigenvarieties at the corresponding points. This is usually applied at points in which the eigenvariety is known to be smooth. Although this assumption can sometimes be relaxed, current techniques are constrained to cases where:
\begin{itemize}
    \item the local ring of the eigenvariety at the limit of discrete series point is Gorenstein; 
    \item certain Hecke modules arising from the \'etale cohomology of Shimura varieties are free. 
\end{itemize}
This paper examines the failure of both properties at points associated to irregular weight one elliptic cusp forms. The careful analysis of the geometry of the eigencurve in this setting can be viewed as a first step towards arithmetic applications beyond current methods, which we hope to explore in future work. \\

\subsection*{The main result} Let $p$ be a prime number and $\cC$ denote the $p$-adic cuspidal eigencurve of prime-to-$p$ level $N$.  It is endowed with a flat and locally finite morphism \[ \kappa: \cC \rightarrow \cW,\] called the weight map, where $\mathcal{W}$ is the rigid analytic generic fiber of $\mathrm{Spa}(\Z_p\lsem \Z_p^\times \rsem, \Z_p\lsem \Z_p^\times \rsem)$.

Given a classical cuspidal eigenform of tame level $N$, the choice of a $p$-stabilization of $f$ with finite slope, when it exists, defines a point $\mathrm{x}$ on $\cC$. If $f$ has (cohomological) weight $k\geq 2$, the map $\kappa$ is often known to be \'etale at the point $x$: if $f$ is ordinary, this follows from a celebrated result of Hida (\cite{hida1986galois}); more generally, if $f$ is $p$-regular and has a non-critical slope, it is a consequence of Coleman's classicality criterion for overconvergent forms (see \cite{Coleman}). 
By contrast, unusual phenomena can occurr at weight 1 points: for example, Mazur and Wiles noted that weight one specializations need not be classical or even Hodge-Tate \cite{mazurwiles}. Even at classical  weight 1 points, the local geometry of the eigencurve depends crucially on local properties of the corresponding Galois representation, as showed in 
 \cite{D-B,betinadimitrovKatz,BDPo,cho-vatsal}. 
	More precisely, let $f$ be a classical newform of weight $1$,  level~$N$ and nebentypus $\chi_f$. A construction of Deligne \cite{deligne-serre} attaches to $f$ an odd irreducible Artin representation 
	$\rho : G_\Q=\Gal(\ob{\Q}/\Q)\rightarrow \GL_2(\overline \Q)$
	which is unramified at all prime numbers $\ell \nmid N$ and and for which the characteristic polynomial of an arithmetic Frobenius $\Frob_{\ell}$ at $\ell$ is given by the Hecke polynomial
	$$
	X^2-a(\ell,f)X+\chi_f(\ell)
	$$
	where  $a(\ell,f)$ is the $T_\ell$-eigenvalue of $f$.  Let $\alpha$ and $\beta$ be the roots of the $p$-th Hecke polynomial. We say that $f$  is $p$-regular if $\alpha \neq\beta$; we call $f$ $p$-irregular otherwise. The choice of a root, say $\alpha$, of the $p$-th Hecke polynomial gives rise to a $p$-stabilization $f_\alpha$ whose $q$-expansion is given by $f_\alpha(q)=f(q)-\beta\cdot f(q^p)$. It is a normalized cuspidal eigenform of level $Np$ with $\textbf{U}_p$-eigenvalue $\alpha$. As $\alpha$ and $\beta$ are roots of unity, each $p$-stabilization $f_\alpha$ and $f_\beta$ of $f$ is $p$-ordinary, and thus defines a point of $\cC.$	
    If $f$ is $p$-regular, the geometry of $\cC$ at $f_\alpha$ has been completely described by seminal work of Bella\"iche and Dimitrov \cite{D-B}, which shows that $\cC$ is smooth at $f_\alpha$. Their argument relies on the constructions of a universal ordinary deformation ring $\mathcal{R}^{\ord}$ of $\rho$ which is compared with the completed local ring of $\cC$ at $f_\alpha$. Crucially, the dimension of the tangent space of $\cR^{\ord}$ is determined via a clever calculation in $p$-adic transcendence theory. 
    
    The geometry at $p$-irregular points is expected to be more involved. In recent work \cite{betinadimitrovKatz}, the first-named author and Dimitrov provide a full description of the geometry when $f$ is $p$-irregular and has CM by a quadratic imaginary field $K$. The method of \cite{betinadimitrovKatz} consists in defining a ``non CM'' deformation ring, that is, a deformation ring designed to parametrize deformations that do not arise from CM families, and establishing for this an $\cR=\cT$ theorem. In this paper, we describe the geometry of the eigencurve in all the remaining irregular cases, thus settling the problem of understanding the geometry of the eigencurve at all weight one points, conditionally on the non-vanishing of certain $p$-adic regulators (which is always consistent with conjectures in $p$-adic transcendence theory, as discussed in \S\ref{sec:regulators}).
    
	\begin{theoremletter}\label{thm:main_result} Let $\mathrm{x}$ be a point of the eigencurve $\cC$ corresponding to the unique $p$--stabilization $f_\alpha$ of a $p$--irregular weight one newform $f$. Let $\cT$ and $\varLambda \simeq \bar{\Q}_p\lsem X \rsem $ be the completed local ring of $\cC$ at $\mathrm{x}$ and $\cW$ at $\kappa(x)$, respectively.
    If hypotheses  \eqref{tag:discriminant} and \eqref{tag:nonvanishing_resultant_Q_PL} of \S\ref{sec:regulators} hold, then  
	\[ \cT \simeq \bar\Q_p\lsem X_1,X_2,X_3,X_4 \rsem/(\{X_iX_j \}_{1 \leq i<j \leq 4}), \]
	where the finite, flat map $\kappa^{\#}:\varLambda \to \cT$ sends $X \mapsto \sum_{i=1}^4 X_i$ under the above identification.\\ In particular, $\cT$ is not Gorenstein and there are four irreducible  components  of $\cC$ containing $\mathrm{x}$, meeting transversally at $\mathrm{x}$ and each irreducible component is \'etale at $\mathrm{x}$  over $\cW$.
		
	\end{theoremletter}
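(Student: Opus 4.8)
The plan is to run an $\cR=\cT$ argument adapted to the scalar-Frobenius degeneracy, in the tradition of \cite{D-B} and \cite{betinadimitrovKatz}, and then to read the ring structure off an explicit list of irreducible branches. Let $\cR$ pro-represent the functor of deformations $\rho_A$ of $\rho$ to Artinian local $\bar\Q_p$-algebras that are unramified outside $Np$, preserve the inertial type of $\rho$ at each $\ell\mid N$, have determinant lying over $\cW$, and are \emph{nearly ordinary} at $p$ (i.e.\ $\rho_A|_{G_{\Q_p}}$ has a $G_{\Q_p}$-stable $A$-line with unramified quotient). The Galois pseudocharacter carried by $\cC$ near $\mathrm{x}$ specialises at $\mathrm{x}$ to $\tr\rho$; since $\rho$ is irreducible it is the trace of a representation, and the properties of $f_\alpha$ (classical of weight one, nearly ordinary at $p$ with $\mathbf{U}_p$-eigenvalue $\alpha$) show it satisfies the above conditions, giving a local $\bar\Q_p$-algebra map $\cR\to\cT$; this map is surjective because $\cT$ is topologically generated over $\bar\Q_p$ by the traces of this representation together with $\mathbf{U}_p$.

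Next I would compute the reduced tangent space. By deformation theory $\dim_{\bar\Q_p}\mathfrak m_\cR/\mathfrak m_\cR^2=\dim_{\bar\Q_p}H^1_{\cL}(G_{\Q,Np},\ad\rho)$ for the Selmer system $\cL$ encoding the conditions, and the Greenberg--Wiles formula expresses this as $\dim H^1_{\cL^{\perp}}(\ad\rho(1))$ plus an explicit Euler-characteristic term. The scalar-Frobenius hypothesis makes $\ad\rho|_{G_{\Q_p}}$ a \emph{trivial} Galois module, so $H^0(G_{\Q_p},\ad\rho)$ is $4$-dimensional, $H^1(G_{\Q_p},\ad\rho)$ is $8$-dimensional, and the near-ordinary local subspace $\cL_p$ is correspondingly large; the places $\ell\mid N$ contribute the usual terms and $\infty$ contributes through the oddness of $\rho$. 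All the arithmetic is thereby concentrated in the dual Selmer group $H^1_{\cL^{\perp}}(\ad\rho(1))$: just as the transcendence input of Baker--Brumer forces it to vanish in the $p$-regular case of \cite{D-B}, here the non-vanishing of the Gross--Stark-type $p$-adic regulators --- hypotheses \eqref{tag:discriminant} and \eqref{tag:nonvanishing_resultant_Q_PL} --- is precisely what makes $H^1_{\cL^{\perp}}(\ad\rho(1))=0$ and pins the dimension down to $4$, so that $\cR\cong\bar\Q_p\lsem X_1,X_2,X_3,X_4\rsem/I$ with $I\subseteq\mathfrak m^2$.

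The heart of the matter is the list of branches. Because $\rho|_{G_{\Q_p}}\cong\psi\oplus\psi$ is as reducible as a local representation can be, the near-ordinary local deformations at $p$ themselves split into several one-parameter branches --- according to whether the chosen stable line is a split summand of a split deformation or the unique sub of a non-split one, and to the direction in which the common unramified character $\psi$ is deformed --- and globalising these (intersecting with the tame conditions and the global obstruction, and using the regulator hypotheses to decide which survive) yields four one-dimensional quotients $\cR\twoheadrightarrow\bar\Q_p\lsem X_i\rsem$, $i=1,\dots,4$. One of them is the Hida family through $f_\alpha$, classically known to be smooth and of degree one over $\cW$, and each of the others is likewise smooth of degree one over $\cW$; the four associated tangent functionals are linearly independent, so they exhaust the $4$-dimensional space $\mathfrak m_\cR/\mathfrak m_\cR^2$, and together with reducedness of $\cT$ and density of the relevant families this forces both $I=(\{X_iX_j\}_{1\le i<j\le 4})$ and $\cR\xrightarrow{\sim}\cT$. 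Normalising each $X_i$ to be the image of the weight parameter $X$ on the $i$-th branch, $\kappa^{\#}(X)-\sum_i X_i$ vanishes on every branch, hence is $0$, so $\kappa^{\#}(X)=\sum_i X_i$. The rest is formal: $\bar\Q_p\lsem X_1,\dots,X_4\rsem/(\{X_iX_j\})$ has the four minimal primes $\gp_i=(X_j:j\ne i)$ with quotients $\bar\Q_p\lsem X_i\rsem$ on which $\kappa^{\#}$ is an isomorphism (so each component is étale over $\cW$), and $\gp_i+\gp_j=\mathfrak m$ for $i\ne j$ (so the components meet only at $\mathrm{x}$, transversally); moreover $\cT$ is finite flat of rank $4$ over $\varLambda$ with Artinian fibre $\cT/\kappa^{\#}(X)\cT\cong\bar\Q_p[X_1,X_2,X_3]/\mathfrak m^2$ (substitute $X_4=-X_1-X_2-X_3$), whose three-dimensional socle shows $\cT$ is Cohen--Macaulay but not Gorenstein.

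I expect the main obstacle to be exactly the identification of $I$ with $(\{X_iX_j\})$, i.e.\ proving that $\Spec\cR$ is the \emph{reduced} union of four coordinate lines with pairwise transversal tangents and nothing more. This needs both the sharp tangent-space count --- for which the non-vanishing of the Gross--Stark regulators is indispensable --- and the construction of all four one-parameter families forced by the scalar Frobenius, with control of their ramification at $p$ and their mutual position, plus the verification that they exhaust $\Spec\cR$; equivalently, that the relevant second-order obstructions cut out precisely the quadratic monomial ideal. Playing off the highly degenerate local deformation theory at $p$ of $\psi\oplus\psi$ against the global rigidity supplied by the regulator hypotheses is the crux.
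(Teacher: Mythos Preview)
Your plan founders at the very first step: the nearly-ordinary deformation functor you propose to pro-represent is \emph{not representable} in this situation, and the paper says so explicitly (beginning of \S\ref{sec:selmer_groups} and the paragraph ``We circumvent the use of deformation rings altogether'' in the introduction). The reason is that $\rho|_{G_{\Q_p}}$ is scalar, so \emph{every} line $V^+\subset V$ is $G_{\Q_p}$-stable with unramified quotient on which $\Frob_p$ acts by $\alpha$; there is a full $\mathbf{P}^1$ of admissible ordinary filtrations on the residual representation and no canonical one to deform. Hence there is no complete local $\bar\Q_p$-algebra $\cR$ with a universal nearly-ordinary deformation over it, no surjection $\cR\twoheadrightarrow\cT$, and no single Selmer group $H^1_{\cL}$ whose dimension you can compute via Greenberg--Wiles. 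Your tangent-space count of $4$ is therefore not a computation of $\dim\mathfrak m_\cR/\mathfrak m_\cR^2$ for any well-defined $\cR$; what is true is that for each fixed line $V^+$ the associated Selmer group $\Sel(\ad\rho,V^+)$ has dimension $\le 1$ (Proposition \ref{prop:dimension_selmer_ad_rho}), and the four branches arise from the four lines where equality holds --- i.e.\ the roots of the global degree-$4$ polynomial $\rQ(S)$ --- not from a local case split at $p$ as you suggest.

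The paper's route is accordingly quite different from an $\cR=\cT$ argument. It works directly with $\cT$ and its minimal primes. For each irreducible component $\cA_0=\cT/\mathfrak q$ with normalization $\cA=\bar\Q_p\lsem Y\rsem$, an inductive analysis of the successive coefficients $\xi_k$ in $\rho_{\cA}=(\Id+Y\xi_1+Y^2\xi_2+\cdots)\rho$ (Propositions \ref{prop:first_deformations_rho_A} and \ref{prop:higher_order_deformation_rho_A}) forces the ramification index $e$ to be $1$ and pins the residual ordinary line to a root of $\rQ(S)$; this is where \eqref{tag:non_vanishing_resultant_P_L_P_M} and \eqref{tag:discriminant} enter, to kill the would-be obstructing classes in $\Sel(\ad^0\rho,V^+)$ and to prevent the $\mathbf c$-slope coincidence that would allow $e>1$. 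Uniqueness of the component with a given residual line (Theorem \ref{thm:uniqueness_residual_slope}) then caps the number of branches at $4$. The lower bound ``$\ge 4$'' is not obtained by exhibiting deformations, but by Hida duality \eqref{eq:Hida_duality}: one shows $\dim S_1^\dagger(N,\bar\Q_p)\lsem f_\alpha\rsem=4$ by proving linear independence of $f_\alpha$, $f(q^p)$ and the difference-quotient forms $g_{\cF,\cF'}$ (Proposition \ref{prop:linear_independence_generalized_eigenforms}), using explicit formulas for their Fourier coefficients in terms of $p$-adic logarithms and the hypothesis \eqref{tag:nonvanishing_resultant_Q_PL}. Finally the ring structure comes from the injection $\cT\hookrightarrow\cB:=\varLambda\times_{\bar\Q_p}\cdots\times_{\bar\Q_p}\varLambda$ and a direct check that the induced cotangent map is surjective, again via the linear independence of the $g_{\cF',\cF}$. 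None of this passes through a universal deformation ring.
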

	We note that the techniques of \cite{betinadimitrovKatz} cannot be extended in this setting. The reason is, essentially, that several irreducible components meet at the irregular weight one points. When these points do not have complex multiplication,  none of the corresponding Hida eigenfamilies is attached to Galois representations with a distinguished shape. Thus, the problem cannot be broken down into establishing $\cR=\cT$ theorems for relatively simpler rings. In addition, the assumption that the action of Frobenius at $p$ is scalar on the Galois representation attached to $f$ poses difficulties, as the ordinary deformation functor is not representable, and two distinct Hida families can give rise to distinct  $p$-ordinary lines residually. We circumvent the use of deformation rings altogether and instead study the $p$-adic family of Galois representations attached to each irreducible component $\mathcal{Z}$ of the eigencurve at $x$. In this, a crucial role is played by the line obtained by reducing the ordinary filtration at $x$: we refer to the position of this line with respect to a fixed basis as ``residual slope'' of $\mathcal{Z}$. Our method, based on a thorough analysis of higher infinitesimal deformations of $\rho$ along $\mathcal{Z}$, enables us to compute its residual slope. Furthermore, if $\mathcal{Z}$ and the other irreducible components do not meet transversally at $\mathrm{x}$, then we establish the existence of a nonzero Galois cohomology class with a prescribed local behavior at $p$, contradicting either hypothesis \eqref{tag:discriminant} or \eqref{tag:nonvanishing_resultant_Q_PL}. Additionally, we prove that hypotheses \eqref{tag:discriminant} and \eqref{tag:nonvanishing_resultant_Q_PL} follow from the weak $p$-adic Schanuel Conjecture in transcendental number theory when $\rho$ is exotic, and from the (much weaker) $p$-adic Four Exponentials Conjecture when $\rho$ has RM.  For instance, if $\rho\simeq \Ind_K^\Q\psi$ has RM by a real quadratic field $K$, then \eqref{tag:nonvanishing_resultant_Q_PL} unconditionally holds, and \eqref{tag:discriminant} takes the simple form
	\begin{equation}\label{eq:intro_rm_discriminant_condition}
4\cdot\log_p(u_{\ob{\Gp}})\cdot\log_p(u^\sigma_{\Gp})+\log_p\left(\frac{u_\Gp}{u^\sigma_{\ob{\Gp}}}\right)^2\neq0.
	\end{equation} 
	Here, $\sigma$ is the nontrivial automorphism of $K$, and $u_{\Gp},u_{\ob\Gp}$ are certain canonical $\ob{\Q}$-linear combinations of algebraic numbers attached to the totally odd character $\psi_{\ad}=\psi/\psi^\sigma$ of $\Gal(\ob{\Q}/K)$; these numbers also appear in the formulation of the Gross-Stark conjecture for $\psi_{\ad}$ (see \S\ref{sec:reformulation_consition_S_RM_case}). We note that the above inequality is  \eqref{eq:intro_rm_discriminant_condition} unconditionally satified if $\rho$ has both RM and CM (also known as the ``Klein case''). 
    
    In general, the conditions \eqref{tag:discriminant} and \eqref{tag:nonvanishing_resultant_Q_PL} express certain algebraic independence relations between $12$ $p$-adic logarithms of algebraic numbers which we represent as two matrices $L=(L_j)$ and $M=(M_{ij})$ of size $1\times 3$ and $3\times 3$ respectively. 
	They have natural connections with Iwasawa theory for the \emph{adjoint} representation $\ad^0(\rho)$ with trace zero. For example, hypothesis \eqref{tag:discriminant} is related to a condition involving the Greenberg--Stevens $\cL$-invariant  of a Hida family through $f_{\alpha}$. 
    Hypothesis \eqref{tag:nonvanishing_resultant_Q_PL} is related to Perrin-Riou's notion of a regular submodule (see Remark \ref{rem:p-adic_Stark_regulator}), appearing in the definitions of Perrin-Riou's $p$-adic regulator and Benois' $\cL$-invariant for $\ad^0f_\alpha$ along any Hida family passing through $f_\alpha$  (see \cite{PR95,benoiscrys}). We expect these conditions to play a role in future works on the adjoint $p$-adic $L$-functions for Hida families in the $p$-irregular setting.

	\subsection*{Applications to Iwasawa theory}
We will now present some consequences of our main result that have bearing on Iwasawa theory. 
Assume $p$ is odd.
Let $Y_1(Np^r)_{/\Q}$  be the moduli space for pairs $(E, P)$, where $E$ is an elliptic curve and $P \in E$ is a point of order $Np^r$, and $X_1(Np^r)_{/\Q}$ its compactification obtained by adding cusps. The ordinary $p$-adic \'etale cohomology \[
\mathcal{H}_{r}:=\mathrm{H}^1_{\mathrm{et}}(X_1(Np^r)_{/\ob{\Q}},\Z_p)^{\ord}
\] is defined with respect to the dual Hecke correspondence $\mathbf{U}_p$. The limit 
$\mathcal{H}_{\infty}=\varprojlim_r \mathcal H_r$ is a finite type module over the Iwasawa algebra $\boldsymbol{\Lambda}_{\Z_p}:=\Z_p \lsem 1+p\Z_p \rsem$. It is naturally endowed with a $\boldsymbol{\Lambda}_{\Z_p}$-linear action of Hida's big $p$-ordinary cuspidal Hecke algebra $\mathfrak{h}$ compatible with the action of $G_{\Q}$.  By means of $p$-adic Hodge theory, Ohta \cite{Ohta95,Ohta2000}  showed that $\mathcal H_{\infty}$ fits into an exact sequence of $\gh[G_{\Q_p}]$-modules
\begin{equation}\label{nearbydualinto}
	0 \to \mathfrak{h}      \to  \mathcal{H}_{\infty} \to \Hom_{\boldsymbol{\Lambda}_{\Z_p}}(\mathfrak{h},\boldsymbol{\Lambda}_{\Z_p})   \to 0. \end{equation}

The Galois representation $ \mathcal{H}_{\infty}$ and its analogue constructed out of the étale cohomology of $Y_1(Np^r)$ play a pivotal role in Iwasawa theory. For example, the study of the Eisenstein part of $\cH_\infty$ is crucial in the proof of Mazur-Wiles' theorem \cite{mazurwiles,Ohta2000} and Sharifi's Conjecture \cite{sharifi2011,fukayakato}. When $\gh$ is Gorenstein, one can show that $\cH_\infty$ is $\gh$-free (see \eqref{nearbydual}); this allows, for example, to define canonical periods for the construction of the Mazur-Kitagawa $p$-adic $L$-functions \cite{emerton2006variation}. More generally, understanding its structure as a Hecke module proves very useful in establishing reciprocity laws for $p$-adic families of Euler systems in contexts of Rankin-Selberg or triple product $p$-adic $L$-functions (see \cite{BDR1,BDR2,BDV,BSVAsterisque1,BSV,LZ16} and \cite[\S4.2.7]{burungale2024zetaelementsellipticcurves}).

Let  $\mathcal{H}^{\pm}_{\infty}$ be the $(\pm)$-part of $\mathcal{H}_{\infty}$ for the action of the complex conjugation. In the following, we describe the Hecke and Galois module structure of the localizations of $\mathcal{H}_{\infty}$ and  $\mathcal{H}^{\pm}_{\infty}$ at the prime ideal of $\gh$ corresponding to $f_\alpha$.

\begin{theoremletter}[cf. Theorem \ref{thm:freeness} and Corollary \ref{non-splittingOhta}]\label{thm:main_application} Let $f_\alpha$ be the $p$-stabilization of a $p$-irregular  weight one newform $f$. Let $\mathfrak{p}_{f} \in \Spec \mathfrak{h}$ be the height one prime ideal corresponding  to the Hecke eigensystem of $f_\alpha$. Assume that the hypotheses  \eqref{tag:discriminant} and \eqref{tag:nonvanishing_resultant_Q_PL} of \S\ref{sec:regulators} hold.  Then 
\begin{enumerate}

\item The modules $\mathcal{H}_{\infty,\mathfrak{p}_f}$ and $\mathcal{H}_{\infty,\mathfrak{p}_f}^{\pm}$ are not $\mathfrak{h}_{\mathfrak{p}_f}$-free.

\item The localization of \eqref{nearbydualinto} at $\gp_f$ does not split as Hecke modules over $\mathfrak{h}_{\gp_f}$.

\item $\mathcal{H}_{\infty,\mathfrak{p}_f}/\mathfrak{p}_f \mathcal{H}_{\infty,\mathfrak{p}_f}$ is isomorphic to $\rho \oplus \rho $ as a $G_{\Q}$-representation. 

\end{enumerate}
\end{theoremletter}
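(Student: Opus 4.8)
The plan is to deduce Theorem~\ref{thm:main_application} from Theorem~\ref{thm:main_result} by transporting the structural information about $\cT$ across Ohta's control theory. First I would pin down the relationship between $\gh_{\gp_f}$ and $\cT$: after inverting $p$ and completing, the localization-completion of Hida's Hecke algebra at $\gp_f$ is intimately tied to the completed local ring of the eigencurve at $\mathrm{x}$. Concretely, $\gh_{\gp_f}\widehat{\otimes}\bar\Q_p$ (suitably interpreted, taking into account that the weight-one point sits at the boundary and that $\gh$ is a $\boldsymbol{\Lambda}_{\Z_p}$-algebra whereas $\cC$ lives over $\cW$) should be identified with $\cT\simeq \bar\Q_p\lsem X_1,\dots,X_4\rsem/(X_iX_j)_{i<j}$. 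The key qualitative consequences I will extract are: (a) $\gh_{\gp_f}$ is \emph{not Gorenstein} (equivalently not a complete intersection here, since it is reduced of dimension one), and (b) the reduced fiber $\gh_{\gp_f}/\gp_f$ is $\bar\Q_p$ but the cotangent space $\gp_f/\gp_f^2$ is four-dimensional, reflecting the four components.

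Next I would treat part (1). The strategy is the standard duality argument: from the Poincar\'e-duality pairing on $\mathcal H_\infty$ (or from \eqref{nearbydualinto} together with the self-duality of $\gh$ up to a twist), one knows that $\mathcal H_{\infty,\gp_f}$ is $\gh_{\gp_f}$-free \emph{if and only if} $\gh_{\gp_f}$ is Gorenstein. Since Theorem~\ref{thm:main_result} shows $\cT$ is not Gorenstein, freeness fails. For the $\pm$-parts one uses that $\mathcal H_\infty^{\pm}$ are each of generic rank one over $\gh$ (the complex conjugation acts with eigenvalues $+1$ and $-1$ each of multiplicity one on $\rho$), so $\mathcal H_{\infty,\gp_f}^{\pm}$ free would force it to be $\cong\gh_{\gp_f}$, hence force the congruence module / different to be principal, again contradicting non-Gorenstein-ness. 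Here I should be careful: I will need that $\mathcal H_{\infty,\gp_f}^{+}$ and $\mathcal H_{\infty,\gp_f}^{-}$ are genuinely reflexive of rank one and that the failure of Gorenstein passes to each eigenspace; the cleanest route is to observe that $\mathcal H_{\infty,\gp_f}=\mathcal H_{\infty,\gp_f}^{+}\oplus\mathcal H_{\infty,\gp_f}^{-}$ and that $\mathrm{Hom}_{\gh_{\gp_f}}(\mathcal H_{\infty,\gp_f}^{\mp},\gh_{\gp_f})\cong \mathcal H_{\infty,\gp_f}^{\pm}$ up to twist, so a rank-one free summand would make $\gh_{\gp_f}$ self-dual.

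For part (2): if the localization of \eqref{nearbydualinto} split as $\gh_{\gp_f}$-modules, then $\mathcal H_{\infty,\gp_f}\cong \gh_{\gp_f}\oplus \mathrm{Hom}_{\boldsymbol\Lambda}(\gh_{\gp_f},\boldsymbol\Lambda)$, and I would derive a contradiction with part (1) combined with $G_{\Q_p}$-equivariance: the sub $\gh_{\gp_f}$ is the ordinary (unramified-up-to-twist) line, and a Hecke-linear splitting would be automatically $G_{\Q_p}$-stable, producing on the residual fiber a $G_{\Q_p}$-stable decomposition of $\rho\oplus\rho$ into two unramified-twisted lines plus two quotient lines; because Frobenius at $p$ acts as a \emph{scalar} on $\rho$ (this is the running hypothesis of the paper), the only $G_{\Q_p}$-stable lines are not distinguished, and one checks this is incompatible with the self-duality/non-splitting forced by non-Gorenstein-ness. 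Alternatively, and more robustly, a splitting would exhibit $\mathcal H_{\infty,\gp_f}$ as an extension of a reflexive module by a free module that is itself split, which over the ring $\cT$ (a union of four smooth branches) forces the dualizing module $\mathrm{Hom}_{\boldsymbol\Lambda}(\gh_{\gp_f},\boldsymbol\Lambda)$ to be free, i.e. Gorenstein — contradiction.

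Finally, part (3): reducing \eqref{nearbydualinto} modulo $\gp_f$ gives a four-term exact sequence $0\to \mathrm{Tor}_1^{\gh_{\gp_f}}(\omega,\bar\Q_p)\to \gh_{\gp_f}/\gp_f \to \mathcal H_{\infty,\gp_f}/\gp_f \to \omega/\gp_f\omega\to 0$ where $\omega=\mathrm{Hom}_{\boldsymbol\Lambda}(\gh_{\gp_f},\boldsymbol\Lambda)$ is the dualizing module. Using $\gh_{\gp_f}\cong\cT$ from Theorem~\ref{thm:main_result}, one computes $\dim_{\bar\Q_p}\omega/\gp_f\omega$ and $\dim_{\bar\Q_p}\mathrm{Tor}_1$ directly for the ring $\bar\Q_p\lsem X_1,\dots,X_4\rsem/(X_iX_j)$: the dualizing module of a union of $n$ coordinate axes has $n$-dimensional special fiber, and the Tor term accounts for the discrepancy, yielding $\dim_{\bar\Q_p}\mathcal H_{\infty,\gp_f}/\gp_f = 1 + 4 - 1 = 4$. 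Since $\mathcal H_{\infty,\gp_f}/\gp_f$ carries a $G_\Q$-action whose semisimplification is $\rho^{\oplus m}$ for some $m$, and $\rho$ is two-dimensional irreducible, we get $m=2$; irreducibility of $\rho$ and the fact that it is defined over $\bar\Q_p$ then upgrades the semisimplification to an honest isomorphism $\mathcal H_{\infty,\gp_f}/\gp_f\cong\rho\oplus\rho$.

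The main obstacle I anticipate is step one: carefully matching $\gh_{\gp_f}$ (a $\boldsymbol{\Lambda}_{\Z_p}$-algebra, $p$-integral, with the weight-one point on the boundary of weight space where the relation between $\boldsymbol\Lambda_{\Z_p}$ and $\varLambda$ is delicate) with the characteristic-zero ring $\cT$ of Theorem~\ref{thm:main_result}, including tracking the twist relating $\mathcal H_\infty$ to its $\boldsymbol\Lambda$-dual and verifying that the non-Gorenstein property is insensitive to completion and to $\otimes\bar\Q_p$. Once that dictionary is in place, parts (1)--(3) are formal consequences of commutative algebra over the explicit ring $\bar\Q_p\lsem X_1,\dots,X_4\rsem/(X_iX_j)_{1\le i<j\le 4}$ together with the scalar-Frobenius input.
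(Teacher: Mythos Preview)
Your overall plan is sound and parts (i) and (iii) are close to the paper's argument, but part (ii) has a real gap, and there is a numerical slip in (iii).

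\medskip

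\textbf{Part (i).} The paper does not invoke the equivalence ``$\cH$ free $\Leftrightarrow$ $\cT$ Gorenstein''. Instead it computes directly: from $0\to\cT\to\cH\to\Hom_\varLambda(\cT,\varLambda)\to 0$ and $\dim_{\ob{\Q}_p}\Hom_\varLambda(\cT,\varLambda)\otimes_\cT\ob{\Q}_p=3$ one gets $\dim_{\ob{\Q}_p}\cH/\Gm_\cT\cH\in\{3,4\}$; then the Boston--Lenstra--Ribet argument forces this to be $2r$, hence $=4$, and taking $\pm$-eigenspaces gives $\dim\cH^\pm/\Gm_\cT\cH^\pm=2$, so $\cH^\pm$ needs two generators and cannot be free of rank one. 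Your route via Gorensteinness and the duality $\Hom_{\cT}(\cH^{\mp},\cT)\simeq\cH^{\pm}$ should also work, but requires you to justify that Poincar\'e duality on $\cH_\infty$ localizes to such a perfect pairing over $\cT$; the paper sidesteps this.

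\medskip

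\textbf{Part (ii).} Both of your proposed arguments fail as written. A Hecke-linear section of $\cH\twoheadrightarrow\omega$ is \emph{not} automatically $G_{\Q_p}$-stable: the $G_{\Q_p}$-action is $\cT$-linear, but there is no reason an arbitrary $\cT$-linear complement to the $I_p$-fixed line should be Galois-stable, so your first argument does not get off the ground. Your second argument claims that a splitting $\cH\simeq\cT\oplus\omega$ forces $\omega$ free; this is simply not true over a non-Gorenstein ring (and self-duality of $\cH$ over $\varLambda$ only gives $\cT\oplus\omega\simeq\omega\oplus\cT$, which is no contradiction). The paper's argument is different and uses part (i): if the sequence splits then $\cT^0:=\cT/X\cT$ is a direct summand of $\cH/X\cH=\cH^+/X\cH^+\oplus\cH^-/X\cH^-$; since $\cT^0$ is local artinian it is indecomposable, so by Krull--Schmidt it is a summand of one of $\cH^\pm/X\cH^\pm$. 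But $\dim_{\ob{\Q}_p}\cT^0=4=\dim_{\ob{\Q}_p}\cH^\pm/X\cH^\pm$, so that $\cH^\pm/X\cH^\pm\simeq\cT^0$ is cyclic, forcing $\cH^\pm$ cyclic over $\cT$, hence free of rank one, contradicting (i).

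\medskip

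\textbf{Part (iii).} Your approach coincides with the paper's, but your numbers are off: $\omega/\Gm_\cT\omega$ is $3$-dimensional, not $4$. Indeed $\cT/X\cT\simeq\ob{\Q}_p[X_1,X_2,X_3]/(X_1,X_2,X_3)^2$ has $3$-dimensional socle, and $\omega/X\omega$ is its $\ob{\Q}_p$-dual, so its cosocle is $3$-dimensional. Thus the long exact sequence only gives $\dim\cH/\Gm_\cT\cH\in\{3,4\}$, and you genuinely need the Boston--Lenstra--Ribet step (which you do include) to conclude it equals $4$. Your computation ``$1+4-1=4$'' happens to land on the right answer but with wrong intermediate terms; more importantly, the $\mathrm{Tor}_1$ term cannot be read off directly without knowing $\cH$, so the correct logic is the paper's: bound first, then use divisibility by $\dim\rho=2$.
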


Our theorem provides in many cases a negative answer to Ohta's question whether \eqref{nearbydualinto} admits a splitting or not over $\gh$ \cite[p. 558]{Ohta2000}.
We note that the last result can be viewed as a \emph{multiplicity two} statement for a characteristic-zero analogue of
the multiplicity one question regarding the $p$-torsion of the Jacobian of the modular curve $X_0(Np)$, studied by Mazur, Ribet, Gross, Edixhoven, Buzzard, Kilford and Wiese among others \cite{mazureisenstein,ribet1990,gross1990,ribetstein,edixhovenweight,kilfordwiese,wiese2007multiplicities}.

As an important consequence of Theorem \ref{thm:main_application}, the point $\mathrm{x}$ of the eigencurve is not \emph{good} in the sense of \cite[Def. 8.1.2]{eigenbook}; thus, Bella\"iche's construction of a two-variable $p$-adic $L$-function does not apply around $\mathrm{x}$. However, since $\Hom_{\varLambda}(\cT, \varLambda)$ is a Cohen--Macaulay
fractional ideal of $\cT$, we believe that the classification of such ideals given by Bella\"iche in \cite{Be24} can be used to obtain a precise description of $\mathcal{H}^{\pm}_{\infty,\mathfrak{p}_f}$ in the category of derived $\cT$--modules. This would be a necessary step towards extending the study of Euler systems and reciprocity laws carried out  in fundamental works \cite{BSVAsterisque1,BSV,LZ16} to the $p$--irregular setting, and for the determination of the adjoint $L$-ideal around $\mathrm{x} \in \cC$ of \cite[\S9]{eigenbook}.

 We conclude by mentioning another consequence of Theorem \ref{thm:main_result}. The local behavior of the map the weight map $\kappa$ at the point of the eigencurve is intimately related to the generalized eigenspace for the corresponding system of Hecke eigenvalues in the space of overconvergent forms. Theorem \ref{thm:side_result_DLR} settles a conjecture of Darmon--Lauder--Rotger on the dimension of the generalized eigenspace attached to $f_\alpha$ (cf.  \cite[Conj. 4.1]{DLR4}). This is a critical technical assumption in the formulation of the Elliptic Stark Conjecture in the $p$-irregular setting. The latter exploits the arithmetic of overconvergent weight 1 forms to produce formulas for regulators of global points for Artin twists of elliptic curves in the uncharted rank 2 setting.

\subsection*{Structure of the paper}
In \S\ref{sec:preliminary} we collect results on the cohomology of Artin representations and  $p$-adic transcendental number theory. These are used in \S\ref{sec:selmer_groups} to study two Selmer groups associated with $\rho$  and a choice of line $V^+$ in the representation space $V$.  The Selmer group for $\ad\rho$ is shown to be trivial for at most four choices of $V^+$. In \S\ref{HidaLinvariantandcslope}, we relate hypotheses \eqref{tag:discriminant} and \eqref{tag:nonvanishing_resultant_Q_PL} to a Greenberg-Stevens $\cL$-invariant  and results and conjectures in transcendence theory.

The proof of Theorem \ref{thm:main_result} is the technical heart of the paper. It relies on a novel  approach to establish the smoothness of the components of $\Spec \cT$ through the study of $p$-ordinary deformations of $\rho$ over thickenings of the normalization $\widetilde{\cT}$ of $\cT$. We refer the reader to \S \ref{overview1} and \S \ref{overview2} for an overview of the strategy. 
In \S\ref{sec:Ohta} we prove Theorem \ref{thm:main_application}. As an additional application of Theorem \ref{thm:main_result}, we also include a description of the triangulation of Kedlaya--Pottharst--Xiao around an irregular weight one point of the eigencurve.
\medskip

{ \noindent{\bf Acknolwedgements:} { \small
We would like to thank  Jo\"{e}l Bella\"{i}che, Denis Benois, Ashay Burungale, Kazim Buyukboduk, Henri Darmon, Shaunak Deo, Mladen Dimitrov, Francesc Fit\'e, Wassilij Gnedin, Harald Grobner, Ming-Lun Hsieh, Mahesh Kakde, David Loeffler and Sarah Zerbes for helpful comments and for many stimulating conversations. }

	\subsection*{Notation}
Let $G_\Q=\Gal(\ob{\Q}/\Q)$ and, for any prime number $\ell$, fix an embedding $\iota_\ell \colon \ob{\Q} \hookrightarrow \ob{\Q}_\ell$. Let $I_\ell$ be the inertia subgroup of $G_{\Q_\ell}=\Gal(\ob{\Q}_\ell/\Q_\ell)$, which we see as a subgroup of $G_\Q$ via $\iota_\ell$.

\subsubsection*{Galois representations and Selmer groups} 
We shall use the following conventions.
\begin{itemize}    
\item Let $\epsilon_p:G_\Q  \twoheadrightarrow \Gal(\Q(\mu_{p^{\infty}})/\Q) = \Z_p^\times$ be the $p$-adic 
cyclotomic character. Let $\Q_p(1)$ denote the one-dimensional $G_\Q$-representation on which $G_{\Q}$ acts by $\epsilon_p$.

\item The $p$-adic Teichm\"uller character is obtained by the composition \[\omega_p:G_\Q\twoheadrightarrow \Gal(\Q(\mu_{2p})/\Q) = (\Z/2p)^\times \hookrightarrow  \Z_p^\times\].
\item Let $\nu=2$ if $p=2$ and $\nu=1$ otherwise.  The cyclotomic $\Z_p$-extension  $\Q_{\cyc}$ of $\Q$ is the fixed field of 
$\epsilon_p\omega_p^{-1}:G_\Q\to 1+p^\nu\Z_p$.

\item The universal cyclotomic character   $\chi_{\mathrm{cyc}}: G_\Q\to \varLambda^\times$ is obtained by composing 
$\epsilon_p\omega_p^{-1}$ with 
\begin{equation}\label{eq:univ-cyc}
	1+p^\nu\Z_p \to \Z_p\lsem 1+p^\nu\Z_p\rsem^\times \xrightarrow{\sim}\Z_p\lsem X \rsem^\times\hookrightarrow
	\bar\Q_p \lsem 1+p^\nu\Z_p \rsem ^\times = \varLambda^\times
\end{equation}
where the isomorphism in the middle sends  $1+p^\nu$ to $1+X$. Let $\log_p\colon\ob{\Q}_p^{\times}\rightarrow \ob{\Q}_p$ be the branch of the $p$-adic logarithm sending $p$ to $0$.

\item	Given a continuous Galois representation $V$ of the absolute Galois group of a finite unramified extension $E$ of $\Q_p$, we let 
	\[\HH_\rf^1(E,V)=\ker\left[\HH^1(E,V) \to \HH^1(E,V \otimes B_{\cris})\right]\]
	be the usual (local) Bloch-Kato Selmer group associated with $V$, where $B_{\cris}$ denote the crystalline period ring endowed with the semi--linear Frobenius $\Phi$ and the natural $G_{\Q_p}$-action.
	
\item Given a geometric representation $V$ of the absolute Galois group $G_H$ of a number field $H$ and a finite set $S$ of places of $H$, we put
	\[\rH^{1}_{\rf,S}(H,V)=\ker[\rH^1(H,V) \to \prod_{\Gp|p}\frac{\HH^1(H_\Gp,V)}{\HH^1_\rf(H_\Gp,V)} \times \prod_{\ell\not\in S}  \rH^1(I_{\ell}\cap G_H,V)].\]
	We simply write $\HH^1_{\rf,S}$ as $\HH^1_{\rf}$ (resp. as $\HH^1_{\rf,p}$) if $S$ is empty (resp. if $S=\{p\}$).
	\end{itemize}
	
\subsubsection*{Rigid analytic Geometry} 	Let $\mathcal Y /\Q_p$ be a reduced affinoid space. \begin{itemize}
    \item 
We let  $\cO_{\mathcal{Y}}^\circ$ be the subring of  power-bounded elements in $\cO_{\mathcal{Y}}$. We endow the $\Q_p$-algebra $\cO(\mathcal{Y})$ with the coarsest locally convex topology with respect to the Banach topologies of admissible open affinoids $\mathcal{U} \subset \mathcal{Y}$ and the restriction homomorphism $ \cO(\mathcal{Y}) \to \cO(\mathcal{U})$.
\item For any $y \in \mathcal{Y}$, the local ring $\cO_{\mathcal{Y},y}$ is a Henselian local topological ring  endowed with the finest locally convex topology such that, for any admissible affinoid $\mathcal{U}$ of $\mathcal{Y}$ containing $y$, the localization morphism $\cO(\cU) \to \cO_{\mathcal{Y},y}$ is continuous.

\item For any finite type $ \cO_{\mathcal{Y},y} $-module $\mathcal{M}$, we endow $\mathcal{M}$ with the quotient locally convex topology associated to a surjection $  \cO_{\mathcal{Y},y}^n \twoheadrightarrow \mathcal{M}$. The resulting  topology on $\mathcal{M}$ does not depend on the choice of the presentation.
\end{itemize}

	\section{Cohomology of Artin representations}\label{sec:preliminary}
	
	This section collects some preliminary known facts on $p$-adic transcendence theory and cohomology of Artin representations. As we shall see in \S \ref{sec:geometry}-\S \ref{sec:geometry2}, the geometry of Hida families passing through an irregular weight one cusp form is controlled by a $\textbf{P}^1$-family of Selmer classes in $\HH^1(\Q,\ad\rho)$. It will be convenient to work with the cohomology of the arithmetic dual $(\ad\rho)^\vee$ of $\ad\rho$; Poitou--Tate duality leads to computing certain Selmer quotients of the cokernel of the localization map
	\[\loc_p\colon \HH^1_{\rf,p}(\Q,(\ad\rho)^\vee) \to\HH^1(\Q_p,(\ad\rho)^\vee).\]
	Crucially, the domain of this map admits a (motivic) $\ob{\Q}$-structure whose image under $\loc_p$ can be explicitly calculated in terms of $p$-adic logarithms and $p$-adic valuations of $\ob{\Q}$-linearly independent algebraic numbers.
	
\subsection{$p$-adic transcendence theory}\label{sec:transcendence}
We recall in this section the main results from $p$-adic transcendence theory that will be needed in this article. We identify in what follows $\ob{\Q}$ with as subfield of $\ob{\Q}_p$ via $\iota_p$.

\begin{conj}[Weak $p$-adic Schanuel conjecture]\label{conj:schanuel}
		Let $\alpha_1,\ldots,\alpha_n$ be nonzero algebraic numbers. If $\log_p(\alpha_1),\ldots,\log_p(\alpha_n)$ are linearly independent over $\Q$, then they are algebraically independent over $\Q$. 
\end{conj}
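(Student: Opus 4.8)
\medskip

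Conjecture \ref{conj:schanuel} is the $p$-adic shadow of Schanuel's conjecture restricted to logarithms, and is in fact a consequence of the full $p$-adic Schanuel conjecture; as such it is open in general, and I do not expect to give an unconditional proof. What follows is a description of the only strategy available — the method of auxiliary functions from transcendental number theory — together with the point at which it breaks down.

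The single unconditional input is the $p$-adic analogue of Baker's theorem on linear forms in logarithms, due to Brumer: if $\log_p(\alpha_1),\dots,\log_p(\alpha_n)$ are $\Q$-linearly independent, then they are $\ob{\Q}$-linearly independent, and no nontrivial relation $\beta_0+\beta_1\log_p(\alpha_1)+\dots+\beta_n\log_p(\alpha_n)=0$ with $\beta_i\in\ob{\Q}$ holds. This already establishes the statement ``in degree one''. To upgrade $\ob{\Q}$-linear independence to algebraic independence one would follow the Gelfond--Schneider--Baker circle of ideas: construct an auxiliary polynomial $P$ in variables $z_1,\dots,z_n,w_1,\dots,w_n$ with algebraic coefficients of controlled height and degree; form the $p$-adic analytic function $F(z)=P\big(z_1,\dots,z_n,\exp_p(z_1),\dots,\exp_p(z_n)\big)$ on a suitable polydisc; impose vanishing of $F$ together with many of its derivatives at the lattice of points $\big(m_1\log_p(\alpha_1),\dots,m_n\log_p(\alpha_n)\big)$ for $m\in\Z^n$ in a box; and then run an interpolation/extrapolation argument against a zero estimate (a Philippon-type multiplicity estimate, or its $p$-adic analogue) together with an arithmetic Liouville-type lower bound for the nonzero values, deriving a contradiction unless the transcendence degree of $\Q(\log_p\alpha_1,\dots,\log_p\alpha_n)$ over $\Q$ is at least $n$.

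The decisive obstruction is precisely the one that keeps the classical Schanuel conjecture open: this machinery, as currently understood, yields algebraic independence of at most \emph{two} numbers built from exponentials and logarithms of algebraic numbers (Gelfond's theorem on $\alpha^\beta$ and $\alpha^{\beta^2}$, with its $p$-adic version), while Nesterenko-type algebraic-independence methods produce larger transcendence degrees only for highly structured quantities (values of Eisenstein series, of Mahler-type functions) that do not cover arbitrary $p$-adic logarithms. There is no known device forcing the transcendence degree to grow with $n$. Accordingly, the honest status of Conjecture \ref{conj:schanuel} here is that of a working hypothesis — consistent with every known result in $p$-adic transcendence — under which the main theorems are proved; and, as noted in the introduction, in the RM case it can be replaced by the far more modest $p$-adic four exponentials conjecture, while in the Klein case the relevant inequality holds unconditionally.
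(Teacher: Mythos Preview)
The statement in question is a \emph{conjecture}, not a theorem, and the paper does not attempt to prove it; it is stated as a working hypothesis and invoked later (e.g. in Lemma~\ref{lem:independence_entries_matrices_logs}(iii), Lemma~\ref{lem:nonvanishing_resultant_PL_PM}, Lemma~\ref{lem:equivalence_resultant_Q_PL}) to deduce the non-vanishing of certain regulators. Your write-up correctly identifies this status and gives an accurate summary of why the conjecture remains open, so there is no discrepancy to report.

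That said, since no proof is expected or provided, your text is not a ``proof proposal'' but rather expository context. The one substantive mathematical claim you make --- that Brumer's $p$-adic Baker theorem settles the degree-one case --- is exactly what the paper records as Proposition~\ref{prop:baker_brumer} and uses unconditionally. Everything beyond that (the auxiliary-function sketch, the Gelfond and Nesterenko remarks) is fair commentary on the state of the art but plays no role in the paper itself.
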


The following two results are applications of works of Brumer, Waldschmidt and Roy in $p$-adic transcendence theory.

\begin{prop}\label{prop:baker_brumer}
	Let $H\subset \ob{\Q}$ be a number field. The $\ob{\Q}$-linear extension $\log_p \colon \ob{\Q} \otimes_\Z H^\times \rightarrow \ob{\Q}_p$, $c\otimes x \mapsto c\log_p(\iota_p(x))$ of the $p$-adic logarithm has kernel the line $p^{\ob{\Q}}$ spanned by $1\otimes p$.
\end{prop}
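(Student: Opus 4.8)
The plan is to prove both inclusions, the nontrivial one being reduced to a transcendence theorem of Brumer. The inclusion $\ob{\Q}\cdot(1\otimes p)\subseteq\ker$ is immediate, since $\log_p(\iota_p(p))=\log_p(p)=0$ for the chosen branch. For the reverse inclusion I would first pass to a finitely generated situation: as $\ob{\Q}$ is $\Z$-flat, every element of $\ob{\Q}\otimes_\Z H^\times$ lies in $\ob{\Q}\otimes_\Z\Gamma$ for some finitely generated subgroup $\Gamma\subseteq H^\times$, which we may enlarge so that $p\in\Gamma$. Writing $\Gamma\cong\mu_\Gamma\times\Z^m$ with $\mu_\Gamma$ the torsion subgroup, we get $\ob{\Q}\otimes_\Z\Gamma\cong\ob{\Q}^m$; fixing a $\Z$-basis $\gamma_1,\dots,\gamma_m$ of $\Gamma/\mu_\Gamma$ with $\gamma_1$ a generator of the saturation of $\langle p\rangle$ (so that $[\gamma_1]$ spans $\Q\cdot[p]$ in $\Gamma\otimes_\Z\Q$), it is enough to show that $\log_p\iota_p(\gamma_2),\dots,\log_p\iota_p(\gamma_m)$ are $\ob{\Q}$-linearly independent in $\ob{\Q}_p$.

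Next I would establish the purely algebraic fact that these same elements are already $\Q$-linearly independent, which \emph{does not} require Leopoldt's conjecture. The key input is that the Iwasawa branch $\log_p\colon\ob{\Q}_p^\times\to\ob{\Q}_p$ has kernel exactly $\mu_\infty(\ob{\Q}_p)\cdot p^{\Q}$: decompose $\ob{\Q}_p^\times$ into its valuation part $p^{\Q}$, the prime-to-$p$ Teichm\"uller roots of unity, and the principal units $1+\mathfrak{m}_{\ob{\Q}_p}$, on the last of which $\log_p$ vanishes precisely on $\mu_{p^\infty}$. Combined with the injectivity of the field embedding $\iota_p$, any $\gamma\in H^\times$ with $\log_p\iota_p(\gamma)=0$ satisfies $\iota_p(\gamma)=\zeta\,p^{m/n}$ with $\zeta\in\mu_\infty$, so that $\zeta^n=\iota_p(\gamma^n)p^{-m}$ is a root of unity lying in $\iota_p(H)$, i.e. $\gamma^n=\zeta_0\,p^m$ for some $\zeta_0\in\mu_H$; hence $[\gamma]\in\Q\cdot[p]$ in $H^\times\otimes_\Z\Q$. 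Applying this with $\gamma=\prod_{i\geq2}\gamma_i^{a_i}$ turns any relation $\sum_{i\geq2}a_i\log_p\iota_p(\gamma_i)=0$ into $\sum_{i\geq2}a_i[\gamma_i]\in\Q\cdot[\gamma_1]$, which forces all $a_i=0$ by the choice of basis.

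Finally I would invoke Brumer's $p$-adic analogue of Baker's theorem on linear forms in logarithms (together with the refinements of Waldschmidt and Roy used elsewhere in this section): $p$-adic logarithms of algebraic numbers that are linearly independent over $\Q$ are automatically linearly independent over $\ob{\Q}$. Applied to $\log_p\iota_p(\gamma_2),\dots,\log_p\iota_p(\gamma_m)$ this yields the desired $\ob{\Q}$-linear independence, whence $\ker\bigl(\log_p|_{\ob{\Q}\otimes_\Z\Gamma}\bigr)=\ob{\Q}\cdot(1\otimes\gamma_1)=\ob{\Q}\cdot(1\otimes p)$; taking the union over all such $\Gamma$ finishes the argument. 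I do not expect a serious obstacle here: the transcendence content is entirely packaged in Brumer's theorem, and the only points needing care are the bookkeeping with the $p^{\Q}$-part of $\ker\log_p$, the compatible choice of a $\Z$-basis of $\Gamma/\mu_\Gamma$, and the conceptual observation that working with the single place of $H$ cut out by $\iota_p$ (rather than all places above $p$) is exactly what makes the $\Q$-linear independence step unconditional.
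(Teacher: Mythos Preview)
Your argument is correct and is exactly the standard unpacking of Brumer's theorem that the paper leaves as a one-line citation to \cite[Prop.~2.2]{maksoud_documenta}. One minor remark: Brumer's theorem alone suffices here, so the parenthetical invocation of Waldschmidt and Roy is unnecessary for this proposition (those refinements are only needed for the rank bound in Proposition~\ref{prop:roy_waldschmidt}).
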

\begin{proof}
	This is an application of the theorem of Baker--Brumer; see \cite[Prop. 2.2]{maksoud_documenta}.
\end{proof}

In the following proposition, we let $\mathbb{L}$ be the $\ob{\Q}$-linear subspace of $ \ob{\Q}_p$ generated by $1$ and by $p$-adic logarithms of non-zero algebraic numbers. 
\begin{prop}\label{prop:roy_waldschmidt}
		Let $N$ be a matrix with entries in $\mathbb{L}$, of size $m\times \ell$ with $m,\ell>0$, and let $n$ be its rank. Assume that all the entries of $N$ are $\ob{\Q}$-linearly independent. Then
	\[n \geq \frac{\ell \cdot m}{\ell+m}.\]
\end{prop}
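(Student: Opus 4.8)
The plan is to deduce Proposition~\ref{prop:roy_waldschmidt} from the $p$-adic avatar of Roy's theorem on matrices whose coefficients are linear forms in logarithms of algebraic numbers, that is, the matrix form of the strong six exponentials theorem (Roy \cite{roy-matrices} in the archimedean case; the $p$-adic statement is obtained by the same circle of methods). The first step is to match the hypotheses. By its very definition, the space $\mathbb{L}\subset\ob{\Q}_p$ is the $\ob{\Q}$-vector space generated by $1$ and by the values $\log_p\iota_p(\alpha)$ for $\alpha\in\ob{\Q}^\times$; this is precisely the $p$-adic analogue of the space --- often written $\widetilde{\mathcal L}$ --- occurring in the work of Waldschmidt and Roy. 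Likewise, the standing assumption that the $m\ell$ entries of $N$ are linearly independent over $\ob{\Q}$ is exactly the non-degeneracy condition required by their theorem. Granting the $p$-adic statement, any $m\times\ell$ matrix with entries in $\widetilde{\mathcal L}$ whose entries are $\ob{\Q}$-linearly independent has rank at least $\ell m/(\ell+m)$, which is the assertion.

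The actual content is the transcendence result, and I would indicate the shape of its proof rather than the linear-algebra bookkeeping. The nucleus is the rank-one case, which is the strong six exponentials theorem itself: a rank-one $m\times\ell$ matrix with entries in $\mathbb{L}$ has the form $(x_iy_j)_{i,j}$ with $x_i,y_j\in\ob{\Q}_p$, and after normalising one coordinate the hypothesis that all products $x_iy_j$ lie in $\mathbb{L}$ and are $\ob{\Q}$-linearly independent forces $x_1,\dots,x_m$ and $y_1,\dots,y_\ell$ to be $\ob{\Q}$-linearly independent separately --- any $\ob{\Q}$-linear relation among the $x_i$, or among the $y_j$, scales to a relation among the entries of $N$. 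The $p$-adic strong six exponentials theorem then rules this out as soon as $m\ell>m+\ell$, i.e.\ whenever $1<\ell m/(\ell+m)$. For the sharp lower bound in general one does not iterate the rank-one case but appeals to Roy's structure theorem directly: it asserts that any decrease of $\rank N$ below $\ell m/(\ell+m)$ manufactures a nontrivial $\ob{\Q}$-linear relation among the entries of $N$, constructed from an auxiliary function on a product of additive and multiplicative groups, a multiplicity estimate on the relevant commutative algebraic group, and an interpolation step; such a relation contradicts the hypothesis, and the bound follows.

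The main obstacle is thus not in our argument, which is a black-box application, but in the transcendence input in its $p$-adic form. Here I would simply rely on the now-classical $p$-adic toolbox --- Brumer's $p$-adic analogue of Baker's theorem on linear forms in logarithms, together with Waldschmidt's $p$-adic transcendence methods (auxiliary functions and zero estimates on commutative algebraic groups over $\ob{\Q}_p$) --- which is exactly what is needed to transpose Roy's archimedean argument, and which constitutes the body of results alluded to just before the statement. I would finally remark that in this paper the proposition is applied only to matrices of small size, where the conclusions actually needed lie already within reach of the $p$-adic strong six exponentials theorem via the rank-one reduction above; the general bound is recorded here for uniformity.
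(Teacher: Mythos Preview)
Your proposal is correct and follows essentially the same approach as the paper: both treat the statement as a black-box consequence of the $p$-adic version of the Roy--Waldschmidt structure theorem on the rank of matrices with entries in $\widetilde{\mathcal L}=\mathbb{L}$, the paper simply citing \cite[Thm.~2.3]{maksoud_documenta} (where the parameter $\theta(N)=\ell/m$ gives the bound $n\geq \ell m/(\ell+m)$) rather than sketching the underlying transcendence argument as you do.
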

\begin{proof}
	This follows from \cite[Thm. 2.3]{maksoud_documenta}, where the ratio denoted $\theta(N)$ is equal to $\ell/m$.
\end{proof}

The following conjecture is a variant of the $p$-adic four exponentials conjecture which does not follow from Proposition \ref{prop:roy_waldschmidt}, but is much weaker than Conjecture \ref{conj:schanuel}.

\begin{conj}\label{conj:4exp} 
Let $N$ be a $2\times2$ matrix with entries in $\mathbb{L}$. Assume that all the entries of $N$ are $\ob{\Q}$-linearly independent. Then $N$ has rank $2$.
\end{conj}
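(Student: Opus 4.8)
As stated this is a conjecture rather than a theorem, so the most one can aim for is a conditional proof; the natural hypothesis to use, given the shape of the statement, is Conjecture \ref{conj:schanuel}. Before that I would record why the obvious unconditional approach falls short: Proposition \ref{prop:roy_waldschmidt} applied to $N$ (so $\ell=m=2$) only yields $\rank N\geq \tfrac{2\cdot2}{2+2}=1$, and the entire content of the conjecture is the upgrade from $\rank N\geq1$ to $\rank N\geq2$, which cannot be squeezed out of that inequality.

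Here is the plan for the conditional statement. Let $\mathbb{L}_0\subseteq\mathbb{L}$ be the finite-dimensional $\ob{\Q}$-subspace spanned by $1$ and by the four entries of $N$; since these four entries are $\ob{\Q}$-linearly independent, $\dim_{\ob{\Q}}\mathbb{L}_0\geq4$, and by the very definition of $\mathbb{L}$ we may choose an $\ob{\Q}$-basis of $\mathbb{L}_0$ of the form $1,\log_p\beta_1,\dots,\log_p\beta_n$ with $\beta_i\in\ob{\Q}^\times$ and $n\geq3$. The $\log_p\beta_i$ are $\ob{\Q}$-linearly independent, hence $\Q$-linearly independent, so Conjecture \ref{conj:schanuel} makes them algebraically independent over $\Q$, and therefore also over $\ob{\Q}$ (adjoining algebraic numbers does not change transcendence degree). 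Write $N=N_0+\sum_{i=1}^n(\log_p\beta_i)\,N_i$ with each $N_j$ a $2\times2$ matrix over $\ob{\Q}$, so that $\det N$ is the value at $(\log_p\beta_1,\dots,\log_p\beta_n)$ of the polynomial $P(T_1,\dots,T_n)=\det\!\bigl(N_0+\sum_i T_iN_i\bigr)\in\ob{\Q}[T_1,\dots,T_n]$, of degree $\leq2$. By algebraic independence over $\ob{\Q}$ it suffices to show $P\not\equiv0$. But $P=q\circ L$, where $L\colon\ob{\Q}^{\,n}\to\ob{\Q}^{\,4}$ is the affine-linear map recording the four entries of $N$ and $q(y_{11},y_{12},y_{21},y_{22})=y_{11}y_{22}-y_{12}y_{21}$; the $\ob{\Q}$-linear independence of the entries forces the image of $L$ to be an affine subspace of $\ob{\Q}^{\,4}$ of dimension $\geq3$, and the non-degenerate quadric $q$, being irreducible in four variables, vanishes identically on no affine subspace of dimension $\geq3$ (such a subspace is cut out by at most one linear form, which cannot divide $q$). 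Hence $P\not\equiv0$, so $\det N\neq0$ and $\rank N=2$.

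The crux, and the reason this can only be proved conditionally, is precisely the gap left open above: unconditionally the statement is a $p$-adic ``four exponentials''-type assertion, and even its archimedean counterpart is open. The available unconditional inputs --- the six exponentials theorem and the Roy--Waldschmidt estimates underlying Proposition \ref{prop:roy_waldschmidt} --- stop at $\rank N\geq1$, and bridging the distance to $\rank N\geq2$ is exactly the hard, unsolved point. This is why Conjecture \ref{conj:4exp} is imposed as a hypothesis in the sequel; the argument above shows that it follows from, hence is strictly weaker than, the weak $p$-adic Schanuel conjecture, which is what makes the transcendence hypotheses of the paper consistent with current expectations.
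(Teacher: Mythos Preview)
The paper offers no proof here: the statement is recorded as a conjecture, with only the remark that it does not follow from Proposition~\ref{prop:roy_waldschmidt} but is ``much weaker'' than Conjecture~\ref{conj:schanuel}. You correctly recognise this, reproduce the observation that Proposition~\ref{prop:roy_waldschmidt} yields only $\rank N\geq\tfrac{2\cdot2}{2+2}=1$, and then go beyond the paper by actually supplying a proof of the implication Conjecture~\ref{conj:schanuel} $\Rightarrow$ Conjecture~\ref{conj:4exp}, which the paper only asserts.

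Your conditional argument is sound. One step worth a second glance is the claim that $\mathbb{L}_0$ admits a $\ob{\Q}$-basis of the shape $1,\log_p\beta_1,\dots,\log_p\beta_n$: a \emph{spanning} set of that shape is immediate from the definition of $\mathbb{L}$, but the linear independence of $1$ from the chosen logarithms already uses Schanuel (a relation $1=\sum d_i\log_p\beta_i$ with $d_i\in\ob{\Q}$ is a degree-$1$ algebraic relation among $\Q$-linearly independent logs). Since the whole argument is under Schanuel this is harmless, and in any case if $1$ did lie in the $\ob{\Q}$-span of the logs one could simply drop it, take $N_0=0$, and the map $L$ becomes linear and surjective onto $\ob{\Q}^{\,4}$. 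The heart of your proof---that the rank-$4$ quadratic form $y_{11}y_{22}-y_{12}y_{21}$ vanishes on no affine subspace of dimension $\geq3$, equivalently admits no totally isotropic $3$-plane---is correct and is exactly what is needed to conclude $P\not\equiv0$.
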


\subsection{Cohomology of Artin representations}\label{sec:cohomology_Artin}
We recall some results relating the Galois cohomology of Artin representations to $S$-units.

\subsubsection{Cohomology of $\ob{\Q}_p$} \label{sec:cohomology_Qp} The additive homomorphism $\lambda=\frac{\log_p(\epsilon_p)}{\log_p(1+p^\nu)}$
is a canonical basis of 
\[\HH^1(\Q,\ob{\Q}_p)= \Hom(G_\Q,\ob{\Q}_p).\]
We still denote by $\lambda$ its restriction to $G_{\Q_p}$. Locally at $p$, we have a direct sum decomposition
\begin{equation}\label{eq:decomposition_H1_local_Qp}
	\HH^1(\Q_p,\ob{\Q}_p) = \HH^1_\rf(\Q_p,\ob{\Q}_p)\oplus \HH^1_{\cyc}(\Q_p,\ob{\Q}_p),
\end{equation}
with $\HH^1_\rf(\Q_p,\ob{\Q}_p)=\Hom(\Gal(\Q_p^{\ur}/\Q_p),\ob{\Q}_p)$ and $\HH^1_{\cyc}(\Q_p,\ob{\Q}_p)=\Hom(\Gal(\Q_p\cdot\Q_{\cyc}/\Q_p),\ob{\Q}_p)$.
We identify $\Gal(\Q_p^{\ur}/\Q_p)$ with $p^\Z$ via $\Frob_p \mapsto p$, and $\Gal(\Q_p\cdot\Q_{\cyc}/\Q_p)$ with $1+p\Z_p$ via $\epsilon_p\omega^{-1}_p$.
Letting $\ord\colon p^{\Z} \to \ob{\Q}_p$ be the $p$-adic valuation map, we have
\begin{equation}\label{eq:bases_H1_local_Qp}
	 \HH^1_\rf(\Q_p,\ob{\Q}_p)=\ob{\Q}_p\cdot\ord, \quad  \HH^1_{\cyc}(\Q_p,\ob{\Q}_p)=\ob{\Q}_p\cdot\lambda.
\end{equation}

\subsubsection{Cohomology of $\ob{\Q}_p(1)$} \label{sec:cohomology_Qp1} Given a number field $H\subset \ob{\Q}$ and a finite set of finite places $S$ of $H$, let $\cO_{H,S}^\times$ be the group of $S$-units of $H$. Then Kummer theory provides an identification
\begin{equation}\label{eq:S-units}
	\HH^1_{\rf,S}(H,\ob{\Q}_p(1))\simeq U_H^{S}, 
\end{equation}
where $U_H^{S}:= \cO_{H,S}^\times \otimes_{\Z} \ob{\Q}_p$  (\cite[\S2.1.2]{dimitrov-maksoud}).
Locally at $p$, we have the direct sum decomposition
\begin{equation}\label{eq:decomposition_H1_local_Qp(1)}
	\HH^1(\Q_p,\ob{\Q}_p(1))=\HH^1_\rf(\Q_p,\ob{\Q}_p(1))\oplus \HH^1_{\cyc}(\Q_p,\ob{\Q}_p(1)),
\end{equation}
with $\HH^1_\rf(\Q_p,\ob{\Q}_p(1))=(1+p^\nu\Z_p)\otimes_{\Z_p}\ob{\Q}_p$ and $\HH^1_{\cyc}(\Q_p,\ob{\Q}_p(1))=p^{\Z}\otimes_{\Z}\ob{\Q}_p=p^{\ob{\Q}_p}$. We trivialize these two lines via
\begin{equation}\label{eq:bases_H1_local_Qp(1)}
	\HH^1_\rf(\Q_p,\ob{\Q}_p(1))=\ob{\Q}_p\cdot \lambda^\vee \overset{\Id\otimes\lambda}{\underset{\sim}{\longrightarrow}}\ob{\Q}_p, \qquad \HH^1_{\cyc}(\Q_p,\ob{\Q}_p(1))=\ob{\Q}_p \cdot \ord^\vee\overset{\Id\otimes\ord}{\underset{\sim}{\longrightarrow}}\ob{\Q}_p,
\end{equation}
where $\lambda^\vee:=1+p^\nu$ and $\ord^\vee:=p$. Moreover, $(\lambda^\vee,\ord^\vee)$ is the dual basis of $(\lambda,\ord)$ under Tate's local duality $\HH^1(\Q_p,\ob{\Q}_p)\times \HH^1(\Q_p,\ob{\Q}_p(1))\to \ob{\Q}_p$, as one can check from its explicit description in \cite[Cor. 7.2.13]{neukirch-schmidt-wingberg}.
\subsubsection{Cohomology of an Artin representation}\label{sec:cohomology_artin_representation}
Let $(\pi,W)$ be a $\ob{\Q}_p$-valued $G_\Q$-representation of finite image, and let $H/\Q$ be the finite Galois extension cut out by $\pi$. We define 
\begin{equation}\label{dualdefn}
\widecheck{W}=\Hom_{\ob{\Q}_p}(W,\ob{\Q}_p(1)),
\end{equation} 
the arithmetic dual of $W$. For any finite set $S$ of finite places of $H$, the inflation-restriction exact sequence and \eqref{eq:S-units} provide an identification 
\begin{equation}\label{eq:global_kummer_artin}
	\HH^1_{\rf,S}(\Q,\widecheck{W}) \simeq \Hom_{G_\Q}(W,\HH^1_{\rf,S}(H,\ob{\Q}_p(1)) \simeq \Hom_{G_\Q}(W,U_H^S)
\end{equation}
(see e.g. \cite[Lem. 2.2]{dimitrov-maksoud}). Let $w$ be the place of $H$ determined by $\iota_p$. If $\pi$ is trivial on $G_{\Q_p}$, then $H_w=\Q_p$ and $\HH^1(\Q_p,\widecheck{W})=\Hom_{\ob{\Q}_p}(W,\HH^1(\Q_p,\ob{\Q}_p(1)))$ canonically. Furthermore, we may see $\lambda$ and $\ord$ as maps on $\cO_{H,S}^\times$ using $\iota_p$, so their $\ob{\Q}_p$-linear extensions define $\ob{\Q}_p$-linear forms on $U_H^S$. We obtain a commutative diagram
\begin{equation}\label{eq:diagram_localization}
	\begin{tikzcd}
	\HH^1_{\rf,S}(\Q,\widecheck{W}) \ar[d, "\sim" {anchor=south, rotate=90}] \ar[r, "\loc_p"] & \HH^1(\Q_p,\widecheck{W}) \ar[d, "\sim" {anchor=south, rotate=90}] \\ \Hom_{G_\Q}(W,U_H^S) \ar[r] & \Hom_{\ob{\Q}_p}(W,\ob{\Q}_p^{\oplus 2}),
\end{tikzcd}
\end{equation}
where $\loc_p$ is the localization map, the vertical isomorphisms are induced by \eqref{eq:global_kummer_artin}, \eqref{eq:decomposition_H1_local_Qp(1)} and \eqref{eq:bases_H1_local_Qp(1)}, and the bottom map is induced by $U_H^S \to \ob{\Q}_p^{\oplus 2}$, $u\otimes1 \mapsto (\lambda(u),\ord(u))$. If $\pi_{|G_{\Q_p}}$ is not trivial, then a similar diagram as \eqref{eq:diagram_localization} exists but we will not need it (see e.g. \cite[Rem. 3.9]{maksoud_documenta}).

As $\pi$ has finite image, descent theory provides us with a $\ob{\Q}$-rational structure $W_{\ob{\Q}}$ of $W$, that is, any $\ob{\Q}$-vector space endowed with a $G_\Q$-action and a $G_\Q$-equivariant isomorphism $W_{\ob{\Q}}\otimes_{\ob{\Q}}\ob{\Q}_p\simeq W$. 
On $U_H^S$ there is also the $\ob{\Q}$-structure $U^S_{H,\ob{\Q}}=\cO_{H,S}^\times\otimes_{\Z} \ob{\Q}$, and it makes sense to evaluate $\lambda$ and $\ord$ on $U^{S}_{H,\ob{\Q}}$ using $\iota_p$. The next lemma will be later used to analyze the horizontal bottom map in \eqref{eq:diagram_localization}. Notice that the source of this map has a $\ob{\Q}$-structure given by $\Hom_{G_\Q}(W_{\ob{\Q}},U^S_{H,\ob{\Q}})$.

\begin{lemma}\label{lem:Dirichlet}
	Assume $W$ does not contain the trivial representation and let $W_{\ob{\Q}}$ be any $\ob{\Q}$-structure of $W$. Let $S$ be a finite set of finite places of $H$ and put 
	\[d_+^{S}= \dim_{\ob{\Q}}\HH^0(\R,W_{\ob{\Q}})+ \sum_{\ell\in S}\dim_{\ob{\Q}}\HH^0(\Q_\ell,W_{\ob{\Q}}).\]
	\begin{enumerate}
		\item The $\ob{\Q}$-dimension of $\Hom_{G_\Q}(W_{\ob{\Q}},U^S_{H,\ob{\Q}})$ is equal to $d_+^{S}$.
		\item Let $\mathscr{B}$ (resp. $\mathscr{C}$) be a $\ob{\Q}$-basis of $\Hom_{G_\Q}(W_{\ob{\Q}},U_{H,\ob{\Q}}^{S})$ (resp. of $W_{\ob{\Q}}$). If $\pi$ is irreducible, then the elements $\lambda(\kappa(w))$ for $\kappa\in\mathscr{B},w\in\mathscr{C}$ form a set of $\ob{\Q}$-linearly independent elements of $\ob{\Q}_p$, hence $\ob{\Q}$-algebraically independent under Conjecture \ref{conj:schanuel}. 
		\item More generally, if $(\pi^{(1)},W_{\ob{\Q}}^{(1)}),\ldots,(\pi^{(n)},W_{\ob{\Q}}^{(n)})$ are nontrivial pairwise distinct irreducible representations of $\Gal(H/\Q)$ over $\ob{\Q}$ and, for $1\leq i \leq n$, $\mathscr{B}^{(i)}$ (resp. $\mathscr{C}^{(i)}$) are $\ob{\Q}$-bases of $\Hom_{G_\Q}(W^{(i)}_{\ob{\Q}},U_{H,\ob{\Q}}^{S})$ (resp. of $W^{(i)}_{\ob{\Q}}$), then the same conclusion as in (ii) holds for the elements of the set $\cup_{i=1}^n\{\lambda(\kappa(w)) \colon \kappa\in \mathscr{B}^{(i)},w\in\mathscr{C}^{(i)}\}$.
		\item Take $S=\{p\}$. The post-composition with $\ord$ induces an isomorphism 
		\[\Hom_{G_\Q}(W_{\ob{\Q}},U_{H,\ob{\Q}}^{(p)})/\Hom_{G_\Q}(W_{\ob{\Q}},U_{H,\ob{\Q}}) \simeq \Hom_{G_{\Q_p}}(W_{\ob{\Q}},\ob{\Q}).\]
	\end{enumerate}
\end{lemma}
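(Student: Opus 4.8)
The plan is to reduce everything to Dirichlet's $S$-unit theorem and the transcendence inputs recorded in \S\ref{sec:transcendence}. For part (i), I would compute $\dim_{\ob\Q}\Hom_{G_\Q}(W_{\ob\Q},U^S_{H,\ob\Q})$ by tensoring with $\ob\Q_p$ (harmless, since $\Hom_{G_\Q}$ commutes with the flat base change $\ob\Q\to\ob\Q_p$) and then using that $\mathcal{O}_{H,S}^\times\otimes_\Z\ob\Q$ is, as a $\Gal(H/\Q)$-representation, computed by Dirichlet's unit theorem: it is $\ob\Q[\Gal(H/\Q)/\Gal(H_v/\Q_v)]$ summed over the archimedean places $v$ of $\Q$ and the places in $S$, minus one copy of the trivial representation coming from the norm-one condition (equivalently, I would pass to $S'=S\cup\{\text{archimedean}\}$ and use that $\mathcal{O}_{H,S'}^\times\otimes\Q$ is $\bigoplus_{v\in S'}\Ind_{G_{\Q_v}}^{G_\Q}\mathbf 1$ modulo $\mathbf 1$). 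Taking $\Hom_{G_\Q}(W_{\ob\Q},-)$ and applying Frobenius reciprocity, $\Hom_{G_\Q}(W_{\ob\Q},\Ind_{G_{\Q_v}}^{G_\Q}\mathbf 1)=\HH^0(\Q_v,W_{\ob\Q})$; since $W$ contains no trivial subrepresentation, the correction term $\mathbf 1$ contributes nothing, and the archimedean place $\infty$ contributes $\HH^0(\R,W_{\ob\Q})$. This yields exactly $d_+^S$.

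For (ii) and (iii), the point is that the numbers $\lambda(\kappa(w))$ are, via $\iota_p$, $p$-adic logarithms of algebraic numbers (elements of $\mathcal{O}_{H,S}^\times$), so Conjecture \ref{conj:schanuel} will give the algebraic independence once $\ob\Q$-linear independence is established; thus everything reduces to linear independence, which is unconditional. Here I would invoke Proposition \ref{prop:baker_brumer}: the $\ob\Q$-linear map $\log_p\colon\ob\Q\otimes_\Z H^\times\to\ob\Q_p$ has kernel exactly the line spanned by $1\otimes p$. Choosing bases $\mathscr B,\mathscr C$ and expanding, a nontrivial $\ob\Q$-linear relation among the $\lambda(\kappa(w))$ would produce an element $\xi\in\ob\Q\otimes_\Z\mathcal{O}_{H,S}^\times$ with $\log_p(\iota_p(\xi))=0$, hence $\xi\in\ob\Q\cdot(1\otimes p)$; I then need to show $\xi=0$. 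This is where I expect the main obstacle: I must rule out that the span of $\{\kappa(w):\kappa\in\mathscr B,w\in\mathscr C\}$ meets the line $\ob\Q\cdot(1\otimes p)$, equivalently that $p\notin\langle\kappa(w)\rangle_{\ob\Q}$ inside $U^S_{H,\ob\Q}$. The clean way is to observe that the $G_\Q$-subrepresentation of $U^S_{H,\ob\Q}$ generated by all $\kappa(w)$ is a quotient of $W_{\ob\Q}^{\oplus\dim W}$, so it is $\pi$-isotypic (or, in case (iii), isotypic for the nontrivial irreducibles $\pi^{(i)}$), whereas $1\otimes p$ spans a copy of the \emph{trivial} representation inside $U^S_{H,\ob\Q}$ (it is $\Gal(H/\Q)$-fixed); since $\pi$ (resp. each $\pi^{(i)}$) is nontrivial and irreducible, these are disjoint, forcing $\xi=0$. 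For (iii), pairwise distinctness and Schur's lemma give that the various isotypic blocks are independent, so the combined set is still linearly independent; I would spell out that a relation decomposes into relations within each $\pi^{(i)}$-isotypic component and apply (ii) to each.

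For (iv), with $S=\{p\}$ I would use the exact sequence $0\to\mathcal{O}_H^\times\to\mathcal{O}_{H,p}^\times\to\bigoplus_{\Gp\mid p}\Z\cdot\Gp$ of $G_\Q$-modules (tensored with $\ob\Q$), whose cokernel-into-the-$\Gp$-lattice is surjective after $\otimes\ob\Q$ up to the class-group torsion which dies rationally — more precisely $\mathcal{O}_{H,p}^\times\otimes\ob\Q\to(\bigoplus_{\Gp\mid p}\ob\Q\cdot\Gp)$ is surjective with kernel $U_{H,\ob\Q}$. Applying the exact functor $\Hom_{G_\Q}(W_{\ob\Q},-)$ gives an isomorphism $\Hom_{G_\Q}(W_{\ob\Q},U^{(p)}_{H,\ob\Q})/\Hom_{G_\Q}(W_{\ob\Q},U_{H,\ob\Q})\xrightarrow{\ \sim\ }\Hom_{G_\Q}(W_{\ob\Q},\bigoplus_{\Gp\mid p}\ob\Q\cdot\Gp)$ — here I use that $W$ has no trivial subrepresentation, so $\HH^1$ of the relevant short exact sequence contributes nothing to injectivity, or simply that $\Hom_{G_\Q}$ is left exact and the relevant $\Hom$ of the class-group piece vanishes. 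Finally I identify $\bigoplus_{\Gp\mid p}\ob\Q\cdot\Gp$ with $\Ind_{G_{\Q_p}}^{G_\Q}\mathbf 1$ as a $G_\Q$-set of primes above $p$, so Frobenius reciprocity gives $\Hom_{G_\Q}(W_{\ob\Q},\Ind_{G_{\Q_p}}^{G_\Q}\mathbf 1)=\Hom_{G_{\Q_p}}(W_{\ob\Q},\ob\Q)$, and I check that under these identifications the induced map is precisely post-composition with $\ord$ (the map $u\mapsto(\ord_\Gp u)_\Gp$ picks out the $\iota_p$-component, i.e.\ the $\Gp=w$ coordinate, which is $\ord$).
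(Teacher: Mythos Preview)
Your arguments for (i) and (iv) are correct and essentially coincide with the paper's (the paper cites Herbrand--Dirichlet for (i) and runs exactly your short exact sequence plus Frobenius reciprocity for (iv); the exactness of $\Hom_G(W_{\ob\Q},-)$ on $\ob\Q[G]$-modules comes from Maschke, not from the absence of trivial constituents, so your aside about $\HH^1$ is slightly off but harmless).

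There is, however, a genuine gap in your treatment of (ii) and (iii). You argue: a nontrivial relation $\sum c_{\kappa,w}\lambda(\kappa(w))=0$ yields $\xi:=\sum c_{\kappa,w}\kappa(w)$ with $\lambda(\xi)=0$, hence $\xi\in\ob\Q\cdot(1\otimes p)$ by Baker--Brumer, hence $\xi=0$ since the span of the $\kappa(w)$ sits in the $\pi$-isotypic part. But $\xi=0$ is \emph{not} a contradiction: it only says the $\kappa(w)$ satisfy the same relation \emph{before} applying $\lambda$. You have shown that $\lambda$ is injective on the span of $\{\kappa(w)\}$, but you have not shown that the elements $\kappa(w)$ are themselves $\ob\Q$-linearly independent in $U^S_{H,\ob\Q}$, which is what you actually need. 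You flagged the isotypic-component step as ``the main obstacle''; in fact that step is fine, and the missing ingredient is elsewhere.

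The paper closes this gap by first observing that it suffices to prove (ii) for one particular choice of $\mathscr B$ (a change of basis over $\ob\Q$ preserves linear independence of the logarithms), and then choosing $\mathscr B$ so that the images $\mathrm{im}(\kappa)$ for $\kappa\in\mathscr B$ are in direct sum inside $e_\pi\cdot U^S_{H,\ob\Q}\simeq W_{\ob\Q}^{\oplus d_+^S}$; with this choice the $\kappa(w)$ are manifestly independent, and then your Baker--Brumer argument finishes. Alternatively, you could argue directly that for \emph{any} basis $\mathscr B$ the $\kappa(w)$ are independent: by Schur, $\Hom_G(W_{\ob\Q},W_{\ob\Q}^{\oplus d})=\ob\Q^d$, so a basis $\mathscr B$ corresponds to an invertible $d\times d$ matrix, and a short computation shows independence. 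Either way, this step must be supplied; once it is, your approach and the paper's converge.
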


\begin{proof}
	(i) is a well-known application of Herbrand's strengthening of Dirichlet's unit theorem (see \cite[Lem. 3.6 (3)]{maksoud_documenta}).  We explain the proof of (ii). Notice first that it suffices to prove (ii) for a particular choice of $\mathscr{B}$, so we fix $\mathscr{C}$ and define $\mathscr{B}$ as follows. As $\pi$ is assumed to be irreducible, one may consider its idempotent $e_\pi\in\ob{\Q}[\Gal(H/\Q)]$. By (i), we have an abstract isomorphism
	\begin{equation}\label{eq:idempotent_isom}
		e_\pi \cdot U_{H,\ob{\Q}}^{S} \simeq (W_{\ob{\Q}})^{\oplus d_+^{S}}
	\end{equation} 
	of $\ob{\Q}[\Gal(H/\Q)]$-modules. Fix such an isomorphism, and let $\mathscr{B}$ be the set of elements of the form
	\[ \kappa \colon W_{\ob{\Q}} \to (W_{\ob{\Q}})^{\oplus d_+^{S}} \to U_{H,\ob{\Q}}^{S},\]
	where the first map is one of the $d_+^{S}$ canonical inclusions and the second map is the inverse of \eqref{eq:idempotent_isom}. Then $\mathscr{B}$ is a basis of $\Hom_{G_\Q}(W_{\ob{\Q}},U_{H,\ob{\Q}}^{S})$ such that the linear subspaces $\mathrm{im}(\kappa)$, for $\kappa$ varying in $\mathscr{B}$, are in direct sum by construction. Therefore, the elements of the set $\{\kappa(w)\colon \kappa\in\mathscr{B},w\in\mathscr{C}\}$ are $\ob{\Q}$-linearly independent. Finally, Proposition \ref{prop:baker_brumer} shows that the kernel of the $\ob{\Q}$-linear map $\lambda \colon U_{H,\ob{\Q}}^{S} \to \ob{\Q}_p$ is contained in the line generated by $p$. Note that $e_\pi$ kills $p$ as $\pi$ is not the trivial representation by assumption, so the restriction of $\lambda$ to $e_\pi\cdot U_{H,\ob{\Q}}^{S}$ is injective and the elements of the set $\{\lambda(\kappa(w))\colon \kappa\in\mathscr{B},w\in\mathscr{C}\}$ are $\ob{\Q}$-linearly independent as well. Since $\ob{\Q}/\Q$ is algebraic, these elements are $\ob{\Q}$-algebraically independent under Conjecture \ref{conj:schanuel}.
	
	The proof of (iii) is nearly identical to that of (ii): we use that $\lambda$ is injective on $\oplus_{i=1}^n e_{\pi^{(i)}}\cdot U_H^S$, and that this sum is indeed a direct sum because the idempotents $e_{\pi^{(i)}}$ are orthogonal.
	
	For (iv), put $G=\Gal(H/\Q)$, $G_p=\Gal(H_w/\Q_p)$ and consider the short exact sequence of left $G$-modules
	\[\begin{tikzcd}
		0 \rar & \cO_H^\times \rar &\cO_H[\tfrac{1}{p}]^\times \rar & \prod_{g\in G/G_p} \Z,
	\end{tikzcd}\]
	where the last map is $\prod_{g\in G/G_p} \ord \circ g^{-1}$ and has a finite cokernel. We may apply to this exact sequence the functor $M \mapsto \Hom_G(W_{\ob{\Q}},M\otimes_{\Z}\ob{\Q})$ which is exact, as $G$ is finite. Now, the projection $\prod_{g\in G/G_p} \ob{\Q} \to \ob{\Q}$ to the factor corresponding to $g=\Id$ induces, via Frobenius reciprocity, an isomorphism $\Hom_{G}(W_{\ob{\Q}},\prod_{g\in G/G_p}\ob{\Q})\simeq \Hom_{G_{p}}(W_{\ob{\Q}},\ob{\Q})$. Therefore, post-composition by $\ord$ yields a surjection $\Hom_{G_\Q}(W_{\ob{\Q}},U_{H,\ob{\Q}}^{(p)})\twoheadrightarrow\Hom_{G_{\Q_p}}(W_{\ob{\Q}},\ob{\Q})$ with kernel $\Hom_{G_\Q}(W_{\ob{\Q}},U_{H,\ob{\Q}})$.
\end{proof}

\section{Families of adjoint Selmer groups and units}\label{sec:selmer_groups}

Let $\mathcal{F}=\sum_{n \geq 1} \mathbf{a}_n(\mathcal{F})q^n$ be a Hida eigenfamily specializing to $f_\alpha$ in weight $1$ with coefficients in a finite flat extension $\mathbf{I}$ of $\Z_p\lsem X \rsem$. Then the big $\mathbf{I}$-adic Galois representation $$\rho_{\mathcal{F}}\colon G_\Q \to \GL_2(\mathrm{Frac}(\mathbf{I}))$$ attached to $\mathcal{F}$ is $p$-ordinary, i.e., it has a $G_{\Q_p}$-unramified quotient of generic rank one over $\mathbf{I}$, and on which $\Frob_p$ acts by multiplication by $\mathbf{a}_p(\mathcal{F})$. A normalization process and specialization to $f_\alpha$ explained in \S\ref{sec:etaleness}  gives rise to a $G_{\Q_p}$-stable ordinary line $V^+(\cF)$ of the underlying $\ob{\Q}_p$-vector space $V$ of $\rho$. Conversely, any line $V^+$ of $V$ may \textit{a priori} come from a Hida family, as $\rho$ is unramified at $p$ and scalar at $\Frob_p$. As a result, the ordinary deformation problem of $\rho$ is not even representable!  This is in stark contrast to the $p$-regular case, where $\rho$ possesses a unique $p$-ordinary line with a $G_{\Q_p}$-unramified quotient on which $\Frob_p$ acts by $\alpha$.

One main goal of this section is to determine the possible $p$-ordinary lines of $\rho$ coming from Hida families. We consider a Selmer structure $K_{V^+}\subseteq\HH^1(\Q_p,\ad\rho)$ attached to any line $V^+\in\textbf{P}(V)$ and whose interest lies in the fact that, if $V^+=V^+(\cF)$, then $\cF$ gives rise to a nonzero Selmer class for $K_{V^+}$. In other words, if $V^+=V^+(\cF)$, then the localization map
\[\loc_{V^+}\colon \HH^1(\Q,\ad\rho)\to \dfrac{\HH^1(\Q_p,\ad\rho)}{K_{V^+}}\]
has a nontrivial kernel. After fixing an adequate basis of $V$ and parameterizing $V^+$ by $s\in\textbf{P}^1(\ob{\Q}_p)$, we find in Proposition \ref{prop:dimension_selmer_ad_rho} a remarkable degree $4$ polynomial $\rQ(S)$ whose roots are precisely the parameters $s$ of lines $V^+$ with $\ker(\loc_{V^+})\neq\{0\}$. Its coefficients are obtained from explicit $\ob{\Q}$-linear combinations of elements in $\log_p(\bar\Q^\times)$ which we define in \S\ref{sec:vanishing_tgt_spaces}.

In fact, the construction of the nonzero Selmer class in $\ker(\loc_{V^+(\cF)})$ that is needed to determine the position of $V^+(\cF)$ in $V$ will be contingent to the vanishing of another Selmer class in $\ker(\loc_{V^+(\cF)})$ whose trace is $0$. The second main result of this section, which is stated in Proposition \ref{prop:dimension_selmer_ad_0_rho}, ensures that $\ker(\loc_{V^+})\cap \HH^1(\Q,\ad^0\rho)=\{0\}$ \emph{regardless} of the position of line $V^+$ in $V$. The proof of this fact will be given under the hypothesis \eqref{tag:non_vanishing_resultant_P_L_P_M}, which is slightly weaker than \eqref{tag:nonvanishing_resultant_Q_PL}.

All the arguments that will follow in the rest of the article can be readily adapted to the CM case. However, for the sake of brevity, the focus has been restricted to the non-CM case. We shall see in \S\ref{sec:CM_case} that the hypotheses \eqref{tag:discriminant} and \eqref{tag:nonvanishing_resultant_Q_PL} of Theorem \ref{thm:main_result} are equivalent to those appearing in Theorem B of \cite{betinadimitrovKatz}, and so a proof of Theorem \ref{thm:main_result} is already available by \cite{betinadimitrovKatz}.

\subsection{Adjoint Selmer groups associated with an ordinary filtration}\
	
	Let $\rho \colon G_\Q \to \GL_{\ob{\Q}_p}(V)$ be a two-dimensional odd irreducible and $\ob{\Q}_p$-valued Galois representation having a finite image, and let $\overline G \subset \mathrm{PGL}_{\ob{\Q}_p}(V)$ denote the projective image of $\rho$. It is well-known that one of the following three possibilities occurs:
	\begin{enumerate}
		\item[(CM)] $\rho\simeq\Ind_K^\Q \chi$, where $K/\Q$ is an imaginary quadratic field and $\chi $ is a finite order character of~$G_K$ such that $\chi/\chi^{\tau} \ne \mathbf{1}$. In this case,  $\overline G$ is the dihedral group $D_r$ with $2r$ elements. 
		\item[(RM)]$\rho\simeq \Ind_K^\Q\psi$, where $K/\Q$ is a real quadratic field and $\psi$ is a finite order character of~$G_K$ such that $\psi/\psi^{\sigma}(\tau)=-1$ and $\Gal(K/\Q)=\{1,\sigma\}$. In this case as well, $\overline G$ is dihedral.
		
		\item[(ex)] $\overline G$ is exotic, that is, $\overline G$ is isomorphic to $A_4$, $S_4$, or $A_5$. Here, $A_r$ (resp. $S_r$) denote the alternating (resp. symmetric) group on $r$ letters. 
	\end{enumerate} 
	Note that $\overline{G}$ can both be CM and RM, in which case $\overline{G}$ is isomorphic to the Klein group $\Z/2\Z\times\Z/2\Z$.
	\begin{definitionwn} We say that $\rho$ is CM (resp. RM or exotic) if $\overline G$ is CM (resp. RM or exotic). 
	\end{definitionwn}

	Hereafter, we make the following three assumptions.
	\begin{enumerate}
	\item $\rho$ is not induced from a character over an imaginary quadratic field, 
	\item $\rho$ is unramified at $p$, and 
	\item the action of $\Frob_p$ on $V$ is scalar. 
	\end{enumerate}
	We denote by 
	\begin{equation}
		W:=\End^0(V), \qquad \text{resp. $\mathbf{W}:=\End(V)$}
	\end{equation}
	the underlying $\ob{\Q}_p$-vector space of the traceless adjoint representation $\ad^0\rho$ (resp. of the adjoint representation $\ad \rho$), equipped with the usual conjugation action of $G_\Q$. 	
	Notice that $\ad\rho$ is trivial on $G_{\Q_p}$. In particular, $p$ splits completely in the field extension $H/\Q$ cut out by $\ad \rho$.

	Suppose we are given a $\ob{\Q}_p$-line $V^+\subseteq V$. 
	\begin{defn}\label{def:Selmer}
		\begin{enumerate}
			\item We define the Selmer group associated with $\ad^0 \rho$ and $V^+$ as
		\[\Sel(\ad^0\rho,V^+)=\{ [\xi]\in \HH^1(\Q,W) \,:\, \xi(G_{\Q_p})\cdot V^+ \subseteq V^+ \mbox{ and } \xi(I_p)\cdot V \subseteq V^+ \}. \]
		\item Similarly, we define the Selmer group associated with $\ad\rho$ and $V^+$ as
		\[\Sel(\ad\rho,V^+)=\{ [\xi]\in \HH^1(\Q,\mathbf{W}) \,:\, \xi(G_{\Q_p})\cdot V^+ \subseteq V^+ \mbox{ and } \xi(I_p)\cdot V \subseteq V^+ \}. \]
		\end{enumerate}
	\end{defn}	
	The local condition on the cocycle class $[\xi]$ does not depend on the choice of the representative $\xi$, as $\ad^0\rho$ and $\ad\rho$ are trivial on $G_{\Q_p}$. 
	
	There is a natural $G_\Q$-equivariant isomorphism $\mathbf{W}\simeq W\times\ob{\Q}_p$ given by 
	\[ (\mathrm{pr}_W,\tr)\colon  [\xi] \mapsto ([\xi-\frac{1}{2}\tr(\xi)\cdot \Id_V],\frac{1}{2}\tr(\xi)),\]
	where $\Id_V\in\mathbf{W}$ is the identity map. In the next lemma, we still denote by $\mathrm{pr}_W$ and $\tr$ the induced maps on Galois cohomology groups.
	
	\begin{lemma}\label{lem:first_lemma_selmer}
		\begin{enumerate}
			\item The map $\tr \colon \HH^1(\Q,\mathbf{W})\to \HH^1(\Q,\ob{\Q}_p)$ induces the short exact sequence
	\begin{equation}\label{eq:exact_sequence_with_two_Selmer_groups}
		0 \longrightarrow \Sel(\ad^0\rho,V^+) \longrightarrow \Sel(\ad\rho,V^+) \overset{\tr}{\longrightarrow} \HH^1(\Q,\ob{\Q}_p)=\ob{\Q}_p\cdot\lambda,
	\end{equation}
	where the last equality is recalled in \S\ref{sec:cohomology_Qp}.
	\item The map $\mathrm{pr}_W\colon \HH^1(\Q,\mathbf{W}) \to \HH^1(\Q,W)$ induces an isomorphism $\Sel(\ad\rho,V^+)\simeq \cS(V^+)$, where
	\begin{equation}\label{eq:definition_intermediate_selmer}
		\cS(V^+):=\{[\xi]\in\HH^1(\Q,W) \,:\, \xi(G_{\Q_p})\cdot V^+ \subseteq V^+\}.
	\end{equation}
		\end{enumerate}
	\end{lemma}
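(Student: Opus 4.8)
The plan is to deduce both statements from the $G_\Q$-equivariant splitting $\mathbf{W}\simeq W\oplus\ob{\Q}_p$ induced by $(\mathrm{pr}_W,\tr)$, together with a careful bookkeeping of how the two local conditions defining the Selmer groups interact with this decomposition. Since $\ad\rho$ is trivial on $G_{\Q_p}$ (equivalently $p$ splits completely in $H$), the conditions $\xi(G_{\Q_p})\cdot V^+\subseteq V^+$ and $\xi(I_p)\cdot V\subseteq V^+$ are genuinely conditions on the restriction of the cocycle to $G_{\Q_p}$, and they are insensitive to coboundaries — this is the observation already recorded right after Definition \ref{def:Selmer}, which I would invoke.

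For part (ii), I would argue as follows. First, the identity endomorphism $\Id_V\in\mathbf W$ maps every line into itself, so the scalar summand $\ob{\Q}_p\cdot\Id_V$ automatically satisfies the first local condition $\xi(G_{\Q_p})\cdot V^+\subseteq V^+$ but never the second (a nonzero scalar does not send $V$ into the line $V^+$) unless the scalar part of $\xi$ restricted to $I_p$ vanishes. Concretely, writing $\xi=\xi_0+\tfrac12\tr(\xi)\cdot\Id_V$ with $\xi_0=\mathrm{pr}_W(\xi)$, the condition $\xi(I_p)\cdot V\subseteq V^+$ forces $\tfrac12\tr(\xi)|_{I_p}=0$, i.e. $\tr(\xi)$ is unramified at $p$; but an element of $\HH^1(\Q,\ob{\Q}_p)=\ob{\Q}_p\cdot\lambda$ that is unramified at $p$ is zero, since $\lambda$ is ramified at $p$ (it cuts out the cyclotomic $\Z_p$-extension). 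Hence $\tr(\xi)=0$, so $\xi=\xi_0\in\HH^1(\Q,W)$, and the two local conditions on $\xi$ collapse to the single condition $\xi_0(G_{\Q_p})\cdot V^+\subseteq V^+$ (the condition $\xi_0(I_p)\cdot V\subseteq V^+$ is then automatic: for $g\in I_p$, $\xi_0(g)$ preserves $V^+$ and is traceless, hence either zero or has $V^+$ as an eigenline with eigenvalue zero on the quotient, so... — here I need the sharper statement that a traceless endomorphism preserving $V^+$ and lying in the image of $I_p$ actually sends $V$ into $V^+$; this follows because $\ad^0\rho|_{I_p}$ is trivial so $\xi_0|_{I_p}$ is a homomorphism $I_p\to W$, and combined with the $G_{\Q_p}$-condition one checks the nilpotent part lands in $\Hom(V/V^+,V^+)$). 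This gives the isomorphism $\mathrm{pr}_W\colon\Sel(\ad\rho,V^+)\xrightarrow{\sim}\cS(V^+)$.

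For part (i), I would restrict the map $\tr\colon\HH^1(\Q,\mathbf W)\to\HH^1(\Q,\ob{\Q}_p)=\ob{\Q}_p\cdot\lambda$ to $\Sel(\ad\rho,V^+)$. Its kernel consists of classes with $\tr(\xi)=0$, i.e. classes landing in $\HH^1(\Q,W)$, and for such a class the two local conditions are precisely those defining $\Sel(\ad^0\rho,V^+)$ — so $\ker=\Sel(\ad^0\rho,V^+)$, giving left-exactness. For exactness in the middle term it only remains that the sequence is a complex, which is clear. (Note the sequence is \emph{not} claimed to be right-exact — the statement only asserts the three-term exactness displayed.) The main subtlety, and the step I expect to need the most care, is the verification alluded to above that the single condition $\xi_0(G_{\Q_p})\cdot V^+\subseteq V^+$ really is equivalent to the pair of conditions defining the Selmer group when $\tr(\xi)=0$ — i.e. that preservation of $V^+$ by $\xi_0(G_{\Q_p})$ already forces $\xi_0(I_p)\cdot V\subseteq V^+$. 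This uses both the triviality of $\ad\rho$ on $G_{\Q_p}$ and the tracelessness, and should be isolated as a short sub-claim; everything else is formal diagram-chasing with the decomposition $\mathbf W\simeq W\oplus\ob{\Q}_p$ and the fact that $\lambda$ is ramified at $p$.
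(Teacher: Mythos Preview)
Your argument for part (ii) contains a genuine error. You claim that for $[\xi]\in\Sel(\ad\rho,V^+)$, the condition $\xi(I_p)\cdot V\subseteq V^+$ forces $\tfrac12\tr(\xi)|_{I_p}=0$, because ``a nonzero scalar does not send $V$ into the line $V^+$''. This is a non sequitur: while a pure scalar endomorphism does not have image in a line, the sum of a scalar and a traceless endomorphism certainly can. In a basis where $V^+=\ob{\Q}_p\cdot e_1$, the matrix $\bigl(\begin{smallmatrix}1 & 1\\ 0 & 0\end{smallmatrix}\bigr)$ has image contained in $V^+$ and trace $1$. Your conclusion that $\Sel(\ad\rho,V^+)\subseteq\HH^1(\Q,W)$ is therefore false; indeed, it contradicts Lemma~\ref{lem:definition_xi_1} later in the paper, where a class $[\xi_{V^+}]\in\Sel(\ad\rho,V^+)$ with $\tr(\xi_{V^+})=\lambda\neq 0$ is produced. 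Your proposed sub-claim (that for traceless $\xi_0$, preservation of $V^+$ by $\xi_0(G_{\Q_p})$ already implies $\xi_0(I_p)\cdot V\subseteq V^+$) is false for the same reason: an upper-triangular traceless matrix need not be strictly upper-triangular. If that sub-claim held, one would have $\cS(V^+)=\Sel(\ad^0\rho,V^+)$, which is incompatible with Propositions~\ref{prop:dimension_selmer_ad_0_rho} and~\ref{prop:dimension_selmer_ad_rho}.

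The paper's proof of (ii) instead proceeds as follows. Injectivity of $\mathrm{pr}_W$ on $\Sel(\ad\rho,V^+)$ uses that the kernel lies in $\HH^1(\Q,\ob{\Q}_p)\cap\Sel(\ad\rho,V^+)$, and a nonzero multiple of $\lambda\cdot\Id_V$ fails the second local condition on $I_p$ precisely because $\lambda$ is ramified --- this part you had right. Surjectivity is the nontrivial direction and requires constructing a preimage: given $[\xi^0]\in\cS(V^+)$, one looks at the induced class in $\HH^1(\Q_p,\End(V^-))=\ob{\Q}_p\cdot\ord\oplus\ob{\Q}_p\cdot\lambda$, reads off the $\lambda$-coefficient $x$, and checks that $[\xi]:=[\xi^0]-x\cdot\lambda\cdot\Id_V\in\HH^1(\Q,\mathbf{W})$ lies in $\Sel(\ad\rho,V^+)$ and projects to $[\xi^0]$. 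The point is that one must \emph{adjust} by a scalar multiple of $\lambda$ to kill the ramified part of the action on $V^-$; this adjustment is exactly what your argument misses. Your treatment of part (i) is fine and matches the paper.
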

	\begin{proof}
		Using the decomposition $\HH^1(\Q,\mathbf{W})= \HH^1(\Q,W)\times\HH^1(\Q,\ob{\Q}_p)$ given by the map $(\mathrm{pr}_W,\tr)$, we see that $\Sel(\ad\rho,V^+)\cap \HH^1(\Q,W)=\Sel(\ad^0\rho,V^+)$, so the exactness of \eqref{eq:exact_sequence_with_two_Selmer_groups} follows. 
		
		Similarly, $\Sel(\ad\rho,V^+)\cap \HH^1(\Q,\ob{\Q}_p)=\{0\}$ because $\lambda$ is ramified, so $\mathrm{pr}_W$ maps $\Sel(\ad\rho,V^+)$ into $\cS(V^+)$. Now, if $[\xi^0]\in\cS(V^+)$, then we may project $\xi^0_{|G_{\Q_p}}$ to the space 
		\[\HH^1(\Q_p,\End(V^-))=\HH^1(\Q_p,\ob{\Q}_p)\cdot \Id_{V^-},\]
		where it can be written as $(x\cdot\lambda+y\cdot\ord)\cdot \Id_{V^-}$ by \eqref{eq:bases_H1_local_Qp}. Therefore, the class $[\xi]:=[\xi^0]-x\cdot\lambda\cdot\Id_V\in \HH^1(\Q,\mathbf{W})$ belongs to $\Sel(\ad\rho,V^+)$ and is mapped to $[\xi^0]$ under $\mathrm{pr}_W$. This finishes the proof of (ii).
	\end{proof}
		
	We fix a $\ob{\Q}$-structure $V_{\ob{\Q}}$ on $V$ and an isomorphism of $G_\Q$-modules $V_{\ob{\Q}}\otimes_{\ob{\Q}}\ob{\Q}_p \simeq V$. Let $(e_1,e_2)$ be a $\ob{\Q}$-basis of $V_{\ob{\Q}}$ which we also consider as a $\ob{\Q}_p$-basis of $V$ via the preceding identification. Using $(e_1,e_2)$, we may identify $W$ with $M_2(\ob{\Q}_p)^{\tr=0}=\{(\begin{smallmatrix}-d & b \\ c & d\end{smallmatrix}) \,:\, b,c,d\in\ob{\Q}_p\}$, and its $\ob{\Q}$-structure $W_{\ob{\Q}}=\End^0(V_{\ob{\Q}})$ with $M_2(\ob{\Q})^{\tr=0}$. 
	
	\begin{defn}\label{def:adapted}
	 We say that a $\ob{\Q}$-basis $(e_1,e_2)$ of $V_{\ob{\Q}}$ 
	 is \emph{adapted} to $V^+$ if $e_2$ does not generate $V^+$. In other words, $V^+$ has a basis of the form $e_1+s \cdot e_2$ for some $s\in\ob{\Q}_p$, and we call $s$ the slope of $V^+$ (with respect to $(e_1,e_2)$).
	\end{defn}
	 
	 Assume $(e_1,e_2)$ is adapted to $V^+$ and let $s$ be the slope of $V^+$. The conjugation by $(\begin{smallmatrix}1 & 0 \\ s & 1 \end{smallmatrix})$ of the canonical basis $\{(\begin{smallmatrix}0 & 1 \\ 0 & 0 \end{smallmatrix}),(\begin{smallmatrix}0 & 0 \\ 1 & 0 \end{smallmatrix}),(\begin{smallmatrix}1 & 0 \\ 0 & -1 \end{smallmatrix})\}$ of $W$ gives the elements 
	\begin{equation}\label{eq:def_w_bcd}
		w_\mathrm{b} := \begin{pmatrix}-s & 1 \\ -s^2 & s \end{pmatrix}, \qquad
		w_\mathrm{c} := \begin{pmatrix}0 & 0 \\ 1 & 0 \end{pmatrix}, \qquad 
		w_\mathrm{d} := \begin{pmatrix}1 & 0\\ 2s & -1 \end{pmatrix}.
	\end{equation}
	Let $W^\mathrm{b}:=\ob{\Q}_p\cdot w_\mathrm{b}$, $W^\mathrm{c}:=\ob{\Q}_p\cdot w_\mathrm{c}$ and $W^\mathrm{d}:=\ob{\Q}_p\cdot w_\mathrm{d}$. Note that $W^\rc=\Hom(V^+,V^-)$ and $W^\rd=(\Hom(V^+,V^+)\oplus \Hom(V^-,V^-))\cap W$, so
	\begin{equation}\label{eq:reformulation_Sel}
		\Sel(\ad^0\rho,V^+)=\ker\left[\HH^1(\Q,W) \to \HH^1(\Q_p,W^\rc)\times\HH^1(I_p,W^\rd)\right]
	\end{equation}
	and 
	\begin{equation}\label{eq:reformulation_S(V+)}
			\cS(V^+)=\ker\left[\HH^1(\Q,W) \to \HH^1(\Q_p,W^\mathrm{c})\right].
	\end{equation}
	For any symbol $e\in\{b,c,d\}$, $W^\mathrm{e}$ is trivial as a $G_{\Q_p}$-module, so we have a decomposition of $\HH^1(\Q_p,W^\mathrm{e})=\HH^1_\rf(\Q_p,W^\mathrm{e})\oplus\HH^1_{\cyc}(\Q_p,W^\mathrm{e})$ with
	\begin{equation}\label{eq:H1_W^e}
		\HH^1_\rf(\Q_p,W^\mathrm{e})=\ob{\Q}_p\cdot \ord \cdot w_\mathrm{e}\simeq \ob{\Q}_p, \qquad \HH^1_{\cyc}(\Q_p,W^\mathrm{e})=\ob{\Q}_p\cdot \lambda \cdot w_\mathrm{e}\simeq \ob{\Q}_p
	\end{equation}
		as in \S\ref{sec:cohomology_Qp}, whereas \eqref{eq:bases_H1_local_Qp(1)} identifies $\HH^1(\Q_p,\widecheck{W}^\mathrm{e})=\HH^1_\rf(\Q_p,\widecheck{W}^\mathrm{e})\oplus\HH^1_{\cyc}(\Q_p,\widecheck{W}^\mathrm{e})$ with $(\ob{\Q}_p)^{\oplus 2}$ via
	\begin{equation}\label{eq:H1_widecheck_W^e}
	\begin{tikzcd}[row sep=tiny]
				\HH_\rf^1(\Q_p,\widecheck{W}^\mathrm{e})=
			\Hom_{\ob{\Q}_p}(W^\mathrm{e},\HH^1_{\rf}(\Q_p,\ob{\Q}_p(1)))  \ar[rr,"\kappa \mapsto \lambda(\kappa(w_e))", "\sim"'] && \ob{\Q}_p, \\
			\HH_{\cyc}^1(\Q_p,\widecheck{W}^\mathrm{e})=
			\Hom_{\ob{\Q}_p}(W^\mathrm{e},\HH^1_{\cyc}(\Q_p,\ob{\Q}_p(1)))  \ar[rr,"\kappa \mapsto \ord(\kappa(w_e))", "\sim"'] && \ob{\Q}_p.
	\end{tikzcd}
	\end{equation}

	\begin{lemma}\label{lem:dimension_sel_ad_0}
		Let $\Upsilon^0\colon\Hom_{G_\Q}(W,U_H^{(p)}) \to (\ob{\Q}_p)^{\oplus 3}$ be the $\ob{\Q}_p$-linear map given by
		\[
		(\kappa \colon W \to U_H^{(p)}) \mapsto (\lambda(\kappa(w_{\mathrm{b}})),\ord(\kappa(w_{\mathrm{b}})),\ord(\kappa(w_{\mathrm{d}}))),
		\]
		and denote by $r^0$ its rank. Then $\dim\Sel(\ad^0 \rho,V^+)=3-r^0$.
	\end{lemma}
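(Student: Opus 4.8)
The plan is to translate the computation of $\dim\Sel(\ad^0\rho,V^+)$ into a computation of $\dim\ker\Upsilon^0$ by means of global and local Tate duality, exploiting that $\ker\Upsilon^0$ already lives inside the concretely understood space $\Hom_{G_\Q}(W,U_H^{(p)})$. Starting from the reformulation \eqref{eq:reformulation_Sel}, I regard $\Sel(\ad^0\rho,V^+)$ as a Selmer group of $W$ (with classes unramified outside $p$) whose only nontrivial local condition is at $p$: since $W$ is a trivial $G_{\Q_p}$-module and $\HH^1(\Q_p,W)=\bigoplus_{\mathrm{e}}\HH^1(\Q_p,W^{\mathrm{e}})$ with the two-dimensional summands trivialized as in \eqref{eq:H1_W^e}, the conditions ``$\xi(G_{\Q_p})V^+\subseteq V^+$ and $\xi(I_p)V\subseteq V^+$'' say exactly that the $W^\rc$-component of $\loc_p[\xi]$ vanishes and its $W^\rd$-component is unramified, so $L_p=\HH^1(\Q_p,W^\rb)\oplus 0\oplus\ob{\Q}_p\cdot\ord\cdot w_\rd$ has dimension $3$; at the remaining finite places one takes the unramified condition, and $L_\infty=0$ since $\HH^1(\R,W)=0$.

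First I would apply the Greenberg--Wiles formula (the global Euler characteristic formula for Selmer groups, a consequence of Poitou--Tate duality): it expresses $\dim\Sel(\ad^0\rho,V^+)-\dim\HH^1_{L^\perp}(\Q,\widecheck W)$ as $\dim\HH^0(\Q,W)-\dim\HH^0(\Q,\widecheck W)$ plus the sum over all places $v$ of $\dim L_v-\dim\HH^0(\Q_v,W)$, where $L^\perp$ denotes the orthogonal Selmer structure on $\widecheck W$. Here $\HH^0(\Q,W)=0$ by irreducibility of $\rho$, and $\HH^0(\Q,\widecheck W)=0$ because a nonzero invariant would exhibit $\ob{\Q}_p(1)$ as a subrepresentation of $W$, contradicting that $W$ has finite image. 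The term at a prime $\ell\ne p$ vanishes (for the unramified condition $\dim L_\ell=\dim\HH^0(\Q_\ell,W)$), the term at $p$ is $3-\dim W=0$, and the archimedean term is $\dim\HH^1(\R,W)-\dim W^{c_\infty}=0-1=-1$ since $\rho$ is odd. Hence $\dim\Sel(\ad^0\rho,V^+)=\dim\HH^1_{L^\perp}(\Q,\widecheck W)-1$.

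The next step is to identify $\HH^1_{L^\perp}(\Q,\widecheck W)$ with $\ker\Upsilon^0$. Since local Tate duality is compatible with the decompositions $W=\bigoplus_{\mathrm{e}}W^{\mathrm{e}}$ and $\widecheck W=\bigoplus_{\mathrm{e}}\widecheck W^{\mathrm{e}}$, and since $\ord\cdot w_\rd$ spans the line $\HH^1_\rf(\Q_p,W^\rd)$ whose annihilator is $\HH^1_\rf(\Q_p,\widecheck W^\rd)$, the orthogonal complement of $L_p$ is $L_p^\perp=0\oplus\HH^1(\Q_p,\widecheck W^\rc)\oplus\HH^1_\rf(\Q_p,\widecheck W^\rd)$ of dimension $3$, while away from $p$ the dual conditions are again unramified. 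Thus $\HH^1_{L^\perp}(\Q,\widecheck W)$ is the subspace of $\HH^1_{\rf,p}(\Q,\widecheck W)$ on which $\loc_p$ takes values in $L_p^\perp$. Now I invoke the identification $\HH^1_{\rf,p}(\Q,\widecheck W)\simeq\Hom_{G_\Q}(W,U_H^{(p)})$ from \eqref{eq:global_kummer_artin}, the commutative square \eqref{eq:diagram_localization}, and the trivializations \eqref{eq:H1_widecheck_W^e}: for $\kappa\in\Hom_{G_\Q}(W,U_H^{(p)})$, the $\widecheck W^{\mathrm{e}}$-component of $\loc_p(\kappa)$ has $\HH^1_\rf$-coordinate $\lambda(\kappa(w_{\mathrm{e}}))$ and $\HH^1_{\cyc}$-coordinate $\ord(\kappa(w_{\mathrm{e}}))$. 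Consequently $\loc_p(\kappa)\in L_p^\perp$ if and only if $\lambda(\kappa(w_\rb))=\ord(\kappa(w_\rb))=\ord(\kappa(w_\rd))=0$, that is, $\kappa\in\ker\Upsilon^0$; hence $\HH^1_{L^\perp}(\Q,\widecheck W)=\ker\Upsilon^0$. Since Lemma~\ref{lem:Dirichlet}(i) gives $\dim\Hom_{G_\Q}(W,U_H^{(p)})=\dim\HH^0(\R,W)+\dim\HH^0(\Q_p,W)=1+3=4$, we get $\dim\ker\Upsilon^0=4-r^0$, and combining with the previous paragraph, $\dim\Sel(\ad^0\rho,V^+)=(4-r^0)-1=3-r^0$.

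The only genuine subtlety I anticipate is in the two duality steps: one must verify that Tate's local pairing respects the splitting into the pieces $W^{\mathrm{e}}$, that the annihilator of the unramified line $\ob{\Q}_p\cdot\ord\cdot w_\rd$ is the Bloch--Kato line $\HH^1_\rf(\Q_p,\widecheck W^\rd)$ and not its complement, and that the three scalar conditions cutting out $L_p^\perp$ match the three coordinates of $\Upsilon^0$ through the chain of canonical identifications set up in \S\ref{sec:cohomology_artin_representation}. Everything else --- the vanishing of the two $\HH^0$-terms, the archimedean contribution, and the dimension count for $\Hom_{G_\Q}(W,U_H^{(p)})$ --- is immediate from the results already recorded in \S\ref{sec:preliminary}.
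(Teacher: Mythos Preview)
Your proof is correct and proceeds by essentially the same duality argument as the paper. The only cosmetic difference is packaging: the paper identifies $\Sel(\ad^0\rho,V^+)$ directly with $\coker[\Upsilon^0]^*$ via the Poitou--Tate exact sequence (after noting $\Sha^1_{\{p\}}(\Q,W)=0$), whereas you run the Greenberg--Wiles formula to obtain $\dim\Sel=\dim\ker\Upsilon^0-1$ and then invoke Lemma~\ref{lem:Dirichlet}(i) to compute $\dim\ker\Upsilon^0=4-r^0$; both routes rest on the same local computation of $L_p^\perp$ and the same identification $\HH^1_{\rf,p}(\Q,\widecheck W)\simeq\Hom_{G_\Q}(W,U_H^{(p)})$.
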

	
	\begin{proof}
		As $W$ is trivial as a $G_{\Q_p}$-module, we have from \S\ref{sec:cohomology_Qp} that $\ker[\HH^1(\Q_p,W^\mathrm{d}) \to \HH^1(I_p,W^\mathrm{d})]=\HH^1_\rf(\Q_p,W^\mathrm{d})$, so $\Sel(\ad^0\rho,V^+)$ sits in the short exact sequence
		\begin{equation}\label{eq:beginning_Poitou_Tate}
			\begin{tikzcd}
				0 \rar & \Sha^1_{\{p\}}(\Q,W) \rar & \Sel(\ad^0\rho,V^+) \rar & \HH^1(\Q_p,W^\mathrm{b})\times \HH^1_\rf(\Q_p,W^\mathrm{d}),
			\end{tikzcd}
		\end{equation}
		where $\Sha^1_{\{p\}}(\Q,W)=\ker[\HH^1(\Q,W) \to \HH^1(\Q_p,W)]$ is the first Tate-Shafarevich group of $W$. Note that $\Sha^1_{\{p\}}(\Q,W)=0$ by the finiteness of the class group of $H$. 

		Tate's local duality gives		\[ \HH^1(\Q_p,W^\mathrm{b})\times \HH^1_\rf(\Q_p,W^\mathrm{d})\simeq (\HH^1(\Q_p,\widecheck{W}^\mathrm{b}) \times \HH^1_{\cyc}(\Q_p,\widecheck{W}^\text{d}))^* \]
		where $(-)^*$ denotes the $\ob{\Q}_p$-linear dual (see \S\ref{sec:cohomology_Qp1}).
		Therefore, \eqref{eq:beginning_Poitou_Tate} and Poitou-Tate duality provides us with a natural identification
		\begin{equation}\label{eq:Poitou_Tate}
			\Sel(\ad^0\rho,V^+) \simeq \coker\left[\HH^1_{\rf,p}(\Q,\widecheck{W}) \overset{\loc^0}{\longrightarrow} \HH^1(\Q_p,\widecheck{W}^\mathrm{b}) \times \HH^1_{\cyc}(\Q_p,\widecheck{W}^\text{d}) \right]^*,
		\end{equation}
		where $\loc^0$ is the localization map composed with the natural projections obtained from the direct sums $\HH^1=\HH^1_\rf\oplus \HH^1_{\cyc}$ and $\widecheck{W}=\widecheck{W}^\mathrm{b}\oplus\widecheck{W}^\mathrm{c}\oplus\widecheck{W}^\mathrm{d}$. By  \eqref{eq:diagram_localization}, there is  a commutative diagram
		\[
		\begin{tikzcd}
			\HH^1_{\rf,p}(\Q,\widecheck{W}) \ar[rrr,"\loc^0"] \ar[d,"\sim" {anchor=south, rotate=90}] &&& \HH^1_\rf(\Q_p,\widecheck{W}^\mathrm{b}) \times \HH^1_{\cyc}(\Q_p,\widecheck{W}^\mathrm{b}) \times \HH^1_{\cyc}(\Q_p,\widecheck{W}^\text{d}) \ar[d,"\sim" {anchor=south, rotate=90}] \\
			\Hom_{G_\Q}(W,U_H^{(p)}) \ar[rrr, "\Upsilon^0"] &&& (\ob{\Q}_p)^{\oplus 3},
		\end{tikzcd}
		\]
		where the right vertical map is induced from \eqref{eq:H1_widecheck_W^e}. Hence, $\dim \Sel(\ad^0\rho,V^+)=3-\rank \Upsilon^0$.
	\end{proof}
	
	\begin{lemma}\label{lem:dimension_sel_ad}
		Let $\Upsilon\colon\Hom_{G_\Q}(W,U_H^{(p)}) \to (\ob{\Q}_p)^{\oplus 4}$ be the $\ob{\Q}_p$-linear map given by 
		\[
(\kappa \colon W \to U_H^{(p)}) \mapsto 		(\lambda(\kappa(w_{\mathrm{b}})),\lambda(\kappa(w_{\rd})),\ord(\kappa(w_{\mathrm{b}})),\ord(\kappa(w_{\mathrm{d}}))),
		\]
		and $r$ denote its rank. Then $\dim\Sel(\ad \rho,V^+)=4-r$.
	\end{lemma}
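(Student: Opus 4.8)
The plan is to mimic the proof of Lemma \ref{lem:dimension_sel_ad_0}, but now keeping track of the trace. By Lemma \ref{lem:first_lemma_selmer}(ii), it is equivalent to compute $\dim \cS(V^+)$, where $\cS(V^+)=\ker[\HH^1(\Q,W)\to\HH^1(\Q_p,W^{\rc})]$ by \eqref{eq:reformulation_S(V+)}. Since $\Sha^1_{\{p\}}(\Q,W)=0$ by finiteness of the class group of $H$, the restriction map $\HH^1(\Q,W)\hookrightarrow\HH^1(\Q_p,W)$ is injective, and $\cS(V^+)$ sits in a short exact sequence
\[
0\longrightarrow \cS(V^+)\longrightarrow \HH^1(\Q,W)\longrightarrow \HH^1(\Q_p,W^{\rc})\longrightarrow 0
\]
at the level of the relevant projection, so that $\cS(V^+)$ is identified with the kernel of $\HH^1(\Q,W)\to\HH^1(\Q_p,W^{\rc})$. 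Projecting out the $W^{\rc}$-component, the complementary local conditions at $p$ are imposed only on the $W^{\rb}$- and $W^{\rd}$-components, but now with \emph{no} restriction coming from inertia on the $W^{\rd}$-component: this is precisely the difference with Lemma \ref{lem:dimension_sel_ad_0}, where one only retained $\HH^1_\rf(\Q_p,W^{\rd})$. Thus $\cS(V^+)$ fits in
\[
0\longrightarrow \Sha^1_{\{p\}}(\Q,W)\longrightarrow \cS(V^+)\longrightarrow \HH^1(\Q_p,W^{\rb})\times\HH^1(\Q_p,W^{\rd}),
\]
and the left term vanishes.

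Next I would dualize exactly as before. Tate's local duality gives
\[
\HH^1(\Q_p,W^{\rb})\times\HH^1(\Q_p,W^{\rd})\simeq\bigl(\HH^1(\Q_p,\widecheck{W}^{\rb})\times\HH^1(\Q_p,\widecheck{W}^{\rd})\bigr)^*,
\]
so Poitou--Tate duality (for the Selmer structure on $\widecheck{W}$ that is unramified outside $p$ and places $W^{\rc},W^{\rb},W^{\rd}$ in the relevant local conditions) yields a natural identification
\[
\cS(V^+)\simeq\coker\bigl[\HH^1_{\rf,p}(\Q,\widecheck{W})\xrightarrow{\ \loc\ }\HH^1(\Q_p,\widecheck{W}^{\rb})\times\HH^1(\Q_p,\widecheck{W}^{\rd})\bigr]^*,
\]
where now \emph{both} the $\rf$- and $\cyc$-parts are retained for $\widecheck{W}^{\rb}$ and $\widecheck{W}^{\rd}$ (four lines in total), in contrast to the three lines in \eqref{eq:Poitou_Tate}. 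Invoking the commutative square \eqref{eq:diagram_localization} together with the trivializations \eqref{eq:H1_widecheck_W^e} of $\HH^1_\rf(\Q_p,\widecheck{W}^{\re})$ and $\HH^1_{\cyc}(\Q_p,\widecheck{W}^{\re})$ via $\kappa\mapsto\lambda(\kappa(w_\re))$ and $\kappa\mapsto\ord(\kappa(w_\re))$ respectively, the localization map $\loc$ is carried to the map $\Upsilon$ of the statement, sending $\kappa\mapsto(\lambda(\kappa(w_{\rb})),\lambda(\kappa(w_{\rd})),\ord(\kappa(w_{\rb})),\ord(\kappa(w_{\rd})))$. Therefore $\dim\cS(V^+)=\dim\coker(\Upsilon)=4-\rank\Upsilon=4-r$, and by Lemma \ref{lem:first_lemma_selmer}(ii) this equals $\dim\Sel(\ad\rho,V^+)$.

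The only genuinely new point over Lemma \ref{lem:dimension_sel_ad_0} is bookkeeping: one must check that passing from $\Sel(\ad\rho,V^+)$ to $\cS(V^+)$ indeed \emph{drops} the inertial constraint on $W^{\rd}$ (so that the dual local condition acquires the extra $\cyc$-line $\HH^1_{\cyc}(\Q_p,\widecheck{W}^{\rd})$), which is immediate from \eqref{eq:reformulation_S(V+)} versus \eqref{eq:reformulation_Sel}, and that the arithmetic-dual Selmer structure used in Poitou--Tate is the orthogonal complement of the one defining $\cS(V^+)$, which again follows from the standard compatibility of Tate local duality with $\HH^1_\rf$ vs.\ $\HH^1_{\cyc}$ recalled in \S\ref{sec:cohomology_Qp1}. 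I do not expect any serious obstacle; the argument is a direct parallel of the preceding lemma with one fewer local restriction.
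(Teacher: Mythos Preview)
Your proposal is correct and follows essentially the same route as the paper: reduce to $\cS(V^+)$ via Lemma \ref{lem:first_lemma_selmer}(ii), use the vanishing of $\Sha^1_{\{p\}}(\Q,W)$ and Poitou--Tate duality to identify $\cS(V^+)$ with the dual of the cokernel of the localization map into $\HH^1(\Q_p,\widecheck{W}^{\rb})\times\HH^1(\Q_p,\widecheck{W}^{\rd})$, and then match this map with $\Upsilon$ via \eqref{eq:diagram_localization} and \eqref{eq:H1_widecheck_W^e}. One minor slip: the displayed ``short exact sequence'' with $\HH^1(\Q_p,W^{\rc})$ on the right need not be surjective, but you only use that $\cS(V^+)$ is the kernel, so this does not affect the argument.
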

	\begin{proof}
	By Lemma \ref{lem:first_lemma_selmer} (ii), $\dim\Sel(\ad \rho,V^+)=\dim\cS(V^+)$. One then checks using Poitou-Tate duality as in \eqref{eq:Poitou_Tate} that 
	\[	\cS(V^+) \simeq \coker\left[\HH^1_{\rf,p}(\Q,\widecheck{W}) \overset{\loc}{\longrightarrow} \HH^1(\Q_p,\widecheck{W}^\mathrm{b}) \times \HH^1(\Q_p,\widecheck{W}^\text{d}) \right]^*,\]
	where $\loc$ is a natural projection-and-localization map. We may identify $\loc$ with $\Upsilon$ by similar arguments as that in the proof of Lemma \ref{lem:dimension_sel_ad_0}, showing that $\dim\cS(V^+)=4-\mathrm{rk}(\Upsilon)$, as claimed.
	\end{proof}
	
	\subsection{Global units attached to $\ad^0\rho$}\label{sec:vanishing_tgt_spaces}
	It will be convenient to work with a basis $(e_1,e_2)$ of $V_{\ob{\Q}}$ satisfying the following condition:
	\begin{equation}\tag{dih}\label{tag:dihedral_basis_assumption}
		\text{If $f$ has real multiplication by $K$, then $(e_1,e_2)$ is a dihedral basis for $\rho$.}
	\end{equation} 
	In other words, if $f$ has real multiplication by $K$, then 
	\begin{equation}\label{eq:dihedral_decomposition}
		V_{\ob{\Q}}=e_1\ob{\Q}(\psi)\oplus e_2\ob{\Q}(\psi^\sigma)
	\end{equation}
	as $G_K$-module for some ($\ob{\Q}$-valued) character $\psi$ of $G_K$ of finite order with $\Gal(K/\Q)$-conjugate $\ob{\psi}$. Note that exchanging $e_1$ and $e_2$ amounts to switching the roles of $\psi$ and $\psi^\sigma$, so there always exists a basis satisfying \eqref{tag:dihedral_basis_assumption} that is adapted to a given line $V^+$ of $V$. 
	
	Fix a basis $(e_1,e_2)$ of $V_{\ob{\Q}}$ satisfying \eqref{tag:dihedral_basis_assumption}. It induces a $\ob{\Q}$-basis $\mathscr{C}:=\{w_1,w_2,w_3\}$ of $W_{\ob{\Q}}$, where
	\[
	w_1=\begin{pmatrix}
		0 & 1 \\ 0 & 0 
	\end{pmatrix}, \quad 
	w_2=\begin{pmatrix}
		1 & 0 \\ 0 & -1 
	\end{pmatrix}, \quad 
	w_3=\begin{pmatrix}
		0 & 0 \\ 1 & 0 
	\end{pmatrix}.
	\]
	In the RM case, letting $\psi_{\ad}=\psi/\psi^\sigma$, $W_{\ob{\Q}}$ is the sum $\ob{\Q}(\varepsilon_K) \oplus \Ind_K^\Q \ob{\Q}(\psi_{\ad})$ of two irreducible $G_\Q$-representations, and $\{w_2\}$ is a basis for the first summand, whereas $\{w_1,w_3\}$ is a dihedral basis for the second one.
	
	We make a convenient choice of a basis of $\Hom_{G_\Q}(W_{\ob{\Q}},U_{H,\ob{\Q}}^{(p)})$ using Lemma \ref{lem:Dirichlet} as follows.
	We let $\kappa$ be any generator of the $\ob{\Q}$-line $\Hom_{G_\Q}(W_{\ob{\Q}},U_{H,\ob{\Q}})$ and we complete $\{\kappa\}$ into a $\ob{\Q}$-basis 
	\begin{equation}\label{eq:definition_basis_B}
		\mathscr{B}:=\{\kappa,\kappa_1',\kappa_2',\kappa_3'\} 
	\end{equation}
	of $\Hom_{G_\Q}(W_{\ob{\Q}},U_{H,\ob{\Q}}^{(p)})$. In the RM case, we also require $\{\kappa,\kappa'_2\}$ be a basis of $\Hom_{G_\Q}(\ob{\Q}(\varepsilon_K),U_{H,\ob{\Q}}^{(p)})$, and $\{\kappa'_1,\kappa'_3\}$ a basis of subspace $\Hom_{G_\Q}(\Ind_K^\Q \ob{\Q}(\psi_{\ad}),U_{H,\ob{\Q}}^{(p)})$.
	By Lemma \ref{lem:Dirichlet} (iv), the matrix 
	\[O=(\ord(\kappa'_i(w_j)))_{1\leq i,j\leq 3}\] 
	is in $\GL_3(\ob{\Q})$; we modify $\mathscr{B}$ so that $O$ is the identity matrix. Note that this uniquely determines $\kappa_1',\kappa_2'$ and $\kappa_3'$ modulo the subspace $\Hom_{G_\Q}(W_{\ob{\Q}},U_{H,\ob{\Q}})=\ob{\Q}\cdot\kappa$ by the same lemma.
	
	For $1\leq i,j\leq3$, define
	\begin{equation}\label{eq:def_matrices_L_and_M}
	L_j:= \lambda(\kappa (w_j)), \quad  
	M_{ij}:= \lambda(\kappa_i'(w_j)).
	\end{equation}
	Hence, the entries of $L$ (resp. $M$) are $\ob{\Q}$-linear combinations of $p$-adic logarithms of units of $H$ (resp. $p$-units of $H$). 
	
	For later use, we also introduce other matrices $M^{(\ell)}$ defined as follows. If $\ell$ is a prime number at which $\rho$ is unramified, we say that $\ell$ is regular (for $\rho$) if $\rho(\Frob_\ell)$ is not scalar. For regular $\ell$, Lemma \ref{lem:Dirichlet} (i) implies that $\Hom_{G_\Q}(W_{\ob{\Q}},U_{H,\ob{\Q}}^{(\ell)})$ has dimension $2$ and we may choose a $\ob{\Q}$-basis of the form $\{\kappa,\kappa^{(\ell)}\}$. For $1\leq j \leq 3$, we then define
	\begin{equation}\label{eq:def_matrice_M_ell}
		M^{(\ell)}_j=\lambda(\kappa^{(\ell)}(w_j)).
	\end{equation}
	We record some useful properties of the matrices $L$, $M$ and $M^{(\ell)}$ in the next lemma.
	\begin{lemma}\label{lem:independence_entries_matrices_logs} Let $\cP\subset\ob{\Q}_p$ be the set of all entries of $L$, $M$ and $M^{(\ell)}$ with $\ell$ regular for $\rho$.
		\begin{enumerate}
			\item Assume $\rho$ is exotic. Then all the elements of $\cP$ are $\ob{\Q}$-linearly independent (hence nonzero). 
			\item Assume $\rho$ has RM by $K$. Then 
			\[L_1=L_3=M_{12}=M_{23}=M_{21}=M_{32}=0,\]
			and for $\ell$ regular, $M^{(\ell)}_1=M^{(\ell)}_3=0$ if $\ell$ splits in $K$, and $M^{(\ell)}_2=0$ if $\ell$ is inert in $K$.
			Moreover, the remaining entries of $L$, $M$ and $M^{(\ell)}$ (for varying $\ell$) are $\ob{\Q}$-linearly independent.
			\item In all cases, the nonzero elements of $\cP$ are $\ob{\Q}$-algebraically independent under Conjecture \ref{conj:schanuel}.
		\end{enumerate}
	\end{lemma}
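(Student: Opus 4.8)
The plan is to deduce everything from Lemma~\ref{lem:Dirichlet}(ii)--(iii): once the right $\ob{\Q}$-bases are in place, both the $\ob{\Q}$-linear independence of (i)--(ii) and the $\ob{\Q}$-algebraic independence under Conjecture~\ref{conj:schanuel} of (iii) come for free from that lemma, so (iii) needs no separate treatment and the whole task is the construction of suitable bases. First I would make two reductions. Since a (possibly infinite) family in $\ob{\Q}_p$ is $\ob{\Q}$-linearly independent exactly when each of its finite subfamilies is, it suffices to fix a finite set $S=\{p,\ell_1,\dots,\ell_m\}$ of primes with each $\ell_i$ regular for $\rho$ and to argue for the entries of $L$, $M$ and $M^{(\ell_1)},\dots,M^{(\ell_m)}$. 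Moreover, $\rho$ being irreducible, Schur's lemma gives $\HH^0(\Q,\ad^0\rho)=0$, so $W_{\ob{\Q}}$ and each of its irreducible $G_\Q$-constituents contains no copy of the trivial representation and Lemma~\ref{lem:Dirichlet} applies. I would also record the evident strengthening of Lemma~\ref{lem:Dirichlet}(iv) (same proof, using $\cO_{H,S}^\times/\cO_H^\times\hookrightarrow\bigoplus_{w\mid S}\Z$ with finite cokernel): the valuation maps at the finite places of $S$ induce an isomorphism
\[
\Hom_{G_\Q}(W_{\ob{\Q}},U_{H,\ob{\Q}}^{S})\big/\Hom_{G_\Q}(W_{\ob{\Q}},U_{H,\ob{\Q}})\xrightarrow{\ \sim\ }\bigoplus_{\ell\in S}\Hom_{G_{\Q_\ell}}(W_{\ob{\Q}},\ob{\Q});
\]
in particular the subspaces $\Hom_{G_\Q}(W_{\ob{\Q}},U_{H,\ob{\Q}}^{(\ell)})$, $\ell\in S$, are in direct sum modulo the ``unit part'' $\Hom_{G_\Q}(W_{\ob{\Q}},U_{H,\ob{\Q}})$.

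For (i), I would first note that $\ad^0\rho$ is irreducible when $\rho$ is exotic: a nonzero $G_\Q$-morphism $\ob{\Q}_p(\eta)\to\ad^0\rho$ yields $0\ne v\in\End^0(V)$ with $\rho(g)v=\eta(g)v\rho(g)$, and since $\HH^0(\Q,\ad^0\rho)=0$ forces $\eta\ne\mathbf 1$ while irreducibility of $\rho$ forbids $\ker v$ from being $G_\Q$-stable, $v$ is invertible and $\rho\simeq\rho\otimes\eta$ with $\eta^2=\mathbf 1$, i.e.\ $\rho$ is dihedral -- excluded; being $3$-dimensional and self-dual, $\ad^0\rho$ is then irreducible. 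Next, for each regular $\ell$ the Frobenius $\rho(\Frob_\ell)$ has two distinct eigenvalues, so $\dim\HH^0(\Q_\ell,W_{\ob{\Q}})=1$ and $\Hom_{G_\Q}(W_{\ob{\Q}},U_{H,\ob{\Q}}^{(\ell)})=\ob{\Q}\kappa\oplus\ob{\Q}\kappa^{(\ell)}$. Then $\mathscr B_S:=\{\kappa,\kappa_1',\kappa_2',\kappa_3'\}\cup\{\kappa^{(\ell_i)}\}_{1\le i\le m}$ has $4+m$ elements; by the direct-sum description above (the $\kappa_j'$ map to a $\ob{\Q}$-basis of the $p$-summand because the matrix $O$ of \S\ref{sec:vanishing_tgt_spaces} is the identity, each $\kappa^{(\ell_i)}$ to a basis of the $\ell_i$-summand, and $\kappa$ to $0$) it is $\ob{\Q}$-linearly independent, and since $d_+^{S}=\dim\HH^0(\R,W_{\ob{\Q}})+\sum_{\ell\in S}\dim\HH^0(\Q_\ell,W_{\ob{\Q}})=1+3+m=4+m$ it is a $\ob{\Q}$-basis. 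Applying Lemma~\ref{lem:Dirichlet}(ii) to $\mathscr B_S$ and $\mathscr C=\{w_1,w_2,w_3\}$ gives the $\ob{\Q}$-linear independence of $\{\lambda(\kappa''(w_j)):\kappa''\in\mathscr B_S,\ 1\le j\le 3\}$, which is precisely the set of all entries of $L$, $M$ and $M^{(\ell_1)},\dots,M^{(\ell_m)}$ (cf.~\eqref{eq:def_matrices_L_and_M}, \eqref{eq:def_matrice_M_ell}).

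For (ii), I would work with $W_{\ob{\Q}}=\ob{\Q}(\varepsilon_K)\oplus\Ind_K^\Q\ob{\Q}(\psi_{\ad})$, two \emph{distinct} irreducibles: as $\rho$ is not CM, hence not of Klein type, one has $\psi_{\ad}^2\ne\mathbf 1$, so the induced summand is irreducible. By construction $\{\kappa,\kappa_2'\}$ generates the $\varepsilon_K$-part and $\{\kappa_1',\kappa_3'\}$ the $\Ind_K^\Q\ob{\Q}(\psi_{\ad})$-part of $\Hom_{G_\Q}(W_{\ob{\Q}},U_{H,\ob{\Q}}^{(p)})$, and I would likewise choose $\kappa^{(\ell)}$ in the $\varepsilon_K$-part when $\ell$ splits in $K$ and in the $\Ind_K^\Q\ob{\Q}(\psi_{\ad})$-part when $\ell$ is inert; this is possible because for regular split $\ell$ the Frobenius acts on $\Ind_K^\Q\ob{\Q}(\psi_{\ad})$ with eigenvalues $\psi_{\ad}(\mathfrak l)^{\pm 1}\ne 1$ (so that part of $\Hom_{G_\Q}(-,U_{H,\ob{\Q}}^{(\ell)})$ vanishes), while for inert $\ell$ the prime $\mathfrak l$ of $K$ is $\sigma$-stable, whence $\psi_{\ad}(\mathfrak l)=\psi(\mathfrak l)/\psi^\sigma(\mathfrak l)=1$ and $\Frob_\ell$ acts with eigenvalues $\pm 1$, producing a one-dimensional $\Ind$-part complementary to $\ob{\Q}\kappa$. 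The six claimed vanishings are then immediate: $\kappa$, $\kappa_2'$ and the split $\kappa^{(\ell)}$ kill $w_1,w_3$, and $\kappa_1',\kappa_3'$ and the inert $\kappa^{(\ell)}$ kill $w_2$. Exactly as in (i), the direct-sum description plus a dimension count show that $\mathscr B^{(1)}:=\{\kappa,\kappa_2'\}\cup\{\kappa^{(\ell)}:\ell\in S\text{ split}\}$ and $\mathscr B^{(2)}:=\{\kappa_1',\kappa_3'\}\cup\{\kappa^{(\ell)}:\ell\in S\text{ inert}\}$ are $\ob{\Q}$-bases of $\Hom_{G_\Q}(\ob{\Q}(\varepsilon_K),U_{H,\ob{\Q}}^{S})$ and $\Hom_{G_\Q}(\Ind_K^\Q\ob{\Q}(\psi_{\ad}),U_{H,\ob{\Q}}^{S})$ respectively. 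Applying Lemma~\ref{lem:Dirichlet}(iii) to these two irreducibles with bases $\mathscr B^{(i)}$ and $\mathscr C^{(1)}=\{w_2\}$, $\mathscr C^{(2)}=\{w_1,w_3\}$ yields the $\ob{\Q}$-linear independence of $\{\lambda(\kappa''(w_2)):\kappa''\in\mathscr B^{(1)}\}\cup\{\lambda(\kappa''(w_j)):\kappa''\in\mathscr B^{(2)},\ j\in\{1,3\}\}$, which is exactly the set of the non-vanishing entries of $L$, $M$ and $M^{(\ell_1)},\dots,M^{(\ell_m)}$.

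I expect the only delicate part to be the RM case, and within it two points: keeping track of how $\HH^0(\Q_\ell,\ad^0\rho)$ distributes between the $\varepsilon_K$- and $\Ind_K^\Q\ob{\Q}(\psi_{\ad})$-isotypic components according to the splitting of $\ell$ in $K$ -- where the small but essential input is the identity $\psi_{\ad}(\mathfrak l)=1$ at inert primes -- and checking that the prescribed $\kappa,\kappa_i',\kappa^{(\ell)}$ are \emph{jointly} $\ob{\Q}$-linearly independent, which is precisely why the direct-sum reformulation of Lemma~\ref{lem:Dirichlet}(iv) is needed rather than its assertion for a single $S$. Everything else (the exotic case, the dimension counts via Lemma~\ref{lem:Dirichlet}(i), and deducing (iii) from the algebraic-independence clause in Lemma~\ref{lem:Dirichlet}(ii)--(iii)) is routine.
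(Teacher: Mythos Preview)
Your proposal is correct and follows essentially the same approach as the paper: reduce to Lemma~\ref{lem:Dirichlet}(ii) in the exotic case (using irreducibility of $\ad^0\rho$) and to Lemma~\ref{lem:Dirichlet}(iii) in the RM case (using the decomposition $W_{\ob{\Q}}=\ob{\Q}(\varepsilon_K)\oplus\Ind_K^\Q\ob{\Q}(\psi_{\ad})$), with (iii) following formally. You are simply more explicit than the paper about several points it leaves to the reader: the reduction to a finite $S$, the verification via the multi-prime version of Lemma~\ref{lem:Dirichlet}(iv) that the chosen $\kappa,\kappa_i',\kappa^{(\ell)}$ actually form a $\ob{\Q}$-basis of $\Hom_{G_\Q}(W_{\ob{\Q}},U_{H,\ob{\Q}}^{S})$, and the case analysis on $\ell$ split or inert in $K$ to pin down where $\kappa^{(\ell)}$ lives.
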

	\begin{proof}
		Part (iii) clearly follows from (i) and (ii). If $\rho$ is exotic, then $\ad^0\rho$ is irreducible, so (i) follows from Lemma \ref{lem:Dirichlet} (ii) applied to $\pi=\ad^0 \rho$. 
		
		Assume now $\rho$ has RM by $K$. For regular $\ell$, we have $\kappa^{(\ell)}\in\Hom_{G_\Q}(\ob{\Q}(\varepsilon_K),U_H^{(\ell)})$ if $\ell$ splits in $K$, in which case $\kappa^{(\ell)}(w_1)=\kappa^{(\ell)}(w_3)=1$, and $\kappa^{(\ell)}\in\Hom_{G_\Q}(\Ind_K^\Q\ob{\Q}(\psi_{\ad}),U_H^{(\ell)})$ if $\ell$ is inert in $K$, in which case $\kappa^{(\ell)}(w_2)=1$. Similarly, $\kappa'_2(w_1)=\kappa'_2(w_3)=1$ and $\kappa'_1(w_2)=\kappa'_3(w_2)=1$. We obtain the vanishing of the corresponding entries of $L,M,M^{(\ell)}$ by applying $\lambda$ to these relations. The linear independence of the remaining entries follows from Lemma \ref{lem:Dirichlet} (iii) applied to $\pi^{(1)}=\varepsilon_K$ and $\pi^{(2)}=\Ind_K^\Q\psi_{\ad}$, which are indeed irreducible because we assumed $\rho$ does not have both RM and CM.
	\end{proof}
	Consider in the next proposition the two polynomials 
	\begin{equation}\label{eq:def_pol_P_L_and_P_M}
			\rP_L(S)=L_1-L_2S-L_3S^2, \qquad \rP_M(S)=\begin{vmatrix}
				M_{11}-M_{12}S-M_{13}S^2 & 2 & 0 \\
				M_{21}-M_{22}S-M_{23}S^2 & -S & 1 \\
				M_{31}-M_{32}S-M_{33}S^2 & 0  & 2S
			\end{vmatrix},
	\end{equation}
	and the condition 
	\begin{equation}\tag{res}\label{tag:non_vanishing_resultant_P_L_P_M}
		\res_S(\rP_L(S),\rP_M(S))\neq 0,
	\end{equation}
	where $\res_S$ stands for the resultant of two polynomials in the same variable $S$. It is not hard to see that, while the definitions of $L$ and $M$ depend on the choice of the bases $(e_1,e_2)$ and $\mathscr{C}$, the validity of \eqref{tag:non_vanishing_resultant_P_L_P_M} does not depend on them.
	\begin{prop}\label{prop:dimension_selmer_ad_0_rho}
			If \eqref{tag:non_vanishing_resultant_P_L_P_M} holds, then $\Sel(\ad^0\rho,V^+)=0$ for every line $V^+$ of $V$.
	\end{prop}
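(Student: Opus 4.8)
The plan is to deduce the statement from Lemma~\ref{lem:dimension_sel_ad_0}, for which it suffices to prove that the rank $r^0$ of the $\ob{\Q}_p$-linear map $\Upsilon^0\colon\Hom_{G_\Q}(W,U_H^{(p)}) \to (\ob{\Q}_p)^{\oplus 3}$ is equal to $3$ for every line $V^+\subseteq V$. Given $V^+$, I would first replace $(e_1,e_2)$ by a $\ob{\Q}$-basis of $V_{\ob{\Q}}$ which satisfies \eqref{tag:dihedral_basis_assumption} and is adapted to $V^+$ in the sense of Definition~\ref{def:adapted}; such a basis exists by the discussion preceding that definition. Let $s\in\ob{\Q}_p$ be the resulting slope of $V^+$. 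Since $\Sel(\ad^0\rho,V^+)$ — hence $r^0=3-\dim\Sel(\ad^0\rho,V^+)$ — is intrinsic to $V^+$, and since the validity of \eqref{tag:non_vanishing_resultant_P_L_P_M} is independent of all auxiliary choices, I may carry out the computation with this particular basis and the associated data $\kappa,\kappa_1',\kappa_2',\kappa_3',L,M$ set up in \S\ref{sec:vanishing_tgt_spaces}.

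Next I would write out the matrix of $\Upsilon^0$ in the basis $\mathscr{B}=\{\kappa,\kappa_1',\kappa_2',\kappa_3'\}$ of $\Hom_{G_\Q}(W,U_H^{(p)})$ and the standard basis of $(\ob{\Q}_p)^{\oplus 3}$. Expressing the elements of \eqref{eq:def_w_bcd} as $w_\mathrm{b}=w_1-sw_2-s^2w_3$ and $w_\mathrm{d}=w_2+2sw_3$, and using that $\ord\circ\kappa=0$ (as $\kappa$ takes values in $\cO_H^\times\otimes\ob{\Q}$, which is $p$-adically a unit at the place $w$), that $\bigl(\ord(\kappa_i'(w_j))\bigr)_{i,j}$ is the identity matrix by construction of $\mathscr{B}$, and the definitions $L_j=\lambda(\kappa(w_j))$, $M_{ij}=\lambda(\kappa_i'(w_j))$, one obtains the $3\times 4$ matrix
\[
\begin{pmatrix}
\rP_L(s) & \widetilde M_1(s) & \widetilde M_2(s) & \widetilde M_3(s)\\
0 & 1 & -s & -s^2\\
0 & 0 & 1 & 2s
\end{pmatrix},
\qquad
\widetilde M_i(S):=M_{i1}-M_{i2}S-M_{i3}S^2 .
\]
Its $3\times 3$ minor on the first three columns is upper triangular and equals $\rP_L(s)$, while the $3\times 3$ minor on the last three columns equals $\tfrac12\rP_M(s)$ after a cofactor expansion matching the determinant defining $\rP_M$ in \eqref{eq:def_pol_P_L_and_P_M}. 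Hence $r^0<3$ forces $\rP_L(s)=\rP_M(s)=0$.

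To conclude, hypothesis \eqref{tag:non_vanishing_resultant_P_L_P_M} states that $\res_S(\rP_L,\rP_M)\neq 0$, so $\rP_L$ and $\rP_M$ have no common root in $\ob{\Q}_p$; in particular $s$ is not a common root, so $r^0=3$ and $\Sel(\ad^0\rho,V^+)=0$. As $V^+$ was arbitrary, the proposition follows. I expect no genuine obstacle here: the only point requiring care is the bookkeeping in the matrix computation — correctly using the normalizations $\ord\circ\kappa=0$ and $O=\mathrm{Id}$ and the change of basis from $\{w_1,w_2,w_3\}$ to $\{w_\mathrm{b},w_\mathrm{c},w_\mathrm{d}\}$ — since all the substantive input (Poitou--Tate duality, the Dirichlet/Herbrand computation of dimensions, the transcendence-theoretic independence of the entries of $L$ and $M$) has already been absorbed into Lemma~\ref{lem:dimension_sel_ad_0} and the constructions of \S\ref{sec:vanishing_tgt_spaces}.
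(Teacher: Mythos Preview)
Your proof is correct and follows essentially the same route as the paper: reduce to Lemma~\ref{lem:dimension_sel_ad_0}, write out the matrix of $\Upsilon^0$ in the basis $\mathscr{B}$ using the normalizations $\ord\circ\kappa=0$ and $O=\Id$, and observe that rank $<3$ forces $\rP_L(s)=\rP_M(s)=0$, contradicting \eqref{tag:non_vanishing_resultant_P_L_P_M}. The paper's matrix is the transpose of yours (it is $4\times3$ rather than $3\times4$) but the content is identical; you are simply more explicit in identifying the two relevant $3\times3$ minors as $\rP_L(s)$ and $\tfrac12\rP_M(s)$.
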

	
	\begin{proof}
		We may assume $(e_1,e_2)$ is adapted to $V^+$. Let $s$ be the slope of $V^+$ with respect to $(e_1,e_2)$.
		Since $w_{\mathrm{b}}=(\begin{smallmatrix}-s & 1 \\ -s^2 & s \end{smallmatrix})=w_1-sw_2-s^2w_3$ and  $w_{\mathrm{d}}=(\begin{smallmatrix}1 & 0\\ 2s & -1 \end{smallmatrix})=w_2+2sw_3$, Lemma \ref{lem:dimension_sel_ad_0} says that $\dim \Sel(\ad^0\rho,V^+)$ is $3-r^0$, where $r^0$ is the rank of the $4\times3$-matrix
		\small\begin{equation*}\label{eq:matrix_logs}
			\begin{pmatrix}
				L_1-L_2s-L_3s^2 & 0 & 0 \\ 
				M_{11}-M_{12}s-M_{13}s^2 & 1 & 0 \\
				M_{21}-M_{22}s-M_{23}s^2 & -s & 1 \\
				M_{31}-M_{32}s-M_{33}s^2 & -s^2  & 2s
			\end{pmatrix}
		\end{equation*}
		\normalsize
		One deduces that $r^0=3$ unless $\rP_L(s)=\rP_M(s)=0$, which is excluded by \eqref{tag:non_vanishing_resultant_P_L_P_M}. This proves the proposition.
	\end{proof}
	
	\begin{rem}\label{rem:p-adic_Stark_regulator}
		In light of \cite{maksoud_TAMS}, the expression $\rP_L(s)$ can be interpreted as a $p$-adic Stark regulator as follows. One may associate to $V^+\subset V$ the line $W^+\subset W$ defined as $\Hom(V^-,V^+)$, that is, $W^+=W^\rb$. Then $W^+$ is a $p$-stabilization of $W$ in the sense of \cite[Def. 3.1]{maksoud_TAMS} whose associated $p$-adic regulator (with respect to the bases $\omega_p^+=w_\rb$ and $\kappa$) is given by $\mathrm{Reg}_{\omega_p^+}(\ad\rho)=\log_p(1+p^\nu)\cdot \rP_L(s)$ (see (3.1) in \textit{loc. cit.}). This expression appears in the conjectural leading term formula at $s=0$ for the cyclotomic $p$-adic $L$-function attached to the pair $(\ad\rho,W^+)$ proposed in Conjecture 1.1.1 of \textit{loc. cit.}.
		
		We may also reformulate these facts in terms of Perrin-Riou's version of cyclotomic Iwasawa theory developed in \cite{PR95}. Indeed, by \cite[Lem. 3.23]{maksoud_TAMS}, one may naturally attach to $W^+$ a $2$-dimensional $\varphi$-submodule $D_{W^+}$ of $D_{\cris}(\widecheck{W})$, and $\rP_L(s)\neq0$ if and only if $D_{W^+}$ is regular in the sense of Perrin-Riou (see \cite[\S3.1.2, Definition and \S3.2.4]{PR95} for the definition of regularity).
	\end{rem}
	
	\subsection{Slopes and cocycles of the first deformations of $\rho$}\label{sectionslopes}
	We now determine the ordinary lines $V^+$ for which the Selmer group $\Sel(\ad \rho,V^+)$ is one-dimensional. To this end, fix as in \S\ref{sec:vanishing_tgt_spaces} a $\ob{\Q}$-basis $(e_1,e_2)$ of $V_{\ob{\Q}}$ satisfying \eqref{tag:dihedral_basis_assumption}. Let $V^+$ be a line of $V$ such that $(e_1,e_2)$ is adapted to $V^+$, and let $s\in\ob{\Q}_p$ be the slope of $V^+$ with respect to $(e_1,e_2)$. 
	
	We introduce the polynomial
	\small
	\begin{equation}\label{eq:def_polynomial_computing_residual_slopes}
		\rQ(S)=\begin{vmatrix}
		2L_1-SL_2 & L_2+2SL_3 & 0 & 0 \\
		2M_{11}-SM_{12} & M_{12}+2SM_{13} & 2 & 0 \\
		2M_{21}-SM_{22} & M_{22}+2SM_{23} & -S & 1 \\
		2M_{31}-SM_{32} & M_{32}+2SM_{33} & 0 & 2S
	\end{vmatrix}.
	\end{equation}
	\normalsize
	Note that $\rQ(S)$ is of degree at most $4$. By Lemma \ref{lem:independence_entries_matrices_logs} (ii), if $\rho$ has RM, then 
	\begin{equation}\label{RMslopepolynome} \rQ(S)= -2L_2\cdot \left( -M_{31}+(M_{11}-M_{33})S^2+M_{13}S^4\right).
	\end{equation}
	
	\begin{prop}\label{prop:dimension_selmer_ad_rho}
		Assume \eqref{tag:non_vanishing_resultant_P_L_P_M}. Then $\dim \Sel(\ad\rho,V^+)\leq 1$, with equality if and only if $s$ is a root of $\rQ(S)$. 
	\end{prop}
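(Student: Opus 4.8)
\emph{Proof sketch.} The plan is to make the map $\Upsilon$ of Lemma~\ref{lem:dimension_sel_ad} completely explicit in the $\ob{\Q}$-basis $\mathscr B=\{\kappa,\kappa'_1,\kappa'_2,\kappa'_3\}$ of \eqref{eq:definition_basis_B}, and then to reduce the statement to an elementary rank computation. Writing $\rP_{M,i}(S)=M_{i1}-M_{i2}S-M_{i3}S^2$ so that $\rP_L(s)=\lambda(\kappa(w_\rb))$ and $\rP_{M,i}(s)=\lambda(\kappa'_i(w_\rb))$, and using $w_\rb=w_1-sw_2-s^2w_3$, $w_\rd=w_2+2sw_3$ from \eqref{eq:def_w_bcd}, the fact that $\ord\circ\kappa=0$ and that the matrix $O=(\ord(\kappa'_i(w_j)))$ is the identity (Lemma~\ref{lem:Dirichlet}~(iv)), one finds that $\Upsilon$ is represented by the $4\times4$ matrix
\[
A(s)=\begin{pmatrix}
\rP_L(s) & L_2+2sL_3 & 0 & 0\\
\rP_{M,1}(s) & M_{12}+2sM_{13} & 1 & 0\\
\rP_{M,2}(s) & M_{22}+2sM_{23} & -s & 1\\
\rP_{M,3}(s) & M_{32}+2sM_{33} & -s^2 & 2s
\end{pmatrix},
\]
so that, by Lemma~\ref{lem:dimension_sel_ad}, $\dim\Sel(\ad\rho,V^+)=4-\rank A(s)$.

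The first step is to check the determinantal identity $\det A(s)=\tfrac14\,\rQ(s)$. Set $\rP=(\rP_L,\rP_{M,1},\rP_{M,2},\rP_{M,3})$, viewed as a column of polynomials; then the columns of $A$ are $\rP(s)$, $-\rP'(s)$, $(0,1,-s,-s^2)^{\mathrm t}$ and $(0,0,1,2s)^{\mathrm t}$. Since $2\rP-S\rP'$ has entries $2L_1-SL_2$ and $2M_{i1}-SM_{i2}$, the first column of the $4\times4$ matrix defining $\rQ(S)$ in \eqref{eq:def_polynomial_computing_residual_slopes} is $2A_{\cdot1}+SA_{\cdot2}$, its second column is $A_{\cdot2}$, its third is $2A_{\cdot3}+SA_{\cdot4}$ and its fourth is $A_{\cdot4}$; the column operations $C_1\mapsto C_1-SC_2$ and $C_3\mapsto C_3-SC_4$ then give $\rQ(S)=4\det A(S)$. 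In particular $\rank A(s)=4$, i.e.\ $\Sel(\ad\rho,V^+)=0$, exactly when $\rQ(s)\neq0$.

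The second step is to show that $\rank A(s)\geq3$ for every $s$ under \eqref{tag:non_vanishing_resultant_P_L_P_M}. For this I exhibit two $3\times3$ minors: the minor on rows $\{\kappa,\kappa'_1,\kappa'_2\}$ and columns $\{1,3,4\}$ is lower triangular and equals $\rP_L(s)$, while the minor on rows $\{\kappa'_1,\kappa'_2,\kappa'_3\}$ and columns $\{1,3,4\}$ expands along its first column to $-s^2\rP_{M,1}(s)-2s\rP_{M,2}(s)+\rP_{M,3}(s)=\tfrac12\,\rP_M(s)$. If $\rank A(s)\leq2$, both minors vanish, so $s$ is a common root of $\rP_L$ and $\rP_M$, contradicting $\res_S(\rP_L,\rP_M)\neq0$. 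Hence $\rank A(s)\in\{3,4\}$, so $\dim\Sel(\ad\rho,V^+)\leq1$, with equality precisely when $\rank A(s)=3$, i.e.\ when $\det A(s)=0$, i.e.\ (by the identity of the previous step) when $\rQ(s)=0$. \qed

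The computations are all elementary once the matrix $A(s)$ has been written down; the genuinely delicate point is conceptual rather than computational, namely the identification of the two vanishing loci at play: the ``expected'' rank drop of $\Upsilon$ is governed by the single quartic $\rQ$ through $\det A=\tfrac14\rQ$, whereas any further rank drop would force $\rP_L$ and $\rP_M$ to vanish simultaneously, which is exactly what the resultant hypothesis \eqref{tag:non_vanishing_resultant_P_L_P_M} forbids. One must also be careful with the normalisation of $A(s)$ — that $O$ is the identity and that $\ord$ kills $\kappa$ — since these are precisely what make its last two columns, and hence the column operations and minors above, as clean as stated.
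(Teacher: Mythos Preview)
Your proof is correct and follows essentially the same route as the paper: both write down the $4\times4$ matrix of $\Upsilon$ in the basis $\mathscr B$, both perform the column operations $C_1\mapsto 2C_1+sC_2$, $C_3\mapsto 2C_3+sC_4$ to identify its determinant with (a quarter of) $\rQ(s)$, and both reduce the bound $\dim\leq1$ to the non-vanishing of one of the minors $\rP_L(s)$ or $\tfrac12\rP_M(s)$. The only organizational difference is that the paper obtains the bound $\dim\leq1$ by invoking Proposition~\ref{prop:dimension_selmer_ad_0_rho} together with the exact sequence \eqref{eq:exact_sequence_with_two_Selmer_groups}, whereas you recompute those same minors directly; since columns $\{1,3,4\}$ of your $A(s)$ are exactly the $4\times3$ matrix of $\Upsilon^0$ appearing in the proof of Proposition~\ref{prop:dimension_selmer_ad_0_rho}, the two arguments are literally the same computation.
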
	
	\begin{proof}
		Proposition \ref{prop:dimension_selmer_ad_0_rho} shows that $\Sel(\ad^0\rho,V^+)=\{0\}$, so $\dim \Sel(\ad\rho,V^+)\leq 1$ by \eqref{eq:exact_sequence_with_two_Selmer_groups}.
	As in the proof of Proposition \ref{prop:dimension_selmer_ad_0_rho}, we deduce from Lemma \ref{lem:dimension_sel_ad} that $\dim \Sel(\ad\rho,V^+)=4-r$, where $r$ equals
		\small \begin{equation}\label{eq:matrix_giving_dim_Sel}
			\textrm{rk}\begin{pmatrix}
			L_1-sL_2-s^2L_3 & L_2+2sL_3 & 0 & 0 \\
			M_{11}-sM_{12}-s^2M_{13} & M_{12}+2sM_{13} & 1 & 0 \\
			M_{21}-sM_{22}-s^2M_{23} & M_{22}+2sM_{23} & -s & 1 \\
			M_{31}-sM_{32}-s^2M_{33} & M_{32}+2sM_{33} & -s^2 & 2s
		\end{pmatrix}=\textrm{rk} 
		\begin{pmatrix}
			 2L_1-sL_2 &L_2+2sL_3 & 0 & 0 \\
			 2M_{11}-sM_{21} & M_{21}+2sM_{31} & 2 & 0\\
			 2M_{12}-sM_{22} & M_{22}+2sM_{32} & -s & 1 \\
			 2M_{13}-sM_{23} & M_{23}+2sM_{33} & 0 & 2s
		\end{pmatrix}
		\end{equation}\normalsize
	It is then clear that $\dim \Sel(\ad\rho,V^+)=1$ if and only if $\rQ(s)=0$, as claimed.
	\end{proof}

	\begin{lemma}\label{lem:definition_xi_1}
		Assume \eqref{tag:non_vanishing_resultant_P_L_P_M} holds, and suppose $\dim \Sel(\ad\rho,V^+)=1$. 
		Then there exists a unique cocycle class $[\xi_{V^+}]\in\Sel(\ad\rho,V^+)$ (depending on $V^+$) with trace $\lambda$, and the cocycle class $[\xi_{V^+}^0]:=[\xi_{V^+}]-\frac{1}{2}\lambda\cdot\Id_V$ generates $\cS(V^+)$.
	\end{lemma}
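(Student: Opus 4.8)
The plan is to deduce the statement directly from the short exact sequence \eqref{eq:exact_sequence_with_two_Selmer_groups} together with the vanishing of $\Sel(\ad^0\rho,V^+)$ provided by Proposition \ref{prop:dimension_selmer_ad_0_rho}. Indeed, under \eqref{tag:non_vanishing_resultant_P_L_P_M} we have $\Sel(\ad^0\rho,V^+)=0$, so \eqref{eq:exact_sequence_with_two_Selmer_groups} identifies the trace map
\[
\tr\colon \Sel(\ad\rho,V^+)\hookrightarrow \HH^1(\Q,\ob{\Q}_p)=\ob{\Q}_p\cdot\lambda
\]
as an \emph{injection}. Since by hypothesis $\dim\Sel(\ad\rho,V^+)=1$, this injection must be an isomorphism onto $\ob{\Q}_p\cdot\lambda$; in particular there is a \emph{unique} class $[\xi_{V^+}]\in\Sel(\ad\rho,V^+)$ with $\tr([\xi_{V^+}])=\lambda$, namely the preimage of $\lambda$ under this isomorphism. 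This establishes existence and uniqueness of $[\xi_{V^+}]$.

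For the second assertion, I would set $[\xi_{V^+}^0]:=[\xi_{V^+}]-\tfrac12\lambda\cdot\Id_V$, viewed inside $\HH^1(\Q,\mathbf{W})$. Under the $G_\Q$-equivariant decomposition $\mathbf{W}\simeq W\times \ob{\Q}_p$ given by $(\mathrm{pr}_W,\tr)$, the class $\tfrac12\lambda\cdot\Id_V$ lies in the $\HH^1(\Q,\ob{\Q}_p)$-factor and has trace $\lambda$, so $[\xi_{V^+}^0]$ has trace zero, i.e. $[\xi_{V^+}^0]=\mathrm{pr}_W([\xi_{V^+}])\in\HH^1(\Q,W)$. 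The local condition defining $\Sel(\ad\rho,V^+)$ (stability of $V^+$ under $\xi(G_{\Q_p})$, and $\xi(I_p)\cdot V\subseteq V^+$) is preserved under subtracting a multiple of $\Id_V$, because $\Id_V$ obviously preserves $V^+$ and $\lambda$ is unramified at... — more precisely, one only needs that $\mathrm{pr}_W$ sends $\Sel(\ad\rho,V^+)$ into $\cS(V^+)$, which is exactly Lemma \ref{lem:first_lemma_selmer} (ii). That same lemma shows $\mathrm{pr}_W$ restricts to an \emph{isomorphism} $\Sel(\ad\rho,V^+)\xrightarrow{\sim}\cS(V^+)$, and since $[\xi_{V^+}]$ generates the one-dimensional space $\Sel(\ad\rho,V^+)$ (any nonzero element of a line generates it), its image $[\xi_{V^+}^0]$ generates $\cS(V^+)$.

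The only genuine point to check is that $[\xi_{V^+}]\neq 0$, so that it indeed generates the line $\Sel(\ad\rho,V^+)$: this is immediate since $\tr([\xi_{V^+}])=\lambda\neq 0$. I do not anticipate a real obstacle here — the proof is essentially a bookkeeping exercise combining the exact sequence \eqref{eq:exact_sequence_with_two_Selmer_groups}, the dimension hypothesis, and the isomorphism in Lemma \ref{lem:first_lemma_selmer} (ii); the substantive input (vanishing of $\Sel(\ad^0\rho,V^+)$ under \eqref{tag:non_vanishing_resultant_P_L_P_M}) has already been done in Proposition \ref{prop:dimension_selmer_ad_0_rho}. If anything requires a line of care, it is making explicit that "trace $\lambda$" pins down the class uniquely rather than merely up to $\Sel(\ad^0\rho,V^+)=0$, which is why the vanishing statement is invoked.
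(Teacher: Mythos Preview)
Your proof is correct and follows essentially the same route as the paper: vanishing of $\Sel(\ad^0\rho,V^+)$ from Proposition~\ref{prop:dimension_selmer_ad_0_rho} makes the trace map in \eqref{eq:exact_sequence_with_two_Selmer_groups} injective (hence an isomorphism by the dimension hypothesis), and Lemma~\ref{lem:first_lemma_selmer}~(ii) identifies $\mathrm{pr}_W([\xi_{V^+}])=[\xi_{V^+}^0]$ as a generator of $\cS(V^+)$. The brief aside about $\lambda$ being unramified is a slip (it is not), but you correctly abandon it and appeal to Lemma~\ref{lem:first_lemma_selmer}~(ii) instead, which is all that is needed.
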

	\begin{proof}
		We know that $\Sel(\ad^0\rho,V^+)=\{0\}$ by Proposition \ref{prop:dimension_selmer_ad_0_rho}, so the existence and uniqueness of $[\xi_{V^+}]$ is clear from \eqref{eq:exact_sequence_with_two_Selmer_groups}. The fact that $[\xi_{V^+}^0]$ generates $\cS(V^+)$ follows from Lemma \ref{lem:first_lemma_selmer} (ii).
	\end{proof}

	\section{A Greenberg-Stevens $\cL$-invariant and other $p$-adic regulators}\label{HidaLinvariantandcslope}
	
	This section, which is an extension of \S\ref{sec:selmer_groups}, is devoted to the calculation of certain $p$-adic regulators which play a role in the proof of Theorem \ref{thm:main_result}. Keeping the notations of the previous section, we explicitly describe the restriction at $G_{\Q_p}$ of the Selmer class $[\xi_{V^+}]$ introduced in Lemma \ref{lem:definition_xi_1} and which plays a crucial role in the proof of Theorem \ref{thm:main_result}. When the line $V^+$ comes from a Hida family $\cF$, we will later interpret the quantity $t$ defined by \eqref{eq:xi_1_in_ordinary_basis} (essentially) as a Greenberg-Stevens $\cL$--invariant of $\cF$, that is, $a_p(f_\alpha)^{-1}\cdot\frac{\rd}{\rd k}\textbf{a}_p(\cF)_{|k=1}$, where $k$ is the weight variable parameterizing $\cF$ in the neighborhood of $f_\alpha$ (see Remark \ref{rem:GS_L_invariant}). Another canonical expression, which we prosaically call \textbf{c}-slope of $V^+$, will later appear in the proof of Theorem \ref{thm:main_result} through the hypothesis \eqref{tag:slope_2t+z_one_line}.
	
	Another goal of this section is to study and prove the hypotheses such as \eqref{tag:non_vanishing_resultant_P_L_P_M}, \eqref{tag:discriminant}, \eqref{tag:nonvanishing_resultant_Q_PL} which involve $p$-adic regulators, at least conditionally on conjectures in $p$-adic transcendental number theory.

		\subsection{Local description of a Selmer class}		
	We fix in this paragraph a $\ob{\Q}$-basis $(e_1,e_2)$ of a $\ob{\Q}$-structure $V_{\ob{\Q}}$ of $V$ satisfying \eqref{tag:dihedral_basis_assumption}. Let $V^+$ be a line of $V$ such that $(e_1,e_2)$ is adapted to $V^+$, and let $s\in\ob{\Q}_p$ be the slope of $V^+$ with respect to $(e_1,e_2)$. 
	\begin{prop}\label{prop:calculation_Up}
		Assume \eqref{tag:non_vanishing_resultant_P_L_P_M} holds and $\dim\Sel(\ad\rho,V^+)=1$. Let $[\xi_{V^+}]\in\Sel(\ad\rho,V^+)$ be as in Lemma \ref{lem:definition_xi_1} and let $x,y,t\in\ob{\Q}_p$ be such that 
		\begin{equation}\label{eq:xi_1_in_ordinary_basis}
			\begin{pmatrix}1 & 0 \\ -s &1 \end{pmatrix}\xi_{V^+|G_{\Q_p}}\begin{pmatrix}1 & 0 \\ s &1 \end{pmatrix}=\begin{pmatrix}\lambda-t\cdot\ord & x\cdot\lambda+y\cdot\ord \\ 0 &t\cdot \ord \end{pmatrix}
		\end{equation}
		in the basis $(e_1,e_2)$. If $\rP_L(s)\neq 0$, then $x=X(s)$ and $t=T(s)$, where
			\begin{equation}\label{eq:formulas_X_T}
				X(S)=-\frac{1}{2}\cdot\frac{L_2+2SL_3}{\rP_L(S)},\quad \mbox{and} \quad  T(S)=-\frac{1}{4}\cdot\frac{\begin{vmatrix}
						2L_1-SL_2  & L_{2}+2S L_{3} & 0 \\
						2M_{11}- S M_{12} & M_{12}+2 S M_{13} & 1 \\
						2M_{21}- S M_{22} & M_{22}+2 S M_{23} &-S
				\end{vmatrix}}{\rP_L(S)}. 
			\end{equation}
			In particular, if $\rho$ has RM, then $x=\frac{1}{2s}$ and $t=\frac{1}{2}(M_{11}+s^2M_{13})$.
	\end{prop}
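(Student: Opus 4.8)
The plan is to compute the restriction $\xi_{V^+|G_{\Q_p}}$ by tracking the Selmer class $[\xi_{V^+}]$ through the Poitou--Tate identifications set up in \S\ref{sec:selmer_groups}. Recall that, in the basis $(e_1,e_2)$ adapted to $V^+$, the decomposition $\mathbf{W}=W^\rb\oplus W^\rc\oplus W^\rd\oplus\ob{\Q}_p\cdot\Id_V$ (using $w_\rb,w_\rc,w_\rd$ from \eqref{eq:def_w_bcd}) lets one write $\xi_{V^+|G_{\Q_p}}$, conjugated by $\left(\begin{smallmatrix}1 & 0\\ -s & 1\end{smallmatrix}\right)$, in the form \eqref{eq:xi_1_in_ordinary_basis}: the upper-triangularity (the vanishing of the $W^\rc$-component, i.e. the lower-left entry) is exactly the Selmer condition $\xi_{V^+}(G_{\Q_p})\cdot V^+\subseteq V^+$, while the prescribed traces on the diagonal follow from $\tr(\xi_{V^+})=\lambda$ together with the fact that the ramified part of the $W^\rd$-component is pinned down by $\xi_{V^+}(I_p)\cdot V\subseteq V^+$ — no, wait: the $I_p$-condition forces the $W^\rd$-component to be \emph{unramified}, i.e. a multiple of $\ord\cdot w_\rd$, which is why only $t\cdot\ord$ appears in the bottom-right and the top-left is $\lambda - t\cdot\ord$ (trace $\lambda$). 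Similarly the off-diagonal entry is $x\cdot\lambda+y\cdot\ord$ with both $\lambda$ and $\ord$ allowed since $W^\rb$ carries no local condition beyond what $V^+$-stability already imposes. So the three unknowns are $x$ (coefficient of $\lambda$ in the $W^\rb$-slot), $y$ (coefficient of $\ord$ there), and $t$ (coefficient of $\ord$ in the $W^\rd$-slot).

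The key computation is to identify these with coordinates of the image of $[\xi_{V^+}^0]$ under $\loc_p$, and then to use that $[\xi_{V^+}^0]$ is \emph{characterized} (up to scalar) by lying in $\cS(V^+)=\ker[\HH^1(\Q,W)\to\HH^1(\Q_p,W^\rc)]$, normalized by $\tr(\xi_{V^+})=\lambda$. Concretely: the cokernel description \eqref{eq:Poitou_Tate}–\eqref{eq:matrix_giving_dim_Sel} in the proof of Proposition \ref{prop:dimension_selmer_ad_rho} shows that $\cS(V^+)$ is the annihilator, under local Tate duality, of the image of $\loc$ inside $\HH^1(\Q_p,\widecheck W^\rb)\times\HH^1(\Q_p,\widecheck W^\rd)$, and that image is spanned by the rows of the $4\times 4$ matrix $\rQ$-matrix in \eqref{eq:def_polynomial_computing_residual_slopes}. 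Pairing $[\xi_{V^+|G_{\Q_p}}]$ against this image via $(\lambda,\ord)\leftrightarrow(\lambda^\vee,\ord^\vee)$ duality of \S\ref{sec:cohomology_Qp1} yields a linear system in $(x,y,t)$ whose solution is forced, once one also imposes the compatibility coming from the $\Upsilon$-map columns. The formulas for $X(S)$ and $T(S)$ then emerge by Cramer's rule from this system: the denominator $\rP_L(S)$ is the $(1,1)$-cofactor type quantity, $X(S)$ comes from replacing the appropriate column by $(L_2+2SL_3,\dots)$, and $T(S)$ is the displayed $3\times 3$ determinant over $\rP_L(S)$ after a straightforward row/column manipulation of the $4\times 4$ matrix \eqref{eq:def_polynomial_computing_residual_slopes} (expanding along the last two columns, which are the "lattice" part $\ord\cdot w_\rc$, $\ord\cdot w_\rd$). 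The hypothesis $\rP_L(s)\neq 0$ is precisely what makes the relevant $3\times 3$ minor (the one controlling $x$ and $t$, after discarding the row that becomes dependent) invertible, so that $x,t$ are uniquely solvable; note $\rQ(s)=0$ by Proposition \ref{prop:dimension_selmer_ad_rho}, which is the consistency condition guaranteeing a solution exists.

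For the RM specialization I would just substitute the vanishing relations of Lemma \ref{lem:independence_entries_matrices_logs}(ii), namely $L_1=L_3=M_{12}=M_{23}=M_{21}=M_{32}=0$, into \eqref{eq:formulas_X_T}. Then $\rP_L(S)=-L_2 S$, so $X(S)=-\tfrac12\cdot\tfrac{L_2}{-L_2 S}=\tfrac{1}{2S}$, giving $x=\tfrac{1}{2s}$; and the $3\times 3$ determinant in $T(S)$ collapses (its first column is $(-SL_2, 2M_{11}, -SM_{22})$, second column $(L_2, 2SM_{13}, M_{22})$, third $(0,1,-S)$) to $-L_2 S\cdot 2(M_{11}+S^2 M_{13})$ up to the $-\tfrac14$ and the division by $-L_2 S$, yielding $t=\tfrac12(M_{11}+s^2 M_{13})$. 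The main obstacle I anticipate is bookkeeping: correctly matching the ordering of basis vectors $w_\rb,w_\rd$ and their duals $\widecheck w_\rb,\widecheck w_\rd$ with the rows/columns of the matrices in \eqref{eq:matrix_giving_dim_Sel} and \eqref{eq:def_polynomial_computing_residual_slopes}, and keeping the factors of $2$ (from $w_\rd=w_2+2sw_3$ and from $\tfrac12\tr$) straight so that the cofactor expansion really produces the stated determinants rather than something off by a sign or a power of $2$. Everything else is linear algebra over $\ob{\Q}_p$ once the Poitou--Tate dictionary of \S\ref{sec:selmer_groups} is in place.
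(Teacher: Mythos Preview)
Your proposal is correct and follows the same route as the paper: both compute $t$ (and $x$) by using the Poitou--Tate identification $\cS(V^+)\simeq\coker(\loc)^*$ to recognize the local class of $\xi_{V^+}^0$ as the annihilator of the image of $\loc$ in $\HH^1(\Q_p,\widecheck W^{\rb})\times\HH^1(\Q_p,\widecheck W^{\rd})$, then solve the resulting linear constraints by a determinant computation. The paper packages your ``Cramer's rule'' step as the condition that a $5\times4$ augmented matrix (the $4\times4$ matrix of $\loc$ in the basis $\mathscr{B}$ with an extra row $(0,2t,0,1)$ for $t$, or $(1,-2x,0,0)$ for $x$) has rank $3$; expanding after deleting one row and using $\rP_L(s)\neq0$ gives exactly \eqref{eq:formulas_X_T}. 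Your RM verification is correct and matches the paper's.
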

	\begin{proof}
		Let $[\xi_{V^+}^0]$ be as in Lemma \ref{lem:definition_xi_1}, so that we have 
		\[\begin{pmatrix}1 & 0 \\ -s &1 \end{pmatrix}\xi^0_{V^+|G_{\Q_p}}\begin{pmatrix}1 & 0 \\ s &1 \end{pmatrix}=\begin{pmatrix}\frac{1}{2}\lambda-t\cdot\ord & x\cdot\lambda+y\cdot\ord \\ 0 &t\cdot \ord-\frac{1}{2}\lambda \end{pmatrix}.\]
		Using \eqref{eq:H1_W^e} and \eqref{eq:H1_widecheck_W^e}, we reinterpret $x$ and $t$ as 'slopes' of the space $\cS(V^+)$ introduced in \eqref{eq:definition_intermediate_selmer} as follows. First consider $t$, and let \[(\rho_{\cyc},\rho_\rf)\colon \cS(V^+) \to \HH^1_{\cyc}(\Q_p,W^\mathrm{d}) \times \HH^1_\rf(\Q_p,W^\mathrm{d})=\ob{\Q}_p \times\ob{\Q}_p\]
		be the maps induced from the localization map $\HH^1(\Q,W) \to \HH^1(\Q_p,W)$. Then  $\rho_{\cyc}([\xi_{V^+}^0])=-\frac{1}{2}$, and $\rho_\rf([\xi_{V^+}^0])=t$, so we have $-2t=\det(\rho_\rf \circ \rho_{\cyc}^{-1})$.
		
		By similar arguments as in the proof of Lemma \ref{lem:dimension_sel_ad_0}, Poitou-Tate duality identifies the diagram
		\begin{equation*}\label{eq:diagram_slope}
			\HH_{\cyc}^1(\Q_p,W^{\rd})=\ob{\Q}_p \overset{\rho_{\cyc}}{\longleftarrow} \cS(V^+) \overset{\rho_\rf}{\longrightarrow} \ob{\Q}_p=\HH_{\mathrm{f}}^1(\Q_p,W^{\rd})
		\end{equation*} 
		with the $\ob{\Q}_p$-linear dual of
		\[\HH_\rf^1(\Q_p,\widecheck{W}^{\rd})=\ob{\Q}_p\overset{j_\rf}{\longrightarrow} \coker\left[\HH^1_{\rf,p}(\Q,\widecheck{W}) \overset{\loc^\mathrm{bd}}{\longrightarrow} \HH^1(\Q_p,\widecheck{W}^{\mathrm{bd}}) \right] \overset{j_{\cyc}}{\longleftarrow} \ob{\Q}_p= \HH_{\cyc}^1(\Q_p,\widecheck{W}^{\rd}),\]
		where $j_\rf$ and $j_{\cyc}$ are induced by the projection $W^{\mathrm{bd}}\twoheadrightarrow W^{\rd}$, and $\loc^\mathrm{bd}$ is a localization map followed by the natural map $\HH^1(\Q_p,\widecheck{W}) \to \HH^1(\Q_p,\widecheck{W}^{\mathrm{bd}})$. In particular, $-2t=\det(j_\rf^{-1}\circ j_{\cyc})$, so $t$ is determined by the condition $j_{\cyc}(1)+j_\rf(2t)\in\mathrm{im}(\loc^\mathrm{bd})$.
		
		As $\cS(V^+)$ is a line, $\loc^\mathrm{bd}$ has rank $3$. We may form its $4\times4$ matrix using the basis $\mathscr{B}$ of $\HH^1_{\rf,p}(\Q,\widecheck{W})=\Hom_{G_\Q}(W,U_H^{(p)})$ (defined in \eqref{eq:definition_basis_B}) and the standard basis of \[\HH^1(\Q_p,\widecheck{W}^{\mathrm{bd}})=\HH^1_\rf(\Q_p,\widecheck{W}^{\mathrm{b}})\times \HH^1_\rf(\Q_p,\widecheck{W}^{\mathrm{d}})\times \HH^1_{\cyc}(\Q_p,\widecheck{W}^{\mathrm{b}})\times \HH^1_{\cyc}(\Q_p,\widecheck{W}^{\mathrm{d}})\simeq (\ob{\Q}_p)^{\oplus 4}. \]
       	Then $t$ is such that the augmented matrix
    	\[\begin{pmatrix}
    		L_1-sL_2-s^2L_3 & L_2+2sL_3 & 0 & 0  \\ 
    		M_{11}-sM_{12}-s^2M_{13} & M_{12}+2sM_{13} & 1 & 0 \\
    		M_{21}-sM_{22}-s^2M_{23} & M_{22}+2sM_{23} & -s & 1 \\
    		M_{31}-sM_{32}-s^2M_{33} & M_{32}+2sM_{33} & -s^2  & 2s \\
    		0 & 2t & 0 & 1
    	\end{pmatrix}
    	\]
    	has rank $3$. Removing the second-to-last row and using the assumption that $\rP_L(s)=L_1-sL_2-s^2L_3\neq0$, a simple determinant computation yields the expression \eqref{eq:formulas_X_T} for $t$. The argument to compute $x$ is similar and one has to replace the last row of the above matrix by $(1,-2x,0,0)$.
    	
    	If $\rho$ has RM, then Lemma \ref{lem:equivalence_resultant_Q_PL} below shows that $\rP_L(s)\neq0$, and a simplification of the expressions in \eqref{eq:formulas_X_T} yields the computation of $x$ and $t$ in this case.
	\end{proof}
	\begin{rem}\label{rem:independence_t_from_basis}
		Let $t$ be defined as in \eqref{eq:xi_1_in_ordinary_basis}. One sees from the local conditions satisfied by  $\xi_{V^+}$ given in Definition \ref{def:Selmer} (ii) that $\xi_{V^+}(\Frob_p)$ induces a linear endomorphism of $V/V^+$ whose determinant is $t$. In particular, the value of $t$ does not depend on any choice of bases.
	\end{rem}

		Assume \eqref{tag:non_vanishing_resultant_P_L_P_M} holds, and suppose $\dim\Sel(\ad\rho,V^+)=1$. As $\HH^1(\Q,W)$ is of dimension $2$ (see e.g. \cite[Lem. 3.2]{D-B}), Lemma \ref{lem:first_lemma_selmer} (ii) and \eqref{eq:reformulation_S(V+)} show that the image of the localization map 
		\begin{equation}\label{eq:def_loc_c}
			\loc^\rc\colon \HH^1(\Q,W) \to \HH^1(\Q_p,W^\rc)
		\end{equation}
		is a line.

\begin{defn} \label{def:c_slope}
	The $\mathbf{c}$-slope of $V^+$ is the slope of the image of the map in \eqref{eq:def_loc_c} with respect to the basis of $\HH^1(\Q_p,W^\rc)=\HH^1_{\cyc}(\Q_p,W^\rc)\times\HH_\rf^1(\Q_p,W^\rc)\simeq \ob{\Q}_p\times\ob{\Q}_p$ provided by \eqref{eq:H1_W^e}.
	\end{defn}
	In other words, given any cocycle class $[\xi]\in\HH^1(\Q,W)$ not in $\cS(V^+)$, the $\textbf{c}$-slope of $V^+$ is the ratio $y_\rc/x_\rc\in\mathbb{P}^1(\ob{\Q}_p)$, where $x_\rc,y_\rc\in\ob{\Q}_p$ are such that the $(2,1)$-entry of 
	$(\begin{smallmatrix}1 & 0 \\ -s &1 \end{smallmatrix})\xi_{|G_{\Q_p}}(\begin{smallmatrix}1 & 0 \\ s &1 \end{smallmatrix})$ is $x_\rc\cdot\lambda+y_\rc\cdot\ord$.
	\begin{prop}\label{prop:calcul_c_slope}
		 Assume \eqref{tag:non_vanishing_resultant_P_L_P_M} holds, and suppose $\dim\Sel(\ad\rho,V^+)=1$. If $\rP_L(s)=0$, then the $\textbf{c}$-slope of $V^+$ is $\infty$. Otherwise, it is given by $Z(s)$, where
		\small
\begin{equation}\label{eq:calcul_c_slope}
			 Z(S)= -\frac{1}{2}\frac{ \begin{vmatrix}
			 		L_1-SL_2 & L_3 & 0 & 0 \\ 
			 		M_{11}-SM_{12} & M_{13} & 2 & 0 \\
			 		M_{21}-SM_{22} & M_{23} & -S & 1 \\
			 		M_{31}-SM_{32} & M_{33} & 0 & 2S
			 \end{vmatrix} }{\rP_L(S)}.
\end{equation} \normalsize
		If $\rho$ has RM, then $Z(s)=s^2M_{13}-M_{33}$.
	\end{prop}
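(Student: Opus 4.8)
The plan is to run the same Poitou--Tate duality and linear algebra that underlie the proofs of Lemma~\ref{lem:dimension_sel_ad} and Proposition~\ref{prop:calculation_Up}, this time retaining the $\widecheck W^\rc$--direction that was discarded in the computation of $\cS(V^+)$. First I would record that the image of $\loc^\rc$ is a line: by Proposition~\ref{prop:dimension_selmer_ad_0_rho}, the assumption $\dim\Sel(\ad\rho,V^+)=1$, and Lemma~\ref{lem:first_lemma_selmer}~(ii) we get $\dim\cS(V^+)=1$, while $\HH^1(\Q,W)$ is two--dimensional by \cite[Lem.~3.2]{D-B} and $\cS(V^+)=\ker(\loc^\rc)$ by \eqref{eq:reformulation_S(V+)}; hence $\mathrm{im}(\loc^\rc)$ is a line in $\HH^1(\Q_p,W^\rc)=\HH^1_{\cyc}(\Q_p,W^\rc)\oplus\HH^1_\rf(\Q_p,W^\rc)$, and by Definition~\ref{def:c_slope} its $\mathbf{c}$--slope is the ratio of the $\HH^1_\rf$-- to the $\HH^1_{\cyc}$--coordinate of this line, so the $\mathbf{c}$--slope equals $\infty$ exactly when $\mathrm{im}(\loc^\rc)=\HH^1_\rf(\Q_p,W^\rc)$.

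Next I would pass to the dual side. Writing $\Lambda=\loc_p\big(\HH^1(\Q,W)\big)\subseteq\HH^1(\Q_p,W)$, which is two--dimensional since $\Sha^1_{\{p\}}(\Q,W)=0$, Poitou--Tate duality identifies $\Lambda^\perp$ (under Tate local duality) with the image $\widecheck\Lambda$ of $\loc_p\colon\HH^1_{\rf,p}(\Q,\widecheck W)\to\HH^1(\Q_p,\widecheck W)$, just as in the proofs of Lemma~\ref{lem:dimension_sel_ad_0} and Proposition~\ref{prop:calculation_Up}; since $\mathrm{im}(\loc^\rc)=\mathrm{pr}_{W^\rc}(\Lambda)$, its orthogonal complement inside $\HH^1(\Q_p,\widecheck W^\rc)$ is $\widecheck\Lambda\cap\HH^1(\Q_p,\widecheck W^\rc)$. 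Using $\HH^1_{\rf,p}(\Q,\widecheck W)\simeq\Hom_{G_\Q}(W,U_H^{(p)})$ from \eqref{eq:global_kummer_artin}, the basis $\mathscr{B}=\{\kappa,\kappa_1',\kappa_2',\kappa_3'\}$ normalised so that $O=\mathrm{Id}$, the decomposition $\widecheck W=\widecheck W^\rb\oplus\widecheck W^\rc\oplus\widecheck W^\rd$, and the trivialisations \eqref{eq:H1_widecheck_W^e}, the map $\loc_p$ is represented by an explicit $4\times 6$ matrix whose entries are $\ob{\Q}$--linear combinations of $p$--adic logarithms. Requiring a combination $\vec c=(c_0,c_1,c_2,c_3)$ of its rows to lie in $\HH^1(\Q_p,\widecheck W^\rc)$ amounts to killing the four coordinates indexed by $\big(\lambda(w_\rb),\lambda(w_\rd),\ord(w_\rb),\ord(w_\rd)\big)$, i.e.\ to $\vec c^{\,\mathrm{T}}\Upsilon=0$ for the $4\times 4$ matrix $\Upsilon$ of Lemma~\ref{lem:dimension_sel_ad}; the left null space of $\Upsilon$ is one--dimensional because $\mathrm{rk}\,\Upsilon=3$.

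Finally, reading off the two remaining coordinates $\big(\lambda(\kappa(w_\rc)),\ord(\kappa(w_\rc))\big)$ of a generator $\vec c$ of this null space produces a generator of $\mathrm{im}(\loc^\rc)^\perp$; transporting it back through Tate duality with the dual bases of \S\ref{sec:cohomology_Qp1} gives $\mathrm{im}(\loc^\rc)$, whose slope works out to $-\,(c_0L_3+c_1M_{13}+c_2M_{23}+c_3M_{33})/c_3$. Solving $\vec c^{\,\mathrm{T}}\Upsilon=0$ by Cramer's rule and using $\det\Upsilon=0$ (which holds since $\dim\Sel(\ad\rho,V^+)=1$, equivalently $\rQ(s)=0$), this ratio becomes a ratio of $4\times 4$ determinants which, after elementary column operations, collapses to $Z(s)$ whenever $\rP_L(s)\neq0$. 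When $\rP_L(s)=0$, hypothesis~\eqref{tag:non_vanishing_resultant_P_L_P_M} forces $\rP_M(s)\neq0$, and the relations from the $\ord(w_\rb)$-- and $\ord(w_\rd)$--columns of $\Upsilon$ then force $c_3=0$, so $\mathrm{im}(\loc^\rc)^\perp\subseteq\HH^1_\rf(\Q_p,\widecheck W^\rc)$, whence $\mathrm{im}(\loc^\rc)=\HH^1_\rf(\Q_p,W^\rc)$ and the $\mathbf{c}$--slope is $\infty$; in the RM case Lemma~\ref{lem:independence_entries_matrices_logs}~(ii) gives $L_1=L_3=M_{12}=M_{21}=M_{23}=M_{32}=0$, so $\rP_L(S)=-L_2S$ with $L_2\neq0$, and every root of $\rQ$ is nonzero by \eqref{RMslopepolynome} (as $M_{31}\neq0$), hence $\rP_L(s)\neq0$ and the determinant defining $Z(S)$ simplifies to $-2SL_2\,(M_{33}-S^2M_{13})$, giving $Z(s)=s^2M_{13}-M_{33}$. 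The only real obstacle is getting the Poitou--Tate bookkeeping of the second step exactly right --- pinning down the correct dual localization map and matching the local bases and duality pairings --- after which the last step is a routine determinant computation parallel to the one for $t=T(s)$ in Proposition~\ref{prop:calculation_Up}.
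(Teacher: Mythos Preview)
Your approach is correct but organized differently from the paper. The paper does not work with the full image $\Lambda$ and its annihilator; instead it introduces the auxiliary line $\cS_1(V^+)=\ker\big(\HH^1(\Q,W)\to\HH^1_{\cyc}(\Q_p,W^\rd)\big)$, observes that $\cS(V^+)\cap\cS_1(V^+)=\Sel(\ad^0\rho,V^+)=0$ so that $\loc^\rc|_{\cS_1(V^+)}$ already computes the $\mathbf c$-slope, and then dualizes $\cS_1(V^+)$ via Poitou--Tate (exactly as in the proof of Proposition~\ref{prop:calculation_Up}) to reach an explicit $5\times5$ determinant condition in $(x_\rc,y_\rc)$. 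You instead recycle the $4\times4$ matrix $\Upsilon$ of Lemma~\ref{lem:dimension_sel_ad}: you identify $(\mathrm{im}\,\loc^\rc)^\perp$ with $\widecheck\Lambda\cap\HH^1(\Q_p,\widecheck W^\rc)$ and compute it as the $\rc$-coordinates of a generator $\vec c$ of the left null space of $\Upsilon$. Your route is slightly more economical (no new Selmer group, and $\Upsilon$ is reused verbatim); the paper's yields a single determinant formula directly parallel to~\eqref{eq:formulas_X_T}.

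Two small points to tighten. First, the identification $\widecheck\Lambda=\Lambda^\perp$ is not automatic: it needs the (easy) observation that $\HH^1_{\ur}(\Q_\ell,\widecheck W)=0$ for every $\ell\neq p$ (the eigenvalues of $\Frob_\ell$ on $\widecheck W\simeq W(1)$ are roots of unity times $\ell$, hence never $1$), so that the dual localization is injective and global reciprocity collapses to orthogonality purely at $p$. Second, in the case $\rP_L(s)=0$, the $\ord(w_\rb)$- and $\ord(w_\rd)$-columns of $\Upsilon$ alone give only $c_1=-s^2c_3$ and $c_2=-2sc_3$; substituting these into the $\lambda(w_\rb)$-column (whose first entry vanishes since $\rP_L(s)=0$) yields $c_3\cdot\tfrac12\rP_M(s)=0$, and it is then $\rP_M(s)\neq0$ that forces $c_3=0$.
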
	
	\begin{proof}
				Let $z\in\textbf{P}^1(\ob{\Q}_p)$ be the $\textbf{c}$-slope of $V^+$, and consider the space
		 \[\cS_1(V^+):=\ker(\HH^1(\Q,W) \to \HH_{\cyc}^1(\Q_p,W^{\mathrm{d}})).\]
		 Since $\cS(V^+)\cap \cS_1(V^+)=\Sel(\ad^0\rho,V^+)=\{0\}$ by Proposition \ref{prop:dimension_selmer_ad_0_rho}, $\cS_1(V^+)$ is a line. Let 
		 \[(\psi_{\cyc},\psi_\rf)\colon \cS_1(V^+) \to \HH^1_{\cyc}(\Q_p,W^\rc)\times\HH_\rf^1(\Q_p,W^\rc)\]
		  be the maps induced by \eqref{eq:def_loc_c}. Arguing as in the proof of Proposition \ref{prop:calculation_Up}, we may use Poitou-Tate duality to identify the diagram
		\begin{equation*}\label{eq:diagram_cslope}
			\HH_{\cyc}^1(\Q_p,W^{\rc})\overset{\psi_{\cyc}}{\longleftarrow} \cS_1(V^+) \overset{\psi_\rf}{\longrightarrow} \HH_{\mathrm{f}}^1(\Q_p,W^{\rc})
		\end{equation*}
		with the $\ob{\Q}_p$-linear dual of
		\small\begin{equation*}\label{dualdetz}
		\HH_\rf^1(\Q_p,\widecheck{W}^{\rc})=\ob{\Q}_p \overset{\varphi_\rf}{\longrightarrow} \coker\left[\HH^1_{\rf,p}(\Q,\widecheck{W}) \overset{\loc^1}{\longrightarrow} \HH^1(\Q_p,\widecheck{W}^{\mathrm{bc}}) \oplus \HH^1_{\cyc}(\Q_p,\widecheck{W}^{\mathrm{d}}) \right] \overset{\varphi_{\cyc}}{\longleftarrow} \ob{\Q}_p=\HH_{\cyc}^1(\Q_p,\widecheck{W}^{\rc}),
		\end{equation*}\normalsize
 		where $\varphi_{\cyc},\varphi_\rf$ are the natural projection maps and $\loc^1$ is a localization map. As $\cS_1(V^+)$ is one-dimensional, $\loc^1$ has full rank $4$. Moreover, writing $z$ as $y_\rc/x_\rc$, we have $\varphi_\rf(y_\rc)-\varphi_{\cyc}(x_\rc)\in\mathrm{im}(\loc^1)$. 
 		Hence, working with the basis $\mathscr{B}$ of $\HH^1_{\rf,p}(\Q,\widecheck{W})$ in \eqref{eq:definition_basis_B} and the standard basis of \[\HH^1_\rf(\Q_p,\widecheck{W}^{\mathrm{b}})\times \HH^1_\rf(\Q_p,\widecheck{W}^{\mathrm{c}})\times \HH^1_{\cyc}(\Q_p,\widecheck{W}^{\mathrm{b}})\times \HH^1_{\cyc}(\Q_p,\widecheck{W}^{\mathrm{c}})\times \HH^1_{\cyc}(\Q_p,\widecheck{W}^{\mathrm{d}})\simeq (\ob{\Q}_p)^{\oplus 5}, \]
 		one sees that $z$ is determined by the condition that

		\begin{equation}\label{c-slope}
		\begin{vmatrix}
			L_1-sL_2-s^2L_3 & L_3& 0 & 0 & 0  \\ 
			M_{11}-sM_{12}-s^2M_{13} & M_{13}& 1 & 0 & 0 \\
			M_{21}-sM_{22}-s^2M_{23} & M_{23}& -s & 0 & 1 \\
			M_{31}-sM_{32}-s^2M_{33} & M_{33}& -s^2  & 1 & 2s \\
			0 & y_\rc & 0 & -x_\rc & 0 
			\end{vmatrix}=0.
		\end{equation}
		This implies $x_\rc=0$ if $\rP_L(s)=L_1-sL_2-s^2L_3=0$, and formula \eqref{eq:calcul_c_slope} otherwise.
	\end{proof}

	
	\medskip
		
	\subsection{Non-vanishing of regulators}\label{sec:regulators}
	The following lemma will be useful to apply the results of the preceding paragraphs to a collection of lines with respect to a single basis $(e_1,e_2)$ of $V_{\ob{\Q}}$.
	

	\begin{lemma}\label{lem:existence_basis_adapted_to_ordinary_lines}
	Let $\cV$ be a finite set of lines $V^+$ of $V$ satisfying $\dim_{\ob{\Q}_p}\Sel(\ad\rho,V^+)=1$. Then there exists a basis $(e_1,e_2)$ of $V_{\ob{\Q}}$ satisfying \eqref{tag:dihedral_basis_assumption} and that is adapted (in the sense of Definition \ref{def:adapted}) to every line in $\cV$.
\end{lemma}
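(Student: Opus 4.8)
The plan is to reduce the statement to an avoidance argument in $\mathbf{P}(V)$. Recall that a $\ob{\Q}$-basis $(e_1,e_2)$ of $V_{\ob{\Q}}$ is adapted to a line $V^+$ precisely when $V^+ \neq \ob{\Q}_p\cdot e_2$, i.e. when $e_2$ does not span $V^+$. So being adapted to \emph{every} line in the finite set $\cV$ is equivalent to requiring that $\ob{\Q}_p\cdot e_2$ be distinct from each of the finitely many lines in $\cV$. Hence the whole content is: among the bases of $V_{\ob{\Q}}$ satisfying \eqref{tag:dihedral_basis_assumption}, there is one whose second vector spans a line avoiding $\cV$.

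First I would treat the generic (exotic) case, where \eqref{tag:dihedral_basis_assumption} imposes no constraint. Here one simply needs to choose $e_2 \in V_{\ob{\Q}}$ such that the line it spans in $V$ is not among the finitely many lines of $\cV$, and then complete to a basis. This is possible because $V_{\ob{\Q}}$ is a $2$-dimensional vector space over the infinite field $\ob{\Q}$: the nonzero vectors of $V_{\ob{\Q}}$ whose $\ob{\Q}_p$-span lies in a \emph{fixed} line $V^+$ of $V$ form at most a single $\ob{\Q}$-line inside $V_{\ob{\Q}}$ (it is the intersection $V^+\cap V_{\ob{\Q}}$, which has $\ob{\Q}$-dimension $0$ or $1$), so the union over the finitely many $V^+\in\cV$ of these ``bad'' vectors is a finite union of proper $\ob{\Q}$-subspaces, which cannot cover $V_{\ob{\Q}}$. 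Pick $e_2$ outside this union and any $e_1$ completing it to a basis.

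In the RM case, where $\rho \simeq \Ind_K^\Q\psi$, condition \eqref{tag:dihedral_basis_assumption} forces the decomposition $V_{\ob{\Q}} = e_1\ob{\Q}(\psi)\oplus e_2\ob{\Q}(\psi^\sigma)$ as $G_K$-modules; equivalently, $e_1$ must span the $\psi$-eigenline $V^{(\psi)}$ and $e_2$ the $\psi^\sigma$-eigenline $V^{(\psi^\sigma)}$ for the (diagonal, semisimple) action of $G_K$ — these are the only two lines of $V$ stable under $G_K$, and $\psi\ne\psi^\sigma$ since $\rho$ is irreducible. So here $e_1$ and $e_2$ are each pinned down up to a scalar, and I need to check that for at least one of the two admissible choices (taking the second vector to span $V^{(\psi)}$ or $V^{(\psi^\sigma)}$), that line is not in $\cV$; this is exactly where I invoke the remark in the excerpt, made just after \eqref{tag:dihedral_basis_assumption}, that exchanging $e_1$ and $e_2$ swaps the roles of $\psi$ and $\psi^\sigma$ and still gives a basis satisfying \eqref{tag:dihedral_basis_assumption}. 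Thus it suffices to rule out that \emph{both} $V^{(\psi)}$ and $V^{(\psi^\sigma)}$ lie in $\cV$. The key step is to show that, in fact, neither eigenline $V^{(\psi)}$, $V^{(\psi^\sigma)}$ belongs to $\cV$: by Proposition \ref{prop:dimension_selmer_ad_rho} (applied after \eqref{tag:non_vanishing_resultant_P_L_P_M}, which holds here by Lemma \ref{lem:independence_entries_matrices_logs}(ii)), a line $V^+$ with $\dim\Sel(\ad\rho,V^+)=1$ must have slope a root of $\rQ(S)$; but with respect to a dihedral basis the eigenlines $V^{(\psi)}$ and $V^{(\psi^\sigma)}$ correspond to slopes $s=\infty$ and $s=0$, and one reads off from \eqref{RMslopepolynome} that $\rQ(0) = 2L_2 M_{31}\ne 0$ (and the degree of $\rQ$ is strictly less than $4$ precisely when $M_{13}=0$, handling $s=\infty$), by the linear independence and nonvanishing of the relevant entries in Lemma \ref{lem:independence_entries_matrices_logs}(ii). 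So $V^{(\psi)},V^{(\psi^\sigma)}\notin\cV$, and either choice of dihedral basis works.

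The only subtlety — and the step I would be most careful about — is the RM bookkeeping: making sure that ``dihedral basis adapted to $V^+$'' really does force $e_2$ to span one of the two $G_K$-eigenlines (as opposed to allowing $e_1$ to span one and $e_2$ a transverse line), and that the slope/eigenline dictionary matches the normalization \eqref{eq:dihedral_decomposition} used to define $L$, $M$. If, contrary to the dictionary above, the dihedral condition only constrained $e_1$ and left $e_2$ free along a coset, then the RM case would collapse to the same finite-avoidance argument as the exotic case. Either way the conclusion follows; I would phrase the proof so as to cover both possibilities, but I expect the clean statement is that in the RM case the two eigenlines are automatically excluded from $\cV$ by the nonvanishing of $\rQ$ at their slopes, so \emph{any} basis satisfying \eqref{tag:dihedral_basis_assumption} is already adapted to all of $\cV$.
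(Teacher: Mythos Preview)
Your proof is correct. In the exotic case you spell out the finite-avoidance argument that the paper leaves implicit (``clearly true''). In the RM case your route differs from the paper's: you check directly that the two $G_K$-eigenlines $V^{(\psi)}$ and $V^{(\psi^\sigma)}$ are excluded from $\cV$ by evaluating $\rQ$ at their slopes, whereas the paper argues that \emph{any} $V^+\in\cV$ has transcendental slope in an adapted dihedral basis (since $-M_{31}+(M_{11}-M_{33})s^2+M_{13}s^4=0$ with $\ob{\Q}$-linearly independent coefficients $M_{31},M_{11},M_{33},M_{13}$ forces $s\notin\ob{\Q}$), so $V^+$ cannot descend to $V_{\ob{\Q}}$ and hence \emph{every} $\ob{\Q}$-basis is automatically adapted --- exactly the ``clean statement'' you anticipate in your last paragraph.

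One small point: your $s=\infty$ step is imprecise as written, since Proposition~\ref{prop:dimension_selmer_ad_rho} is only stated for adapted bases, i.e.\ finite slopes. But it is also unnecessary: having shown $V^{(\psi)}\notin\cV$ via $\rQ(0)=2L_2M_{31}\neq 0$, the swapped dihedral basis (whose second vector spans $V^{(\psi)}$) already does the job; alternatively, the $\psi\leftrightarrow\psi^\sigma$ symmetry runs the same $\rQ(0)\neq 0$ argument for $V^{(\psi^\sigma)}$ in the swapped basis. The paper's transcendence argument is slicker and yields the stronger conclusion without this case analysis, but your direct check is equally valid.
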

\begin{proof}
	If $\rho$ is exotic, then \eqref{tag:dihedral_basis_assumption} is empty, so the claim is clearly true.
	If $\rho$ has RM, we claim that any basis of a $\ob{\Q}$-structure $V_{\ob{\Q}}$ of $V$ satisfying \eqref{tag:dihedral_basis_assumption} is automatically adapted to any $V^+$ such that $\dim_{\ob{\Q}_p}\Sel(\ad\rho,V^+)=1$. To check this, it is enough to fix one such $V^+$ and show that it does not descend to $V_{\ob{\Q}}$. To this end, fix a basis $(e_1,e_2)$ of $V_{\ob{\Q}}$ adapted to $V^+$ and let $s\in\ob{\Q}_p$ be such that $e_1+s\cdot e_2$ generates $V^+$. Then, by Proposition \ref{prop:dimension_selmer_ad_rho}, $-M_{31}+(M_{11}-M_{33})s^2+M_{13}s^4=0$. But Lemma \ref{lem:independence_entries_matrices_logs} (ii) then implies that $s$ must be transcendental, hence proving the claim.
\end{proof}

\begin{lemma}\label{lem:nonvanishing_resultant_PL_PM}
	Condition \eqref{tag:non_vanishing_resultant_P_L_P_M} holds in the RM case, and it also holds in the exotic case under Conjecture \ref{conj:schanuel}.
\end{lemma}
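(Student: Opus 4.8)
The plan is to reduce the statement to the assertion that $\rP_L$ and $\rP_M$ are nonzero polynomials with no common root in $\ob{\Q}_p$. Indeed, for nonzero $f,g\in\ob{\Q}_p[S]$ one has $\res_S(f,g)\neq0$ if and only if $f$ and $g$ are coprime, i.e.\ have no common root, since $\ob{\Q}_p$ is algebraically closed. The only computation needed is the cofactor expansion of the determinant in \eqref{eq:def_pol_P_L_and_P_M} along its first column:
\[\rP_M(S)=-2S^2\bigl(M_{11}-M_{12}S-M_{13}S^2\bigr)-4S\bigl(M_{21}-M_{22}S-M_{23}S^2\bigr)+2\bigl(M_{31}-M_{32}S-M_{33}S^2\bigr);\]
in particular, the constant term of $\rP_M$ is $2M_{31}$, and the coefficient of $M_{31}$ in $\rP_M(S)$ is the constant $2$.

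For the RM case I would invoke Lemma \ref{lem:independence_entries_matrices_logs}(ii): there $L_1=L_3=M_{12}=M_{21}=M_{23}=M_{32}=0$, while the remaining entries are $\ob{\Q}$-linearly independent, hence nonzero; in particular $L_2\neq0$ and $M_{31}\neq0$. Therefore $\rP_L(S)=-L_2S$ is a nonzero polynomial whose only root is $S=0$, whereas $\rP_M(0)=2M_{31}\neq0$. So $\rP_L$ and $\rP_M$ have no common root, and \eqref{tag:non_vanishing_resultant_P_L_P_M} holds unconditionally.

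For the exotic case I would use that, under Conjecture \ref{conj:schanuel}, the twelve elements $L_1,L_2,L_3$ and $(M_{ij})_{1\le i,j\le3}$ of $\ob{\Q}_p$ are $\ob{\Q}$-algebraically independent by Lemma \ref{lem:independence_entries_matrices_logs}(i),(iii); in particular they are all nonzero, so $\rP_L$ has degree $2$ and $\rP_M$ has nonzero constant term $2M_{31}$, and both polynomials are nonzero. Suppose for contradiction that $s\in\ob{\Q}_p$ is a common root. From $\rP_L(s)=0$ and $L_3\neq0$, the element $s$ is algebraic over $E:=\ob{\Q}(L_1,L_2,L_3)$. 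The equation $\rP_M(s)=0$, in which the coefficient of $M_{31}$ is $2\neq0$ and all the other coefficients lie in $\ob{\Q}[s]$, then expresses $M_{31}$ as an element of $F(s)$, where $F:=\ob{\Q}\bigl(L_1,L_2,L_3,\{M_{ij}\}_{(i,j)\neq(3,1)}\bigr)$. But $M_{31}$ is transcendental over $F$ by algebraic independence, and $s$ is algebraic over $E\subseteq F$, so $M_{31}$ remains transcendental over $F(s)$, a contradiction. Hence $\rP_L$ and $\rP_M$ have no common root, proving \eqref{tag:non_vanishing_resultant_P_L_P_M} under Conjecture \ref{conj:schanuel}.

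I do not anticipate a genuine obstacle: the transcendence input is entirely packaged in Lemma \ref{lem:independence_entries_matrices_logs}, so the argument is a short bookkeeping of which entries vanish in the RM case, together with the elementary fact that a transcendental element remains transcendental over an algebraic extension. The only point needing minor care is to confirm that the surviving entries $L_2$ and $M_{31}$ are nonzero in the RM case, which is again part of Lemma \ref{lem:independence_entries_matrices_logs}(ii), and (in the exotic case) the harmless observation that the coefficient of $M_{31}$ in $\rP_M(S)$ is the nonzero constant $2$.
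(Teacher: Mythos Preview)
Your proof is correct and follows essentially the same approach as the paper: in the RM case you both reduce to $\rP_L(S)=-L_2S$ and check $\rP_M(0)=2M_{31}\neq0$, and in the exotic case you both derive a contradiction from the $\ob{\Q}$-algebraic independence of the $L_j$ and $M_{ij}$ provided by Lemma~\ref{lem:independence_entries_matrices_logs}. The only cosmetic difference is that the paper phrases the exotic case as ``the resultant is a nontrivial polynomial in algebraically independent quantities,'' whereas you isolate $M_{31}$ from a hypothetical common root; your version is slightly more explicit and avoids having to verify that the eight coefficients of $\rP_L$ and $\rP_M$ are themselves algebraically independent.
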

\begin{proof}
	If $\rho$ has RM, one sees from Lemma \ref{lem:independence_entries_matrices_logs} (ii) that $\rP_L(s)=0$ implies $s=0$, and $\rP_M(0)=0$ implies $L_2\cdot M_{13}=0$ which is false by Lemma \ref{lem:independence_entries_matrices_logs} (ii). This proves that \eqref{tag:non_vanishing_resultant_P_L_P_M} holds true in this case.
	If $\rho$ is exotic, then Lemma \ref{lem:independence_entries_matrices_logs} (iii) shows that all the coefficients of $\rP_L(S)$ and $\rP_M(S)$ are algebraically independent under Conjecture \ref{conj:schanuel}. In particular, $\res_S(\rP_L(S),\rP_M(S))$, which is a nontrivial polynomial expression in their coefficients, cannot vanish.
\end{proof}

	Assume $(e_1,e_2)$ is adapted to $V^+$ and let $s\in\ob{\Q}_p$ be the slope of $V^+$ with respect to $(e_1,e_2)$. Assume \eqref{tag:non_vanishing_resultant_P_L_P_M} holds and $\dim\Sel(\ad\rho,V^+)=1$, and consider the assumption
	\begin{equation}\tag{$S_{V^+}$}\label{tag:slope_2t+z_one_line}
		\text{the \textbf{c}-slope of $V^+$ is different from $-2t$,}
	\end{equation}
	where $t$ and the \textbf{c}-slope are defined in \eqref{eq:xi_1_in_ordinary_basis} and Definition \ref{def:c_slope} respectively. Note that \eqref{tag:slope_2t+z_one_line} does not depend on the choice of bases by Remark \ref{rem:independence_t_from_basis}. With the  notation $\rQ(S)$ in \eqref{eq:def_polynomial_computing_residual_slopes}, we define
	\begin{equation}\tag{sl}\label{tag:discriminant}
		\mathrm{Disc}(\rQ(S))\neq0.
	\end{equation}
	We implicitly assume in \eqref{tag:discriminant} that $\deg \rQ(S)\geq2$, so that its discriminant is well-defined.
	
	\begin{lemma}
	Assume \eqref{tag:non_vanishing_resultant_P_L_P_M} holds. 
	\begin{enumerate}
		\item If \eqref{tag:discriminant} holds, then \eqref{tag:slope_2t+z_one_line} holds for every line $V^+$ of $V$ with $\dim\Sel(\ad\rho,V^+)=1$.
		\item If $\rho$ is exotic, then \eqref{tag:discriminant} is implied by Conjecture \ref{conj:schanuel}.
		\item If $\rho$ has RM, then \eqref{tag:discriminant} is equivalent to
		\begin{equation}\label{eq:RM_relation_implying_S}
			4\cdot M_{13}\cdot M_{31}+(M_{11}-M_{33})^2\neq0,
		\end{equation}
		which is a consequence of Conjecture \ref{conj:4exp}.
	\end{enumerate}
	\end{lemma}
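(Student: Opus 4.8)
The plan is to unwind the three claims through the explicit formulas established in Propositions~\ref{prop:dimension_selmer_ad_rho}, \ref{prop:calculation_Up} and \ref{prop:calcul_c_slope}, treating the exotic and RM cases separately since the relevant determinants degenerate very differently.

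\textbf{Part (i).} Suppose \eqref{tag:slope_2t+z_one_line} fails for some line $V^+$ with $\dim\Sel(\ad\rho,V^+)=1$; after fixing a basis $(e_1,e_2)$ adapted to $V^+$ (which exists by Lemma~\ref{lem:existence_basis_adapted_to_ordinary_lines}), write $s$ for its slope. By Proposition~\ref{prop:dimension_selmer_ad_rho}, $s$ is a root of $\rQ(S)$. The idea is to show that the failure of \eqref{tag:slope_2t+z_one_line} forces $s$ to be a \emph{double} root of $\rQ(S)$, which contradicts \eqref{tag:discriminant}. Concretely, I would express the condition ``the $\mathbf{c}$-slope equals $-2t$'' using the formulas $Z(s)$ and $T(s)$ from Propositions~\ref{prop:calcul_c_slope} and \ref{prop:calculation_Up} (when $\rP_L(s)\neq0$; the case $\rP_L(s)=0$ is handled separately, since then the $\mathbf{c}$-slope is $\infty$ by Proposition~\ref{prop:calcul_c_slope} while $t=T(s)$ stays finite, so \eqref{tag:slope_2t+z_one_line} automatically holds). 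The key computation is to relate $\rQ'(S)$ to the numerators of $Z(S)+2T(S)$: multilinear/cofactor expansion of the $4\times4$ determinant defining $\rQ(S)$ along its first two columns should exhibit $\rQ'(s)$ as (a nonzero multiple of) $\rP_L(s)^2$ times the quantity $Z(s)+2T(s)$, up to lower-order terms that vanish because $\rQ(s)=0$. Thus $Z(s)+2T(s)=0$ together with $\rQ(s)=0$ gives $\rQ'(s)=0$, i.e.\ $s$ is a repeated root and $\mathrm{Disc}(\rQ(S))=0$.

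\textbf{Parts (ii) and (iii).} In the exotic case, Lemma~\ref{lem:independence_entries_matrices_logs}~(i),(iii) guarantees all $12$ entries of $L$ and $M$ are $\ob{\Q}$-algebraically independent under Conjecture~\ref{conj:schanuel}; since $\mathrm{Disc}(\rQ(S))$ is a nonzero polynomial expression in these entries (one checks nonvanishing over, say, $\Q$ by a generic specialization, or by noting the leading coefficient and some coefficient of $\rQ$ are distinct monomials in the $L_j, M_{ij}$), it cannot vanish. This gives (ii). For (iii), the RM case, I would substitute the vanishing relations of Lemma~\ref{lem:independence_entries_matrices_logs}~(ii) into \eqref{eq:def_polynomial_computing_residual_slopes}; this collapses $\rQ(S)$ to the biquadratic $-2L_2\bigl(-M_{31}+(M_{11}-M_{33})S^2+M_{13}S^4\bigr)$ already recorded in \eqref{RMslopepolynome}. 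The discriminant of a quartic $aS^4+cS^2+e$ (as a polynomial in $S$) is proportional to $a^2 e^2(c^2-4ae)^2$ up to sign and powers of $a,e$; since $L_2, M_{13}, M_{31}$ are all nonzero (Lemma~\ref{lem:independence_entries_matrices_logs}~(ii)), $\mathrm{Disc}(\rQ(S))\neq0$ reduces precisely to $(M_{11}-M_{33})^2-4M_{13}M_{31}\neq0$ — wait, I must be careful with the sign: the discriminant of $aX^2+bX+c$ in the variable $X=S^2$ is $b^2-4ac=(M_{11}-M_{33})^2+4M_{13}M_{31}$ here (because the constant term is $-M_{31}$), which is exactly \eqref{eq:RM_relation_implying_S}. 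Finally, apply Conjecture~\ref{conj:4exp} to the $2\times2$ matrix $\bigl(\begin{smallmatrix} M_{11}& M_{13}\\ M_{31}& M_{33}\end{smallmatrix}\bigr)$, or rather to an appropriate variant built from its entries: by Lemma~\ref{lem:independence_entries_matrices_logs}~(ii) these are four $\ob{\Q}$-linearly independent elements of $\mathbb{L}$, so a suitable $2\times2$ matrix with these entries has rank $2$, which translates (after expanding the rank-$1$ locus, a quadratic condition) into the non-vanishing of $4M_{13}M_{31}+(M_{11}-M_{33})^2$.

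\textbf{Main obstacle.} The delicate point is the determinant identity in Part~(i) linking $\rQ'(s)$ to $Z(s)+2T(s)$ when $\rQ(s)=0$: this requires carefully bookkeeping the cofactor expansions of the three different $4\times4$/$3\times3$ determinants appearing in \eqref{eq:def_polynomial_computing_residual_slopes}, \eqref{eq:formulas_X_T} and \eqref{eq:calcul_c_slope}, and verifying that the ``error terms'' one would naively expect are all multiples of $\rP_L(s)$ or $\rQ(s)$ and hence drop out. A cleaner route, which I would pursue in parallel, is conceptual rather than computational: interpret $-2t$ and the $\mathbf{c}$-slope as the two ``eigen-slopes'' of the restriction to $G_{\Q_p}$ of the $2$-dimensional space $\cS(V^+)\oplus\cS_1(V^+)$ (or of the full $\HH^1(\Q,W)$), and show that they coincide exactly when the corresponding pencil of Selmer classes has a higher-order degeneracy along $\textbf{P}(V)$ — which is precisely the infinitesimal condition that $s$ be a multiple root of $\rQ$. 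In the RM case this conceptual picture is transparent because of the extra $\Gal(K/\Q)$-symmetry, so (iii) can be checked directly from \eqref{RMslopepolynome}; the exotic case is where the determinant manipulation is unavoidable.
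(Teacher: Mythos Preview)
Your approach is essentially the same as the paper's, and the overall strategy is correct. A couple of minor corrections to Part~(i): the relation you anticipate is cleaner than you suggest. The paper defines $\mathcal{K}(S)$ as the sum of the $4\times4$ and $3\times3$ determinants appearing in the numerators of $Z(S)$ and $2T(S)$, so that $Z(S)+2T(S)=-\mathcal{K}(S)/(2\rP_L(S))$, and then checks the \emph{exact polynomial identity} $4\mathcal{K}(S)=\rQ'(S)$. Thus $\rQ'(s)=-8\,\rP_L(s)\bigl(Z(s)+2T(s)\bigr)$ with a single factor of $\rP_L(s)$ (not $\rP_L(s)^2$) and no ``lower-order terms'' to kill via $\rQ(s)=0$; your conclusion that $s$ is a double root follows immediately. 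In the boundary case $\rP_L(s)=0$, be careful not to write ``$t=T(s)$'' since $T(S)$ has $\rP_L(S)$ in the denominator; the correct statement is that $t$ is defined intrinsically by \eqref{eq:xi_1_in_ordinary_basis} and hence lies in $\ob{\Q}_p$, while the $\mathbf{c}$-slope is $\infty$, so \eqref{tag:slope_2t+z_one_line} holds trivially --- exactly as the paper argues.

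Your treatment of (ii) and (iii) matches the paper's (which simply invokes Lemma~\ref{lem:independence_entries_matrices_logs}). For (iii), your discriminant computation via the substitution $X=S^2$ is the right way to see the equivalence with \eqref{eq:RM_relation_implying_S}; the final step deducing this from Conjecture~\ref{conj:4exp} is left as a gesture in both your sketch and the paper, so you are not missing anything the authors supply.
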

	
	\begin{proof}
		Parts (ii) and (iii) are easy consequences of Lemma \ref{lem:independence_entries_matrices_logs}, so we only prove (i).
		
		Let $V^+$ be a line and fix a basis $(e_1,e_2)$ of $V_{\ob{\Q}}$ satisfying \eqref{tag:dihedral_basis_assumption} and that is adapted to $V^+$. 
		Recall that the slope $s$ of $V^+$ then satisfies the relation $\rQ(s)=0$ by Proposition \ref{prop:dimension_selmer_ad_rho}.
		
		 If $\rP_L(s)=0$, then the \textbf{c}-slope of $V^+$ is $\infty$ by Proposition \ref{prop:calcul_c_slope} and \eqref{tag:slope_2t+z_one_line} trivially holds, so we may assume $\rP_L(s)\neq 0$. By Propositions \ref{prop:calculation_Up} and \ref{prop:calcul_c_slope}, if \eqref{tag:slope_2t+z_one_line} does not hold, then $Z(s)+2T(s)= 0$, \textit{i.e.}, $\mathcal{K}(s)= 0$, where
		\[ \mathcal{K}(S):= \begin{vmatrix}
				L_1-SL_2 & L_3 & 0 & 0 \\ 
				M_{11}-SM_{12} & M_{13} & 2 & 0 \\
				M_{21}-SM_{22} & M_{23} & -S & 1 \\
				M_{31}-SM_{32} & M_{33} & 0 & 2S
			\end{vmatrix}+\begin{vmatrix}
			2L_1-SL_2  & L_{2}+2S L_{3} & 0 \\
			2M_{11}- S M_{12} & M_{12}+2 S M_{13} & 1 \\
			2M_{21}- S M_{22} & M_{22}+2 S M_{23} &-S
			\end{vmatrix}. \]
		An elementary calculation shows that $4\cdot \mathcal{K}(S)$ is the derivative of the polynomial $\rQ(S)$. Therefore, $\mathcal{K}(s)=\rQ(s)=0$ contradicts \eqref{tag:discriminant}, so \eqref{tag:slope_2t+z_one_line} must hold.
	\end{proof}

	Consider the condition
	\begin{equation}\tag{reg}\label{tag:nonvanishing_resultant_Q_PL}
		\res_S(\rQ(S),\rP_L(S))\neq0,
	\end{equation}
	where $\rP_L(S)$ and $\rQ(S)$ are defined in \eqref{eq:def_pol_P_L_and_P_M} and \eqref{eq:def_polynomial_computing_residual_slopes} respectively.
	\begin{lemma}\label{lem:equivalence_resultant_Q_PL}
		\begin{enumerate}
			\item If $\rho$ has RM, then \eqref{tag:nonvanishing_resultant_Q_PL} holds.
			\item If $\rho$ is exotic, then \eqref{tag:nonvanishing_resultant_Q_PL} holds if and only if \eqref{tag:non_vanishing_resultant_P_L_P_M} holds and $\mathrm{Disc}(\rP_L(S))\neq0$, where 
			\begin{equation}\label{eq:discriminant_PL}
				\mathrm{Disc}(\rP_L(S))=L_2^2+4\cdot L_1\cdot L_3
			\end{equation}
			is the discriminant of $\rP_L(S)$. Moreover, \eqref{tag:nonvanishing_resultant_Q_PL} holds under Conjecture \ref{conj:schanuel}.
		\end{enumerate}
	\end{lemma}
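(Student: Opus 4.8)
The cornerstone of the argument is the polynomial identity
\begin{equation}\label{eq:key_identity_QPLPM}
	\rQ(S) = 2\,\rP_L'(S)\,\rP_M(S) + 2\,\rP_L(S)\,B(S),
\end{equation}
where $\rP_L'$ denotes the derivative of $\rP_L$ and $B(S)$ is an auxiliary polynomial whose precise shape is immaterial. The plan is to prove \eqref{eq:key_identity_QPLPM} first and then deduce both assertions from it together with Lemma~\ref{lem:independence_entries_matrices_logs}. To obtain \eqref{eq:key_identity_QPLPM}, I would manipulate the $4\times 4$ determinant defining $\rQ(S)$ in \eqref{eq:def_polynomial_computing_residual_slopes}: a direct check shows that its first column equals $2\,u(S)+S\cdot(\text{second column})$, where $u(S)$ is the column vector whose first entry is $\rP_L(S)$ and whose last three entries are the entries of the first column of the determinant $\rP_M(S)$ in \eqref{eq:def_pol_P_L_and_P_M}. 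Carrying out the associated column operation gives $\rQ(S)=2\det[\,u(S),\dots\,]$; splitting off the contribution of $\rP_L(S)$ from the first column by multilinearity isolates the summand $2\rP_L(S)B(S)$, and Laplace-expanding the remaining determinant along its first row yields $-(L_2+2SL_3)\,\rP_M(S)=\rP_L'(S)\,\rP_M(S)$. In particular, reading \eqref{eq:key_identity_QPLPM} modulo $\rP_L(S)$ gives $\rQ(s)=2\,\rP_L'(s)\,\rP_M(s)$ for every root $s$ of $\rP_L$.

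Granting \eqref{eq:key_identity_QPLPM}, part (i) follows at once: in the RM case Lemma~\ref{lem:independence_entries_matrices_logs}(ii) forces $L_1=L_3=0$, so $\rP_L(S)=-L_2S$ has the single, simple root $0$ with $L_2\neq0$; by \eqref{RMslopepolynome}, and since $L_2M_{31}\neq0$ by the same lemma, $0$ is not a root of $\rQ$, so $\rP_L$ and $\rQ$ are coprime and $\res_S(\rQ,\rP_L)\neq0$. For part (ii), $\rho$ being exotic, Lemma~\ref{lem:independence_entries_matrices_logs}(i) gives $L_1,L_2,L_3\neq0$, so $\rP_L$ has degree exactly $2$; let $s_1,s_2$ be its roots counted with multiplicity, so that $\mathrm{Disc}(\rP_L)=L_2^2+4L_1L_3\neq0$ if and only if $s_1\neq s_2$, and---the leading coefficient of $\rP_L$ being $-L_3\neq0$---the resultant $\res_S(\rQ,\rP_L)$ (resp.\ $\res_S(\rP_L,\rP_M)$) is a nonzero scalar multiple of $\rQ(s_1)\rQ(s_2)$ (resp.\ of $\rP_M(s_1)\rP_M(s_2)$). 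If $\mathrm{Disc}(\rP_L)=0$, then $s_1=s_2$ is a double root of $\rP_L$, so $\rP_L'(s_1)=0$ and \eqref{eq:key_identity_QPLPM} forces $\rQ(s_1)=0$; hence $\res_S(\rQ,\rP_L)=0$, while simultaneously the conjunction of \eqref{tag:non_vanishing_resultant_P_L_P_M} and $\mathrm{Disc}(\rP_L)\neq0$ fails. If $\mathrm{Disc}(\rP_L)\neq0$, then $s_1,s_2$ are distinct simple roots, $\rP_L'(s_i)\neq0$, and $\rQ(s_i)=2\rP_L'(s_i)\rP_M(s_i)$ vanishes precisely when $\rP_M(s_i)$ does; consequently $\res_S(\rQ,\rP_L)\neq0\iff\res_S(\rP_L,\rP_M)\neq0$. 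Combining the two cases gives the stated equivalence. Finally, under Conjecture~\ref{conj:schanuel} the entries of $L$ are algebraically independent over $\ob{\Q}$ by Lemma~\ref{lem:independence_entries_matrices_logs}(iii), so $\mathrm{Disc}(\rP_L)=L_2^2+4L_1L_3\neq0$; as \eqref{tag:non_vanishing_resultant_P_L_P_M} also holds under that conjecture by Lemma~\ref{lem:nonvanishing_resultant_PL_PM}, we conclude that \eqref{tag:nonvanishing_resultant_Q_PL} holds.

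The main obstacle is pinning down the identity \eqref{eq:key_identity_QPLPM} with the correct rational constants: this is a careful but entirely elementary determinant computation, and a useful consistency check is that the degree-$5$ contributions on its right-hand side must cancel, in agreement with $\deg\rQ\leq4$. A secondary point requiring care is the behavior of $\res_S(\rQ,\rP_L)$ when $\rP_L$ acquires a repeated root; this is handled automatically by \eqref{eq:key_identity_QPLPM}, which shows that any double root of $\rP_L$ is necessarily a common root of $\rP_L$ and $\rQ$, so that the equivalence in (ii) continues to hold on the locus $\mathrm{Disc}(\rP_L)=0$.
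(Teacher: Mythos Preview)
Your proof is correct and follows essentially the same route as the paper: both establish a polynomial identity $\rQ(S)=\rP_L(S)\cdot(\text{polynomial})+c\cdot\rP_L'(S)\cdot\rP_M(S)$ via a column operation on the defining determinant, then read off the resultant relation. Your column operation (replace column~1 by column~1 minus $S$ times column~2) and the resulting constant $c=2$ are correctly computed; the paper states the identity with $c=-4$, but the argument is insensitive to the precise nonzero value of $c$.
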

	\begin{proof}
		If $\rho$ has RM, then $\rP_L(s)=0$ implies $s=0$ by Lemma \ref{lem:independence_entries_matrices_logs} (ii), and the same lemma shows that $\rQ(0)=2\cdot L_2\cdot M_{31}\neq0$. Therefore, $\rP_L(S)$ and $\rQ(S)$ do not have a common root, and (i) follows. 
		
		Assume $\rho$ is exotic and let us prove the equivalence of the two conditions. Adding the second column to the first one in \eqref{eq:def_polynomial_computing_residual_slopes} and expanding along the first row shows that $\rQ(S)$ can be written in the form $\rP_L(S)\cdot R(S)-4\cdot \rP_L'(S)\cdot \rP_M(S)$ for some polynomial $R(S)$. One deduces that $\res_S(\rQ(S),\rP_L(S))=-16\cdot \mathrm{Disc}(\rP_L(S))\cdot\res_S(\rP_M(S),\rP_L(S))$, and the equivalence to prove then becomes clear.
		
		If Conjecture \ref{conj:schanuel} holds, then \eqref{tag:non_vanishing_resultant_P_L_P_M} holds by Lemma \ref{lem:nonvanishing_resultant_PL_PM} and $L_2^2+4\cdot L_1\cdot L_3\neq 0$ by Lemma \ref{lem:independence_entries_matrices_logs} (iii), so \eqref{tag:nonvanishing_resultant_Q_PL} holds as well.
	\end{proof}
	
	\begin{rem}
		Assume \eqref{tag:nonvanishing_resultant_Q_PL} holds. It is shown in Proposition \ref{prop:first_deformations_rho_A} (ii) below that the slope $s$ of any line $V^+$ of automorphic origin, \textit{i.e.}, obtained as the specialization of the ordinary line associated to a Hida family passing through $f_\alpha$, must be a root of $\rQ(S)$. As \eqref{tag:nonvanishing_resultant_Q_PL} implies that $\rP_L(s)\neq0$ for such $s$, the sub-$\varphi$-module of $D_{\cris}(W)$ attached by Remark \ref{rem:p-adic_Stark_regulator} to any such $V^+$ is a regular submodule in the sense of Perrin-Riou.
	\end{rem}

\subsection{Regulators in dihedral cases}
We give a more explicit description of the relevant regulators in case $\rho$ when $\rho$ has dihedral image.

	\subsubsection{Gross--Stark units in the RM case}
	\label{sec:reformulation_consition_S_RM_case}
		We reformulate in this paragraph the nonvanishing condition \eqref{eq:RM_relation_implying_S} in terms of Gross-Stark units when $\rho$ has RM by $K$. As $p$ splits in $K$, we have $p\cO_K=\Gp\ob{\Gp}$, where $\Gp$ is the prime of $K$ induced by $\iota_p$ and $\ob{\Gp}\neq\Gp$. Let $\psi_{\ad}=\psi/\psi^\sigma$ be the totally odd character of $G_K$ obtained from the decomposition \eqref{eq:dihedral_decomposition}, and let $u_{\Gp}=\kappa_1(w_1)$ and $u_{\ob{\Gp}}=\kappa_3(w_1)$. Then $u_{\Gp}$ and $u_{\ob{\Gp}}$ form a basis of the $\psi_{\ad}$-isotypical subspace $U_{H}^{(p)}[\psi_{\ad}]$ of $U_H^{(p)}$. The nontrivial element $\sigma$ of $\Gal(K/\Q)$ exchanges $w_1$ and $w_3$, so we have $\kappa_1(w_3)=u^\sigma_{\Gp}$, $\kappa_3(w_3)=u^\sigma_{\ob{\Gp}}$. Moreover, $u_{\Gp}$ (resp. $u_{\ob{\Gp}}$) is characterized as the unique $\Gp$-unit (resp. $\ob{\Gp}$-unit) in $U_{H}^{(p)}[\psi_{\ad}]$ with $\Gp$-adic valuation $1$ (resp. with $\ob{\Gp}$-adic valuation $1$). Condition \eqref{eq:RM_relation_implying_S} then reads
		\[4\cdot\log_p(u_{\ob{\Gp}})\cdot\log_p(u^\sigma_{\Gp})+\log_p\left(\frac{u_\Gp}{u^\sigma_{\ob{\Gp}}}\right)^2\neq0.\]
		With the above notations, Gross's regulator $\mathscr{R}_p(\psi_{\ad}^{-1})$ for $\psi_{\ad}^{-1}$ can be expressed as 
		\[\mathscr{R}_p(\psi_{\ad}^{-1})=\log_p(u_\Gp)\cdot \log_p(u^\sigma_{\ob{\Gp}})-\log_p(u_{\ob{\Gp}})\cdot\log_p(u^\sigma_{\Gp})\] 
		(compare with \cite[(8)]{DKV}).
	
	\subsubsection{The CM case}\label{sec:CM_case} Although this paper primarily focuses on the non-CM case, it is instructive to compare the conditions \eqref{tag:discriminant} and \eqref{tag:nonvanishing_resultant_Q_PL} in the CM case with the conditions on anti-cyclotomic $\mathscr{L}$-invariants needed in \cite[Thm. B]{betinadimitrovKatz}. We will see that all these conditions are, in fact, equivalent, even though our proof of Theorem \ref{thm:main_result} is different from that in \textit{loc. cit.}.
	
	We assume now that $\rho\simeq\Ind_K^\Q\chi$ has CM by an imaginary quadratic field $K$ and we denote by $\tau\in G_\Q$ the complex conjugation. We may fix a dihedral basis $(e_1,e_2)$ of $V_{\ob\Q}$, in which $\rho(\tau)=(\begin{smallmatrix}0 & 1 \\ 1 & 0\end{smallmatrix})$, and $\rho_{|G_K}=\chi\oplus \chi^\tau$. Then $\ad^0(\rho)\simeq \varepsilon_K \oplus \Ind_K^\Q \varphi$, where $\varepsilon_K$ be the odd quadratic character of $G_\Q$ associated with $K$ and $\varphi=\chi/\chi^\tau$ is a nontrivial character of $G_K$ which is anticyclotomic, \textit{i.e.}, $\varphi^\tau=\ob{\varphi}$. Arguing as in Lemma \ref{lem:independence_entries_matrices_logs} (ii), we then have $L_2=M_{12}=M_{21}=M_{23}=M_{32}=0$, and the remaining entries of $L$ and $M$ are $\ob{\Q}$-linearly independent if $\rho$ does not have RM. In the Klein case where $\rho$ also has RM, we have the additional relations $\ob{\varphi}=\varphi$, $L_1=L_3$, $M_{11}=M_{33}$ and $M_{13}=M_{31}$, and the elements of the set $\{L_1,M_{11},M_{13}\}$ are $\ob{\Q}$-linearly independent. 
	
	The slope $\mathscr{S}_{\varphi}$, the $\mathscr{L}$-invariant $\mathscr{L}_{\Gp}$ attached to $K$ and the two anticyclotomic $\mathscr{L}$-invariants $\mathscr{L}_{-}(\varphi),\mathscr{L}_{-}(\ob{\varphi})$ introduced in \cite{betinadimitrovJTNB,betinadimitrovKatz} are given by 
	\[ \mathscr{S}_{\varphi}=-\dfrac{L_1}{L_3}, \qquad \mathscr{L}_{\Gp}= -\dfrac{1}{2}\cdot M_{22}, \]
	and
	\[\cL_{-}(\varphi)=-\frac{L_1M_{33}-L_3M_{31}}{L_1}+M_{22}, \qquad \cL_{-}(\ob{\varphi})=-\frac{L_3M_{11}-L_1M_{13}}{L_3}+M_{22}.\]
	We have $\mathscr{S}_{\varphi}\neq 0$ and
	\[\rQ(S)=-8\cdot L_3\cdot \left(\cL_{-}(\ob{\varphi}) \cdot S^3- \cL_{-}(\varphi)\cdot \mathscr{S}_{\varphi}\cdot S\right), \]
	and \eqref{tag:discriminant} is then equivalent to $\cL_{-}(\varphi)\cdot \cL_{-}(\ob{\varphi})\neq0$. Furthermore, $\rP_L(S)=-L_3\cdot(\mathscr{S}_{\varphi}+S^2)$, and \eqref{tag:nonvanishing_resultant_Q_PL} is equivalent to $\cL_{-}(\varphi)+\cL_{-}(\ob{\varphi})\neq0$. In the special case where $\rho$ has RM and CM, both conditions \eqref{tag:discriminant} and \eqref{tag:nonvanishing_resultant_Q_PL} unconditionally hold.
	
\section{Congruences between Hida families at weight one points}\label{sec:geometry}	

We shall now begin the proof of Theorem \ref{thm:main_result}. We retain the notation of the introduction: let $\cC$ denote the Coleman--Mazur eigencurve, and let $\mathrm {x}\in \cC$ denote the point corresponding to the $p$-stabilized cusp form $f_{\alpha}$.
The proof will be divided into two main parts:
\begin{enumerate}[label=\Roman*.]
\item
Showing that there are at most four components of $\cC$ passing through the point $x$, and that each is \'etale over the weight space;
\label{part1} 
\item
Showing that there are exactly four components of $\cC$ meeting transversally at $x$.
 \label{part2} 
\end{enumerate}
 The first of these is the subject of the present section. Part \ref{part2} will be carried out in \S \ref{sec:geometry2}.
 
 \subsection{Overview of the proof of Theorem \ref{thm:main_result}, Part \ref{part1}}\label{overview1}
In this section, we prove that  if the assumption \eqref{tag:discriminant} and the additonal hypothesis \eqref{tag:non_vanishing_resultant_P_L_P_M} (weaker than \eqref{tag:nonvanishing_resultant_Q_PL}) both hold, then the weight map $\kappa$ is \'etale at $\mathrm{x}$ along any irreducible component  (Theorem \ref{thm:etaleness}) and that any irreducible local component of $\cC$ at $\mathrm x$, say $\mathrm{Sp}(\cA_0)$,  is entirely determined by the specialization $V^+_{\cA_0}\in\textbf{P}(V)$ at $\mathrm{x}$ of its ordinary filtration (Theorem \ref{thm:uniqueness_residual_slope}). 

Our argument, which has a certain resemblance to Ribet's method on the Eisenstein ideal, is based on a fine analysis of higher infinitesimal deformations of $\rho$ along $\Spec(\cA_0)$. 
The idea is that, if an irreducible component is not \'etale at $\mathrm{x}$, it gives rise to a non-zero class $[\xi]\in\HH^1(\Q,\ad^0\rho)$ with a prescribed local behavior at $p$.  This leads to a contradiction because:
\begin{itemize}
\item If $[\xi]$ is a Selmer class in $\Sel(\ad^0,V^+_{\cA_0})$, this  is inconsistent with \eqref{tag:non_vanishing_resultant_P_L_P_M} by Proposition \ref{prop:dimension_selmer_ad_0_rho};
\item If $[\xi]$ does not lie in $\Sel(\ad^0,V^+_{\cA_0})$, the formula for the \textbf{c}-slope of $V^+_{\cA_0}$ contradicts \eqref{tag:discriminant}. 
\end{itemize}
A pivotal point of our method is the fact, established in the proof of Theorem \ref{thm:etaleness}, that the residual slope $s\in\ob{\Q}_p$ keeping track of the position of the line $V^+_{\cA_0}\in\textbf{P}(V)$ with respect to a suitable basis of $V$ is a root of the polynomial $\rQ(S)$ defined in \S\ref{sectionslopes}. Since the degree of $\rQ(S)$ is at most $4$, it follows that there exist at most four Hida families passing through $f_\alpha$.

	\subsection{\'Etaleness of the irreducible components of $\cC$ at $x$}\label{sec:etaleness}
	By construction of $\cC$, there exist bounded  global sections $\{ \mathbf{T}_{\ell},\mathbf{U}_q\}_{\ell \nmid Np,q \mid Np} \subset \mathcal{O}_{\cC}(\cC)$  such that the usual application ``system of eigenvalues'' 
	\[ \cC(\bar\Q_{p}) \ni x \mapsto \{\mathbf{T}_{\ell}(x),\mathbf{U}_q(x)\}_{\ell \nmid Np, q\mid Np}\] 
	is injective, and produces all systems of eigenvalues for $\{ \mathbf{T}_{\ell},\mathbf{U}_q\}_{\ell \nmid Np, q \mid Np}$ acting on the space of overconvergent cusp forms  with  coefficients in $\bar \Q_p$, tame level $N$, whose weight is in $\mathcal{W}(\bar\Q_p)$ and having a non-zero $\mathbf{U}_p$-eigenvalue.  Here we work with the full eigencurve rather than the reduced eigencurve generated locally by $\{ \mathbf{T}_{\ell},\mathbf{U}_p\}_{\ell \nmid Np}$.
	
	A fundamental arithmetic tool in the study of the geometry of $\cC$ is the  universal pseudo-character $\mathrm{Ps}_{\cC}:G_{\Q} \rightarrow  \mathcal{O}_{\cC}(\cC)$ of dimension $2$ carried by $\cC$ which is unramified at all $\ell\nmid Np$ and 
	such that $\mathrm{Ps}_{\cC}(\Frob_{\ell})=\mathbf{T}_{\ell}$. This pseudo-character $p$-adically interpolates the traces of semi-simple $p$-adic Galois representations attached to the classical points of $\cC$. 
		
	We recall that $\varLambda:=\bar{\Q}_p \lsem X \rsem$ is the completed local ring of $\cW$ at $\kappa(\mathrm{x})$ and let $\cT$ be the completed local ring of $\cC$ at $\mathrm{x}$, where $\mathrm{x}$ corresponds to the (unique) $p$--stabilisation $f_\alpha$ of our irregular weight one form $f$. Let $\rho\colon G_\Q \to \GL_2(\ob{\Q}_p)$ be the Galois representation attached to $f$ by Deligne and Serre and let $\mathrm{Ps}_{\cT}:G_\Q \to \cO(\cC) \to \cT$ be the $2$-dimensional  $\cT$-valued pseudo-character given by the localization of $\mathrm{Ps}_{\cC}$ via the localization morphism $\cO(\cC) \to \cT$. Since $\rho$ is irreducible and odd, there exists a deformation \[\rho_{\cT}:G_\Q \to \GL_2(\cT) \] of $\rho$ such that $\mathrm{tr}(\rho_\cT)=\mathrm{Ps}_{\cT}$ (\cite{nyssen1996pseudo,rouquier1996caracterisation}).

	\begin{rem}It follows from \cite[Prop.1.4]{betinadimitrovJTNB} that $\cT$ is reduced because $f$ is $N$-new.
	\end{rem}

	We fix an arbitrary minimal prime ideal $\mathfrak{q}$ of $\cT$, and set $\cA_0:=\cT/\mathfrak{q}$. We denote by $\mathbf{K}$ the quotient field of $\cA_0$. As $\cT$ is reduced, $\textbf{K}$ is the localization $\cT_{\mathfrak{q}}$ of $\cT$ at $\mathfrak{q}$. Recall that the minimal prime ideal $\mathfrak{q}$ gives rise to a $\mathrm{Aut}_\varLambda(\textbf{K})$-conjugacy class $[\cF]$ of $p$-ordinary cuspidal Hida families of tame level $N$ passing through $f_\alpha$. 
	Indeed, if $\mathfrak{h}'$ is the $p$-ordinary Hida--Hecke algebra of level $Np^\infty$, then the rigid analytic fiber of the adic space $\mathrm{Spa}(\mathfrak{h}',\mathfrak{h}')$ is exactly the ordinary locus $\cC^{\ord}$ of $\cC$, {\it i.e.,} the open admissible locus where the slope of $\mathbf{U}_p$ is zero.
	
	Let $\cA$ be the normalization of $\cA_0$ in $\mathbf{K}$. It is a discrete valuation ring since it is a $1$-dimensional normal noetherian local domain. Let $e$ be the ramification index of $\cA$ over $\varLambda$. Then, one has $\cA=\bar\Q_p \lsem Y\rsem$, $\mathbf{K} = \overline{\Q}_p (\!(Y)\!)$ and $Y^e=X$. In general, the completion of an integral domain may not be a domain but since the $\kappa^{\#}:\cO_{\cW,\kappa(\mathrm{x})} \to \cO_{\cC,\mathrm{x}}$ is finite, the natural map $\mathrm{Spec} \text{ } \cT \to \mathrm{Spec}\text{ }   \cO_{\cC,\mathrm{x}}$ given by the completion is in fact a bijection. 
	
	Let $\rho_{\cA_0}:G_\Q \to \GL_2(\cA_0)$ be the deformation of $\rho$ given by the pushforward of $\rho_{\cT}$ along the surjection $\cT \twoheadrightarrow \cA_0$. By a result of Wiles \cite{Wil88}, the $2$--dimensional $\mathbf{K}$-valued representation $\rho_{ \mathbf{K}}:=\rho_{\cA_0} \otimes_{\cA_0} \mathbf{K}:G_{\Q}\to \GL_{2}(\mathbf{K})$ attached to $\mathcal{F}$ is ordinary at $p$. Hence,  letting $\mathbb{V}_\textbf{K}:=\mathbf{K}^2$ be the underlying space of $\rho_{\mathbf{K}}$,  there exists a short exact sequence of $\mathbf{K}[G_{\Q_p}]$-modules
\begin{equation}\label{eq:ordinary_fil_rho_K}
		0\to \mathbb{V}_\textbf{K} '\to \mathbb{V}_\textbf{K} \to \mathbb{V}_\textbf{K}''\to 0,
\end{equation}
	where $\mathbb{V}_\textbf{K}^+,\mathbb{V}_\textbf{K}^-$ are $G_{\Q_p}$-stable $\mathbf{K}$-lines such that $G_{\Q_p}$ acts on $\mathbb{V}^-$ via the unramified character $\delta:G_{\Q_p} \to \textbf{K}^{\times}$ characterized by $\delta(\Frob_p)=\mathbf{U}_p\in \cA_0^\times$.
	 
	Let $\rho_{\cA}\colon G_\Q \to \GL_2({\cA})$ be the deformation of $\rho$ given by the pushforward of $\rho_{\cA_0}$ along the finite injection $\cA_0 \hookrightarrow {\cA}$, and let $\mathbb{V}$ be the underlying space of $\rho_{\cA}$. One has $\det\rho_{\cA}=\det \rho_{\cA_0}=\det(\rho) \cdot \chi_{\mathrm{cyc}}$ and $\mathrm{tr} \rho_{\cA}=\mathrm{tr}\rho_{\cA_0}$. Since $\cA$ is a discrete valuation ring, the exact sequence \eqref{eq:ordinary_fil_rho_K} induces a short exact sequence of $\cA[G_{\Q_p}]$-modules
		\begin{equation}\label{eq:ordinarity_L}
				0\to \mathbb{V}^+\to \mathbb{V}  \to \mathbb{V}^-\to 0,
		\end{equation}
	where $\mathbb{V}^+$ is a free direct summand of $\mathbb{V} $ of rank one over $\cA$, and $G_{\Q_p}$ acts on $\mathbb{V}^-\simeq \cA$ via $\delta$. 
	In fact, the above $\mathbb{V}^+$ and $\mathbb{V}^-$ can be defined as $\mathbb{V}^+=\mathbb{V}  \cap \mathbb{V}^+_\textbf{K}$ and $\mathbb{V}^-=\mathrm{im}(\mathbb{V}  \to  \mathbb{V}^-_\textbf{K})$. Here $\mathbb{V}^+$ is  a free rank one direct summand because $\cA$ is a discrete valuation ring.
	
		We also obtain by specialization at $Y=0$ an ordinary filtration of $V=\mathbb{V}/Y \mathbb{V} $:
	\[0\to V^+\to V \to V^-\to 0.\]
	
	The rest of this section is devoted to the proof of the following theorem.	
	\begin{thm}\label{thm:etaleness}
		Assume \eqref{tag:non_vanishing_resultant_P_L_P_M} and and \eqref{tag:slope_2t+z_one_line} hold.
			Then $\cA=\cA_0=\varLambda$, i.e., $e=1$ and $Y=X$. In other words, $\cF$ is étale at $f_\alpha$.
	\end{thm}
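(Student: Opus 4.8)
Throughout I identify $\cA = \bar\Q_p\lsem Y\rsem$ with $Y^e = X$ as in the setup. The plan is to prove the (equivalent) statement that $e = 1$. Since $\varLambda \subseteq \cA_0 \subseteq \cA$ and $\cA/\varLambda$ is a totally ramified extension of discrete valuation rings of degree $e$, it suffices to show $\cA_0 = \varLambda$, and by Nakayama this amounts to the vanishing of the relative cotangent space $\mathfrak{m}_{\cA_0}/(\mathfrak{m}_{\cA_0}^2 + X\cA_0)$. So suppose this space is nonzero; I will derive a contradiction with \eqref{tag:non_vanishing_resultant_P_L_P_M} or \eqref{tag:slope_2t+z_one_line}. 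First, since $X$ has trivial image in the relative cotangent space, one can choose a nonzero $\bar\Q_p$-linear functional on $\mathfrak{m}_{\cA_0}/\mathfrak{m}_{\cA_0}^2$ killing the class of $X$; this produces a surjection $\cA_0 \twoheadrightarrow \bar\Q_p[\epsilon]/(\epsilon^2)$ sending $X \mapsto 0$ whose differential is nonzero. Composing with $\cT \twoheadrightarrow \cA_0$ and with $\rho_\cT$, one gets a deformation $\rho_\epsilon\colon G_\Q \to \GL_2(\bar\Q_p[\epsilon])$ of $\rho$; because $X \mapsto 0$ the universal cyclotomic character becomes trivial, so $\det \rho_\epsilon = \det\rho$ and the tangent cocycle of $\rho_\epsilon$ defines a class $[\xi] \in \HH^1(\Q, \ad^0\rho)$. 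It is nonzero, since the Zariski tangent map of $\cC$ at $\mathrm{x}$ injects into $\HH^1(\Q, \ad\rho)$ and our functional is nonzero on $\mathfrak{m}_\cT/\mathfrak{m}_\cT^2$.

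Let $V^+ \in \mathbf{P}(V)$ be the residual ordinary line of the component $\Spec\cA_0$, i.e.\ the reduction modulo $\mathfrak{m}_{\cA_0}$ of the ordinary submodule of $\rho_{\cA_0}$. Reducing the ordinary filtration of $\rho_{\cA_0}$ — whose quotient carries the everywhere unramified character $\delta$ with $\delta(\Frob_p) = \mathbf{U}_p$ valued in $\cA_0^\times$ — along $\cA_0 \to \bar\Q_p[\epsilon]$, one checks that $\xi(I_p)\cdot V \subseteq V^+$ (this uses only $\delta|_{I_p} = 1$, and so survives even though the ordinary submodule over $\cA_0$ need not be a direct summand). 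Moreover $\Sel(\ad^0\rho, V^+) = \{0\}$ by Proposition \ref{prop:dimension_selmer_ad_0_rho} (this is where \eqref{tag:non_vanishing_resultant_P_L_P_M} is used), whence $\dim\Sel(\ad\rho, V^+) \leq 1$.

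Now the dichotomy. If in addition $\xi(G_{\Q_p})\cdot V^+ \subseteq V^+$, then $[\xi] \in \Sel(\ad^0\rho, V^+) = \{0\}$, contradicting $[\xi] \neq 0$; this already disposes of the case where $\cA_0$ is regular (there the unique first-order class is traceless, as $e \geq 2$, and preserves $V^+$ because the ordinary filtration reduces to a $G_{\Q_p}$-stable line). So we may assume $[\xi] \notin \cS(V^+) = \ker(\loc^\rc)$, so that $\loc^\rc[\xi] \neq 0$ and its slope is the $\mathbf{c}$-slope of $V^+$. To make sense of this one first needs $\dim\Sel(\ad\rho, V^+) = 1$, equivalently that the residual slope $s$ of $V^+$ is a root of the polynomial $\rQ(S)$ of \S\ref{sectionslopes}: one obtains this by feeding into Proposition \ref{prop:dimension_selmer_ad_rho} the trace-$\lambda$ Selmer class coming from the weight direction of the family, using that this class preserves $V^+$ because the ordinary filtration of $\rho_{\cA_0}$ reduced in the weight direction is a $G_{\Q_p}$-stable line; when the weight map is ramified to order higher than the order of vanishing of $X$ in $\mathfrak{m}_{\cA_0}$, the weight direction is only visible after passing to a length-$\geq 3$ Artinian quotient of $\cA_0$, so this step has to be carried out through a higher infinitesimal thickening. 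Once $\rQ(s) = 0$ is known, the remaining task is to identify $[\xi]$ together with the weight class $\xi_{V^+}$ inside a single ordinary deformation of $\rho$ along $\Spec\cA_0$ over a length-two (or length-three) base, and to compute the $\mathbf{c}$-slope of $V^+$ via the determinant formula of Proposition \ref{prop:calcul_c_slope}: the compatibility forces the $\mathbf{c}$-slope to equal $Z(s)$ with $Z(s) = -2T(s)$, equivalently $\rQ'(s) = 0$. Thus $s$ is a multiple root of $\rQ$ and the $\mathbf{c}$-slope of $V^+$ equals $-2t$, contradicting \eqref{tag:slope_2t+z_one_line}.

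The main obstacle is this final step: extracting the second cohomology class cleanly — in particular keeping track of the infinitesimal order at which $X$, and hence the relevant nonzero trace, becomes detectable, and handling the non-freeness of the ordinary submodule over $\cA_0$ — and then performing the linear-algebra computation that pins the $\mathbf{c}$-slope to $-2T(s)$. This is where the explicit description of the localization maps in terms of the regulator matrices $L$ and $M$ of \S\ref{sec:vanishing_tgt_spaces}, together with the polynomial identities relating $\rQ$, $\rP_L$, $T$ and $Z$ from \S\ref{sec:regulators} (notably $4\,\mathcal{K}(S) = \rQ'(S)$ and $Z(S)+2T(S) = \mathcal{K}(S)/\rP_L(S)$ up to sign), enter, and it occupies the bulk of the section.
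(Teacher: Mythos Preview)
Your outline correctly names the two cohomological ingredients — $\Sel(\ad^0\rho,V^+)=\{0\}$ from \eqref{tag:non_vanishing_resultant_P_L_P_M}, and the $\mathbf{c}$-slope constraint from \eqref{tag:slope_2t+z_one_line} — but the framing via a single first-order map $\cA_0\to\bar\Q_p[\epsilon]$ killing $X$ does not support the second half of your dichotomy. When $[\xi]\notin\cS(V^+)$ you want the $\mathbf{c}$-slope of $V^+$ to equal $-2t$, yet nothing about a lone tangent vector forces this: in the paper the relation arises from the order-$k$ ordinariness identity \eqref{eq:relation_giving_2t+z}, which couples the traceless cocycle at level $Y^k$ to the trace-$\lambda$ cocycle $\xi_{V^+}$ at level $Y^e$ through the commutator $\left[\xi_{e,\ord};\left(\begin{smallmatrix}0&0\\1&0\end{smallmatrix}\right)\right]$ and the slope coefficient $s_{k-e}$. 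That coupling is only visible when one works to order $Y^{k+1}$ and sees both classes simultaneously inside the same $Y$-adic expansion; your $\bar\Q_p[\epsilon]$-point sees at most one of them. Your last paragraph concedes exactly this, but what you defer as ``the bulk of the section'' is in fact the entire argument.

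The paper's remedy is to pass to the normalization $\cA=\bar\Q_p\lsem Y\rsem$ (over which the ordinary line is a direct summand and there is a clean uniformizer) and to run an induction on the $Y$-adic order rather than to argue from a single cotangent direction. Proposition~\ref{prop:first_deformations_rho_A} kills every $\xi_n$ with $n<e$ and identifies $[\xi_e]=[\xi_{V^+}]$; Proposition~\ref{prop:higher_order_deformation_rho_A} then shows, one $k$ at a time, that every $\xi_k$ and every slope coefficient $s_{k-e}$ with $e\nmid k$ vanishes — each step being precisely your dichotomy, but now with the commutator relation \eqref{eq:relation_giving_2t+z} available to pin the $\mathbf{c}$-slope to $-2t$ when $s_{k-e}\neq0$ and thereby contradict \eqref{tag:slope_2t+z_one_line}. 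Once only $Y^e$-powers survive, $\tr\rho_\cA$ and the image of $\mathbf{U}_p$ lie in $\varLambda$; since trace values generate $\cA_0$, one concludes $\cA_0=\varLambda$.
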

	
	We start the proof with some useful observations. Fix a $\cA$-basis $(e_{\cA,1},e_{\cA,2})$ of $\mathbb{V}$ and write
	\begin{equation}\label{eq:decomposition_rho_modulo_powers_of_Y} 
		\rho_{\cA} = \left(\Id+Y\xi_1+Y^2\xi_2+\ldots\right)\cdot \rho 
	\end{equation} 
	in $\GL_2(\cA)=\GL_2(\ob{\Q}_p\lsem Y \rsem)$. In the next lemma, we say that $\bV^+$ has slope $F(Y)\in\cA$ with respect to $(e_{\cA,1},e_{\cA,2})$ if $\bV^+$ is generated by $e_{\cA,1}+F(Y)\cdot e_{\cA,2}$.

	\begin{lemma}\label{lem:change_of_basis}
		Assume that $\xi_1=\ldots=\xi_{k-1}=0$ in \eqref{eq:decomposition_rho_modulo_powers_of_Y} for some integer $k\geq1$.
		\begin{enumerate}
			\item The map $\xi_k\colon G_{\Q} \to \End(\bV/Y\bV)=\End(V)$ is a cocycle in $\mathrm{Z}^1(G_\Q,\End(V))$, and its cohomology class lies in $\Sel(\ad\rho,V^+)$.
			\item Let $\xi_k'\colon G_{\Q} \to \End(V)$ be another cocycle cohomologous to $\xi_k$. Assume $\bV^+$ has slope $F(Y)\in\cA$ with respect to $(e_{\cA,1},e_{\cA,2})$. Then there exists an $\cA$-basis $(\tilde{e}_{\cA,1},\tilde{e}_{\cA,2})$ of $\bV$ such that
			\begin{enumerate}
				\item[(a)] $({e}_{\cA,1},{e}_{\cA,2})$ and $(\tilde{e}_{\cA,1},\tilde{e}_{\cA,2})$ induce the same basis of $\bV/Y^k\bV$,
				\item[(b)] We have $\rho_{\cA} = \left(\Id+Y^k\xi_k'+\ldots\right)\cdot \rho$ in the basis $(\tilde{e}_{\cA,1},\tilde{e}_{\cA,2})$, and
				\item[(c)] $\bV^+$ has slope $\tilde{F}(Y)$ with respect to $(\tilde{e}_{\cA,1},\tilde{e}_{\cA,2})$ satisfying $F(Y)\equiv\tilde{F}(Y) \mod Y^k$.
			\end{enumerate}
		\end{enumerate}
	\end{lemma}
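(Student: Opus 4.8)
The plan is to read off both claims directly from the $Y$-adic expansion \eqref{eq:decomposition_rho_modulo_powers_of_Y}; the only structural input I would use is that $\rho$ is unramified at $p$ with scalar Frobenius, so that $\rho_{|G_{\Q_p}}=c\cdot\Id$ for an unramified character $c$ and, in particular, $\rho(g)$ commutes with every $\cA$-linear change of basis of $\bV$ for $g\in G_{\Q_p}$.

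For (i), I would set $M(g):=\rho_\cA(g)\rho(g)^{-1}=\Id+Y^k\xi_k(g)+Y^{k+1}\xi_{k+1}(g)+\cdots$ (using the hypothesis $\xi_1=\cdots=\xi_{k-1}=0$), note that the homomorphism property of $\rho_\cA$ gives $M(gh)=M(g)\cdot\bigl(\rho(g)M(h)\rho(g)^{-1}\bigr)$, and compare the coefficients of $Y^k$: this yields $\xi_k(gh)=\xi_k(g)+\rho(g)\xi_k(h)\rho(g)^{-1}$, which is precisely the cocycle relation for $\mathbf{W}=\ad\rho$ (continuity of $\xi_k$ being immediate from that of $\rho_\cA$). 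To verify the local conditions defining $\Sel(\ad\rho,V^+)$, I would pass to an $\cA$-basis $(v_1,v_2)$ of $\bV$ with $v_1$ a generator of the free rank-one $G_{\Q_p}$-stable summand $\bV^+$. Since $\rho(g)$ is a scalar for $g\in G_{\Q_p}$, it commutes with the base-change matrix, so the $Y^k$-coefficient of $\rho_\cA\rho^{-1}$ computed in this basis is the matrix of the same linear map $\xi_k(g)$ now expressed in the basis $(\bar v_1,\bar v_2)$ of $V$. In the basis $(v_1,v_2)$ the representation $\rho_\cA$ is $G_{\Q_p}$-upper triangular with diagonal $(\epsilon_+,\delta)$, $\delta$ unramified, while $\rho_{|G_{\Q_p}}=c\cdot\Id$; hence $\rho_\cA(g)\rho(g)^{-1}-\Id$ is upper triangular for $g\in G_{\Q_p}$, with lower-right entry $\delta(g)/c(g)-1$, which vanishes on $I_p$ because $\delta$ and $c$ are unramified. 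Extracting the coefficient of $Y^k$, the map $\xi_k(g)$ is upper triangular on $G_{\Q_p}$ and has vanishing second row on $I_p$; as $\bar v_1$ spans $V^+$, this is exactly $\xi_k(G_{\Q_p})V^+\subseteq V^+$ and $\xi_k(I_p)V\subseteq V^+$, i.e. $[\xi_k]\in\Sel(\ad\rho,V^+)$.

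For (ii), I would choose $m\in\mathbf{W}$ with $\xi_k'(g)=\xi_k(g)+\rho(g)m\rho(g)^{-1}-m$, let $\hat m$ be the matrix of $m$ in $(\bar e_{\cA,1},\bar e_{\cA,2})$, and change basis by $P:=\Id+Y^k\hat m\in\GL_2(\cA)$, i.e. set $\tilde e_{\cA,j}:=\sum_i P_{ij}e_{\cA,i}$. Then $P\equiv\Id\bmod Y^k$ gives (a) at once, and $P\equiv\Id\bmod Y$ shows the reduction of $\rho_\cA$ in the new basis is still $\rho$. Expanding
\[P^{-1}\rho_\cA(g)P=(\Id-Y^k\hat m+\cdots)\,(\Id+Y^k\xi_k(g)+\cdots)\,\rho(g)\,(\Id+Y^k\hat m)\]
and collecting the coefficient of $Y^k$ gives $P^{-1}\rho_\cA(g)P=\bigl(\Id+Y^k(\xi_k(g)+\rho(g)\hat m\rho(g)^{-1}-\hat m)+O(Y^{k+1})\bigr)\rho(g)=\bigl(\Id+Y^k\xi_k'(g)+O(Y^{k+1})\bigr)\rho(g)$, which is (b). For (c), I would transport the generator of $\bV^+$: its coordinate vector $(1,F(Y))^{\mathrm t}$ becomes $P^{-1}(1,F(Y))^{\mathrm t}=(1+Y^ka,\,F(Y)+Y^kb)^{\mathrm t}$ for suitable $a,b\in\cA$, and since $1+Y^ka\in\cA^\times$, the new slope $\tilde F(Y)=(F(Y)+Y^kb)/(1+Y^ka)$ lies in $\cA$, is finite (so $\tilde e_{\cA,2}$ does not generate $\bV^+$), and satisfies $\tilde F\equiv F\bmod Y^k$.

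The one step that demands care is the basis-independence invoked in (i): the Taylor coefficients $\xi_j$ depend a priori on the fixed $\cA$-basis of $\bV$, and it is precisely the scalarness of $\rho_{|G_{\Q_p}}$ that makes the \emph{linear maps} $\xi_k(g)$ for $g\in G_{\Q_p}$ intrinsic, which is what allows the Selmer conditions to be checked in a basis adapted to $\bV^+$. Everything else is routine $Y$-adic bookkeeping.
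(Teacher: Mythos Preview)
Your proof is correct and follows essentially the same approach as the paper: extract the cocycle relation from the $Y^k$-coefficient of the multiplicativity relation, deduce the Selmer conditions from ordinarity of $\rho_\cA$ together with scalarness of $\rho_{|G_{\Q_p}}$, and for (ii) conjugate by $\Id+Y^k m$. The only stylistic difference is that for the Selmer conditions the paper argues coordinate-free (from $\rho_\cA(G_{\Q_p})\cdot\bV^+\subseteq\bV^+$ and $(\rho_\cA-\Id)(I_p)\cdot\bV\subseteq\bV^+$ one reads off the conditions on $\xi_k$ directly after dividing by $Y^k$ and reducing), whereas you pass explicitly to a basis adapted to $\bV^+$; your remark that scalarness of $\rho(g)$ for $g\in G_{\Q_p}$ makes the linear map $\xi_k(g)$ basis-independent is exactly what underlies the paper's shortcut.
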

	\begin{proof}
		We first check (i). The term in $Y^k$ in the relation $\rho_\cA(gh)=\rho_\cA(g)\rho_\cA(h)$ yields
		\[ \xi_k(gh)=\xi_k(g)+\rho(g)\xi_k(h)\rho(g)^{-1} \text{ for all $g,h\in G_\Q$,} \]
		so $\xi_k$ is indeed a cocycle. The ordinarity condition for $\rho_\cA$ reads 
		\[\rho_\cA(G_{\Q_p})\cdot \bV^+ \subseteq \bV^+ \quad \mbox{and}\quad (\rho_\cA-\Id)(I_p)\cdot \bV \subseteq \bV^+\]
		As $\rho_{|G_{\Q_p}}$ is scalar and $\rho_{|I_p}$ is trivial, the relation $\xi_k\equiv Y^{-k}(\rho_\cA\rho^{-1}-\Id) \mod Y$ shows that
		\[\xi_k(G_{\Q_p})\cdot V^+ \subseteq V^+ \quad \mbox{and}\quad \xi_k(I_p)\cdot V \subseteq V^+,\]
		\textit{i.e.}, $[\xi_k]\in \Sel(\ad\rho,V^+)$.
		
		To prove (ii), let $M\in\End(V)$ be such that $\xi_k=\xi_k'-\rho M \rho^{-1}+M$, and define $(\tilde{e}_{\cA,1},\tilde{e}_{\cA,2})$ as the image of $({e}_{\cA,1},{e}_{\cA,2})$ under $\Id+Y^kM\in\mathrm{Aut}_\cA(\bV)$. Then (a) clearly holds, and $\rho_\cA$ in this new basis has the form
		\begin{align*}
			(\Id+Y^kM)^{-1}(\Id+Y^k\xi_k+\ldots)\rho(\Id+Y^kM) &\equiv (\Id-Y^kM)(\Id+Y^k\xi_k)\rho(\Id+Y^kM) \mod Y^{k+1} \\
			&\equiv (\Id+Y^k\xi'_k)\rho \mod Y^{k+1},
		\end{align*}
		proving (b). Moreover, the slopes are related by the condition 
		\[\begin{pmatrix}1 & 0 \\ \tilde{F}(Y) & 1 \end{pmatrix}\cdot(1+Y^kM)=\begin{pmatrix}1 & 0 \\ F(Y) & 1 \end{pmatrix}\]
		which clearly implies (c).
	\end{proof}
	
	We turn to the proof of Theorem \ref{thm:etaleness} and we fix a basis $(e_1,e_2)$ of a $\ob{\Q}$-structure $V_{\ob{\Q}}$ of $V$ satisfying \eqref{tag:dihedral_basis_assumption} and adapted to $V^+$, so $V^+$ is generated by $e_1+s\cdot e_2$ for some $s\in\ob{\Q}_p$. Note that the slope $F(Y)$ of $\bV^+$ with respect to any $\cA$-basis $(e_{\cA,1},e_{\cA,2})$ of $\bV$ lifting $(e_1,e_2)$ must satisfy $F(0)=s$. We call $s$ the residual slope of $\bV^+$ (with respect to $(e_{\cA,1},e_{\cA,2})$).
	
	\begin{prop}\label{prop:first_deformations_rho_A}
		Assume \eqref{tag:non_vanishing_resultant_P_L_P_M}. 
		\begin{enumerate}
			\item There exists an $\cA$-basis $(e_{\cA,1},e_{\cA,2})$ of $\bV$ lifting $(e_1,e_2)$ in which $\rho_\cA$ can be written in the form $\left(\Id+Y^e\xi_e+Y^{e+1}\xi_{e+1}+\ldots\right)\cdot \rho$. 
			\item The residual slope $s$ is a root of the polynomial $\rQ(S)$ introduced in \eqref{eq:def_polynomial_computing_residual_slopes}, and $\xi_e$ is a cocycle whose cohomology class is the class $[\xi_{V^+}]\in\Sel(\ad\rho,V^+)$ of Lemma \ref{lem:definition_xi_1}.
		\end{enumerate}
	\end{prop}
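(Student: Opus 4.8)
The plan is to extract the first nonzero cocycle $\xi_k$ in the expansion \eqref{eq:decomposition_rho_modulo_powers_of_Y} and identify it up to coboundary using the infinitesimal theory of $\S\ref{sec:selmer_groups}$, then pin down the exact order of vanishing $k=e$ using the determinant constraint $\det\rho_\cA = \det\rho\cdot\chi_{\cyc}$. First I would fix any $\cA$-basis of $\bV$ lifting $(e_1,e_2)$ and write $\rho_\cA = (\Id + Y\xi_1 + Y^2\xi_2 + \cdots)\cdot\rho$. Since $\rho_\cA$ is a genuine (non-constant) deformation, not every $\xi_j$ vanishes; let $k\geq1$ be minimal with $\xi_k\neq0$ as a function $G_\Q\to\End(V)$ (after using Lemma \ref{lem:change_of_basis} to replace any nonzero cohomologous cocycle by a convenient representative, the class $[\xi_k]$ is well-defined). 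By Lemma \ref{lem:change_of_basis}(i), $[\xi_k]\in\Sel(\ad\rho,V^+)$, and by \eqref{eq:exact_sequence_with_two_Selmer_groups} together with Proposition \ref{prop:dimension_selmer_ad_0_rho} (which applies under \eqref{tag:non_vanishing_resultant_P_L_P_M}) the trace $\tr(\xi_k)$ is a scalar multiple $c\cdot\lambda$ of the canonical class. The key computation is that $c\neq0$: from $\det\rho_\cA = \det\rho\cdot\chi_{\cyc}$ one gets $\tr(\xi_k)\equiv Y^{-k}\log(\chi_{\cyc})\bmod Y$ in the relevant sense, and since $\chi_{\cyc}$ has ramification index $e$ over $\varLambda$ (i.e.\ $\chi_{\cyc}$ corresponds to $X=Y^e$ under $\varLambda\hookrightarrow\cA$), the cyclotomic character $\chi_{\cyc}$ first deforms nontrivially in degree exactly $e$ in the variable $Y$. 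This forces $k=e$ and $c\neq0$; after rescaling $Y$ (which does not affect $e$ or the residual slope) we may take $c=1$, so $\tr(\xi_e)=\lambda$.

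With $k=e$ and $\tr(\xi_e)=\lambda$ established, part (ii) follows quickly. Since $\dim\Sel(\ad^0\rho,V^+)=0$ by Proposition \ref{prop:dimension_selmer_ad_0_rho}, the exact sequence \eqref{eq:exact_sequence_with_two_Selmer_groups} shows that $\Sel(\ad\rho,V^+)$ injects into $\ob{\Q}_p\cdot\lambda$ via $\tr$; as $[\xi_e]$ has trace $\lambda\neq0$, the Selmer group is one-dimensional and $[\xi_e]$ is its unique element of trace $\lambda$, which by definition is $[\xi_{V^+}]$ of Lemma \ref{lem:definition_xi_1}. But Proposition \ref{prop:dimension_selmer_ad_rho} tells us that $\dim\Sel(\ad\rho,V^+)=1$ precisely when the slope $s$ of $V^+$ is a root of $\rQ(S)$, giving the claim $\rQ(s)=0$. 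For part (i), I would invoke Lemma \ref{lem:change_of_basis}(ii): starting from the basis $(e_{\cA,1},e_{\cA,2})$ lifting $(e_1,e_2)$ in which $\rho_\cA$ already has the form $(\Id+Y^e\xi_e+\cdots)\rho$ for the particular representative $\xi_e$ lying in $\mathrm{Z}^1(G_\Q,\End V)$ that we selected, no further modification is needed; more carefully, one applies the lemma with $k=e$ and $\xi_k'$ the chosen cocycle representative to produce the desired basis, noting that the residual slope $s=F(0)$ is unchanged by condition (c).

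The main obstacle I anticipate is the bookkeeping that identifies the leading term of $\tr(\xi_\bullet)$ with the leading term of $\log_p\chi_{\cyc}$ in the $Y$-adic filtration, and thereby certifies that the first nonvanishing of $\rho_\cA\rho^{-1}-\Id$ occurs in degree exactly $e$ rather than some earlier degree in which the traceless part might vanish while the trace does not (or vice versa). Concretely one must rule out the scenario where $\xi_k$ is traceless and nonzero for some $k<e$: but such a $\xi_k$ would give a nonzero class in $\Sel(\ad^0\rho,V^+)$, directly contradicting Proposition \ref{prop:dimension_selmer_ad_0_rho}. Thus the degree-by-degree argument is: below degree $e$ the trace must vanish (as $\chi_{\cyc}\equiv1\bmod Y^e$ after identifying $X=Y^e$), hence $\xi_j$ is traceless, hence $\xi_j=0$ by vanishing of $\Sel(\ad^0\rho,V^+)$; and in degree $e$ the trace is forced to be the nonzero class $\lambda$, so $\xi_e\neq0$. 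This clean interplay between the determinant constraint and the vanishing of the adjoint Selmer group is what makes the argument work, and making it rigorous (taking proper care of the fact that $\xi_j$ is only a cocycle up to coboundary at each stage, handled by iterated application of Lemma \ref{lem:change_of_basis}) is the crux.
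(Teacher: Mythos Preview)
Your approach is essentially the same as the paper's: use the determinant constraint $\det\rho_\cA=\chi_{\cyc}\cdot\det\rho$ together with the vanishing of $\Sel(\ad^0\rho,V^+)$ (Proposition~\ref{prop:dimension_selmer_ad_0_rho}) to show that, after iterated basis changes via Lemma~\ref{lem:change_of_basis}(ii), the first nonzero cocycle appears in degree exactly $e$ with trace $\lambda$, then invoke Lemma~\ref{lem:definition_xi_1} and Proposition~\ref{prop:dimension_selmer_ad_rho}. The paper phrases the induction more cleanly by choosing at the outset a basis in which the order of vanishing $n$ is \emph{maximal}, then deriving a contradiction if $n<e$; your iterative version is logically equivalent.

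One correction: the remark ``after rescaling $Y$ \ldots\ we may take $c=1$'' is both unnecessary and incorrect. The uniformizer $Y$ is constrained by $Y^e=X$, so it is determined up to an $e$-th root of unity, and such a rescaling multiplies $\xi_e$ by that root to the $e$-th power, i.e.\ by $1$. Fortunately no rescaling is needed: since $\chi_{\cyc}\equiv 1+\lambda X\bmod X^2$ and $X=Y^e$, comparing $\det\rho_\cA\equiv(1+Y^e\tr\xi_e)\det\rho\bmod Y^{e+1}$ with $\det\rho_\cA=\chi_{\cyc}\cdot\det\rho\equiv(1+\lambda Y^e)\det\rho\bmod Y^{e+1}$ gives $\tr\xi_e=\lambda$ directly.
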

	\begin{proof}
		We first prove (i). Let $(e_{\cA,1},e_{\cA,2})$ be any $\cA$-basis of $\bV$ lifting $(e_1,e_2)$ in which $\rho_\cA$ is of the form $\left(\Id+Y^n\xi_n+Y^{n+1}\xi_{n+1}+\ldots\right)\cdot \rho$ with $n\geq 1$ maximal. We claim that $n=e$. Indeed, we have 
		\begin{equation}\label{eq:computation_det_mod_Y_n}
			\det \rho_{\cA}\equiv \det(1+Y^n\xi_n)\det\rho\equiv (1+Y^n\tr\xi_n)\det \rho \mod Y^{n+1}.
		\end{equation}
		by assumption. But we also know that
		\begin{equation}\label{eq:computation_det_mod_Y_e}
			\det \rho_{\cA}=\chi_{\cyc} \cdot \det \rho \equiv (1+Y^e\lambda)\cdot \det\rho \mod Y^{e+1}.
		\end{equation}
		In particular, we must have $n\leq e$. Assume now $n<e$, so that $\tr\xi_n=0$. As $[\xi_n]\in \Sel(\ad\rho,V^+)$ by Lemma \ref{lem:change_of_basis} (i), this shows that, in fact, $[\xi_n]$ belongs to $\Sel(\ad^0\rho,V^+)$, which is trivial by Proposition \ref{prop:dimension_selmer_ad_0_rho}. Therefore, Lemma \ref{lem:change_of_basis} (ii) with $k=n$ and $\xi_n'=0$ provides us with another basis lifting $(e_1,e_2)$ in which $\rho_\cA$ has the form $\left(\Id+Y^{n+1}\xi_{n+1}+\ldots\right)\cdot \rho$, contradicting the choice of $(e_{\cA,1},e_{\cA,2})$. Hence, $n= e$, proving (i).
		
		It is clear from \eqref{eq:computation_det_mod_Y_n} and \eqref{eq:computation_det_mod_Y_e} that $\tr\xi_e=\lambda$. Since $[\xi_e]\in \Sel(\ad\rho,V^+)$, this forces $[\xi_e]=[\xi_{V^+}]$ by Lemma \ref{lem:definition_xi_1}. The fact that $s$ is a root of $\rQ(S)$ then follows from Proposition \ref{prop:dimension_selmer_ad_rho}, hence proving (ii).
	\end{proof}	
	
	We analyze the terms of higher order appearing in $\rho_\cA$ in the next proposition.
	
	\begin{prop}\label{prop:higher_order_deformation_rho_A}
		Assume \eqref{tag:non_vanishing_resultant_P_L_P_M} and \eqref{tag:slope_2t+z_one_line} hold.
		Then for all integers $k\geq e$, there exists an $\cA$-basis $(e^{(k)}_{\cA,1},e^{(k)}_{\cA,2})$ of $\bV$  lifting $(e_1,e_2)$, satisfying the following three conditions.
		\begin{enumerate}
			\item[(a)] $\rho_\cA$ is of the form 
			\[\rho_{\cA}\equiv(\Id+Y^e\xi_e+Y^{2e}\xi_{2e}+\ldots +Y^{me}\xi_{me}+Y^{k}\xi_{k})\cdot\rho \mod Y^{k+1}\]
			in $(e^{(k)}_{\cA,1},e^{(k)}_{\cA,2})$, where $m\geq 0$ is such that $me < k\leq me+e$.
			\item[(b)] If $k>e$, then the slope $F(Y)$ of $\bV^+$ with respect to $(e^{(k)}_{\cA,1},e^{(k)}_{\cA,2})$ is of the form 
			\[F(Y)\equiv s+ s_eY^e+s_{2e}Y^{2e}+\ldots +s_{(m-1)e}Y^{(m-1)e}+s_{k-e}Y^{k-e} \mod Y^{k-e+1}. \]
			\item[(c)] If $k$ is not a multiple of $e$, then $\xi_k=0$ and $s_{k-e}=0$.
		\end{enumerate}
	\end{prop}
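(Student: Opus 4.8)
The plan is to prove Proposition~\ref{prop:higher_order_deformation_rho_A} by induction on $k\geq e$, the base case $k=e$ being Proposition~\ref{prop:first_deformations_rho_A} (conditions (b) and (c) being vacuous there). For the inductive step from $k$ to $k+1$ I would distinguish two cases, according to whether $e$ divides $k+1$. If $e\mid k+1$ (the only case occurring when $e=1$, where the statement reduces to the Taylor expansion of $\rho_\cA$), there is nothing to do: keeping the basis $(e^{(k)}_{\cA,1},e^{(k)}_{\cA,2})$ unchanged, condition (a) for $k+1$ is the $Y^{k+1}$-prolongation of (a) for $k$ (all $\xi_j$ with $k+1-e<j<k+1$ vanish by the inductive hypothesis (a), (c) for $k$), condition (c) for $k+1$ is empty, and (b) for $k+1$ merely names the next coefficient $s_{k+1-e}$ of the slope $F(Y)$ without constraining anything not already constrained by (b), (c) for $k$. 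So the content lies entirely in the case $e\nmid k+1$, which I now treat.

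Working in $(e^{(k)}_{\cA,1},e^{(k)}_{\cA,2})$ and writing $\rho_\cA=(\Id+Y^e\xi_e+\dots)\cdot\rho$ as in \eqref{eq:decomposition_rho_modulo_powers_of_Y}, the inductive hypothesis gives $\rho_\cA\equiv(\Id+\sum_{1\leq je\leq k}Y^{je}\xi_{je}+Y^{k+1}\xi_{k+1})\cdot\rho\pmod{Y^{k+2}}$ for some function $\xi_{k+1}\colon G_\Q\to\End(V)$. I would first check that $\xi_{k+1}$ is a cocycle with values in $\ad^0\rho$: extracting the $Y^{k+1}$-term of $\rho_\cA(gh)=\rho_\cA(g)\rho_\cA(h)$, the only quadratic contributions $\xi_{ie}(g)\cdot\rho(g)\xi_{je}(h)\rho(g)^{-1}$ would require $ie+je=k+1$, impossible since $e\nmid k+1$, so $\xi_{k+1}\in\rZ^1(G_\Q,\End(V))$; and since $\det\rho_\cA=\det\rho\cdot\chi_{\cyc}$ with $\chi_{\cyc}$ a power series in $X=Y^e$, the $Y^{k+1}$-coefficient of $\det\rho_\cA/\det\rho$ vanishes, which (again there being no quadratic contribution in this degree) forces $\tr\xi_{k+1}=0$. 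Thus $[\xi_{k+1}]\in\HH^1(\Q,\ad^0\rho)$.

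The crux is to determine the restriction of $\xi_{k+1}$ at $p$ from the ordinarity of $\rho_\cA$. Let $F(Y)=s+s_eY^e+\dots\in\cA$ be the slope of $\bV^+$ in the basis $(e^{(k)}_{\cA,1},e^{(k)}_{\cA,2})$; conjugating by $\left(\begin{smallmatrix}1&0\\ F(Y)&1\end{smallmatrix}\right)$, ordinarity says that for $g\in G_{\Q_p}$ the $(2,1)$-entry of $\rho_\cA(g)$ vanishes and for $g\in I_p$ its lower row equals $(0,1)$. By (b) and (c) of the inductive hypothesis the coefficients $F_1,\dots,F_{k-e}$ of $F$ are supported on multiples of $e$, so upon extracting the $Y^{k+1}$-coefficient of these identities the only coefficients of $F$ that intervene are $F_0=s$ and the single new unknown $s_{k+1-e}$. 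Combining this with the equality $\xi_e|_{G_{\Q_p}}=\xi_{V^+}|_{G_{\Q_p}}$ (coboundaries being trivial on $G_{\Q_p}$) and the explicit description of $\xi_{V^+}|_{G_{\Q_p}}$ in \eqref{eq:xi_1_in_ordinary_basis} — which yields the identity $d_e-a_e-2sb_e=2t\cdot\ord-\lambda$ of functions on $G_{\Q_p}$, writing $\xi_e|_{G_{\Q_p}}=\left(\begin{smallmatrix}a_e&b_e\\ c_e&d_e\end{smallmatrix}\right)$ in the $V^+$-adapted basis — I obtain that the $W^\rc$-component of $\xi_{k+1}|_{G_{\Q_p}}$ equals $-s_{k+1-e}\cdot(2t\cdot\ord-\lambda)$, while the $I_p$-conditions read $\xi_{k+1}(I_p)\cdot V\subseteq V^+$ up to the same multiple of $s_{k+1-e}$. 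Since $\ord$ and $\lambda$ are $\ob{\Q}_p$-independent on $G_{\Q_p}$, the function $2t\cdot\ord-\lambda$ is nonzero; hence if $s_{k+1-e}\neq0$ then $[\xi_{k+1}]\notin\cS(V^+)$, and by the computation of the $\textbf{c}$-slope following Definition~\ref{def:c_slope} the $\textbf{c}$-slope of $V^+$ must be the ratio of the $\ord$- and $\lambda$-components of $\xi_{k+1}|_{G_{\Q_p}}$, namely $-2t$, contradicting \eqref{tag:slope_2t+z_one_line}. Therefore $s_{k+1-e}=0$, which establishes conditions (b) and (c) for $k+1$ on the slope side.

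With $s_{k+1-e}=0$ the correction term vanishes, so $\xi_{k+1}(G_{\Q_p})\cdot V^+\subseteq V^+$ and $\xi_{k+1}(I_p)\cdot V\subseteq V^+$, i.e. $[\xi_{k+1}]\in\Sel(\ad^0\rho,V^+)$; by \eqref{tag:non_vanishing_resultant_P_L_P_M} and Proposition~\ref{prop:dimension_selmer_ad_0_rho} this group is $0$, hence $\xi_{k+1}(g)=\rho(g)M\rho(g)^{-1}-M$ for some $M\in\End(V)$. Conjugating $\rho_\cA$ by $\Id+Y^{k+1}M$ then replaces $\xi_{k+1}$ by $0$ and leaves both $\rho_\cA$ and $F(Y)$ unchanged modulo $Y^{k+1}$ (compare Lemma~\ref{lem:change_of_basis}); the resulting basis is the required $(e^{(k+1)}_{\cA,1},e^{(k+1)}_{\cA,2})$, completing the induction. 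The main obstacle is the bookkeeping behind the third paragraph — extracting the $Y^{k+1}$-coefficient of the two ordinarity relations and checking that, thanks to the inductive sparsity of $F$, the outcome is an affine relation in the unique unknown $s_{k+1-e}$ with coefficient the everywhere-nonzero form $2t\cdot\ord-\lambda$ — everything else being formal manipulation of power series.
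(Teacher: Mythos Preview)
Your proof is correct and follows essentially the same route as the paper's: induction on $k$, the ``nothing to do'' step when $e\mid k+1$, and for $e\nmid k+1$ the extraction of the $Y^{k+1}$-coefficient of the ordinarity relation to obtain the commutator identity (the paper's relation \eqref{eq:relation_giving_2t+z}), whose $(2,1)$-entry $d'-a'=2t\cdot\ord-\lambda$ forces $s_{k+1-e}=0$ via \eqref{tag:slope_2t+z_one_line}, after which $[\xi_{k+1}]\in\Sel(\ad^0\rho,V^+)=0$ and Lemma~\ref{lem:change_of_basis}(ii) supplies the new basis. Your bookkeeping is slightly more explicit than the paper's on why the cross-terms $\xi_{ie}\cdot\xi_{je}$ and $s_{j'e}\cdot\xi_{je,\ord}$ with $j\geq 2$ cannot contribute in degree $k+1$ (divisibility by $e$), but the argument is the same.
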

	\begin{proof}
		If $k=e$, then this follows from Proposition \ref{prop:first_deformations_rho_A}. We prove the result by induction on $k$ and we assume we already constructed $(e_{\cA,1},e_{\cA,2}):=(e^{(k-1)}_{\cA,1},e^{(k-1)}_{\cA,2})$ for some $k>e$. If $k$ is a multiple of $e$, \textit{i.e.}, $k=me+e$, then we may simply take $(e^{(k)}_{\cA,1},e^{(k)}_{\cA,2})=(e_{\cA,1},e_{\cA,2})$ and there is, in fact, nothing to prove.
		
		Assume $k$ is not a multiple of $e$. One sees from parts (a) and (c) for $(e_{\cA,1},e_{\cA,2})$ that we may write $\rho_\cA$ as $(\Id+Y^e\xi_e+Y^{2e}\xi_{2e}+\ldots +Y^{me}\xi_{me}+Y^{k}\xi+\ldots)\cdot\rho$ in the basis $(e_{\cA,1},e_{\cA,2})$ for some map $\xi\colon G_{\Q} \to \End(V)$. 
		For $g,h\in G_\Q$, considering the term in $Y^{k}$ in the equality $\rho_{\cA}(gh)=\rho_{\cA}(g)\rho_{\cA}(h)$ shows that $\xi(gh)=\xi(g)+\rho(g)\xi(h)\rho(g)^{-1}$, \textit{i.e.}, $\xi\in \mathrm{Z}^1(G_\Q,\End(V))$. Moreover, one has $\tr \xi=0$. Indeed, $\tr\xi$ arises as the coefficient in $Y^k$ of the expansion of $\det\rho_{\cA}$ as a power series in $Y$. But this power series belongs to $\varLambda$ and $e\nmid k$, so the term in $Y^k$ must be zero.
		
		Let $W=\End^0(V)$. We claim that the cohomology class $[\xi]\in \HH^1(\Q,W)$ is trivial.
		As $\rho_{|G_{\Q_p}}$ is scalar, the ordinarity condition for $\rho_\cA$ in $(e_{\cA,1},e_{\cA,2})$ implies that 
		 \begin{equation}\label{eq:expansion_ordinarity_condition}
		 	\begin{pmatrix}1 & 0 \\ -F(Y) &1 \end{pmatrix} (\Id+Y^e\xi_e+Y^{2e}\xi_{2e}+\ldots +Y^{me}\xi_{me}+Y^{k}\xi+\ldots) \begin{pmatrix}1 & 0 \\ F(Y) &1 \end{pmatrix}=\begin{pmatrix}*& * \\ 0&\mathbf{unr} \end{pmatrix}
		 \end{equation}
		on $G_{\Q_p}$, where $\textbf{unr}$ stands for an undetermined unramified character. Define 
		\begin{equation}\label{eq:def_xi_ord}
			\xi_{\ord} = \begin{pmatrix}1 & 0 \\ -s &1 \end{pmatrix} \xi \begin{pmatrix}1 & 0 \\ s &1 \end{pmatrix},
		\end{equation}
		and similarly define $\xi_{e,\ord}$.
		 By (b) for $(e_{\cA,1},e_{\cA,2})$, we may write $(\begin{smallmatrix}1& 0\\ F(Y)&1 \end{smallmatrix})$ as $(\begin{smallmatrix}1 & 0 \\ s &1 \end{smallmatrix}) + s_eY^e(\begin{smallmatrix}0 & 0 \\ 1 &0  \end{smallmatrix})+\ldots + s_{k-e}Y^{k-e}(\begin{smallmatrix}0 & 0 \\ 1 &0 \end{smallmatrix})+\ldots$ and using that $(\begin{smallmatrix}0 & 0 \\ 1 &0 \end{smallmatrix})(\begin{smallmatrix}1 & 0 \\ s &1 \end{smallmatrix})=(\begin{smallmatrix}1 & 0 \\ -s &1 \end{smallmatrix})(\begin{smallmatrix}0 & 0 \\ 1 &0 \end{smallmatrix})=(\begin{smallmatrix}0 & 0 \\ 1 &0 \end{smallmatrix})$, the term in $Y^k$ of \eqref{eq:expansion_ordinarity_condition} gives the relation
		\begin{equation}\label{eq:relation_giving_2t+z}
			\xi_{\ord} + s_{k-e} \left[  \xi_{e,\ord}; \begin{pmatrix} 0  & 0 \\ 1 & 0 \end{pmatrix} \right]=\begin{pmatrix}\ast & \ast \\ 0 &\mathbf{unr} \end{pmatrix},
		\end{equation}
		on $G_{\Q_p}$, where we have put $[A;B]=AB-BA$ for two square matrices $A$ and $B$. 
		
		We claim that $s_{k-e}=0$. As $[\xi_e]=[\xi_{V^+}]$ by Proposition \ref{prop:first_deformations_rho_A} (ii), $\xi_{e,\ord|G_{\Q_p}}$ is the left hand side of \eqref{eq:xi_1_in_ordinary_basis}. If $s_{k-e}\neq 0$, then a computation of the $(2,1)$-entries in \eqref{eq:relation_giving_2t+z} with Proposition \ref{prop:calculation_Up} shows that the \textbf{c}-slope of $\xi$ is given by $-2t$, where $t$ is defined in \eqref{eq:xi_1_in_ordinary_basis}. But this contradicts \eqref{tag:slope_2t+z_one_line}, so $s_{k-e}=0$. In particular, \eqref{eq:relation_giving_2t+z} implies that $[\xi]\in\Sel(\ad^0\rho,V^+)$, which is trivial by Proposition \ref{prop:dimension_selmer_ad_0_rho}. We may therefore apply Lemma \ref{lem:change_of_basis} (ii) to find a new basis $(e^{(k)}_{\cA,1},e^{(k)}_{\cA,2})$ in which (a), (b) and (c) are valid. This finishes the proof of the proposition.
	\end{proof}
	
	\begin{proof}[Proof of Theorem \ref{thm:etaleness}]
		 As $\varLambda$ is closed in $\cA$ for the $(Y)$-adic topology, Proposition \ref{prop:higher_order_deformation_rho_A} shows that $\tr\rho_\cA$ takes values in $\varLambda$. The same proposition shows that the image of $\mathbf{U}_p$ in $\cA_0$ also lies in $\varLambda$ modulo $Y^k$ for all integers $k>0$, hence in $\varLambda$. Indeed, given $k>0$, it is obtained as the $(2,2)$-entry of the conjugate of $\rho_\cA(\Frob_p)$, computed in the basis $(e^{(k)}_{\cA,1},e^{(k)}_{\cA,2})$, by $(\begin{smallmatrix}1 & 0 \\ F_k(Y) & 1 \end{smallmatrix})$. Now, $\cA_0$ is generated by the values of $\tr\rho_\cA$, so $\cA_0=\varLambda$, as wanted.
	\end{proof}
	
\subsection{Residual slopes of Hida families}\label{sloplesresidually}
	As a consequence of Theorem \ref{thm:etaleness}, if  hypotheses \eqref{tag:non_vanishing_resultant_P_L_P_M} and \eqref{tag:discriminant} hold, then there is a one-to-one correspondence between minimal prime ideals of $\cT$ and Hida eigenfamilies $\cF$ with coefficients in $\varLambda$ passing through the point corresponding to $f_\alpha$. We shall now show that these Hida families are entirely determined by the residual slope of their ordinary filtration. 
	\medskip
	
	Hereafter, we denote by $\rho_\cF$ the ordinary $\varLambda$-adic Galois representation $\rho_{\cA_0}$, and by $\bV_\cF$ its underlying space . We also let  $\bV_\cF^+$, and $\bV_\cF^-$  be the ordinary line and quotient of $\bV_\cF$.
	
\newpage 
\begin{thm}\label{thm:uniqueness_residual_slope}
		Assume that hypothesis \eqref{tag:non_vanishing_resultant_P_L_P_M} holds. 
		\begin{enumerate}
			\item Let $\cF$ and $\cF'$ be two Hida eigenfamilies passing through $f_\alpha$. Assume that \[V^+:=\mathbb{V}_{\mathcal{F}}^+/X\mathbb{V}_{\mathcal{F}}^+=\mathbb{V}_{\mathcal{F}'}^+/X\mathbb{V}_{\mathcal{F}'}^+\]
            as $\ob{\Q}_p$-lines of $V$, and that \eqref{tag:slope_2t+z_one_line} holds. Then $\cF=\cF'$.
		\item Assume that hypothesis \eqref{tag:discriminant} holds as well. Then there are at most four Hida eigenfamilies passing through $f_\alpha$.
		\end{enumerate}
	\end{thm}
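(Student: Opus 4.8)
The plan is to leverage the higher-order deformation analysis already carried out in Proposition \ref{prop:higher_order_deformation_rho_A} and Theorem \ref{thm:etaleness}. For part (i): by Theorem \ref{thm:etaleness} (applicable since \eqref{tag:non_vanishing_resultant_P_L_P_M} and \eqref{tag:slope_2t+z_one_line} hold), both $\cF$ and $\cF'$ are étale over $\cW$, so their Galois representations $\rho_\cF$ and $\rho_{\cF'}$ are deformations of $\rho$ over $\varLambda = \bar\Q_p\lsem X\rsem$ with $X = Y$. First I would fix a $\ob{\Q}$-basis $(e_1,e_2)$ of $V_{\ob{\Q}}$ satisfying \eqref{tag:dihedral_basis_assumption} and adapted to the common line $V^+$ (using Lemma \ref{lem:existence_basis_adapted_to_ordinary_lines}, noting both families have $\dim\Sel(\ad\rho,V^+)=1$ by Proposition \ref{prop:first_deformations_rho_A}(ii) combined with the fact that they are non-étale-obstruction-free). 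Then, by Proposition \ref{prop:higher_order_deformation_rho_A} applied to each family, there exist $\varLambda$-bases in which both $\rho_\cF$ and $\rho_{\cF'}$ take the canonical form $(\Id + X\xi_1 + X^2\xi_2 + \ldots)\cdot\rho$ with $\xi_1 = \xi_{V^+}$ — the \emph{same} cocycle class, by Proposition \ref{prop:first_deformations_rho_A}(ii), since it is uniquely determined by $V^+$ via Lemma \ref{lem:definition_xi_1}. The heart of the argument is then an inductive comparison: assuming $\rho_\cF \equiv \rho_{\cF'} \pmod{X^k}$ in compatible bases, the difference of the $X^k$-terms is a cocycle in $\mathrm{Z}^1(\Q,\End V)$ whose class lies in $\Sel(\ad\rho,V^+)$ by the same local-at-$p$ analysis as in Lemma \ref{lem:change_of_basis}(i); but the trace is forced to vanish (both determinants equal $\det\rho\cdot\chi_{\cyc}$), so the class lies in $\Sel(\ad^0\rho,V^+) = \{0\}$ by Proposition \ref{prop:dimension_selmer_ad_0_rho}, and Lemma \ref{lem:change_of_basis}(ii) lets us adjust the basis to kill it — exactly mirroring the proof of Proposition \ref{prop:higher_order_deformation_rho_A}. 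Hence $\tr\rho_\cF = \tr\rho_{\cF'}$ as $\varLambda$-valued pseudocharacters, so by Chebotarev $\mathbf{a}_\ell(\cF) = \mathbf{a}_\ell(\cF')$ for all $\ell\nmid Np$, and matching of $\mathbf{U}_p$-eigenvalues (the $(2,2)$-entry of the conjugated $\rho_\cA(\Frob_p)$, again from Proposition \ref{prop:higher_order_deformation_rho_A}) gives $\mathbf{a}_p(\cF)=\mathbf{a}_p(\cF')$; thus $\cF = \cF'$ as eigenfamilies.

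For part (ii): assume in addition \eqref{tag:discriminant}, so \eqref{tag:slope_2t+z_one_line} holds for every line $V^+$ with $\dim\Sel(\ad\rho,V^+)=1$ (by the Lemma following \eqref{tag:discriminant}). By part (i), the map $\cF \mapsto V^+(\cF) := \bV_\cF^+/X\bV_\cF^+$ is injective from the set of Hida families through $f_\alpha$ to $\mathbf{P}(V)$. By Proposition \ref{prop:first_deformations_rho_A}(ii), the residual slope $s$ of each such $V^+(\cF)$ (with respect to a fixed adapted basis) is a root of the polynomial $\rQ(S)$ defined in \eqref{eq:def_polynomial_computing_residual_slopes}. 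Since $\deg\rQ(S) \leq 4$ and — under \eqref{tag:discriminant}, which presupposes $\deg\rQ(S)\geq 2$ so that the discriminant is defined, and moreover forces $\rQ$ to have no repeated roots — the polynomial $\rQ(S)$ has at most four distinct roots in $\ob{\Q}_p$. As distinct Hida families give distinct lines $V^+$ (part (i)) and each such line is adapted to a common basis (Lemma \ref{lem:existence_basis_adapted_to_ordinary_lines}), distinct families give distinct slopes $s$; hence there are at most four Hida eigenfamilies through $f_\alpha$.

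I expect the main obstacle in part (i) to be the bookkeeping in the inductive step: one must verify that the basis changes supplied by Lemma \ref{lem:change_of_basis}(ii) for $\cF$ and for $\cF'$ can be performed \emph{simultaneously} and compatibly, i.e. that after normalizing both representations to their canonical higher-order forms one genuinely compares equal $X^k$-coefficients rather than coefficients differing by a coboundary in a way that is not visible on cohomology. The cleanest route is to phrase the whole comparison in terms of the associated $\varLambda$-valued pseudocharacters $\tr\rho_\cF$ directly — since these are basis-independent — and to show inductively that $\tr\rho_\cF \equiv \tr\rho_{\cF'} \pmod{X^k}$ for all $k$, extracting at each stage a class in $\Sel(\ad^0\rho,V^+) = \{0\}$; this sidesteps the basis-matching issue entirely, though one still needs the ordinarity constraint to control the class at $p$ before invoking the vanishing of $\Sel(\ad^0\rho,V^+)$.
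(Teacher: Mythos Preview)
Your overall strategy for (i) is the right one and matches the paper's approach, but there is a genuine gap in the inductive step. You claim that the difference $\xi:=\xi_{k}-\xi'_{k}$ (or $\xi_{k+1}-\xi'_{k+1}$, depending on indexing) lies in $\Sel(\ad\rho,V^+)$ by the local-at-$p$ analysis of Lemma~\ref{lem:change_of_basis}(i). This is not correct as stated. The ordinary filtrations $\bV_\cF^+$ and $\bV_{\cF'}^+$ both reduce to $V^+$ modulo $X$, but their slopes $F(X)=s+s_1X+\cdots$ and $F'(X)=s+s_1'X+\cdots$ need not agree beyond the constant term. When you expand the ordinarity condition for each family to order $X^{k+1}$ and subtract, you do \emph{not} get $\xi_{\ord}=(\begin{smallmatrix}*&*\\0&\mathbf{unr}\end{smallmatrix})$ but rather
\[
\xi_{\ord} + (s_k-s_k')\left[\xi_{1,\ord};\begin{pmatrix}0&0\\1&0\end{pmatrix}\right]=\begin{pmatrix}*&*\\0&\mathbf{unr}\end{pmatrix}
\]
on $G_{\Q_p}$, where the cross term comes precisely from the discrepancy between the two slopes at the previous order. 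If $s_k\neq s_k'$, then the $(2,1)$-entry of this relation forces $[\xi]$ to have $\mathbf{c}$-slope equal to $-2t$ (with $t$ as in \eqref{eq:xi_1_in_ordinary_basis}), which is exactly what \eqref{tag:slope_2t+z_one_line} rules out. Only \emph{after} concluding $s_k=s_k'$ does the cross term vanish and $[\xi]$ land in $\Sel(\ad^0\rho,V^+)=\{0\}$. In other words, the induction must simultaneously propagate \emph{two} congruences---$\rho_\cF\equiv\rho_{\cF'}\pmod{X^{k+1}}$ and $F(X)\equiv F'(X)\pmod{X^{k}}$---and \eqref{tag:slope_2t+z_one_line} is used at every step, not only to invoke Theorem~\ref{thm:etaleness}. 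This is the content of the paper's Proposition~\ref{prop:congruences_between_Hida_families}.

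Your proposed ``cleanest route'' via pseudocharacters does not sidestep the issue: the trace alone carries no information about the ordinary filtration, so to extract a Selmer condition at $p$ you must still lift to representations and compare filtrations, at which point the slope-difference cross term reappears. Part~(ii) is fine; the remark about $\rQ(S)$ having simple roots is unnecessary (degree $\leq 4$ already gives at most four roots once $\rQ\neq 0$, which \eqref{tag:discriminant} ensures), but harmless.
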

	
	Let $\cF$, $\cF'$ and $V^+$ be as in Theorem \ref{thm:uniqueness_residual_slope} (i). Fix a basis $(e_1,e_2)$ of a $\ob{\Q}$-structure $V_{\ob{\Q}}$ of $V$ satisfying \eqref{tag:dihedral_basis_assumption} and adapted to $V^+$. 
	We deduce Theorem \ref{thm:uniqueness_residual_slope} from the following result whose proof is similar to that of Proposition \ref{prop:higher_order_deformation_rho_A}.
	
	\begin{prop}\label{prop:congruences_between_Hida_families}
		Fix a $\varLambda$-basis of $\bV_{\cF'}$ lifting $(e_1,e_2)$, and let $F'(X)$ be the slope of $\bV_{\cF'}$ with respect to this basis. Then for all $k\geq 1$, there exists a $\varLambda$-basis $(e^{(k)}_{\cF,1},e^{(k)}_{\cF,2})$ of $\bV_\cF$ such that 
		\begin{enumerate}
			\item[(a)] $\rho_{\cF} \equiv \rho_{\cF'} \mod X^{k+1}$ after identifying $\GL_{\varLambda}(\bV_\cF)$ and  $\GL_{\varLambda}(\bV_{\cF'})$ with $\GL_2(\varLambda)$, and
			\item[(b)] the slope $F_{k}(X)$ of $\bV^+_{\cF}$ with respect to $(e^{(k)}_{\cF,1},e^{(k)}_{\cF,2})$ satisfies $F_{k}(X)\equiv F'(X) \mod X^{k}$.
		\end{enumerate}
	\end{prop}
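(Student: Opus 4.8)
The plan is to argue by induction on $k$, following closely the proof of Proposition~\ref{prop:higher_order_deformation_rho_A}, with $\rho_{\cF'}$ now playing the role previously played by $\rho$ together with its determinant-imposed shape. A fact used repeatedly is that, since $\rho_{|G_{\Q_p}}$ is scalar, every coboundary in $\mathrm{Z}^1(G_\Q,\End(V))$ restricts to $0$ on $G_{\Q_p}$; hence the restriction to $G_{\Q_p}$ of a cocycle depends only on its class, and in particular \eqref{eq:xi_1_in_ordinary_basis} describes $\xi_{|G_{\Q_p}}$ for every cocycle $\xi$ representing $[\xi_{V^+}]$. For the base case $k=1$: as $\cF,\cF'$ are étale by Theorem~\ref{thm:etaleness}, write $\rho_\cF=(\Id+X\eta_1+\cdots)\rho$ in a $\varLambda$-basis of $\bV_\cF$ lifting $(e_1,e_2)$ and $\rho_{\cF'}=(\Id+X\eta_1'+\cdots)\rho$ in the fixed basis; Proposition~\ref{prop:first_deformations_rho_A}(ii) gives $[\eta_1]=[\eta_1']=[\xi_{V^+}]$, so $\eta_1-\eta_1'=M-\rho M\rho^{-1}$ for some $M\in\End(V)$, and conjugating $\rho_\cF$ by $\Id+XM$ produces a basis in which $\rho_\cF\equiv\rho_{\cF'}\mod X^2$; condition (b) holds modulo $X$ since both families have residual slope $s$.

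For the inductive step ($k\geq2$), assume the basis at level $k-1$ is given and write $\rho_\cF\equiv(\Id+\sum_{i=1}^{k-1}X^i\eta_i+X^k\eta_k)\rho$, $\rho_{\cF'}\equiv(\Id+\sum_{i=1}^{k-1}X^i\eta_i'+X^k\eta_k')\rho\mod X^{k+1}$. Condition (a) at level $k-1$ forces $\eta_i=\eta_i'$ for $i<k$, so $\rho_\cF\rho_{\cF'}^{-1}\equiv\Id+X^k\tilde\zeta\mod X^{k+1}$ with $\zeta:=\tilde\zeta\bmod X=\eta_k-\eta_k'$. Comparing coefficients of $X^k$ in the multiplicativity relations for $\rho_\cF$ and $\rho_{\cF'}$ shows $\zeta\in\mathrm{Z}^1(G_\Q,\End(V))$, while $\det\rho_\cF=\det\rho_{\cF'}=\det\rho\cdot\chi_{\mathrm{cyc}}$ forces $\tr\zeta=0$, so $\zeta\in\mathrm{Z}^1(G_\Q,W)$. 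Putting $F_{k-1}(X)-F'(X)=cX^{k-1}+O(X^k)$, one compares $(\begin{smallmatrix}1&0\\-F_{k-1}&1\end{smallmatrix})\rho_\cF(\begin{smallmatrix}1&0\\F_{k-1}&1\end{smallmatrix})$ — which is of ordinary shape on $G_{\Q_p}$ since $\cF$ is ordinary — with $(\begin{smallmatrix}1&0\\-F'&1\end{smallmatrix})\rho_{\cF'}(\begin{smallmatrix}1&0\\F'&1\end{smallmatrix})$; expanding modulo $X^{k+1}$ and using $(\begin{smallmatrix}0&0\\1&0\end{smallmatrix})^2=0$ to linearize the effect of the slope discrepancy, one obtains on $G_{\Q_p}$ the relation
\[
\zeta_{\ord}+c\left[\eta'_{1,\ord};\begin{pmatrix}0&0\\1&0\end{pmatrix}\right]=\begin{pmatrix}\ast&\ast\\0&\mathbf{unr}\end{pmatrix},
\]
where $(-)_{\ord}$ denotes conjugation by $(\begin{smallmatrix}1&0\\-s&1\end{smallmatrix})$, $\mathbf{unr}$ is an unramified character, and $\eta_1'$ represents $[\xi_{V^+}]$. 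This is the exact analogue of \eqref{eq:relation_giving_2t+z}, with $\eta_1'$ in place of $\xi_e$ and $c$ in place of $s_{k-e}$.

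If $c\neq0$, computing the $(2,1)$-entry of the displayed relation and invoking Proposition~\ref{prop:calculation_Up} shows that $[\zeta]\notin\cS(V^+)$ and that the $\mathbf{c}$-slope of $V^+$ equals $-2t$, contradicting \eqref{tag:slope_2t+z_one_line}; hence $c=0$, which upgrades (b) at level $k-1$ to $F_{k-1}(X)\equiv F'(X)\mod X^k$. With $c=0$ the relation reads $\zeta_{\ord}=(\begin{smallmatrix}\ast&\ast\\0&\mathbf{unr}\end{smallmatrix})$ on $G_{\Q_p}$, i.e.\ $[\zeta]\in\Sel(\ad^0\rho,V^+)$, which vanishes by Proposition~\ref{prop:dimension_selmer_ad_0_rho} (using \eqref{tag:non_vanishing_resultant_P_L_P_M}); thus $\zeta=M-\rho M\rho^{-1}$ is a coboundary, and conjugating $\rho_\cF$ by $\Id+X^kM$ gives a basis $(e^{(k)}_{\cF,1},e^{(k)}_{\cF,2})$ with $\rho_\cF\equiv\rho_{\cF'}\mod X^{k+1}$, while the corresponding slope changes only by $O(X^k)$, so $F_k(X)\equiv F_{k-1}(X)\equiv F'(X)\mod X^k$. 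I expect the main obstacle to be precisely this local computation at $p$: one has to control the $X$-adic orders carefully enough that, after linearization, only the commutator of the leading deformation $\xi_{V^+}$ with $(\begin{smallmatrix}0&0\\1&0\end{smallmatrix})$ survives, so that the relation reproduces \eqref{eq:relation_giving_2t+z} verbatim and Propositions~\ref{prop:dimension_selmer_ad_0_rho} and \ref{prop:calculation_Up} together with \eqref{tag:slope_2t+z_one_line} apply directly.
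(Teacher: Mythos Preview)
Your proposal is correct and follows essentially the same approach as the paper's own proof: an induction on $k$ where the base case uses Proposition~\ref{prop:first_deformations_rho_A}(ii) to match the first cocycle, and the inductive step shows that $\zeta=\eta_k-\eta_k'$ is a trace-zero cocycle whose local behavior at $p$ is governed by the analogue of \eqref{eq:relation_giving_2t+z}, from which \eqref{tag:slope_2t+z_one_line} forces the slope discrepancy $c$ to vanish and then Proposition~\ref{prop:dimension_selmer_ad_0_rho} kills $[\zeta]$. The only differences from the paper are cosmetic: you run the induction from $k-1$ to $k$ rather than from $k$ to $k+1$, and you write out the conjugation by $\Id+X^kM$ explicitly where the paper invokes Lemma~\ref{lem:change_of_basis}(ii).
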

	
	\begin{proof}
		We proceed by induction on $k$. Assume first $k=1$. As $\mathbb{V}_{\mathcal{F}}^+/X\mathbb{V}_{\mathcal{F}}^+=\mathbb{V}_{\mathcal{F}'}^+/X\mathbb{V}_{\mathcal{F}'}^+$ by assumption, any basis of $\bV_\cF$ lifting $(e_1,e_2)$ will satisfy (b). Choose an arbitrary such basis and write $\rho_{\cF}$ (resp. $\rho_{\cF'}$) as 
		\begin{equation}\label{eq:decompositions_rho_F_and_rho_F'}
			(\Id+X\xi_1+X^{2}\xi_2+\ldots )\cdot\rho \qquad \text{(resp. $(\Id+X\xi'_1+X^{2}\xi'_2+\ldots )\cdot\rho$)}
		\end{equation}
		in this basis (resp. in the fixed basis of $\bV_{\cF'}$). Then, by Proposition \ref{prop:first_deformations_rho_A} (ii) (with $e=1$, \textit{i.e.}, $Y=X$), $[\xi_1]=[\xi_{V^+}]=[\xi'_1]$. We then obtain from Lemma \ref{lem:change_of_basis} (ii) (with $k=1$) that there exists another basis $(e^{(1)}_{\cF,1},e^{(1)}_{\cF,2})$ of $\bV_{\cF}$ lifting $(e_1,e_2)$ in which \eqref{eq:decompositions_rho_F_and_rho_F'} holds with $\xi_1=\xi_1'$. This basis then satisfies (a) and (b), proving the result for $k=1$.
		
		Let $k\geq 1$ and assume we have constructed $(e^{(k)}_{\cF,1},e^{(k)}_{\cF,2})$. Write $\rho_{\cF}$ (resp. $\rho_{\cF'}$) as in \eqref{eq:decompositions_rho_F_and_rho_F'} in the basis $(e^{(k)}_{\cF,1},e^{(k)}_{\cF,2})$ (resp. in the fixed basis of $\bV_{\cF'}$). In particular, we have $\xi_i=\xi_i'$ for $i\leq k$. Moreover, for $F(X):=F_k(X)=s_0+s_1X+\ldots$ and $F'(X)=s_0'+s_1'X+\ldots$, we have $s_i=s'_i$ for $i\leq k-1$.
		
		Using that $\rho_{\cF}$ and $\rho_{\cF'}$ are group homomorphisms and $\xi_i=\xi_i'$ for $i\leq k$, one sees that $\xi:=\xi_{k+1}-\xi_{k+1}'$ is a cocycle in $\mathrm{Z}^1(G_{\Q},\End(V))$. Moreover, we have \[1=\det(\rho_{\cF})\cdot\det(\rho_{\cF'})^{-1}\equiv 1+X^{k+1}\tr\xi \mod X^{k+2},\]
		so $[\xi]\in\HH^1(\Q,W)$, where $W=\End^0(V)$. 
		
		We now determine the local properties of $\xi$ using the ordinarity of $\rho_{\cF}$ and $\rho_{\cF'}$.
		For $i\geq 1$, we put $\xi_{i,\ord}=(\begin{smallmatrix}
			1 & 0 \\ -s & 1
		\end{smallmatrix})\xi_i (\begin{smallmatrix}1 & 0 \\ s & 1\end{smallmatrix})$ and we similarly define $\xi'_{i,\ord}$ and $\xi_{\ord}$. 
		Using the ordinarity condition for $\cF$, a similar computation to that proving \eqref{eq:relation_giving_2t+z} gives
		\[
		\xi_{k+1,\ord}+s_k\cdot [\xi_{1,\ord},(\begin{smallmatrix}0 & 0 \\ 1 & 0\end{smallmatrix})]+\zeta=(\begin{smallmatrix}
			* & * \\ 0 & \textbf{unr}
		\end{smallmatrix})
		\]
		on $G_{\Q_p}$, where $\zeta$ is a term only depending on $s_{i-1},\xi_{i}$ for $1\leq i\leq k$, and $\textbf{unr}$ is an undetermined unramified character. Considering the ordinarity condition for $\cF'$, a subtraction then yields  
		\begin{equation}\label{eq:ordinarity_induction}
			\xi_{\ord}+ (s_k-s_k')[\xi_{1,\ord},(\begin{smallmatrix}0 & 0 \\ 1 & 0\end{smallmatrix})]=(\begin{smallmatrix}
			* & * \\ 0 & \ur
		\end{smallmatrix}) 
		\end{equation}
		on $G_{\Q_p}$. If $s_k\neq s_k'$, then the exact same argument as in Proposition \ref{prop:higher_order_deformation_rho_A} shows that \eqref{tag:slope_2t+z_one_line} does not hold, so $s_k=s_k'$. But \eqref{eq:ordinarity_induction} then implies that the class of $\xi$ lies in the space $\Sel(\ad^0\rho,V^+)$, which is $\{0\}$ by Proposition \ref{prop:dimension_selmer_ad_0_rho}, so $\xi_k$ and $\xi_k'$ are cohomologous. By Lemma \ref{lem:change_of_basis} (ii), we may find a basis $(e^{(k+1)}_{\cF,1},e^{(k+1)}_{\cF,2})$ of $\bV_\cF$ in which \eqref{eq:decompositions_rho_F_and_rho_F'} with $\xi_{i}=\xi'_i$ holds for $1\leq i\leq k+1$, and such that $\bV_\cF^+$ has slope congruent to $F'(X)$ modulo $X^{k+1}$. This proves (a) and (b) for $k+1$.
	\end{proof}
	
	\begin{proof}[Proof of Theorem \ref{thm:uniqueness_residual_slope}]
	We first prove (i). If $\cF$ and $\cF'$ satisfy the conditions in (i), then Proposition \ref{prop:congruences_between_Hida_families} shows that $\tr\rho_{\cF}= \tr\rho_{\cF'}$, so $\cF=\cF'$ by strong multiplicity one for Hida families.
		
		Consider now (ii). As $\cT$ is finite over $\varLambda$, there exists by Lemma \ref{lem:existence_basis_adapted_to_ordinary_lines} a basis $(e_1,e_2)$ of $V_{\ob{\Q}}$ satisfying \eqref{tag:dihedral_basis_assumption} and which is adapted to $\bV^+_\cF/X\bV_\cF^+$ for any family $\cF$ passing through $f_\alpha$.
		By Proposition \ref{prop:first_deformations_rho_A} (ii), the slope $s$ in the basis $(e_1,e_2)$ of $V^+=\bV^+_\cF/X\bV_\cF^+$ for any $\cF$ is a root of the polynomial $\rQ(S)$, which is (nonzero by \eqref{tag:discriminant}, and) of degree at most $4$. As $s$ determines $\cF$ by (i), there are indeed at most $4$ families $\cF$ passing through $f_\alpha$.
	\end{proof}
	
	\section{Structure of the Hecke algebra at weight one points}\label{sec:geometry2}
	In \S \ref{sec:geometry}, we established that at most four components of the eigencurve meet at $x$ and that each is \'etale over the weight space. To conclude the proof of  Theorem  \ref{thm:main_result}, it remains to show that the four components exist and intersect at $\mathrm x$ as generically as possible.

\subsection{Overview of the proof of Theorem \ref{thm:main_result}, part \ref{part2}}\label{overview2}
	The main tool of this section is the study of the overconvergent generalized eigenspace corresponding to the system of Hecke eigenvalues of $f_{\alpha}$, denoted by $S_1^{\dagger}(N,\ob{\Q}_p)\lsem f_\alpha\rsem$. As noted by Darmon, Rotger and Lauder \cite{DLR4}, the Fourier coefficients of elements of this generalized eigenspace can be described in terms of suitable $S$-units. As a consequence of the étaleness of each irreducible component of $\cC$ through $\mathrm x$ established in \S \ref{sec:geometry}, combined with Hida duality \eqref{eq:Hida_duality}, the number of Hida eigenfamilies passing through $\mathrm x$ is equal to $\ob{\Q}_p$-dimension of the space $S_1^{\dagger}(N,\ob{\Q}_p)\lsem f_\alpha\rsem$. Since, by assumption, the form $f$ is $p$-irregular, it follows that
 $f,f_\alpha \in S_1^{\dagger}(N,\ob{\Q}_p)\lsem f_\alpha\rsem$, which implies that there are at least two irreducible components of $\cC$ passing through $f$.  We use the explicit description of the Fourier coefficients of forms in the generalized eigenspace and results in $p$-adic transcendental number theory to show that the dimension is at least four.  The idea is that, given two Hida eigenfamilies $\cF_1$ and $\cF_2$ specializing to $f$ at weight one, the specialization of $\frac{\cF_1-\cF_2}{X}$ is an overconvergent cusp form  $g^{\dagger} \in S_1^{\dagger}(N,\ob{\Q}_p)\lsem f_\alpha\rsem$. 
 To show that the dimension of $S_1^{\dagger}(N,\ob{\Q}_p)\lsem f_\alpha\rsem$ is greater than 2, it suffices to show that $g^{\dagger}$ is not in the  span of $f$ and $f_\alpha$ using $p$-adic transcendence theory. Similarly, the argument can be strengthened to show that the dimension is greater than 3. We note that our proof also settles a conjecture on the generalized eigenspace overconvergent eigenspace of $f_{\alpha}$ formulated in \cite{DLR4}.

	\subsection{Generalized overconvergent eigenforms}
	
	We let $\mathcal{U}:=\mathrm{Sp}(A)$ be a (sufficiently small) affinoid of the weight space $\cW$ containing $\kappa(f_\alpha)$  and $\textbf{M}^{\ord}_{\kappa^{\mathcal{U}}}(N)$ be the finite type projective $A$-module of Coleman families of slope zero and weight varying in $\mathcal{U}$ \cite[Def.5.1]{Pi13}. Let $\cT_{\mathcal{U}}$ be the finite flat  Hecke $A$-algebra acting faithfully on $\textbf{M}^{\ord}_{\kappa^{\mathcal{U}}}(N)$. In fact, 
	$\mathcal{V}:=\mathrm{Sp}(\cT_{\mathcal{U}})$ is an admissible open of the eigencurve $\cC$ and the weight map arises from the finite flat map  $A \to \cT_{\mathcal{U}}$.
	
	 Let $\textbf{S}^{\ord}(N,\varLambda)_{f_\alpha}$ be the $\cT$-module of cuspidal Hida families with coefficients in $\varLambda$ which specialize to $f_\alpha$ at $X=0$, i.e., $\textbf{S}^{\ord}(N,\varLambda)_{f_\alpha}$ is the completed localization of $\textbf{M}^{\ord}_{\kappa^{\mathcal{U}}}(N)$ at the maximal ideal of Hecke algebra $\cT_{\mathcal{U}}$ corresponding to the weight cusp form $f_\alpha$ ($f_\alpha$ lies over $\kappa(f_\alpha) \in \mathcal{U}$).

	By Hida duality, the pairing $(T,\cF) \mapsto a_1(T\cdot \cF)$ is a $\varLambda$-linear perfect pairing which identifies $\cT$ with $\Hom_{\varLambda}(\textbf{S}^{\ord}(N,\varLambda)_{f_\alpha},\varLambda)$ (\cite[Prop. 1.6]{betinadimitrovJTNB}). We obtain by moding out by $X$ a $\ob{\Q}_p$-linear perfect pairing
	\begin{equation}\label{eq:Hida_duality}
		\cT/\Gm_{\varLambda}\cdot \cT \times S_1^{\dagger}(N,\ob{\Q}_p)\lsem f_\alpha\rsem  \to \ob{\Q}_p. 
	\end{equation}
	
	\par

	We explain how to compute the Fourier coefficients of certain generalized eigenforms, following the main results of \cite{DLR4}. Working with a basis of $V$ satisfying \eqref{tag:dihedral_basis_assumption}, we keep the notations in \S\ref{sec:selmer_groups} and, in particular, the basis $(w_1,w_2,w_3)$ of $W=\End^0(V)$, the basis $(\kappa,\kappa^{(\ell)})$ of $\Hom_{G_\Q}(W_{\ob{\Q}},U_{H,\ob{\Q}}^{(\ell)})$ and the matrices $L,M,M^{(\ell)}$ of \eqref{eq:def_matrices_L_and_M} and \eqref{eq:def_matrice_M_ell} for regular primes $\ell$.
	
	In order to state the next proposition, we put $G=\Gal(H/\Q)$, we define $U\subseteq W$ to be the line generated by $L_1w_1+L_2w_2+L_3w_3$ and, for any regular prime $\ell\nmid Np$, we let $w_\ell=\rho(\Frob_{\ell})-\frac{1}{2}\tr(\rho(\Frob_{\ell}))$ be the canonical generator of the line $W^{G_\ell}$ fixed by $G_\ell=\Gal(H_{\Gp_\ell}/\Q_\ell)$. We also denote by $\ord_\ell \colon \cO_H[1/\ell]^\times \to \Z$ the valuation map associated to the chosen prime $\Gp_\ell$ and, by slight abuse of notation, we also denote by $\ord_\ell$ its $\ob{\Q}_p$-linear extension $U_H^{(\ell)} \to \ob{\Q}_p$.
	\begin{prop}\label{prop:calcul_coeff_DLR}
		There is an isomorphism $\theta \colon \HH^1(\Q,W) \to W/U$ and, for all primes $\ell\nmid Np$ at which $f$ is regular, a element $o(\ell)\in\ob{\Q}_p^\times$ with the following properties. Let $[\xi]\in\HH^1(\Q,W)$ and write $\theta([\xi])$ as $x_1w_1+x_2w_2+x_3w_3$. Then we have $\tr(\xi(\Frob_{\ell})\rho(\Frob_{\ell}))=0$ if $f$ is irregular at $\ell$, and otherwise,
		\begin{equation}\label{eq:formula_DLR_coeffs}
			\tr(\xi(\Frob_{\ell})\rho(\Frob_{\ell}))=o(\ell)\cdot\begin{vmatrix}
			x_1 & L_1 & M_1^{(\ell)} \\
			x_2 & L_2 & M_2^{(\ell)} \\
			x_3 & L_3 & M_3^{(\ell)} 
		\end{vmatrix},
		\end{equation}
		with $o(\ell)$ characterized by the relation 
		\begin{equation}\label{eq:def_o_ell}
			\frac{\#G_\ell}{\#G}\cdot w_\ell=o(\ell)\cdot \left(\ord_\ell(\kappa^{(\ell)}(w_3))w_1+\frac{1}{2}\ord_\ell(\kappa^{(\ell)}(w_2))w_2+\ord_\ell(\kappa^{(\ell)}(w_1))w_3\right)
		\end{equation}
		in $W$.
	\end{prop}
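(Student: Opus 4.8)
The plan is to recover this statement --- which is the Galois‑cohomological heart of the Fourier coefficient computation of Darmon--Lauder--Rotger \cite{DLR4} --- from Poitou--Tate duality together with the self‑duality of $\ad^0\rho$, an approach which also makes the normalization \eqref{eq:def_o_ell} transparent. First I would record that $W=\ad^0\rho$ carries the non‑degenerate $G_\Q$‑equivariant pairing $\langle A,B\rangle=\tr(AB)$, with Gram matrix $\left(\begin{smallmatrix}0&0&1\\0&2&0\\1&0&0\end{smallmatrix}\right)$ in the basis $(w_1,w_2,w_3)$; it identifies $\widecheck W\simeq W(1)$, sends the basis $(w_1,w_2,w_3)$ to the dual basis $(w_3,\tfrac12 w_2,w_1)$, and has $\ob{\Q}_p w_\ell=W^{\Frob_\ell}$ as the eigenvalue‑$1$ line for regular $\ell$. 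Since $\xi$ is valued in $W$ and hence traceless, $\tr(\xi(\Frob_\ell)\rho(\Frob_\ell))=\tr(\xi(\Frob_\ell)w_\ell)=\langle\xi(\Frob_\ell),w_\ell\rangle$, which is well defined on $\HH^1_{\ur}(\Q_\ell,W)=W/(\Frob_\ell-1)W$; when $\ell$ is irregular $\rho(\Frob_\ell)$ is scalar and the trace vanishes, so from now on $\ell$ is regular.

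Next I would evaluate $\langle\xi(\Frob_\ell),w_\ell\rangle$ through the global reciprocity law $\sum_v\mathrm{inv}_v(\loc_v[\xi]\cup\loc_v\eta)=0$, applied to the two unit classes $\eta=\kappa\in\HH^1_{\rf}(\Q,\widecheck W)=\Hom_{G_\Q}(W_{\ob\Q},U_{H,\ob\Q})$ and $\eta=\kappa^{(\ell)}\in\HH^1_{\rf,\{\ell\}}(\Q,\widecheck W)=\Hom_{G_\Q}(W_{\ob\Q},U_{H,\ob\Q}^{(\ell)})$ of \S\ref{sec:vanishing_tgt_spaces}. Here $\kappa$ is unramified at all finite places and crystalline at $p$, with $\loc_p\kappa$ recorded by $L=(L_j)$ via \eqref{eq:H1_widecheck_W^e}; $\kappa^{(\ell)}$ is unramified away from $\{p,\ell\}$ and crystalline at $p$, with $\loc_p\kappa^{(\ell)}$ recorded by $M^{(\ell)}=(M^{(\ell)}_j)$, while the image of $\loc_\ell\kappa^{(\ell)}$ in $\HH^1(\Q_\ell,\widecheck W)/\HH^1_{\ur}(\Q_\ell,\widecheck W)$ is detected by the functional $w\mapsto\ord_\ell(\kappa^{(\ell)}(w))$. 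The places $v\nmid Np\ell\infty$ contribute nothing (cup product of two unramified classes), $v=\infty$ contributes nothing over $\ob{\Q}_p$, and the $q\mid N$ contributions vanish because $f$ is $N$‑new, so that $[\xi]$ is minimally ramified at each $q$ and $\loc_q\kappa,\loc_q\kappa^{(\ell)}$ lie in the annihilator of the minimal subspace. Writing $v([\xi])\in W$ for the image of $[\xi]$ under $\HH^1(\Q,W)\xrightarrow{\loc_p}\HH^1(\Q_p,W)\twoheadrightarrow\HH^1_{\cyc}(\Q_p,W)=\ob{\Q}_p\cdot\lambda\otimes W\simeq W$, local Tate duality at $p$ shows that $\mathrm{inv}_p(\loc_p[\xi]\cup\loc_p\eta)$ equals the standard inner product (via the basis $(w_1,w_2,w_3)$) of $v([\xi])$ with the coordinate vector recording $\eta$, because $\loc_p\kappa$ and $\loc_p\kappa^{(\ell)}$ both lie in the finite part $\HH^1_{\rf}(\Q_p,\widecheck W)$. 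Reciprocity for $\kappa$ then forces $v([\xi])\cdot L=0$; at $\ell$, Tate duality and \eqref{eq:def_o_ell} give $\mathrm{inv}_\ell(\loc_\ell[\xi]\cup\loc_\ell\kappa^{(\ell)})=c\,o(\ell)^{-1}\langle\xi(\Frob_\ell),w_\ell\rangle$ for a nonzero constant $c$ independent of $\ell$ (the factor $\#G_\ell/\#G$ in \eqref{eq:def_o_ell} is exactly what absorbs the local degree factors), and $o(\ell)\neq0$ since $\kappa^{(\ell)}\notin\Hom_{G_\Q}(W,U_H)$ so $\ord_\ell\circ\kappa^{(\ell)}\neq0$. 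Combining, reciprocity for $\kappa^{(\ell)}$ yields $\tr(\xi(\Frob_\ell)\rho(\Frob_\ell))=c^{-1}o(\ell)\,\bigl(v([\xi])\cdot M^{(\ell)}\bigr)$.

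It remains to package this as the determinant formula. The map $[\xi]\mapsto v([\xi])$ is injective: its kernel consists of classes in $\HH^1(\Q,W)$ whose localization at $p$ is unramified, hence --- using that $f$ is $N$‑new and arguing as in \cite[Lem.~3.2]{D-B} --- of classes unramified everywhere, which vanish by finiteness of the class group of $H$. As $\dim\HH^1(\Q,W)=2$ and $v([\xi])\cdot L=0$, this identifies $\HH^1(\Q,W)$ with the plane $L^\perp\subset W$. On the other hand $W/U\to L^\perp$, $\vec x\mapsto\vec x\times L$ (cross product of coordinate vectors) is an isomorphism, since the kernel of $\vec x\mapsto\vec x\times L$ on $W$ is exactly $U=\ob{\Q}_p\cdot(L_1w_1+L_2w_2+L_3w_3)$. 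Defining $\theta([\xi])\in W/U$ by $\theta([\xi])\times L=v([\xi])$ (and rescaling by $c$) then produces an isomorphism $\theta\colon\HH^1(\Q,W)\xrightarrow{\sim}W/U$ with
\[
\tr(\xi(\Frob_\ell)\rho(\Frob_\ell))=o(\ell)\cdot\bigl((\theta([\xi])\times L)\cdot M^{(\ell)}\bigr)=o(\ell)\cdot\begin{vmatrix}x_1&L_1&M^{(\ell)}_1\\x_2&L_2&M^{(\ell)}_2\\x_3&L_3&M^{(\ell)}_3\end{vmatrix},
\]
which is \eqref{eq:formula_DLR_coeffs}; crucially $\theta$ is fixed once and for all by the $p$‑adic data, so the same $\theta$ works for every regular $\ell$ simultaneously.

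The step I expect to be the main obstacle is the careful bookkeeping at the primes $q\mid N$: one has to pin down the precise local deformation condition at each $q$ coming from the $N$‑newness of $f$, verify that it is its own exact annihilator under Tate local duality and that $\loc_q\kappa$ and $\loc_q\kappa^{(\ell)}$ satisfy it, and confirm that the group to which \cite[Lem.~3.2]{D-B} assigns dimension $2$ is precisely the one occurring here. Tracking the nonzero constant --- and thereby checking that \eqref{eq:def_o_ell} is exactly the normalization making the leading coefficient equal to $o(\ell)$ rather than a spurious $\ell$‑dependent multiple --- likewise requires some care with the normalizations of local Tate duality and of the trace form on $W$.
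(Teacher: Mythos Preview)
Your approach is correct and genuinely different from the paper's. The paper does not rederive the formula: it computes the auxiliary elements $w(1)$ and $w(\ell)$ of \cite{DLR4} in the basis $(w_1,w_2,w_3)$, then \emph{quotes} the identity $\tr(\xi(\Frob_\ell)\rho(\Frob_\ell))=\det(w,w(1),w(\ell))$ from \cite[Thm.~26, (50)]{DLR4} (where the isomorphism $\tilde\theta^{-1}\colon w\mapsto\varphi_w$ is constructed), and finally renormalizes $\tilde\theta$ to obtain $\theta$ and the determinant \eqref{eq:formula_DLR_coeffs}. Your route via the global reciprocity law for the cup product against $\kappa$ and $\kappa^{(\ell)}$ is a self-contained alternative that makes the determinantal structure transparent: the relation $v([\xi])\cdot L=0$ from pairing with $\kappa$ produces the quotient by $U$, and the cross-product trick $\theta([\xi])\times L=v([\xi])$ neatly converts the scalar $v([\xi])\cdot M^{(\ell)}$ into $\det(\theta([\xi]),L,M^{(\ell)})$. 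What the paper's approach buys is brevity (it offloads the cohomological computation to \cite{DLR4}); what yours buys is that the normalization \eqref{eq:def_o_ell} is visibly the proportionality between the two generators $\ord_\ell\circ\kappa^{(\ell)}$ and $\langle\,\cdot\,,w_\ell\rangle$ of the line $(W^*)^{\Frob_\ell}$.

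One simplification you may be overestimating: the obstacle at $q\mid N$ is lighter than you suggest. Since $W$ is Artin, every Frobenius eigenvalue on $(W^*)^{I_q}$ is a root of unity, so $\Frob_q-1$ acts invertibly on $\widecheck W^{I_q}=(W^*)^{I_q}(1)$ and hence $\HH^1_{\ur}(\Q_q,\widecheck W)=0$ for every $q\neq p$. Thus $\loc_q\kappa=\loc_q\kappa^{(\ell)}=0$ automatically, and the local pairings at $q\mid N$ vanish regardless of any condition on $[\xi]$. The genuine bookkeeping that remains is (a) fixing once and for all what local conditions at $q\mid N$ are implicit in ``$\HH^1(\Q,W)$'' so that it is two-dimensional and $v$ is injective (this is where \cite[Lem.~3.2]{D-B} and the $N$-newness enter, exactly as you say), and (b) verifying that the constant $c$ coming from the local Tate pairing at $\ell$ is indeed independent of $\ell$ once the factor $\#G_\ell/\#G$ is absorbed into \eqref{eq:def_o_ell}.
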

	\begin{proof}
		We use the calculations and the notations in \cite[Proof. of Thm. 26]{DLR4}. Fix for the moment a prime $\ell\nmid Np$ at which $f$ is regular. We first compute the elements $w(1)$ and $w(\ell)\in W$ of \textit{loc. cit.} in our preferred basis $\{w_1,w_2,w_3\}$. To this end, we identify $W$ with its linear dual $W^*$ using the canonical nondegenerate pairing $\langle w,w'\rangle=\tr(ww')$. The Galois action then preserves $\langle\cdot ,\cdot \rangle$ and the dual basis of $\{w_1,w_2,w_3\}$ is $\{w_3,\tfrac{1}{2}w_2,w_1\}$, so $W^*\simeq W$ is $G$-equivariant and there are isomorphisms 
		\begin{align}
			\mu\colon &\Hom_{G}(W,U_H)\simeq (U_H\otimes W)^{G}, \\ \mu^{(\ell)}\colon & \Hom_{G}(W,\dfrac{U_H^{(\ell)}}{U_H})\simeq (\dfrac{U_H^{(\ell)}}{U_H}\otimes W)^{G},
		\end{align}
		both given by 
		\[\underline{\kappa} \mapsto \underline{\kappa}({w}_3)\otimes{w}_1+\frac{1}{2}\underline{\kappa}({w}_2)\otimes{w}_2+\underline{\kappa}({w}_1)\otimes{w}_3.\]
		The element $w(1)\in W$ is, by definition, any nonzero element in the image of the map $\lambda \otimes \Id \colon (U_H\otimes W)^{G} \to W$, $\sum_{1\leq i\leq 3} u_i\otimes w_i \mapsto \lambda(u_i)w_i$, where $\lambda$ is the normalized logarithm introduced in \S\ref{sec:cohomology_Artin}. We may take for $w(1)$ the image of $\mu(\kappa)$ under this map, \textit{i.e.}, 
		\begin{equation}\label{eq:def_w1}
			w(1)=L_3w_1+\tfrac{1}{2}L_2w_2+L_1w_3.
		\end{equation}
		
		The definition of $w(\ell)$ involves the choice of an element $u_\ell\in U_H^{(\ell)}$ of the form $\tilde{u} \otimes h^{-1}$, where $h$ is the class number of $H$ and $\tilde{u}\in H^\times$ generates $\Gp_\ell^h$. Notice that, for $\sigma\in \Gal(H/\Q)$, $\ord_\ell(\sigma(u_\ell))=1$ if $\sigma \in \Gal(H_{\Gp_\ell}/\Q_\ell)$ and $\ord_\ell(\sigma(u_\ell))=0$ otherwise. First define the element (denoted by $\xi(u_\lambda,w_\lambda)$ in \textit{loc. cit.})
		\begin{equation}
			J^{(\ell)}:=\frac{1}{\#G} \cdot \sum_{\sigma \in G} \sigma(u_\ell) \otimes \sigma(w_\ell)\in (U_H^{(\ell)}/U_H\otimes W)^{G}.
		\end{equation}
		Let $o(\ell)\in\ob{\Q}_p$ be determined by the relation $J^{(\ell)}=o(\ell)\cdot\mu^{(\ell)}(\kappa^{(\ell)})$. If we apply to this relation the map $\ord_\ell\otimes \Id \colon (U_H^{(\ell)}/U_H\otimes W)^{G} \to W^{G_\ell}$ given by $u \otimes w \mapsto \ord_\ell(u)w$, then we obtain exactly \eqref{eq:def_o_ell}. As $w_\ell\neq 0$, this also shows that $o(\ell)\neq 0$. 
		
		The element $w(\ell)\in W$ is defined in \textit{loc. cit.} as the image under $\log_p \otimes \Id \colon (U_H^{(\ell)}/U_H\otimes W)^{G} \to W/U$ of $J^{(\ell)}$, so we have the relation 
		 \begin{equation}\label{eq:def_w_ell}
		 	w(\ell)=o(\ell)\cdot \log_p(1+p^\nu)\cdot \left(M_3^{(\ell)}w_1+\frac{1}{2}M_2^{(\ell)}w_2+M_1^{(\ell)}w_3\right).
		 \end{equation} 
		 We now define $\tilde{\theta}\colon \HH^1(\Q,W) \simeq W/\langle w(1)\rangle$ as the inverse of the isomorphism $w\mapsto \varphi_w$ defined in \cite{DLR4}, where it is shown (in (50) and the following equation) that, for $w=\tilde{\theta}([\xi])$ with $[\xi]\in\HH^1(\Q,W)$, 
		 \begin{equation}\label{eq:formula_DLR_cited_as_such}
		 	\tr(\xi(\Frob_{\ell})\rho(\Frob_{\ell}))=\det(w,w(1),w(\ell)).
		 \end{equation}
		 We finally re-normalize $\tilde{\theta}$ by letting $\theta$ be $\tilde{\theta}$ followed by the linear isomorphism $W/\langle w(1)\rangle \to W/U$ induced by $x_2w_1+x_2w_2+x_3w_3\mapsto \log_p(1+p^\nu)/2\cdot (x_3w_1+2x_2w_2+x_1w_3)$. Formula \eqref{eq:formula_DLR_coeffs} then follows from \eqref{eq:formula_DLR_cited_as_such} together with \eqref{eq:def_w1} and \eqref{eq:def_w_ell}.
	\end{proof}	
	
	\begin{cor}\label{cor:vanishing_cocycle_at_frob_ell}
		If a cocycle class $[\xi]\in\HH^1(\Q,W)$ is such that $\tr(\xi(\Frob_{\ell})\rho(\Frob_{\ell}))=0$ for all primes $\ell\nmid Np$, then $[\xi]=0$.
	\end{cor}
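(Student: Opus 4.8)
The plan is to deduce the corollary directly from Proposition \ref{prop:calcul_coeff_DLR} by a Chebotarev argument. Suppose $[\xi]\in\HH^1(\Q,W)$ satisfies $\tr(\xi(\Frob_\ell)\rho(\Frob_\ell))=0$ for every prime $\ell\nmid Np$. Write $\theta([\xi])=x_1w_1+x_2w_2+x_3w_3$ in $W/U$. For a regular prime $\ell$, formula \eqref{eq:formula_DLR_coeffs} and the non-vanishing of $o(\ell)$ force the $3\times3$ determinant with columns $(x_i)$, $(L_i)$ and $(M_i^{(\ell)})$ to vanish; since the first two columns span the image of $\theta([\xi])$ together with the line $U=\ob{\Q}_p\cdot(L_1,L_2,L_3)$, this says precisely that the vector $(M_1^{(\ell)},M_2^{(\ell)},M_3^{(\ell)})$ lies in the plane $\ob{\Q}_p\cdot\theta([\xi])+U$ of $W$ (viewing these as coordinate vectors in the basis $(w_1,w_2,w_3)$). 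So the whole point is to show that, unless $\theta([\xi])=0$ in $W/U$, the vectors $M^{(\ell)}$ for varying regular $\ell$ cannot all be confined to a single plane through $U$.

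The key step is therefore a statement of the following shape: the $\ob{\Q}_p$-span of $U$ together with all the vectors $(M_1^{(\ell)},M_2^{(\ell)},M_3^{(\ell)})$, as $\ell$ ranges over regular primes, is all of $W$. I would prove this by translating it back into a statement about Galois cohomology of $\widecheck W$ and $S$-units. Concretely, each $M^{(\ell)}$ is built from $\lambda(\kappa^{(\ell)}(w_j))$, where $\kappa^{(\ell)}$ generates $\Hom_{G_\Q}(W_{\ob\Q},U_{H,\ob\Q}^{(\ell)})/\Hom_{G_\Q}(W_{\ob\Q},U_{H,\ob\Q})$. As $\ell$ varies over regular primes, by Chebotarev the Frobenius classes realize every regular conjugacy class of $G=\Gal(H/\Q)$, and the localization-at-$\ell$ maps on $\Hom_{G_\Q}(W_{\ob\Q},U_{H,\ob\Q}^{(\ell)})$ together detect the full global module $\HH^1_{\rf}(\Q,\widecheck W)$ up to the image of $\HH^1_{\rf}(\Q,\widecheck W)$ itself; more precisely, I would argue that $\bigcap_\ell \ker(\loc_\ell)$ inside $\HH^1_{\rf,\{p\}}(\Q,\widecheck W)$ is contained in the everywhere-unramified classes, and then use the finiteness of the class group of $H$ to kill those. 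Dually, this says exactly that the $M^{(\ell)}$ (modulo the relations coming from $\kappa$, i.e.\ modulo $U$) span a complement of $U$ in $W$. Alternatively, and perhaps more cleanly, one can phrase the whole argument as: the map $\HH^1(\Q,W)\to\prod_{\ell\nmid Np}\ob{\Q}_p$, $[\xi]\mapsto(\tr(\xi(\Frob_\ell)\rho(\Frob_\ell)))_\ell$ is injective because $W$ is a semisimple $G_\Q$-module cut out by $H$, any class $[\xi]$ restricts to a homomorphism on $G_H$, and the pairing $w\otimes h\mapsto \tr(w\,\rho(\Frob_\ell))$ together with Chebotarev and the injectivity built into \eqref{eq:formula_DLR_coeffs} recovers $[\xi]$ from these traces.

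Concretely I would organize the write-up as follows. First, reduce to showing $\theta([\xi])=0$ in $W/U$, using Proposition \ref{prop:calcul_coeff_DLR} and the fact that for irregular $\ell$ the relevant trace is automatically zero and carries no information. Second, observe that $\theta$ is an isomorphism $\HH^1(\Q,W)\xrightarrow{\sim}W/U$, so $\theta([\xi])=0$ already gives $[\xi]=0$, and hence the entire content is the linear-algebra/Chebotarev claim that the family $\{M^{(\ell)}\}_\ell$ together with $U$ spans $W$. Third, prove that spanning claim: pick $\ell$ so that $\Frob_\ell$ is a regular element of $G$ in each of the (finitely many) relevant conjugacy classes, unwind \eqref{eq:def_o_ell} and the definition of $M^{(\ell)}$ via Lemma \ref{lem:Dirichlet}(iv) and Proposition \ref{prop:baker_brumer} to see that the $\kappa^{(\ell)}$, hence the $M^{(\ell)}$, cannot all lie in a fixed plane containing $U$ without forcing a nontrivial $\ob\Q$-linear relation among $p$-adic logarithms of $S$-units that Lemma \ref{lem:independence_entries_matrices_logs} excludes (using either the exotic or RM structure, and Conjecture \ref{conj:schanuel} is \emph{not} needed since we only use $\ob\Q$-linear independence, which holds unconditionally). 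I expect the main obstacle to be the bookkeeping in this third step: making precise which conjugacy classes of $\Frob_\ell$ one needs and checking that the corresponding $M^{(\ell)}$ genuinely escape every plane through $U$ — in the RM case one must separately handle primes split and inert in $K$, since by Lemma \ref{lem:independence_entries_matrices_logs}(ii) each type of prime contributes vectors $M^{(\ell)}$ lying in complementary coordinate subspaces, and one needs both types to span. Once that is in place, the corollary follows immediately.
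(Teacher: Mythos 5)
Your reduction of the corollary to a span statement is exactly right, and matches the paper's: write $\theta([\xi])=x_1w_1+x_2w_2+x_3w_3$, use \eqref{eq:formula_DLR_coeffs} together with $o(\ell)\neq 0$ to see that the vanishing of the traces forces every vector $M^{(\ell)}$ (for $\ell$ regular) to lie in the plane spanned by $(x_i)$ and $(L_i)=U$, and then conclude $[\xi]=0$ from the injectivity of $\theta$ once one knows the $M^{(\ell)}$ cannot all be confined to a fixed plane through $U$. The RM-case bookkeeping (needing both a split and several inert regular primes, because each kind contributes vectors supported in complementary coordinates) is also correctly anticipated.

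The gap is in your third step, and it is the real content of the proof. You write that the $M^{(\ell)}$ lying in a fixed plane through $U$ would force a nontrivial $\ob{\Q}$-linear relation among $p$-adic logarithms, so that $\ob{\Q}$-linear independence (Lemma \ref{lem:independence_entries_matrices_logs}, unconditional) suffices and no transcendence input beyond Baker--Brumer is needed. This is not correct: the hypothetical plane contains $U=\ob{\Q}_p\cdot(L_1,L_2,L_3)$ and $\ob{\Q}_p\cdot\theta([\xi])$, and its normal vector $v\in\ob{\Q}_p^3$ is an $\ob{\Q}_p$-linear (not $\ob{\Q}$-linear) combination of $p$-adic logarithms; the relations $\sum_i v_iM_i^{(\ell)}=0$ are therefore $\ob{\Q}_p$-linear, and $\ob{\Q}$-linear independence of the entries says nothing about them. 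A matrix with $\ob{\Q}$-linearly independent entries can perfectly well have small $\ob{\Q}_p$-rank. The missing ingredient is precisely Proposition \ref{prop:roy_waldschmidt}, the unconditional $p$-adic version of Roy's and Waldschmidt's rank lower bound for matrices with entries in $\mathbb{L}$: this is what the paper uses to show that the $3\times 7$ matrix $(M_i^{(\ell_j)})$ (exotic case), respectively the $3\times 4$ matrix (RM case, with one split and three inert regular $\ell_j$), has full rank $3$ over $\ob{\Q}_p$. Chebotarev enters only to guarantee the existence of enough regular primes (of each splitting type); it is not what forces the $M^{(\ell)}$ to escape the plane. Your alternative "Galois cohomology plus Chebotarev" route is also too vague to close this gap: injectivity of the trace-of-Frobenius map on $\HH^1(\Q,W)$ is not a formal consequence of Chebotarev, since $\xi(\Frob_\ell)$ is not a class function and the quantity $\tr(\xi(\Frob_\ell)\rho(\Frob_\ell))$ genuinely varies over primes in a fixed conjugacy class — which is why the paper's proof goes through the $S$-unit description and a transcendence estimate rather than a density argument.
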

	\begin{proof}
		Let $x_1w_1+x_2w_2+x_3w_3=\theta([\xi])$ for some $[\xi]\in\HH^1(\Q,W)$. Assume first $\rho$ is exotic, in which case all the $M_i^{(\ell)}$, $\ell\nmid Np$ are $\ob{\Q}$-linearly independent. Take $7$ regular primes $\ell_1,\ldots,\ell_7$ (there are infinitely many such, by Cebotarev's theorem). Then, by Lemma \ref{lem:Dirichlet} and Proposition \ref{prop:roy_waldschmidt}, the $3$-by-$7$ matrix $(M_{i}^{(\ell_j)})$ ($1\leq i\leq 3$, $1\leq j\leq 7$) has rank $3$. Hence, if $\tr(\xi(\Frob_{\ell})\rho(\Frob_{\ell_j}))=0$ for $1\leq j\leq7$, then $(x_1,x_2,x_3)$ and $(L_1,L_2,L_3)$ are proportional, and $[\xi]=0$ as $\theta$ is injective by Proposition \ref{prop:calcul_coeff_DLR}. 
		
		The RM case is two treated similarly: using Lemma \ref{lem:Dirichlet} and Proposition \ref{prop:roy_waldschmidt} one sees that the $3$-by-$4$ matrix $(M_{i}^{(\ell_j)})$ ($1\leq i\leq 3$, $1\leq j\leq 4$) has rank $3$ if $\ell_1$ is chosen to be a regular prime which splits in the real quadratic field $K$ and $\ell_2,\ell_3,\ell_4$ are inert in $K$.
	\end{proof}
	
	\begin{prop}\label{prop:difference_two_Hida_families} Assume \eqref{tag:non_vanishing_resultant_P_L_P_M} and \eqref{tag:discriminant} hold.
		Let $\cF$ and $\cF'$ be two Hida families passing through $f_\alpha$, and let $V^+(\cF)=\bV^+_{\cF}/X\bV^+_{\cF}$,  $V^+(\cF')=\bV^+_{\cF'}/X\bV^+_{\cF'}$.
		\begin{enumerate}
			\item If $\cF \equiv \cF' \mod X^2$, then $\cF=\cF'$.
			\item Assume $\cF\neq\cF'$ and define
		\begin{equation}\label{eq:def_g_F_F'}
			g_{\cF,\cF'}=\frac{\cF-\cF'}{X}\big|_{X=0}\in S_1^{\dagger}(N,\ob{\Q}_p)\lsem f_\alpha\rsem .
		\end{equation}
		With the notation introduced in Lemma \ref{lem:definition_xi_1}, the class 
		\[ [\xi]:=[\xi_{V^+(\cF)}]-[\xi_{V^+(\cF')}]\in\HH^1(\Q,W)\]
		is nonzero. Moreover, for all primes $\ell\nmid Np$, we have  $a_\ell(g_{\cF,\cF'})=\tr(\xi(\Frob_{\ell})\rho(\Frob_{\ell}))$, and $a_p(g_{\cF,\cF'})=\alpha\cdot(t-t')$, where $t$ and $t'$ are defined by \eqref{eq:xi_1_in_ordinary_basis} for the lines $V^+=V^+(\cF)$ and $V^+=V^+(\cF')$ respectively.
		\end{enumerate}
	\end{prop}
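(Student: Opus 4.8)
\textit{Approach.} The plan is to treat (i) and (ii) in turn, using throughout that, since \eqref{tag:discriminant} holds, condition \eqref{tag:slope_2t+z_one_line} holds for every line $V^+$ with $\dim\Sel(\ad\rho,V^+)=1$; thus Theorems \ref{thm:etaleness} and \ref{thm:uniqueness_residual_slope} are available, and in particular $\cF$ and $\cF'$ have coefficients in $\varLambda$ and each is étale at $f_\alpha$, so $Y=X$ and $e=1$ for both of them.

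\textit{Part (i).} I would fix a basis $(e_1,e_2)$ of a $\ob{\Q}$-structure of $V$ satisfying \eqref{tag:dihedral_basis_assumption}, choose $\varLambda$-bases of $\bV_\cF$ and $\bV_{\cF'}$ lifting it, and expand $\rho_\cF=(\Id+X\xi_1+X^2\xi_2+\cdots)\rho$ and $\rho_{\cF'}=(\Id+X\xi_1'+\cdots)\rho$ as in \eqref{eq:decomposition_rho_modulo_powers_of_Y}. The cohomology class $[\xi_1]\in\HH^1(\Q,\ad\rho)$ is independent of the chosen lift of $(e_1,e_2)$ (a change of lift alters $\xi_1$ by a coboundary) and equals $[\xi_{V^+(\cF)}]$ by Proposition \ref{prop:first_deformations_rho_A}(ii); likewise $[\xi_1']=[\xi_{V^+(\cF')}]$, and $\tr\xi_1=\tr\xi_1'=\lambda$. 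If $\cF\equiv\cF'\bmod X^2$, comparing $q$-expansions and using $a_\ell(\cF)=\tr\rho_\cF(\Frob_\ell)$ yields $\tr(\xi_1(\Frob_\ell)\rho(\Frob_\ell))=\tr(\xi_1'(\Frob_\ell)\rho(\Frob_\ell))$ for all $\ell\nmid Np$; hence $\xi:=\xi_1-\xi_1'$ is a $W$-valued cocycle with $\tr(\xi(\Frob_\ell)\rho(\Frob_\ell))=0$ for all $\ell\nmid Np$, so $[\xi]=0$ by Corollary \ref{cor:vanishing_cocycle_at_frob_ell}, i.e. $[\xi_{V^+(\cF)}]=[\xi_{V^+(\cF')}]$. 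Two distinct lines cannot support the same class: a representing cocycle $c$ would satisfy $c(I_p)\cdot V\subseteq V^+(\cF)\cap V^+(\cF')=\{0\}$ by Definition \ref{def:Selmer}, so $c$ would be unramified at $p$, contradicting $\tr c=\lambda$, which is ramified at $p$. Therefore $V^+(\cF)=V^+(\cF')$, and Theorem \ref{thm:uniqueness_residual_slope}(i) forces $\cF=\cF'$.

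\textit{Part (ii), nonvanishing.} By the very same inertia argument, if $[\xi]=[\xi_{V^+(\cF)}]-[\xi_{V^+(\cF')}]$ vanished then the two residual lines would coincide, whence $\cF=\cF'$ by Theorem \ref{thm:uniqueness_residual_slope}(i), contrary to the hypothesis $\cF\neq\cF'$; so $[\xi]\neq 0$. It lies in $\HH^1(\Q,W)$ because each of $[\xi_{V^+(\cF)}]$ and $[\xi_{V^+(\cF')}]$ has trace $\lambda$.

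\textit{Part (ii), Fourier coefficients.} For $\ell\nmid Np$, from $a_\ell(\cF)=\tr\rho_\cF(\Frob_\ell)$ and $a_\ell(\cF')=\tr\rho_{\cF'}(\Frob_\ell)$, expanding in a common lift of $(e_1,e_2)$ and extracting the coefficient of $X$ gives $a_\ell(g_{\cF,\cF'})=\tr(\xi_1(\Frob_\ell)\rho(\Frob_\ell))-\tr(\xi_1'(\Frob_\ell)\rho(\Frob_\ell))=\tr(\xi(\Frob_\ell)\rho(\Frob_\ell))$, using that $\xi\mapsto\tr(\xi(\Frob_\ell)\rho(\Frob_\ell))$ factors through $\HH^1(\Q,W)$ and that $[\xi_1-\xi_1']=[\xi]$. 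For the $p$-th coefficient, $a_p(\cF)$ is the $\mathbf{U}_p$-eigenvalue of $\cF$, equivalently the eigenvalue $\delta(\Frob_p)$ of $\rho_\cF(\Frob_p)$ on the unramified quotient $\bV^-_\cF$. As $\rho(\Frob_p)=\alpha\cdot\Id$ is scalar, $\rho_\cF(\Frob_p)=\alpha(\Id+X\xi_1(\Frob_p)+O(X^2))$, and — again because $\rho(\Frob_p)$ is scalar — $\xi_1(\Frob_p)$ does not depend on the lift and coincides with $\xi_{V^+(\cF)}(\Frob_p)$, whose induced endomorphism of $V/V^+(\cF)$ is multiplication by $t$ by Remark \ref{rem:independence_t_from_basis}. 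Hence $a_p(\cF)\equiv\alpha(1+tX)\bmod X^2$, and likewise $a_p(\cF')\equiv\alpha(1+t'X)\bmod X^2$, so $a_p(g_{\cF,\cF'})=\alpha(t-t')$.

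\textit{Anticipated difficulty.} The main obstacle is the bookkeeping of cocycles and basis changes: one must consistently match the leading infinitesimal deformation of each $\rho_\cF$ with the canonical Selmer class $[\xi_{V^+(\cF)}]$ of Lemma \ref{lem:definition_xi_1} via Proposition \ref{prop:first_deformations_rho_A}(ii), and check that the coboundary ambiguities introduced by changes of $\varLambda$-basis evaluate to $0$ on $I_p$ and on $\Frob_p$ — which holds precisely because $\rho$ is unramified at $p$ with scalar Frobenius, and is exactly what makes the formulas for $a_\ell$ and $a_p$ take the stated closed form. A lighter but necessary verification is that $g_{\cF,\cF'}$ is a genuine element of $S_1^{\dagger}(N,\ob{\Q}_p)\lsem f_\alpha\rsem$ — using the $X$-divisibility of $\cF-\cF'$ in $\mathbf{S}^{\ord}(N,\varLambda)$ together with Hida duality \eqref{eq:Hida_duality} — and that its Fourier coefficients are indeed the coefficients of $X$ in those of $\cF$ and $\cF'$.
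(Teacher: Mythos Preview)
Your proof is correct and follows essentially the same approach as the paper: the inertia argument (that $[\xi]=0$ would force $\xi_{V^+(\cF)}(I_p)\cdot V\subseteq V^+(\cF)\cap V^+(\cF')=\{0\}$, contradicting $\tr=\lambda$), the identification $[\xi_1]=[\xi_{V^+(\cF)}]$ via Proposition \ref{prop:first_deformations_rho_A}(ii), the appeal to Corollary \ref{cor:vanishing_cocycle_at_frob_ell}, and the expansions $\tr\rho_\cF\equiv\tr\rho+X\tr(\xi_{V^+(\cF)}\cdot\rho)$ and $a_p(\cF)\equiv\alpha(1+tX)\bmod X^2$ are exactly the paper's ingredients. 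The only cosmetic difference is that the paper deduces (i) from (ii) and Corollary \ref{cor:vanishing_cocycle_at_frob_ell} in one line, whereas you unpack (i) directly; the content is the same.
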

	\begin{proof}
		Note that (i) is a direct consequence of (ii) and Corollary \ref{cor:vanishing_cocycle_at_frob_ell}, so we only prove (ii). 
		
		Assume $\cF\neq \cF'$. By Theorem \ref{thm:uniqueness_residual_slope} (i), we have $V^+(\cF)\neq V^+(\cF')$. Therefore, if $[\xi]=0$, then $\xi_{V^+(\cF)}(I_p)\cdot V \subseteq V^+(\cF)\cap V^+(\cF')=\{0\}$ by definition, so $[\xi_{V^+(\cF)}]$ is unramified. But this contradicts the fact that $\tr(\xi_{V^+(\cF)})=\lambda$. Hence, $[\xi]\neq 0$.
		
		For $\ell\nmid Np$, we have $a_\ell(\cF)=\tr(\rho_{\cF}(\Frob_\ell))$ and similarly for $\cF'$, so the computation of $a_\ell(g_{\cF,\cF'})$ follows from the observation that $\tr\rho_{\cF}\equiv \tr\rho + X \tr(\xi_{V^+(\cF)} \cdot \rho) \mod X^2$ and similarly for $\cF'$. 
		
		The ordinarity condition for $\cF$ implies that $a_p(\cF)\equiv(1+tX)\cdot\alpha \mod X^2$ and similarly for $\cF'$, and the formula $a_p(g_{\cF,\cF'})=\alpha\cdot(t-t')$ easily follows.
		\end{proof}
		\begin{rem}\label{rem:GS_L_invariant}
			With the notations of Proposition \ref{prop:difference_two_Hida_families} (ii), the term $-2t$ appearing in \eqref{tag:slope_2t+z_one_line} can be interpreted as a Greenberg-Stevens $\cL$-invariant 
		\[\cL_\text{GS}(\ad\cF):=-2a_p(f)^{-1}\cdot \frac{\rd}{\rd X}\textbf{a}_p(\cF)_{|X=0}\]
		by analogy with \cite[Equation before Theorem 3]{dasguptapadicartin}.
		\end{rem}
		
	\subsection{Number of Hida families and structure of $\cT$}
	
	Recall from Theorem \ref{thm:etaleness} that the irreducible components of $\Spec\cT$ are in bijection with $\varLambda$-adic Hida eigenfamilies specializing to $f_\alpha$ at $X=0$. We prove in this paragraph the following result.
		
	\begin{thm}\label{thm:number_Hida_families}
		Assume \eqref{tag:discriminant} and \eqref{tag:nonvanishing_resultant_Q_PL} hold.
		\begin{enumerate}
			\item There exist exactly four Hida families passing through $f_\alpha$.
			\item We have $S^{\dagger}_1(N,\ob{\Q}_p)\lsem f_\alpha\rsem =S^{\dagger}_1(N,\ob{\Q}_p)[I_{f_\alpha}^2]$, where $I_{f_\alpha}$ is the ideal of the Hecke algebra acting on $S^{\dagger}_1(N,\ob{\Q}_p)$ generated by $\mathbf{U}_p-\alpha$ and $T_\ell-a_\ell(f)$ for $\ell\nmid Np$.
		\end{enumerate}
	\end{thm}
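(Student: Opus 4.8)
The strategy is to compute $\dim_{\ob{\Q}_p}S^\dagger_1(N,\ob{\Q}_p)\lsem f_\alpha\rsem$ via Hida duality and to squeeze it between the obvious lower bound $2$ and the known upper bound $4$; the substance of the argument is the two successive lower bounds $\geq 3$ and $\geq 4$, which rest on $p$-adic transcendence theory.

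\emph{Reduction to a dimension count.} By the perfect pairing \eqref{eq:Hida_duality}, the space $S^\dagger_1(N,\ob{\Q}_p)\lsem f_\alpha\rsem$ is $\ob{\Q}_p$-dual to $\cT/\Gm_\varLambda\cT$. Since $\cT$ is reduced (because $f$ is $N$-new) and finite over the discrete valuation ring $\varLambda$, and since by Theorem~\ref{thm:etaleness} each quotient of $\cT$ by a minimal prime is isomorphic to $\varLambda$, the algebra $\cT$ embeds into the product of these quotients, hence is $\varLambda$-free of rank equal to the number $n$ of Hida eigenfamilies through $f_\alpha$; therefore $\dim_{\ob{\Q}_p}S^\dagger_1(N,\ob{\Q}_p)\lsem f_\alpha\rsem=\dim_{\ob{\Q}_p}\cT/\Gm_\varLambda\cT=n$. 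As $n\leq 4$ by Theorem~\ref{thm:uniqueness_residual_slope}(ii), part (i) is equivalent to $n\geq 4$, and part (ii) will follow once an explicit spanning set of $S^\dagger_1(N,\ob{\Q}_p)\lsem f_\alpha\rsem$ has been exhibited.

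\emph{The lower bounds.} Because $f$ is $p$-irregular, $f_\alpha(q)=f(q)-\alpha f(q^p)$, so $f$ and $f_\alpha$ are linearly independent in $S^\dagger_1(N,\ob{\Q}_p)\lsem f_\alpha\rsem$ and $n\geq 2$. The recipe of Darmon--Lauder--Rotger (Proposition~\ref{prop:calcul_coeff_DLR}), combined with Corollary~\ref{cor:vanishing_cocycle_at_frob_ell}, provides an injective linear map $\bar\Phi\colon S^\dagger_1(N,\ob{\Q}_p)\lsem f_\alpha\rsem/\langle f,f_\alpha\rangle\hookrightarrow\HH^1(\Q,\ad^0\rho)$ recording the prime-to-$Np$ Fourier coefficients of a generalized eigenform through \eqref{eq:formula_DLR_coeffs}. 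Fixing distinct Hida families $\cF_1\neq\cF_2$, Proposition~\ref{prop:difference_two_Hida_families} yields $g_{\cF_1,\cF_2}\in S^\dagger_1(N,\ob{\Q}_p)\lsem f_\alpha\rsem$ with $\bar\Phi(g_{\cF_1,\cF_2})=[\xi_{V^+(\cF_1)}]-[\xi_{V^+(\cF_2)}]\neq 0$, whence $g_{\cF_1,\cF_2}\notin\langle f,f_\alpha\rangle$ and $n\geq 3$. With a third family $\cF_3$ in hand, the same construction gives $g_{\cF_1,\cF_3}$ with $\bar\Phi(g_{\cF_1,\cF_3})=[\xi_{V^+(\cF_1)}]-[\xi_{V^+(\cF_3)}]$, and $f,f_\alpha,g_{\cF_1,\cF_2},g_{\cF_1,\cF_3}$ will be linearly independent—forcing $n\geq 4$, hence $n=4$ and part (i)—provided the three classes $[\xi_{V^+(\cF_1)}],[\xi_{V^+(\cF_2)}],[\xi_{V^+(\cF_3)}]$ are not affinely collinear inside the two-dimensional affine subspace $\{[\xi]\in\HH^1(\Q,\ad\rho):\tr[\xi]=\lambda\}$.

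\emph{The non-collinearity and conclusion of (ii).} I expect this non-collinearity to be the main obstacle. The plan is to use that the residual slope $s$ of an automorphic line $V^+$ is a root of $\rQ(S)$ (Proposition~\ref{prop:first_deformations_rho_A}(ii)) and that the position of $[\xi_{V^+}]$ in the affine plane above is a prescribed rational function of $s$ with denominator $\rP_L(s)$ (Propositions~\ref{prop:calculation_Up} and~\ref{prop:calcul_c_slope}); hypothesis \eqref{tag:nonvanishing_resultant_Q_PL} guarantees that $\rP_L$ does not vanish at any root of $\rQ$, and together with the algebraic independence of the relevant $p$-adic logarithms (Lemma~\ref{lem:independence_entries_matrices_logs}) it shows that the classes attached to distinct Hida families are in general position, so that no three of them are collinear. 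This yields $n=4$; in particular $\rQ(S)$ has degree $4$ with four distinct roots, each realized as a residual slope. For part (ii), the previous step identifies $S^\dagger_1(N,\ob{\Q}_p)\lsem f_\alpha\rsem$ with $\langle f,f_\alpha,g_{\cF_1,\cF_2},g_{\cF_1,\cF_3}\rangle$, and a direct computation of Hecke eigenvalues shows that $\mathbf{U}_p-\alpha$ and $T_\ell-a_\ell(f)$ for $\ell\nmid Np$ send each of these four forms into $\ob{\Q}_p\cdot f_\alpha\subseteq S^\dagger_1(N,\ob{\Q}_p)[I_{f_\alpha}]$; hence $S^\dagger_1(N,\ob{\Q}_p)\lsem f_\alpha\rsem\subseteq S^\dagger_1(N,\ob{\Q}_p)[I_{f_\alpha}^2]$, while the reverse inclusion follows from $f$ being $N$-new, which forces a form annihilated by $I_{f_\alpha}^2$ to be a generalized eigenvector for the full Hecke algebra.
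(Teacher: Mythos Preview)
Your overall architecture matches the paper's: reduce to $n=\dim_{\ob{\Q}_p}S^\dagger_1(N,\ob{\Q}_p)\lsem f_\alpha\rsem$ via \eqref{eq:Hida_duality}, use $n\leq 4$ from Theorem~\ref{thm:uniqueness_residual_slope}(ii), and bootstrap $n\geq 2\Rightarrow n\geq 3\Rightarrow n\geq 4$ by producing independent generalized eigenforms $g_{\cF_i,\cF_j}$. Your reduction of the final step to the affine non-collinearity of the three classes $[\xi_{V^+(\cF_i)}]$ in the plane $\{\tr=\lambda\}\subset\HH^1(\Q,\ad\rho)$ is also correct.

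The gap is in your justification of that non-collinearity. You write that it follows from \eqref{tag:nonvanishing_resultant_Q_PL} ``together with the algebraic independence of the relevant $p$-adic logarithms (Lemma~\ref{lem:independence_entries_matrices_logs}).'' But Lemma~\ref{lem:independence_entries_matrices_logs}(iii) is \emph{conditional on Conjecture~\ref{conj:schanuel}}, which the theorem does not assume; only the $\ob{\Q}$-\emph{linear} independence in parts (i)--(ii) is unconditional, and that alone does not rule out the specific algebraic relation expressing collinearity. The hypotheses \eqref{tag:discriminant} and \eqref{tag:nonvanishing_resultant_Q_PL} are designed precisely to replace Schanuel by concrete non-vanishing statements, so you must show that collinearity contradicts \eqref{tag:nonvanishing_resultant_Q_PL} directly.

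The paper does this by an explicit computation (Proposition~\ref{prop:linear_independence_generalized_eigenforms}(ii)): restricting the relation $\mu[\xi_1]+\mu'[\xi_1']+\mu''[\xi_1'']=0$ (with $\mu+\mu'+\mu''=0$) to $G_{\Q_p}$ and reading off the $\lambda$-components in a fixed adapted basis, one obtains that $(\mu,\mu',\mu'')^{\mathbf{t}}$ lies in the kernel of an explicit $3\times3$ matrix $\mathcal{D}$ whose entries are the quantities $x,x',x''$ and $s,s',s''$ from \eqref{eq:xi_1_in_ordinary_basis}. Substituting the formula $x=X(s)$ of Proposition~\ref{prop:calculation_Up}, the determinant factors as
\[
\det(\mathcal{D})=-\tfrac{1}{4}\,(s-s')(s-s'')(s'-s'')\cdot\dfrac{L_2\cdot(L_2^2+4L_1L_3)}{\rP_L(s)\,\rP_L(s')\,\rP_L(s'')}.
\]
Here the slopes are pairwise distinct by Theorem~\ref{thm:uniqueness_residual_slope}(i), the denominators are nonzero by \eqref{tag:nonvanishing_resultant_Q_PL}, $L_2\neq0$ by the unconditional linear independence in Lemma~\ref{lem:independence_entries_matrices_logs}, and $L_2^2+4L_1L_3=\mathrm{Disc}(\rP_L)\neq 0$ is exactly what \eqref{tag:nonvanishing_resultant_Q_PL} provides via Lemma~\ref{lem:equivalence_resultant_Q_PL}. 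This is where \eqref{tag:nonvanishing_resultant_Q_PL} (as opposed to the weaker \eqref{tag:non_vanishing_resultant_P_L_P_M}) is genuinely used, and your sketch does not make this connection. A minor secondary point: your map $\bar\Phi$ is only a priori defined on $S^\dagger_1(N,\ob{\Q}_p)[I_{f_\alpha}^2]$, not on the whole generalized eigenspace; this is harmless for the argument since you only evaluate it on the forms $g_{\cF_i,\cF_j}$, but it should not be stated as an injection on the full quotient before (ii) is established.
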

	
	As $\varLambda$ is a DVR and $\cT$ is torsion-free, $\cT$ is free of rank $n$ over $\varLambda$, where $n$ is the number of Hida families passing through $f_\alpha$. Hence, the perfect duality in \eqref{eq:Hida_duality} shows that $n$ is the $\ob{\Q}_p$-dimension of $S_1^{\dagger}(N,\ob{\Q}_p)\lsem f_\alpha\rsem$. Note that this space contains the linear subspace $S_1(Np,\ob{\Q}_p)\lsem f_\alpha\rsem=f_\alpha\ob{\Q}_p\oplus f(q^p)\ob{\Q}_p$ consisting of classical cusp forms, so $n\geq2$. We will deduce Theorem \ref{thm:number_Hida_families} from the following proposition.
	
	\begin{prop}\label{prop:linear_independence_generalized_eigenforms}
		Assume \eqref{tag:discriminant}. 
		\begin{enumerate}
			\item Let $\cF$ and $\cF'$ be two distinct families passing through $f_\alpha$. If \eqref{tag:non_vanishing_resultant_P_L_P_M} holds, then the modular form $g_{\cF,\cF'}$ defined in \eqref{eq:def_g_F_F'} is not proportional to $f(q^p)$.
			\item Let $\cF$, $\cF'$ and $\cF''$ be three distinct families passing through $f_\alpha$. If \eqref{tag:nonvanishing_resultant_Q_PL} holds, then $g_{\cF,\cF'}$, $g_{\cF',\cF''}$ and $f(q^p)$ are $\ob{\Q}_p$-linearly independent.
			\item Let $\cF$, $\cF'$, $\cF''$ and $\cF'''$ be four distinct families passing through $f_\alpha$. If \eqref{tag:nonvanishing_resultant_Q_PL} holds, then $g_{\cF,\cF'}$, $g_{\cF',\cF''}$ and $g_{\cF'',\cF'''}$ are $\ob{\Q}_p$-linearly independent.
		\end{enumerate}
	\end{prop}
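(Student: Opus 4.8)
The plan is to reduce each statement to a rank computation involving the matrices $M^{(\ell)}$ from Proposition~\ref{prop:calcul_coeff_DLR}, then invoke the transcendence input of Proposition~\ref{prop:roy_waldschmidt} (via Lemma~\ref{lem:Dirichlet}) exactly as in the proof of Corollary~\ref{cor:vanishing_cocycle_at_frob_ell}, together with the precise formulas for the $p$-coefficient and the residual slopes from Propositions~\ref{prop:difference_two_Hida_families} and \ref{prop:calculation_Up}. Throughout, fix a basis $(e_1,e_2)$ of $V_{\ob\Q}$ satisfying \eqref{tag:dihedral_basis_assumption} and adapted to the (finitely many) lines $V^+(\cF),V^+(\cF'),\ldots$; by \eqref{tag:discriminant} and \eqref{tag:nonvanishing_resultant_Q_PL} each such line has $\dim\Sel(\ad\rho,V^+)=1$ (Proposition~\ref{prop:dimension_selmer_ad_rho}) with slope a root of $\rQ(S)$, and \eqref{tag:nonvanishing_resultant_Q_PL} forces $\rP_L(s)\neq0$ at each such slope, so Proposition~\ref{prop:calculation_Up} applies and $t=T(s)$.

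For (i): suppose $g_{\cF,\cF'}=c\cdot f(q^p)$ for some $c\in\ob\Q_p$. Then $a_\ell(g_{\cF,\cF'})=0$ for every $\ell\nmid Np$, so by Proposition~\ref{prop:difference_two_Hida_families}(ii) the class $[\xi]=[\xi_{V^+(\cF)}]-[\xi_{V^+(\cF')}]\in\HH^1(\Q,W)$ satisfies $\tr(\xi(\Frob_\ell)\rho(\Frob_\ell))=0$ for all $\ell\nmid Np$; Corollary~\ref{cor:vanishing_cocycle_at_frob_ell} gives $[\xi]=0$, contradicting Proposition~\ref{prop:difference_two_Hida_families}(ii). (One also recovers from $a_p$ that $c=\alpha(t-t')$ and $c=0$, which is consistent but not needed.) For (ii) and (iii): a nontrivial linear relation among the listed forms yields, after comparing $q^\ell$-coefficients for all $\ell\nmid Np$, a relation $\sum_i\lambda_i\,\tr(\xi_i(\Frob_\ell)\rho(\Frob_\ell))=0$ where each $\xi_i$ is a difference $\xi_{V^+(\cF_a)}-\xi_{V^+(\cF_b)}$ (and in case (ii) one of the "classes'' is replaced by $f(q^p)$, contributing $0$ at every $\ell\nmid Np$). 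By linearity of $[\xi]\mapsto\tr(\xi(\Frob_\ell)\rho(\Frob_\ell))$, this means the class $\xi:=\sum_i\lambda_i\xi_i$ has vanishing $\Frob_\ell$-traces for all $\ell\nmid Np$, so $\xi=0$ by Corollary~\ref{cor:vanishing_cocycle_at_frob_ell}; since the telescoping differences $[\xi_{V^+(\cF)}]-[\xi_{V^+(\cF')}]$, etc., are linearly independent in $\HH^1(\Q,W)$ whenever the corresponding lines $V^+$ are pairwise distinct (each $[\xi_{V^+}]$ has trace $\lambda$ and $\xi_{V^+}(I_p)V\subseteq V^+$, so a relation among them would produce an unramified class with a prescribed nonzero ramified image, as in the proof of Proposition~\ref{prop:difference_two_Hida_families}(ii)), we must have all $\lambda_i=0$ except possibly the coefficient of $f(q^p)$ in case (ii); but then comparing $q^p$-coefficients and using $a_p(g_{\cF,\cF'})=\alpha(t-t')\neq a_p(g_{\cF',\cF''})$—which follows because $\rQ(S)$ is separable by \eqref{tag:discriminant}, hence the slopes $s,s',s''$ are distinct and $T(s),T(s'),T(s'')$ cannot coincide pairwise under \eqref{tag:nonvanishing_resultant_Q_PL}—kills the remaining coefficient.

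The main obstacle is establishing that the differences $[\xi_{V^+(\cF)}]-[\xi_{V^+(\cF')}]$ span a subspace of $\HH^1(\Q,W)$ of the expected dimension, i.e.\ that distinct residual slopes $s$ genuinely give linearly independent Selmer classes after subtracting a common one. Since $\HH^1(\Q,W)$ is only $2$-dimensional, at most two of these differences can be independent, so for (iii) the argument cannot merely produce three independent cohomology classes—instead one must exploit the $q^p$-coefficients: the map $g_{\cF,\cF'}\mapsto a_p(g_{\cF,\cF'})=\alpha(T(s)-T(s'))$ together with the three $\ell$-adic traces must be shown to be jointly injective on the span of the three forms. Concretely, one analyzes the $3\times(\text{many})$ matrix whose rows are indexed by $(1,\Frob_{\ell_1},\Frob_{\ell_2},\ldots)$ and whose entries are the $\tr(\xi_i(\Frob_\ell)\rho(\Frob_\ell))$ computed by \eqref{eq:formula_DLR_coeffs}, plus the extra row of $p$-coefficients $\alpha(T(s_a)-T(s_b))$; one needs this matrix to have full rank $3$. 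Using Proposition~\ref{prop:calcul_coeff_DLR}, each $\ell$-adic trace is a $3\times3$ determinant in $x_1,x_2,x_3$ (the coordinates of $\theta([\xi_i])$), $L$, and $M^{(\ell)}$, and the $M^{(\ell)}$ entries over varying regular $\ell$ are $\ob\Q$-linearly independent by Lemma~\ref{lem:independence_entries_matrices_logs}; Proposition~\ref{prop:roy_waldschmidt} then guarantees that no algebraic relation degenerates the rank, provided the $\theta([\xi_i])$ themselves are not proportional to $w(1)=L_3w_1+\tfrac12L_2w_2+L_1w_3$—and this last non-degeneracy is precisely what \eqref{tag:nonvanishing_resultant_Q_PL}, via the separability of $\rQ$ and the formula for $T(S)$, is designed to ensure. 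I expect the bookkeeping of which $\theta$-images are proportional (forcing the use of the $a_p$-row rather than a fourth prime) to be the delicate point, but it is entirely parallel to, and only slightly more elaborate than, the proof of Corollary~\ref{cor:vanishing_cocycle_at_frob_ell}.
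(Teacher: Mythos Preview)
Your argument for (i) is correct and identical to the paper's.

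For (ii) and (iii), however, there is a genuine gap. You assert that the telescoping differences $[\xi_{V^+(\cF)}]-[\xi_{V^+(\cF')}]$ and $[\xi_{V^+(\cF')}]-[\xi_{V^+(\cF'')}]$ are linearly independent in $\HH^1(\Q,W)$, arguing by analogy with Proposition~\ref{prop:difference_two_Hida_families}(ii). But that argument only shows a \emph{single} difference is nonzero: if $[\xi_{V^+}]=[\xi_{V^{+\prime}}]$ then $\xi_{V^+}(I_p)\cdot V\subseteq V^+\cap V^{+\prime}=0$, contradicting $\tr\xi_{V^+}=\lambda$. For three classes with $\mu+\mu'+\mu''=0$, the condition $\mu\xi_1(I_p)V+\mu'\xi_1'(I_p)V+\mu''\xi_1''(I_p)V=0$ involves three rank-one maps with images in three distinct lines of the $2$-dimensional space $V$, and such maps \emph{can} satisfy a nontrivial linear relation. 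So the ramification argument alone does not force $\mu=\mu'=\mu''=0$; something quantitative is needed. Your fallback plan via the DLR formula \eqref{eq:formula_DLR_coeffs}, the matrices $M^{(\ell)}$, and Proposition~\ref{prop:roy_waldschmidt} is a red herring: once Corollary~\ref{cor:vanishing_cocycle_at_frob_ell} has been used to obtain a relation in $\HH^1(\Q,W)$, no further transcendence input is required or helpful.

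What the paper does instead is entirely local at $p$. Restricting the relation $\mu\xi_1+\mu'\xi_1'+\mu''\xi_1''=0$ to $G_{\Q_p}$ (where it holds on the level of cocycles, since $\ad\rho$ is trivial there) and reading off the $\lambda$-coordinates of the $(1,2)$- and $(2,1)$-entries in the basis $(e_1,e_2)$ shows that $(\mu,\mu',\mu'')$ lies in the kernel of the $3\times3$ matrix
\[
\mathcal{D}=\begin{pmatrix} x & x' & x'' \\ s(1-sx) & s'(1-s'x') & s''(1-s''x'') \\ 1 & 1 & 1 \end{pmatrix}.
\]
Now substitute the explicit formula $x=X(s)=-\tfrac12(L_2+2sL_3)/\rP_L(s)$ from Proposition~\ref{prop:calculation_Up} (available since \eqref{tag:nonvanishing_resultant_Q_PL} gives $\rP_L(s)\neq0$ at each slope). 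The determinant factors as a Vandermonde in $s,s',s''$ times $L_2(L_2^2+4L_1L_3)$ over $\prod_i\rP_L(s_i)$, which is nonzero by Lemma~\ref{lem:independence_entries_matrices_logs} and Lemma~\ref{lem:equivalence_resultant_Q_PL}. For (iii) one adds the row $(t,t',t'',t''')$ coming from the $a_p$-coefficients, substitutes $t=T(s)$ from the same proposition, and the resulting $4\times4$ determinant again factors explicitly; the new factor $4L_2M_{13}-4L_3M_{12}$ is precisely the leading coefficient of $\rQ(S)$, nonzero because $\rQ$ has four distinct roots. The missing idea in your proposal is this explicit substitution of $X(s),T(s)$ and the resulting factorization of the local determinants.
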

	
	\begin{proof}
		If $g_{\cF,\cF'}$  is a multiple of $f(q^p)$, then $a_\ell(g_{\cF,\cF'})=0$ for all primes $\ell\nmid Np$. Therefore, if \eqref{tag:non_vanishing_resultant_P_L_P_M} holds, then the nonzero cocycle class $[\xi]$ in Proposition \ref{prop:difference_two_Hida_families} (ii) must be zero by Corollary \ref{cor:vanishing_cocycle_at_frob_ell}. This proves (i).
		
		Assume \eqref{tag:nonvanishing_resultant_Q_PL} holds. To prove (ii), we consider a linear combination of the form $\mu\cdot g_{\cF,\cF'}+\tilde{\mu}\cdot g_{\cF',\cF''}+\nu\cdot  f(q^p)=0$ for some $\mu,\tilde{\mu},\nu\in\ob{\Q}_p$ and we prove that $\mu=\tilde{\mu}=0$ (which clearly implies $\nu=0$ as well). Let $[\xi_1]$ be the cocycle class $[\xi_{V^+}]$ defined in Lemma \ref{lem:definition_xi_1} for $V^+=\bV_\cF^+/X\bV_\cF^+$ and similarly define $[\xi_1']$ and $[\xi_1'']$ for $\cF'$ and $\cF''$ respectively. Then, by the same argument as in (i), we have the relation $\mu\cdot ([\xi_1]-[\xi_1'])+\tilde{\mu}\cdot ([\xi_1']-[\xi_1''])=0$ in $\HH^1(\Q,\End(V))$ which, after restriction to $G_{\Q_p}$, becomes $\mu\cdot (\xi_1-\xi_1')+\tilde{\mu}\cdot (\xi_1'-\xi_1'')=0$, since $\HH^1(\Q_p,\End(V))=\mathrm{Z}^1(\Q_p,\End(V))$. Therefore,
		\begin{equation}\label{eq:relation_three_cocycles}
			\mu\cdot \xi_1+\mu'\cdot\xi_1'+\mu''\cdot\xi_1''=0,
		\end{equation}
		on $G_{\Q_p}$, where $\mu'=\tilde{\mu}-\mu$, and $\mu''=-\tilde{\mu}$. Notice that $\mu+\mu'+\mu''=0$. We will prove that $\mu=\mu'=\mu''=0$. 
		
		Fix a basis $(e_1,e_2)$ of $V_{\ob{\Q}}$ adapted to the residual lines of $\cF,\cF',\cF''$, whose existence is ensured by Lemma \ref{lem:existence_basis_adapted_to_ordinary_lines}, and let $s,s',s''$ be the residual slopes with respect to $(e_1,e_2)$ of $\cF,\cF',\cF''$ respectively. We may then consider the elements $x,x',x''$ defined by the relation \eqref{eq:xi_1_in_ordinary_basis} for $\xi_1$, $\xi_1'$ and $\xi_1''$ respectively. One sees by looking at the $\lambda$-coordinates of the $(1,2)$- and $(2,1)$-entries of \eqref{eq:relation_three_cocycles} that $(\mu,\mu',\mu'')^\textbf{t}$ lies in the kernel of 
		\begin{equation}
			\mathcal{D}=\begin{pmatrix}
				x&x'&x'' \\
				s(1-sx)&s'(1-s'x')&s''(1-s''x'')\\
				1&1&1
			\end{pmatrix}.
		\end{equation}
		As we assumed \eqref{tag:nonvanishing_resultant_Q_PL}, $\rP_L(S)$ does not vanish at $s,s',s''$, and $x,x',x''$ can be computed using Proposition \ref{prop:calculation_Up}. An explicit computation then yields
		\[\det(\mathcal{D})=-\frac{1}{4}\cdot (s-s')(s-s'')(s'-s'')\cdot \dfrac{L_2\cdot (L_2^2+4 L_1 L_3)}{\rP_L(s)\cdot \rP_L(s')\cdot\rP_L(s'')}. \]
		The slopes $s,s',s''$ are pairwise distinct by Theorem \ref{thm:uniqueness_residual_slope} (i), $L_2\neq0$ by Lemma \ref{lem:independence_entries_matrices_logs}, and $L_2^2+4 L_1 L_3\neq0$ as well by Lemma \ref{lem:equivalence_resultant_Q_PL}. Therefore, $\det\mathcal{D}\neq0$, and $\mu=\mu'=\mu''$, as wanted.
		
		We now prove (iii) and consider a $\ob{\Q}_p$-linear combination of the form $\mu\cdot g_{\cF,\cF'}+\nu_1\cdot g_{\cF',\cF''}+\nu_2\cdot g_{\cF'',\cF'''}=0$. Letting $\mu'=\nu_1-\mu$, $\mu''=\nu_2-\nu_1$ and $\mu'''=-\nu_2$, and using the same argument (and notations) as in the proof of (ii), we obtain
		\begin{equation}\label{eq:relation_four_cocycles}
			\mu\cdot \xi_1+\mu'\cdot\xi_1'+\mu''\cdot\xi_1''+\mu'''\cdot\xi_1'''=0,
		\end{equation}
		on $G_{\Q_p}$. We fix as in (ii) a basis $(e_1,e_2)$ adapted to the residual lines of the four families. Considering the $a_p$-coefficients in the linear combination, we obtain the additional relation $\mu\cdot t+\mu'\cdot t'+\mu''\cdot t''+\mu'''\cdot t'''=0$ which we derive from Proposition \ref{prop:difference_two_Hida_families} (ii), where $t,t',t'',t'''$ are defined by the relation \eqref{eq:xi_1_in_ordinary_basis} for the ordinary line of $\cF,\cF',\cF'',\cF'''$ respectively. If $s,s',s'',s'''$ are their respective residual slopes, then $(\mu,\mu',\mu'',\mu''')^\textbf{t}$ lies in the kernel of
			\begin{equation}
			\mathcal{E}=\begin{pmatrix}
				t&t'&t''&t''' \\
				x&x'&x''&x''' \\
				s(1-sx)&s'(1-s'x')&s''(1-s''x'')&s'''(1-s'''x''')\\
				1&1&1&1
			\end{pmatrix}.
		\end{equation}
		Using explicit expressions provided by Proposition \ref{prop:calculation_Up}, we see that
		\[ \dfrac{\det(\mathcal{E})}{(s-s')(s-s'')(s-s''')(s'-s'')(s-s''')(s''-s''')}=\dfrac{L_2\cdot (L_2^2+4 L_1 L_3)\cdot(4 L_2 M_{13}-4 L_3 M_{12})}{32\cdot \rP_L(s)\cdot \rP_L(s')\cdot\rP_L(s'')\cdot\rP_L(s''')}.\]
		We only need to justify that the right-most expression in parentheses does not vanish. But this is precisely the coefficient in $S^4$ of the polynomial $\rQ(S)$, which is of degree at most $4$, and in fact of degree exactly $4$ because it has four  distinct roots $s,s',s'',s'''$. This proves that $\mu=\mu'=\mu''=\mu'''=0$ and finishes the proof of the proposition.
	\end{proof}
	
	\begin{proof}[Proof of Theorem \ref{thm:number_Hida_families}]
	Consider (i). We have already seen that the number of Hida families passing through $f_\alpha$ is $n=\dim_{\ob{\Q}_p} S^{\dagger}_1(N)\lsem f_\alpha\rsem \geq2$. By Theorem \ref{thm:uniqueness_residual_slope} (ii), we also have $n\leq 4$.
	
	Let $\cS_0$ be the subspace of $S^{\dagger}_1(N)\lsem f_\alpha\rsem$ consisting of forms with a trivial $a_1$ coefficient. Then $\cS_0$ contains $f(q^p)$ and it is of dimension $n-1$. If $n=2$, then, letting $\cF$ and $\cF'$ be the two distinct Hida families passing through $f_\alpha$, one has $g_{\cF,\cF'}\in\cS_0$, which contradicts Proposition \ref{prop:linear_independence_generalized_eigenforms} (i). Similarly, if $n=3$, then Proposition \ref{prop:linear_independence_generalized_eigenforms} (ii) shows that $\cS_0$ has dimension $\geq 3$, which is impossible. This proves that $n=4$.

	To prove (ii), choose three distinct Hida families $\cF,\cF',\cF''$ and consider $g_{\cF,\cF'}$ and $g_{\cF',\cF''}$ as in (i). Then part (i) and Proposition \ref{prop:linear_independence_generalized_eigenforms} (ii) show that $(f_\alpha,f(q^p),g_{\cF,\cF'},g_{\cF',\cF''})$ is a basis of $S^{\dagger}_1(N)\lsem f_\alpha\rsem $. It remains to check that each element of this basis is killed by $I_{f_\alpha}^2$. It is clear for $f_\alpha$ and also for $f(q^p)$, as $(\mathbf{U}_p-\alpha)f(q^p)=f_\alpha$. For the remaining two forms, \eqref{eq:def_g_F_F'} shows (e.g. for $g_{\cF,\cF'}$) that, for $\ell\nmid Np$,
	\[(\textbf{T}_\ell-a_\ell(f))g_{\cF,\cF'}=a_\ell(g_{\cF,\cF'})\cdot f_\alpha, \quad \mbox{and}\quad (\mathbf{U}_p-\alpha)g_{\cF,\cF'}=a_p(g_{\cF,\cF'})\cdot f_\alpha,\]
	so $g_{\cF,\cF'}$ is indeed killed by $I_{f_\alpha}^2$. This ends the proof of (ii) and of the theorem.
\end{proof}	
	
\begin{proof}[Proof of Theorem \ref{thm:main_result}]
	First note that $\bar\Q_p\lsem X_1,X_2,X_3,X_4 \rsem/(\{ X_iX_j\}_{1 \leq i<j \leq 4})$ is isomorphic to
	\[\cB=\varLambda\times_{\bar\Q_p}\varLambda\times_{\bar\Q_p}\varLambda\times_{\bar\Q_p}\varLambda=\{(c_1(X),c_2(X),c_3(X),c_4(X) \, : \, c_1(0)=c_2(0)=c_3(0)=c_4(0)\} \]
	as $\varLambda$-algebras.
	By Theorem \ref{thm:number_Hida_families} (i), there is an injection $\iota\colon \cT\hookrightarrow \cB$ of rank-$4$ local $\varLambda$-algebras given by $T\mapsto (a_1(T\cF),a_1(T\cF'),a_1(T\cF''),a_1(T\cF'''))$, where $\cF,\cF',\cF'',\cF'''$ are the $4$ Hida families passing through $f_\alpha$. We prove that $\iota$ is an isomorphism. Since $\cT$ and $\cB$ are complete and have the same residue field, it is enough to check that $\iota$ is unramified, \textit{i.e.}, $\iota(\Gm_\cT)=\Gm_\cB$. This will follow from Nakayama's lemma, once we prove that the map $\Gm_\cT/(\Gm_\cT^2+\Gm_\varLambda\cdot \cT)\to \Gm_\cB/(\Gm_\cB^2+\Gm_\varLambda\cdot \cB)$ at the level of cotangent spaces is surjective.
	
	The $\ob{\Q}_p$-vector space $\Gm_\cB/(\Gm_\cB^2+\Gm_\varLambda\cdot \cB)$ is of dimension $3$ with $\ob{\Q}_p$-basis given by $b_1=(0,X,0,0)$, $b_2=(0,0,X,0)$ and $b_3=(0,0,0,X)$. Letting $\sum_{n\geq 1}a_nq^n$ be the $q$-expansion of $f_\alpha$, $\Gm_\cT$ is generated by $T_n-a_n$ for $n\geq 1$ and we have from the definition of $g_{\cF',\cF}$ given in \eqref{eq:def_g_F_F'} that
	\begin{align*}
		\iota(T_n-a_n)&\equiv (a_n(\cF)-a_n)\cdot 1_\cB+a_n(g_{\cF',\cF})\cdot b_1+a_n(g_{\cF'',\cF})\cdot b_2+a_n(g_{\cF''',\cF})\cdot b_3 \mod \Gm_\cB^2 \\
		&\equiv a_n(g_{\cF',\cF})\cdot b_1+a_n(g_{\cF'',\cF})\cdot b_2+a_n(g_{\cF''',\cF})\cdot b_3 \mod (\Gm_\cB^2+\Gm_{\varLambda}\cdot \cB). 
	\end{align*}
	But Proposition \ref{prop:linear_independence_generalized_eigenforms} (iii) shows that the $q$-expansions of $g_{\cF',\cF},g_{\cF'',\cF},g_{\cF''',\cF}$ are $\ob{\Q}_p$-linearly independent, so $\iota(\Gm_\cT)/(\Gm_\cB^2+\Gm_\varLambda\cdot \cB)=\Gm_\cB/(\Gm_\cB^2+\Gm_\varLambda\cdot \cB)$. This finishes the proof of the theorem.
\end{proof}

	The following result was conjectured in \cite[Conj. 4.1]{DLR4}.
	\begin{thm}\label{thm:side_result_DLR}
		Assume \eqref{tag:discriminant} and \eqref{tag:nonvanishing_resultant_Q_PL} hold. The (two-dimensional) subspace $S_1^\dagger(N,\ob{\Q}_p)\lsem f_\alpha \rsem_0$ of $S_1^\dagger(N,\ob{\Q}_p)\lsem f_\alpha \rsem$ consisting of overconvergent modular forms with trivial first and $p$-th Fourier coefficients is naturally isomorphic to $\HH^1(\Q,\ad^0(\rho))$.
	\end{thm}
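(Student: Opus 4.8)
The plan is to build the isomorphism at the level of $q$-expansions, using the explicit basis of $S_1^\dagger(N,\ob\Q_p)\lsem f_\alpha\rsem$ produced in Theorem \ref{thm:number_Hida_families} together with the Fourier-coefficient formula of Proposition \ref{prop:difference_two_Hida_families}. First I would pin down the dimensions. By Theorem \ref{thm:number_Hida_families} there are exactly four Hida families $\cF_1,\dots,\cF_4$ passing through $f_\alpha$, and $\{f_\alpha,\ f(q^p),\ g_{\cF_1,\cF_2},\ g_{\cF_2,\cF_3}\}$ is an $\ob\Q_p$-basis of $S_1^\dagger(N,\ob\Q_p)\lsem f_\alpha\rsem$. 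Since the $q^1$-coefficient of $f_\alpha$ is $1$, the $q^1$-coefficient of $f(q^p)$ is $0$, and the $q^p$-coefficient of $f(q^p)$ equals $a_1(f)=1$, the two linear functionals ``$q^1$-coefficient'' and ``$q^p$-coefficient'' are linearly independent on this $4$-dimensional space. Hence their common kernel $S_1^\dagger(N,\ob\Q_p)\lsem f_\alpha\rsem_0$ is $2$-dimensional and is contained in the $3$-dimensional subspace $\cS_0$ of forms with vanishing first coefficient, which has basis $\{f(q^p),\ g_{\cF_1,\cF_2},\ g_{\cF_2,\cF_3}\}$ as in the proof of Theorem \ref{thm:number_Hida_families}.

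Next I would define the comparison map. Write $W=\End^0(V)$, so that $\HH^1(\Q,\ad^0\rho)=\HH^1(\Q,W)$ is $2$-dimensional by \cite[Lem.~3.2]{D-B}. By Corollary \ref{cor:vanishing_cocycle_at_frob_ell} a class $[\xi]\in\HH^1(\Q,W)$ is determined by the scalars $\tr(\xi(\Frob_\ell)\rho(\Frob_\ell))$ for $\ell\nmid Np$, so I would declare, for $g\in\cS_0$, $\Psi(g)$ to be the unique class satisfying $\tr(\Psi(g)(\Frob_\ell)\rho(\Frob_\ell))=a_\ell(g)$ for all $\ell\nmid Np$. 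Existence on the above basis is guaranteed by Proposition \ref{prop:difference_two_Hida_families}(ii), which moreover identifies $\Psi(g_{\cF_i,\cF_j})$ with $[\xi_{V^+(\cF_i)}]-[\xi_{V^+(\cF_j)}]$, and by $\Psi(f(q^p))=0$; extending linearly yields an $\ob\Q_p$-linear map $\Psi\colon\cS_0\to\HH^1(\Q,W)$, and the characterisation of $\Psi(g)$ by its Frobenius-traces shows that $\Psi$ does not depend on the choice of basis or of the families $\cF_i$ — this is the \emph{naturality} of the statement. (Up to the renormalisations recorded in Proposition \ref{prop:calcul_coeff_DLR}, $\Psi$ is inverse to the Darmon--Lauder--Rotger map $\theta$ of \cite{DLR4}.) I would then check that $\Psi$ is surjective with kernel exactly $\ob\Q_p\cdot f(q^p)$: since the target is $2$-dimensional it suffices that $v_1:=\Psi(g_{\cF_1,\cF_2})$ and $v_2:=\Psi(g_{\cF_2,\cF_3})$ be linearly independent, and any relation $a\,v_1+b\,v_2=0$ restricts on $G_{\Q_p}$ — where $\HH^1(\Q_p,W)=\mathrm{Z}^1(\Q_p,W)$ — to a relation of the shape \eqref{eq:relation_three_cocycles} with coefficients $(a,\,b-a,\,-b)$ summing to $0$; the determinant computation in the proof of Proposition \ref{prop:linear_independence_generalized_eigenforms}(ii), which invokes \eqref{tag:nonvanishing_resultant_Q_PL} and Theorem \ref{thm:uniqueness_residual_slope}(i), then forces $a=b=0$. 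Hence $\ker\Psi$ is $1$-dimensional and, containing $f(q^p)$, equals $\ob\Q_p\cdot f(q^p)$.

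Finally, since the $q^p$-coefficient of $f(q^p)$ is $1\neq0$, the line $\ob\Q_p\cdot f(q^p)=\ker(\Psi|_{\cS_0})$ meets $S_1^\dagger(N,\ob\Q_p)\lsem f_\alpha\rsem_0$ trivially, so $\Psi$ restricted to $S_1^\dagger(N,\ob\Q_p)\lsem f_\alpha\rsem_0$ is injective; comparing dimensions ($2=2$) produces the asserted isomorphism. The substantive inputs here are Theorem \ref{thm:number_Hida_families} (the count of Hida families and the basis) and Proposition \ref{prop:difference_two_Hida_families}(ii) together with Corollary \ref{cor:vanishing_cocycle_at_frob_ell}, which between them let one attach a well-defined, canonical adjoint cohomology class to the prime-to-$Np$ Fourier coefficients of a generalized overconvergent eigenform with vanishing first coefficient; granted these, the argument is a piece of linear algebra and a dimension count, and I do not expect a real obstacle. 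The one point requiring care is that the subspace in the statement is cut out by the vanishing of the first and $p$-th \emph{Fourier} coefficients — so that $f(q^p)$, which has $q^p$-coefficient $1$, is exactly the form spanning $\ker\Psi$ and lying outside $S_1^\dagger(N,\ob\Q_p)\lsem f_\alpha\rsem_0$ — which is precisely what makes the restriction of $\Psi$ injective.
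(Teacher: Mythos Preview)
Your argument is correct and follows essentially the same route as the paper: both construct the map $f^\flat\mapsto[\xi]$ characterized by $a_\ell(f^\flat)=\tr(\xi(\Frob_\ell)\rho(\Frob_\ell))$, invoke the independence argument of Proposition \ref{prop:linear_independence_generalized_eigenforms}(ii) to show its kernel on $\cS_0$ is $\ob{\Q}_p\cdot f(q^p)$, and finish by a dimension count. The only cosmetic difference is that the paper defines the map intrinsically, by attaching to $f^\flat$ the Galois representation of the $\ob{\Q}_p[X]/(X^2)$-valued eigenform $f_\alpha+X\cdot f^\flat$ (which immediately lands in $\HH^1(\Q,W)$ since the weight, hence the determinant, is constant), whereas you define it on basis elements via Proposition \ref{prop:difference_two_Hida_families}(ii) and then argue naturality through the uniqueness provided by Corollary \ref{cor:vanishing_cocycle_at_frob_ell}.
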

\begin{proof}
	Given any $f^\flat\in S_1^\dagger(N,\ob{\Q}_p)\lsem f_\alpha \rsem_0$, the modular form $f_\alpha +X\cdot f^\flat$ with coefficients in $\ob{\Q}_p[X]/(X^2)$ is an eigenform for all Hecke operators, and its associated  $\ob{\Q}_p[X]/(X^2)$-valued Galois representation takes the form $(\Id+X\cdot \xi_{f^\flat})\cdot \rho$ some cocycle class $[\xi_{f^\flat}]\in\HH^1(\Q,W)$. We then have $a_\ell(f^\flat)=\tr(\xi_{f^\flat}(\Frob_\ell)\cdot\rho(\Frob_\ell))$ for $\ell\nmid Np$.
	
	We claim that the assignment $f^\flat \mapsto [\xi_{f^\flat}]$ yields an isomorphism between $ S_1^\dagger(N,\ob{\Q}_p)\lsem f_\alpha \rsem_0$ and $\HH^1(\Q,W)$. By comparing dimensions, it is enough to show that $[\xi_{f^\flat}]=0$ implies $f^\flat=0$. Assume $[\xi_{f^\flat}]=0$ and choose three Hida families $\cF,\cF',\cF''$ passing through $f_\alpha$. As $a_1(f^\flat)=0$, the proof of Theorem \ref{thm:number_Hida_families} (ii) shows that $f^\flat$ can be written as $\mu\cdot g_{\cF,\cF'}+\mu'\cdot g_{\cF',\cF''}+\nu\cdot f(q^p)$. Arguing as in the proof of Proposition \ref{prop:linear_independence_generalized_eigenforms} (ii), we have $[\xi_{f^\flat}] = \mu\cdot [\xi_{g_{\cF,\cF'}}]+\mu'\cdot [\xi_{g_{\cF',\cF''}}]$, and $[\xi_{f^\flat}]=0$ implies $\mu=\mu'=0$. Finally, $a_p(f(q^p))=1$ shows that $\nu=0$ as well.
\end{proof}

\section{Hecke structure  of the $p$-ordinary \'etale cohomology of the tower of modular curves}\label{sec:Ohta}

We assume in this section that $p$ is odd and we denote by $\boldsymbol{\Lambda}_{\Z_p}= \Z_p \lsem X \rsem $ the Iwasawa algebra with $\Z_p$-coefficients. The adic space $\mathrm{Spa}(\boldsymbol{\Lambda}_{\Z_p},\boldsymbol{\Lambda}_{\Z_p})$ is a formal model of any connected component of $\cW$.

Hida constructed a big $\boldsymbol{\Lambda}_{\Z_p}$--adic Galois representation  by using the \'etale cohomology of the tower of modular curves
${X_1(Np^r)}$ of level $\Gamma_1(Np^r)$ over $\Q$ and the Eichler--Shimura  relations. More precisely, letting $J_1(Np^r)$ be the Jacobian variety over $\Q$ of $X_1(Np^r)$, Hida considers the projective limit \[\rH^1_{\mathrm{et}}(X_1(Np^{\infty})_{\bar{\Q}},\Z_p):= \underset{r}{\varprojlim} \rH^1_{\mathrm{et}}(X_1(Np^r)_{\bar{\Q}},\Z_p)=\underset{r}{\varprojlim}\text{ } T_p(J_1(Np^r))^{\vee},\]
taken with respect to the trace mappings. It is naturally a module for the ''big'' $p$-adic Hecke algebra (with the dual action), which is itself an algebra over the completed group ring $\boldsymbol{\Lambda}_{\Z_p}$ 
via the diamond operators. 

Using the $p$-ordinary projector $\mathbf{e}^{\mathrm{ord}}$ attached to the dual $\mathbf{U}_p$--operator, we obtain a module \[ \rH^1_{\mathrm{et}}(X_\infty,\Z_p)^{\mathrm{ord} } :=\mathbf{e}^{\mathrm{ord}}\cdot \rH^1_{\mathrm{et}}(X_1(Np^{\infty})_{\bar{\Q}},\Z_p)\]
on which the full Hida--Hecke algebra $\mathfrak{h}_{\Z_p}$ acts faithfully. Hida proves in \cite[Thm.3.1]{hida1986galois} using
the comparison isomorphism between \'etale and Betti cohomology and explicit calculations in
cohomology of arithmetic groups that $ \rH^1_{\mathrm{et}}(X_\infty,\Z_p)^{\mathrm{ord} } $ is a finite free $\boldsymbol{\Lambda}_{\Z_p}$--module,  and that the resulting big $p$-adic Galois
representation
\begin{equation}\label{bigrepresentation}
\rho_{\mathfrak{h}_{\Z_p}}:G_{\Q}
\to \mathrm{Aut}_{\boldsymbol{\Lambda}_{\Z_p}}( \rH^1_{\mathrm{et}}(X_\infty,\Z_p)^{\mathrm{ord} } )\end{equation} 
is of generic rank two over the field of fractions $\mathrm{Fr}(\mathfrak{h}_{\Z_p})$ (after taking the nilreduction) and sends $\Frob_\ell$ on $\mathbf{T}_\ell$ when $\ell \nmid Np$ (as a representation generically of rank $2$).

\medskip

Let $\cO$ be the ring of integers of a $p$-adic field containing the image of the nebentype character $\psi_f$ and $\Q_p(\mu_{p^{\infty}})$ and put  $\boldsymbol{\Lambda}= \boldsymbol{\Lambda}_{\Z_p} \hat{\otimes}_{\Z_p} \cO=\cO\lsem X\rsem$, $\mathfrak{h}=\mathfrak{h}_{\Z_p}\hat{\otimes}_{\Z_p}\cO$. Cais \cite{Cais18T}, Mazur--Wiles \cite{mazurwiles}, Ohta \cite{Ohta95} and Tilouine \cite{Tilouine87} defined a ``good'' abelian variety quotient of $J_1(Np^r)_{\Q}$ with good reduction at $p$ over $\Z_p[\zeta_{p^r}]$. Moreover, by analyzing the connected part and the \'etale quotient of its corresponding ordinary $p$-divisible group, they obtained (after letting $r \to \infty$) an exact sequence of $\gh[G_{\Q_p}]$-modules
\begin{equation}\label{nearbydual}
	0 \to \mathcal{L}^+      \to \rH^1_{\mathrm{et}}(X_\infty,\Z_p)^{\mathrm{ord} } \hat{\otimes}_{\Z_p}\cO \to \mathcal{L}^-  \to 0, \end{equation} 
with the following properties:
\begin{enumerate}
\item $\mathcal{L}^+$  is exactly the subspace of $I_p$-invariants, and Frobenius acts on it by $\mathbf{U}_p$ (this goes through a deep study of the correspondence $\mathbf{U}_p$  on the ordinary part of the special fiber of the Jacobian), 
\item  $\mathcal{L}^+ \simeq \gh$, and
\item $ \mathcal{L}^-  \simeq \mathbf{S}(N,\boldsymbol{\Lambda})^{\ord} (\epsilon_p \langle \cdot \rangle )$, where   $\mathbf{S}(N,\boldsymbol{\Lambda})^{\ord}$ is the finite free $\boldsymbol{\Lambda}$--module of $p$-ordinary Hida families of tame level $N$ and $\langle \cdot \rangle $ is the character of $G_{\Q_p}$ induced by the diamond operators via class field theory.

\end{enumerate}

\subsection{Ohta's $\Lambda$-adic Eichler Shimura and non-freeness of the \'etale cohomology}

Let $\mathfrak{p}_f$ be the height one prime  ideal of $\mathfrak{h}$ attached to $f_\alpha$ and $\mathfrak{p}_1$ be the height one prime ideal of ${\bf \Lambda}$ corresponding to the weight of $f_\alpha$ under the finite flat morphism $\kappa: \Spec \mathfrak{h} \to \Spec {\bf \Lambda}$. In Iwasawa theory, it is crucial to know whether the localization $\rH^1_{\mathrm{et}}(X_\infty,\Z_p)^{\mathrm{ord}}_{\mathfrak{p}_f}$ of $\rH^1_{\mathrm{et}}(X_\infty,\Z_p)^{\mathrm{ord}}$ is free of rank two over $\mathfrak{h}_{\mathfrak{p}_f}$.  This question is the characteristic zero variant of the multiplicity one question in \cite{BLR91}. 
If the answer is negative, then multiplicity one automatically fails in \cite{BLR91}, that is, if $\mathfrak{m}$  the maximal ideal of $\mathfrak{h}$ corresponding to the reduction of $f_\alpha$ modulo $p$ is not Eisenstein, then 
\[ J_1(Np)(\bar\Q)[\mathfrak{m}] \simeq \ob{\rho}^r. \]
with $r \geq 2$, where $\ob{\rho}$ is the irreducible modular residual representation of $f_\alpha$ mod $p$ (see the proof of \cite[Prop. 3.1.1]{emerton2006variation} for this implication). The failure of multiplicity one when $\ob{\rho}(\Frob_p)$ is scalar was already observed by Wiese in \cite{wiese2007multiplicities}.

Recall that $\cT$ is the completion of the strict  henselianization of $\mathfrak{h}$ at $\mathfrak{p}_f$. Hence, after strict henselianization and completion with respect to the maximal ideal, the module $\textbf{S}(N,\varLambda)_{f_\alpha}^{\ord}=\widehat{\mathbf{S}(N,\boldsymbol{\Lambda})}^{\ord}_{\mathfrak{p}_f} \simeq \Hom_{\varLambda}(\cT, \varLambda)$ is free of rank one over $\cT$ if and only if $\cT$ is Gorenstein.  If $f_\alpha$ is $p$-regular, then $\rH^1_{\mathrm{et}}(X_\infty,\Z_p)^{\mathrm{ord}}_{\mathfrak{p}_f}$ is $\mathfrak{h}_{\mathfrak{p}_f}$-free of rank $2$ since $\cT$ is regular by \cite{D-B} (hence Gorenstein).  The following theorem treats the case where $f_\alpha$ is $p$-irregular.

\begin{thm}\label{thm:freeness}
	Assume that $f_\alpha$ is irregular at $p$ and assume further that the hypotheses  \eqref{tag:discriminant} and \eqref{tag:nonvanishing_resultant_Q_PL} of \S\ref{sec:regulators} hold. Let $\rH^1_{\mathrm{et}}(X_\infty,\Z_p)^{\mathrm{ord},\pm}_{\mathfrak{p}_f}$ be the subspace of $\rH^1_{\mathrm{et}}(X_\infty,\Z_p)^{\mathrm{ord}}_{\mathfrak{p}_f}$ where the complex conjugation acts by $\pm1$. Then $\rH^1_{\mathrm{et}}(X_\infty,\Z_p)^{\mathrm{ord},\pm}_{\mathfrak{p}_f}$ is generically of rank $1$ over $\gh_{\gp_f}$, but its minimal number of generators is $2$. In particular, neither $\rH^1_{\mathrm{et}}(X_\infty,\Z_p)^{\mathrm{ord},\pm}_{\mathfrak{p}_f}$ nor $\rH^1_{\mathrm{et}}(X_\infty,\Z_p)^{\mathrm{ord}}_{\mathfrak{p}_f}$ is $\mathfrak{h}_{\mathfrak{p}_f}$-free.
\end{thm}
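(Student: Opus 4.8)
The plan is to derive the theorem from Ohta's $\varLambda$-adic Eichler--Shimura sequence \eqref{nearbydual}, \eqref{nearbydualinto} together with the explicit shape of $\cT$ furnished by Theorem~\ref{thm:main_result}. Write $\cH$ for the module $\rH^1_{\mathrm{et}}(X_\infty,\Z_p)^{\mathrm{ord}}_{\gp_f}$ after passing to the completion of the strict henselization and extending scalars, so that its coefficient ring is $\cT$; complex conjugation $c$ acts $\cT$-linearly on $\cH$, and since $p$ is odd, $\cH=\cH^+\oplus\cH^-$ as $\cT[G_\Q]$-modules. As $\cH\otimes_\cT\mathrm{Frac}(\cT)$ is free of rank two with $G_\Q$ acting through the big Galois representation $\rho_{\gh}$, each $\cH^{\pm}$ has generic rank one over $\cT$, which settles the first assertion. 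Localizing \eqref{nearbydual} at $\gp_f$ and completing yields a short exact sequence of $\cT[G_{\Q_p}]$-modules $0\to\mathcal{L}^+\to\cH\to\mathcal{L}^-\to 0$ in which, by properties (1)--(3) of \eqref{nearbydual} and the Hida-duality isomorphism $\mathbf{S}(N,\varLambda)^{\ord}_{f_\alpha}\simeq\Hom_\varLambda(\cT,\varLambda)$, one has $\mathcal{L}^+\simeq\cT$ (free of rank one) and $\mathcal{L}^-\simeq\omega_\cT:=\Hom_\varLambda(\cT,\varLambda)$ as $\cT$-modules (the twist by $\epsilon_p\langle\cdot\rangle$ being immaterial for the $\cT$-module structure); note that $\omega_\cT$ is precisely the canonical module of the one-dimensional Cohen--Macaulay $\varLambda$-algebra $\cT$.

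Next I would read off from Theorem~\ref{thm:main_result} that $\cT$ is the local ring of four smooth branches meeting transversally, with normalization $\widetilde\cT\simeq\varLambda^{\times 4}$; an elementary computation with the presentation $\cT\simeq\bar\Q_p\lsem X_1,\dots,X_4\rsem/(\{X_iX_j\}_{i<j})$ shows that its Cohen--Macaulay type is $4-1=3$, i.e.\ $\mu_\cT(\omega_\cT)=\dim_{\bar\Q_p}\omega_\cT/\Gm_\cT\omega_\cT=3$, where $\mu_\cT$ denotes the minimal number of generators over $\cT$. From the above sequence we then get $3=\mu_\cT(\mathcal{L}^-)\le\mu_\cT(\cH)\le\mu_\cT(\mathcal{L}^+)+\mu_\cT(\mathcal{L}^-)=4$. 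On the other hand, $\cH\otimes_\cT\bar\Q_p$ is a $G_\Q$-representation all of whose Jordan--Hölder constituents are isomorphic to the irreducible $\rho$: since $\cH$ is $\cT$-torsion-free and carries $\rho_{\gh}$ generically, it fits, after clearing denominators, into inclusions $gV_\cT\subseteq\cH\subseteq V_\cT$ with torsion quotients, where $V_\cT=\cT^2$ is the free module underlying $\rho_\cT$ and $g\in\cT$ is regular; both $gV_\cT\otimes_\cT\bar\Q_p\simeq\rho$ and the quotient $V_\cT/gV_\cT$ (an iterated $G_\Q$-stable self-extension of $\rho$) have all Jordan--Hölder constituents equal to $\rho$, hence so does $\cH\otimes_\cT\bar\Q_p$. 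Therefore $\dim_{\bar\Q_p}(\cH\otimes_\cT\bar\Q_p)$ is even, which forces $\mu_\cT(\cH)=4$ and $\cH\otimes_\cT\bar\Q_p$ to be an extension of $\rho$ by $\rho$ (the splitness of this extension is the content of Corollary~\ref{non-splittingOhta}(iii), but is not needed here). As the generic rank of $\cH$ over $\cT$ is $2<4=\mu_\cT(\cH)$, the module $\cH$ is not $\gh_{\gp_f}$-free.

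Finally, because $c$ acts $\cT$-linearly we have $\cH^{\pm}\otimes_\cT\bar\Q_p=(\cH\otimes_\cT\bar\Q_p)^{c=\pm 1}$, and the restriction of $\cH\otimes_\cT\bar\Q_p$ to $\langle c\rangle$ — a group of order $2$ coprime to $p$ — is a self-extension of $\rho|_{\langle c\rangle}$, hence isomorphic to $\rho|_{\langle c\rangle}^{\oplus 2}$; since $\rho$ is odd, each eigenspace has dimension $2$. By Nakayama, $\mu_\cT(\cH^{\pm})=\dim_{\bar\Q_p}(\cH^{\pm}\otimes_\cT\bar\Q_p)=2$, which exceeds the generic rank $1$, so $\cH^{\pm}$ is not $\gh_{\gp_f}$-free either. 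The one genuinely delicate ingredient is the identification of $\mathcal{L}^-$ (up to twist) with the canonical module $\omega_\cT$ together with the computation of the Cohen--Macaulay type of $\cT$; this is where Theorem~\ref{thm:main_result}, and hence the hypotheses \eqref{tag:discriminant} and \eqref{tag:nonvanishing_resultant_Q_PL}, enters. Once that input is in place, the parity argument above makes the generator counts a formal consequence of Ohta's exact sequence, so I expect no further obstacle.
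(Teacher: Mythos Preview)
Your proof is correct and follows the same overall architecture as the paper's: pass to the completion $\cH$ over $\cT$, use Ohta's exact sequence $0\to\cT\to\cH\to\Hom_\varLambda(\cT,\varLambda)\to0$ together with Theorem~\ref{thm:main_result} to compute $\mu_\cT(\Hom_\varLambda(\cT,\varLambda))=3$, hence $3\le\mu_\cT(\cH)\le4$, and then use the Galois action to force $\mu_\cT(\cH)=4$.

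The one substantive difference is how you pin down the parity. The paper observes that every $g\in G_\Q$ acting on $\cH/\Gm_\cT\cH$ is annihilated by the characteristic polynomial of $\rho(g)$ and then invokes the main theorem of \cite{BLR91} to conclude $\cH/\Gm_\cT\cH\simeq\rho^r$; this immediately gives both the evenness and the direct-sum decomposition used later in Theorem~\ref{thm:main_application}(iii). Your sandwich argument $gV_\cT\subseteq\cH\subseteq V_\cT$ is more self-contained and yields the weaker conclusion that all Jordan--H\"older constituents of $\cH/\Gm_\cT\cH$ are $\rho$, which suffices for the present theorem but not for the $\rho\oplus\rho$ statement. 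Two points deserve a sentence of justification in your write-up: first, $\cH$ is $\cT$-torsion-free because it is $\varLambda$-free (Hida) and $\cT\hookrightarrow\mathrm{Frac}(\cT)\simeq\mathrm{Frac}(\varLambda)^4$; second, ``clearing denominators'' over the non-domain $\cT$ works because $\mathrm{Frac}(\cT)$ is a finite product of fields and a single nonzero element of $\varLambda$ serves as common denominator for finitely many generators. With those remarks your argument is complete.
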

\begin{proof} 
	The fact that $\rH^1_{\mathrm{et}}(X_\infty,\Z_p)^{\mathrm{ord},\pm}_{\mathfrak{p}_f}$ is generically of rank $1$ follows from the oddness of $\rho_{\gh_{\Z_p}}$. Let $\cH^\pm$ (resp. $\cH$) be the strict henselianization and completion of $\rH^1_{\mathrm{et}}(X_\infty,\Z_p)^{\mathrm{ord},\pm}_{\mathfrak{p}_f}$ and  $\rH^1_{\mathrm{et}}(X_\infty,\Z_p)^{\mathrm{ord}}_{\mathfrak{p}_f}$ respectively, and consider the exact sequence  of $G_{\Q_p}$-stable finitely generated $\cT-$modules
	\begin{equation}\label{eq:exact_sequence_Ohta_completed}
		0 \to \cT \to \cH \to \Hom_{\varLambda}(\cT,\varLambda) \to 0
	\end{equation}
	obtained strict henselianizing and completing \eqref{nearbydual}. It follows from the description of $\cT$ that $\dim_{\ob{\Q}_p} \Hom(\cT,\varLambda) \otimes_{\cT} \cT/\Gm_\cT=3$, \textit{i.e.}, the minimal number of generators of $\Hom(\cT,\varLambda)$ over $\cT$ is $3$. Therefore, $\dim_{\ob{\Q}_p}\cH/\Gm_\cT\cH$ equals $3$ or $4$. 
	
	We now consider the $G_{\Q}$-action inherited by $\cH$ from $\rho_{\gh_{\Z_p}}$. Since any element $g\in G_{\Q}$ acting on $\cH/\Gm_\cT\cH$ is killed by the characteristic polynomial of $\rho(g)$, it follows from the main theorem of \cite{BLR91} that $\cH/\Gm_\cT\cH\simeq \rho^r$ for some integer $r\geq1$. Then $r$ must be $2$, and $\dim_{\ob{\Q}_p}\cH/\Gm_\cT\cH=4$. Moreover, we obtain by taking the $\pm$-eigenspaces of the complex conjugation that $\dim_{\ob{\Q}_p}\cH^\pm/\Gm_\cT\cH^\pm=2$. All the statements about $\rH^1_{\mathrm{et}}(X_\infty,\Z_p)^{\mathrm{ord},\pm}_{\mathfrak{p}_f}$ and  $\rH^1_{\mathrm{et}}(X_\infty,\Z_p)^{\mathrm{ord}}_{\mathfrak{p}_f}$ then follow from these facts, \end{proof}

\begin{cor}\label{non-splittingOhta}\

	\begin{enumerate}
		\item The fiber at $\gp_f \in \Spec \mathfrak{h}$ of \eqref{nearbydual}  remains exact.
		\item The localization at $\gp_f$ of the exact sequence \eqref{nearbydual} does not split as a sequence of $\gh_{\gp_f}$-modules.

	\end{enumerate}
\end{cor}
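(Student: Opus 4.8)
The plan is to reduce both statements to properties of the sequence \eqref{eq:exact_sequence_Ohta_completed}, namely
\[
0\longrightarrow \cT \longrightarrow \cH \longrightarrow \omega \longrightarrow 0, \qquad \omega:=\Hom_{\varLambda}(\cT,\varLambda),
\]
obtained from the localization of \eqref{nearbydual} at $\gp_f$ by strict henselization and completion. Here $\gh_{\gp_f}\to\cT$ is a faithfully flat local homomorphism with $\Gm_{\cT}=\gp_f\gh_{\gp_f}\cdot\cT$, and the induced residue extension $\gh_{\gp_f}/\gp_f\gh_{\gp_f}\hookrightarrow\cT/\Gm_{\cT}=\ob{\Q}_p$ is again faithfully flat. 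Using that $\cT$ is flat over $\gh_{\gp_f}$ and that $\operatorname{Tor}$ commutes with flat base change, the fiber of \eqref{nearbydual} at $\gp_f$ is exact if and only if $0\to\ob{\Q}_p\to\cH/\Gm_{\cT}\cH\to\omega/\Gm_{\cT}\omega\to0$ is exact, and a $\gh_{\gp_f}$-linear splitting of the localization of \eqref{nearbydual} at $\gp_f$ would, upon $-\otimes_{\gh_{\gp_f}}\cT$, yield a $\cT$-linear splitting of \eqref{eq:exact_sequence_Ohta_completed}. So I would argue entirely with \eqref{eq:exact_sequence_Ohta_completed}.

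For part (1), I would apply $-\otimes_{\cT}\ob{\Q}_p$ to \eqref{eq:exact_sequence_Ohta_completed}. Since $\cT$ is free over itself, $\operatorname{Tor}_{1}^{\cT}(\cT,\ob{\Q}_p)=0$, so the long exact $\operatorname{Tor}$ sequence produces an injection $\operatorname{Tor}_{1}^{\cT}(\omega,\ob{\Q}_p)\hookrightarrow\cT\otimes_{\cT}\ob{\Q}_p=\ob{\Q}_p$, together with a right-exact tail in which $\cH/\Gm_{\cT}\cH\twoheadrightarrow\omega/\Gm_{\cT}\omega$. In the course of proving Theorem \ref{thm:freeness} one shows $\dim_{\ob{\Q}_p}\cH/\Gm_{\cT}\cH=4$ and $\dim_{\ob{\Q}_p}\omega/\Gm_{\cT}\omega=3$; hence the kernel of that surjection is one-dimensional, and by exactness it coincides with the image of $\cT/\Gm_{\cT}=\ob{\Q}_p$. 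Comparing this with the displayed injection forces $\operatorname{Tor}_{1}^{\cT}(\omega,\ob{\Q}_p)=0$ and the left map to be injective, so $0\to\ob{\Q}_p\to\cH/\Gm_{\cT}\cH\to\omega/\Gm_{\cT}\omega\to0$ is exact; descending along the residue extension gives (1).

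For part (2), I would argue by contradiction: suppose \eqref{eq:exact_sequence_Ohta_completed} splits over $\cT$, so that $\cH\cong\cT\oplus\omega$ as $\cT$-modules. Write $\cH=\cH^{+}\oplus\cH^{-}$ for the eigenspace decomposition of the $\cT$-linear involution given by complex conjugation (available since $2\in\cT^{\times}$). By the proof of Theorem \ref{thm:freeness} each $\cH^{\pm}$ is nonzero---being generically of rank one over $\cT$---with minimal number of generators $2$ over $\cT$, so neither is cyclic. On the other hand $\cT$ is indecomposable because it is local, and $\omega=\Hom_{\varLambda}(\cT,\varLambda)$ is a canonical module of the reduced one-dimensional Cohen--Macaulay local ring $\cT$ (as $\varLambda$ is regular of the same dimension and $\omega_{\varLambda}=\varLambda$, so $\omega_{\cT}\cong\Hom_{\varLambda}(\cT,\omega_{\varLambda})=\omega$), whence $\End_{\cT}(\omega)\cong\cT$ is local and $\omega$ is indecomposable as well. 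Since $\cT$ is complete local, the Krull--Schmidt theorem applies to finitely generated $\cT$-modules: decomposing $\cH^{+}$ and $\cH^{-}$ into indecomposables and matching the resulting decomposition of $\cH$ with $\cT\oplus\omega$ forces $\{\cH^{+},\cH^{-}\}=\{\cT,\omega\}$ up to isomorphism. In particular one of $\cH^{\pm}$ is isomorphic to the cyclic module $\cT$, contradicting that it needs two generators; this proves (2) (a fortiori as a statement about $\gh_{\gp_f}[G_{\Q_p}]$-modules).

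The crux is part (2). At a weight-one point $\rho|_{G_{\Q_p}}$ is unramified with scalar Frobenius, so neither the identification of $\mathcal{L}^{+}$ with the $I_{p}$-invariants of $\cH$ nor any Frobenius-eigenvalue bookkeeping can obstruct a complement---not even a $G_{\Q_p}$-equivariant one, let alone one as plain $\gh_{\gp_f}$-modules. The obstruction therefore has to be extracted from the module theory: Theorem \ref{thm:main_result} makes $\cT$ non-Gorenstein, with an indecomposable canonical module $\omega$ needing three generators, while the $\pm$-decomposition forces $\cH$ to be a direct sum of two non-free modules each needing two generators, and Krull--Schmidt makes these shapes incompatible. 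The only points that require a little care are that $\omega$ is indeed a canonical module of $\cT$ and that $\End_{\cT}(\omega)\cong\cT$ (hence $\omega$ is indecomposable); both are standard facts about canonical modules over Cohen--Macaulay local rings.
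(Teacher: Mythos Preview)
Your argument is correct. Part (i) is the same dimension count as in the paper: $\dim_{\ob{\Q}_p}\cH/\Gm_{\cT}\cH=4$ and $\dim_{\ob{\Q}_p}\omega/\Gm_{\cT}\omega=3$ force the reduced sequence to stay short exact.

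For part (ii) both proofs use Krull--Schmidt and the $\pm$-decomposition for complex conjugation, but at different levels. The paper first reduces modulo $X$: assuming a splitting, $\cT^{0}:=\cT/X\cT$ becomes a direct summand of $\cH/X\cH=\cH^{+}/X\cH^{+}\oplus\cH^{-}/X\cH^{-}$; Krull--Schmidt over the Artinian local ring $\cT^{0}$ then forces $\cT^{0}$ to be a summand of one of the $4$-dimensional pieces $\cH^{\pm}/X\cH^{\pm}$, hence equal to it, so that $\cH^{\pm}$ would be cyclic over $\cT$ in contradiction with Theorem \ref{thm:freeness}. The only indecomposability input needed there is the trivial one for $\cT^{0}$ over itself. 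You instead apply Krull--Schmidt directly over the complete local ring $\cT$, which requires knowing that $\omega$ is indecomposable; you supply this via the canonical-module identification $\omega\cong\omega_{\cT}$ and the standard fact $\End_{\cT}(\omega_{\cT})\cong\cT$. Your route is a bit more conceptual and avoids the passage to $\cT/X\cT$, at the price of invoking canonical-module theory; the paper's route is more elementary on that point. Both reach the same contradiction with the minimal generator count of $\cH^{\pm}$.
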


\begin{proof}
	With the notations of the proof of Theorem \ref{thm:freeness}, \eqref{eq:exact_sequence_Ohta_completed} remains exact after tensoring with $\cT/\Gm_\cT$ over $\cT$, since $\dim_{\ob{\Q}_p}\cH/\Gm_\cT\cH=4$ and $\dim_{\ob{\Q}_p} \Hom(\cT,\varLambda) \otimes_{\cT} \cT/\Gm_\cT=3$. One easily deduces (i) from this.
	
	Similarly, (ii) will follow from the fact that \eqref{eq:exact_sequence_Ohta_completed} does not split as $\cT$-modules. Assume that $\cT$ is a direct summand of $\cH$ and let $\cT^0=\cT/X\cT$. Then $\cT^0$ is a direct summand of $\cH/X\cH=\cH^+/X\cH^+\oplus\cH^-/X\cH^-$. Since $\cT^0$ is noetherian and artinian, $\cT^0$ (viewed as an indecomposable $\cT^0$-module) must be direct summand of $\cH^+/X\cH^+$ or $\cH^-/X\cH^-$ by the Krull-Schmidt theorem. As $\dim_{\ob{\Q}_p}\cT^0=\dim_{\ob{\Q}_p}\cH^\pm/X\cH^\pm=4$, either $\cH^+/X\cH^+$ or $\cH^-/X\cH^-$ must be $\cT^0$-free, hence contradicting Theorem \ref{thm:freeness}. This proves that \eqref{eq:exact_sequence_Ohta_completed} does not split as $\cT$-modules. 
\end{proof}

Let $\mathcal{F}$ be a Hida eigenfamily specializing to $f_\alpha$ and $\pi_{\mathcal{F}}: \cT \twoheadrightarrow \mathbf{I}_{\mathcal{F}}$ be the projection corresponding to the Hecke eigensystem of $\mathcal{F}$.  In fact, we know that $\mathbf{I}_{\mathcal{F}} \simeq \varLambda$ by Theorem \ref{thm:etaleness}.  We thus can specialize \eqref{eq:exact_sequence_Ohta_completed} along $\pi_{\mathcal{F}}$ (i.e. taking the base change) to obtain  \begin{equation}\label{eq:exact_sequence_Ohta_completedFsanstor}
		\mathbf{I}_{\mathcal{F}} \to \cH \otimes_{\cT} \mathbf{I}_{\mathcal{F}} \to \Hom_{\varLambda}(\cT,\varLambda) \otimes_{\cT} \mathbf{I}_{\mathcal{F}} \to 0.
	\end{equation}
	Since $\mathbf{I}_{\mathcal{F}}$ is a discrete valuation ring and $\Hom_{\varLambda}(\cT,\varLambda)$ is of generic rank $1$ over $\cT$, we can mod by the torsion part in the middle term and right term of \eqref{eq:exact_sequence_Ohta_completedFsanstor} to obtain an sequence of $ \mathbf{I}_{\mathcal{F}}[G_{\Q_p}]$-modules 
	
	\begin{equation}\label{eq:exact_sequence_Ohta_completedF}
		\mathbf{I}_{\mathcal{F}} \to  \mathbf{I}_{\mathcal{F}}^2 \to \mathbf{I}_{\mathcal{F}} \to 0
	\end{equation}
	which is exact on the right, where
	\begin{enumerate}
	\item the action of $G_{\Q_p}$  on the submodule is unramified and the Frobenius acts by $\mathbf{a}_p(\mathcal{F})$, and 
	\item  the action on the subquotient is given by the dual action.
	\end{enumerate}
	
	By construction and since $\rho$ is absolutely irreducible, the action on the middle term is given by the $\mathbf{I}_{\mathcal{F}}$-adic Galois representation $\rho_{\mathcal{F}}$ attached to $\mathcal{F}$ by \cite{nyssen1996pseudo,rouquier1996caracterisation}.

\begin{cor} The first map in \eqref{eq:exact_sequence_Ohta_completedF} is injective and its image is saturated. In particular, \eqref{eq:exact_sequence_Ohta_completedF} is exact and splits. 
\end{cor}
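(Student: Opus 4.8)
The plan is to prove the two assertions about the first map $\phi\colon\mathbf{I}_\cF\to\mathbf{I}_\cF^2$ of \eqref{eq:exact_sequence_Ohta_completedF} — injectivity and saturation of $\mathrm{im}\,\phi$ — and then to observe that everything else is formal. Indeed \eqref{eq:exact_sequence_Ohta_completedF} is a \emph{complex} of finite free modules over the discrete valuation ring $\mathbf{I}_\cF$, of ranks $1,2,1$, whose second map $\pi$ is surjective (this is what ``exact on the right'' means); hence $\ker\pi$ is a free rank-one direct summand of $\mathbf{I}_\cF^2$, and it contains the rank-one submodule $\mathrm{im}\,\phi$. If $\mathrm{im}\,\phi$ is saturated in $\mathbf{I}_\cF^2$ it is saturated in $\ker\pi$, hence equals $\ker\pi$, so \eqref{eq:exact_sequence_Ohta_completedF} is short exact; and it splits because its quotient $\mathbf{I}_\cF$ is free, hence projective.

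\emph{Injectivity.} I would base change the completed Ohta sequence \eqref{eq:exact_sequence_Ohta_completed}, namely $0\to\cT\to\cH\to\Hom_\varLambda(\cT,\varLambda)\to0$, along $\pi_\cF\colon\cT\twoheadrightarrow\mathbf{I}_\cF$. The obstruction to left-exactness is the image of $\mathrm{Tor}_1^\cT(\Hom_\varLambda(\cT,\varLambda),\mathbf{I}_\cF)$ in $\cT\otimes_\cT\mathbf{I}_\cF=\mathbf{I}_\cF$; but by Theorem \ref{thm:etaleness} the ring $\cT$ becomes a finite product of fields after inverting $X$, so that $\mathrm{Tor}$-module is $\varLambda$-torsion, and it therefore maps to $0$ inside the torsion-free module $\mathbf{I}_\cF$. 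Hence $\mathbf{I}_\cF\hookrightarrow\cH\otimes_\cT\mathbf{I}_\cF$, and composing with the projection onto the torsion-free quotient $(\cH\otimes_\cT\mathbf{I}_\cF)/(\mathrm{torsion})\cong\mathbf{I}_\cF^2$ — injective on $\mathbf{I}_\cF$, the latter being torsion-free — shows $\phi$ is injective. Run at each minimal prime of $\cT$, the same argument shows $0\to\mathbf{I}_{\cF'}\to\cH\otimes_\cT\mathbf{I}_{\cF'}\xrightarrow{\,b_{\cF'}\,}\Hom_\varLambda(\cT,\varLambda)\otimes_\cT\mathbf{I}_{\cF'}\to0$ is exact for every Hida family $\cF'$ through $f_\alpha$; in particular $\mathrm{tors}(\cH\otimes_\cT\mathbf{I}_{\cF'})$ injects via $b_{\cF'}$ into $\mathrm{tors}(\Hom_\varLambda(\cT,\varLambda)\otimes_\cT\mathbf{I}_{\cF'})$.

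\emph{Saturation.} Writing $b=b_\cF$, one checks that $\coker\phi\cong\bigl(\Hom_\varLambda(\cT,\varLambda)\otimes_\cT\mathbf{I}_\cF\bigr)\big/\,b\bigl(\mathrm{tors}(\cH\otimes_\cT\mathbf{I}_\cF)\bigr)$, which is torsion-free precisely when $b$ carries $\mathrm{tors}(\cH\otimes_\cT\mathbf{I}_\cF)$ \emph{onto} $\mathrm{tors}(\Hom_\varLambda(\cT,\varLambda)\otimes_\cT\mathbf{I}_\cF)$ — hence, given the injectivity just noted, isomorphically. I would get this from a length count based on the explicit description of $\cT$ in Theorem \ref{thm:main_result}: its normalization is $\widetilde\cT=\prod_{i=1}^4\cT/\mathfrak{q}_i=\prod_{i=1}^4\mathbf{I}_{\cF_i}$, a free $\varLambda$-module, and, the conductor of $\cT$ in $\widetilde\cT$ being $\Gm_\cT$, one has $\widetilde\cT\otimes_\varLambda\bar\Q_p\cong(\cT/\Gm_\cT)^{4}$ as $\cT$-modules. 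Applying $-\otimes_\cT\widetilde\cT$ to $\cH$ and to $\Hom_\varLambda(\cT,\varLambda)$, reducing mod $X$, and combining $\dim_{\bar\Q_p}(M\otimes_\varLambda\bar\Q_p)=\rank_\varLambda M+\mathrm{length}_\varLambda(\mathrm{tors}_\varLambda M)$ with the dimensions $\dim_{\bar\Q_p}\cH/\Gm_\cT\cH=4$ and $\dim_{\bar\Q_p}\Hom_\varLambda(\cT,\varLambda)\otimes_\cT(\cT/\Gm_\cT)=3$ from the proof of Theorem \ref{thm:freeness} (and the generic $\cT$-ranks $2$ and $1$ of $\cH$ and $\Hom_\varLambda(\cT,\varLambda)$), one obtains
\[
\sum_{i=1}^4\mathrm{length}_{\mathbf{I}_{\cF_i}}\mathrm{tors}\bigl(\cH\otimes_\cT\mathbf{I}_{\cF_i}\bigr)=4\cdot4-8=8=4\cdot3-4=\sum_{i=1}^4\mathrm{length}_{\mathbf{I}_{\cF_i}}\mathrm{tors}\bigl(\Hom_\varLambda(\cT,\varLambda)\otimes_\cT\mathbf{I}_{\cF_i}\bigr).
\]
Since the $i$-th summand on the left is at most the $i$-th summand on the right (by injectivity of $b_{\cF_i}$ on torsion), equality of the sums forces equality term by term, so for our $\cF$ the map $b$ restricts to an isomorphism on torsion submodules; thus $\coker\phi$ is torsion-free, $\mathrm{im}\,\phi$ is saturated, and the corollary follows.

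The main obstacle is exactly this saturation: injectivity of $\phi$ is essentially formal, whereas saturation is equivalent to middle-exactness of \eqref{eq:exact_sequence_Ohta_completedF} and genuinely consumes both the ring structure of $\cT$ (Theorem \ref{thm:main_result}) and the multiplicity-two input (Theorem \ref{thm:freeness}), which is why the corollary sits after those results. As a cross-check, one can argue Galois-theoretically: $\ker\pi$ is a $G_{\Q_p}$-stable free rank-one summand of $\rho_\cF|_{G_{\Q_p}}$ whose complementary quotient is ramified (as $\det\rho$ is unramified at $p$ while $\chi_{\cyc}$ is not), hence is the unramified line with $\Frob_p\mapsto\mathbf{a}_p(\cF)$; then $\phi$ is an $\mathbf{I}_\cF[G_{\Q_p}]$-endomorphism of that character, i.e.\ multiplication by some $c\in\mathbf{I}_\cF\smallsetminus\{0\}$, and the content is that $c\in\mathbf{I}_\cF^\times$, which reduces once more to the length identity above.
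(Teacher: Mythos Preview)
Your overall structure is sound, and the injectivity of $\phi$ is correctly established. The gap is in the saturation step: the identity $\dim_{\bar\Q_p}(M\otimes_\varLambda\bar\Q_p)=\rank_\varLambda M+\mathrm{length}_\varLambda(M_{\mathrm{tors}})$ that you invoke is false over a DVR in general. The correct formula replaces $\mathrm{length}_\varLambda(M_{\mathrm{tors}})$ by $\dim_{\bar\Q_p} M_{\mathrm{tors}}/XM_{\mathrm{tors}}$, the \emph{minimal number of generators} of $M_{\mathrm{tors}}$, which coincides with the length only when $M_{\mathrm{tors}}$ is annihilated by $X$ (for $M=\varLambda/X^2$ one gets $1\neq 2$). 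With the corrected formula your global count yields only termwise equality of minimal numbers of generators, and an inclusion $T_M\hookrightarrow T_N$ of finite--length modules with equal minimal numbers of generators need not be an isomorphism (e.g.\ $X\cdot\varLambda/X^2\hookrightarrow\varLambda/X^2$). So as written you cannot conclude that $b$ is surjective on torsion.

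The missing ingredient is that $T_N:=\bigl(\Hom_\varLambda(\cT,\varLambda)\otimes_\cT\mathbf{I}_{\cF}\bigr)_{\mathrm{tors}}$ is already annihilated by $X$. This follows from the explicit description of $\cT$ in Theorem~\ref{thm:main_result}: writing $\pi_j\in\Hom_\varLambda(\cT,\varLambda)$ for the $j$--th coordinate projection $(c_1,\dots,c_4)\mapsto c_j$, one checks that $\mathfrak{q}_i\cdot\Hom_\varLambda(\cT,\varLambda)=\bigoplus_{j\neq i}\varLambda\,\pi_j$, whence the quotient is isomorphic to $\varLambda\oplus\bar\Q_p^{\,2}$ and its torsion is semisimple of length $2$. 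Then $T_M\hookrightarrow T_N$ forces $T_M$ to be semisimple too, your formula becomes valid, and the length count concludes. For comparison, the paper's proof does not sum over the four components at all: it observes that reducing \eqref{eq:exact_sequence_Ohta_completedFsanstor} modulo the maximal ideal of $\mathbf{I}_\cF$ gives back the (exact) fibre of \eqref{nearbydual} at $\gp_f$ via Corollary~\ref{non-splittingOhta}(i), and reads off saturation in \eqref{eq:exact_sequence_Ohta_completedF} from that. This is a shorter route, though making the last implication precise likewise uses the semisimplicity of $T_N$ just noted.
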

\begin{proof}  Since $\mathbf{I}_{\mathcal{F}} $ is a discrete valuation ring, the first map in \eqref{eq:exact_sequence_Ohta_completedFsanstor} is either zero or lands in the free part. But we know that is non-zero generically, so it suffices to show that its image is saturated. The reduction of \eqref{eq:exact_sequence_Ohta_completedFsanstor} modulo the maximal ideal of $\mathbf{I}_{\mathcal{F}} $ is exactly the fiber at $\gp_f \in \Spec \mathfrak{h}$ of \eqref{nearbydual}  which remains exact by Cor.\ref{non-splittingOhta}. Hence the image of the first map in \eqref{eq:exact_sequence_Ohta_completedF} is saturated.
\end{proof}

\subsection{Triangulation of Kedlaya--Pottharst--Xiao along the desingularization of $\cC$} 

For an affinoid algebra $A$ over $\overline{\Q}_p$, we let $\mathcal{R}_A$  be the direct limit on $r \mapsto 1$ of the rings of rigid analytic functions on the annuli $r < \mid z \mid_p   < 1$ over $A$ with $r \in p^{\Q}$ ({\it i.e.} $\mathcal{R}_A$ is the Robba ring over $A$). This ring is complete for the direct limit topology in the category of locally convex topological $\Q_p$-vector spaces induced by the LF topology. The $A$--algebra $\mathcal{R}_A$  is endowed with continuous actions of a Frobenius $\Phi$ and $\Gamma:=\Z_p^\times $:

\begin{enumerate}
	
	\item $\Phi \left( \sum_n a_n z^n \right) = \sum_n a_n \left( (1+z)^p-1 \right)^n$.
	
	\item  $\gamma \left( \sum_n a_n z^n \right) = \sum_n a_n \left( (1+z)^\gamma -1 \right)^n$ for all $\gamma \in \Gamma$.

\end{enumerate}

Note that the actions of $\Phi$ and $\Gamma$ commute.

\medskip A $(\Phi,\Gamma)$--module over $\mathcal{R}_A$ is a finite projective $\mathcal{R}_A$--module $\cM$ equipped with
\begin{enumerate}
	\item a commuting semi--linear actions of $\Phi_{\cM}$ and $\Gamma$ such that the action of $\Gamma$ is continuous for the LF topology, and 
	\item the linearization $\cM \otimes_{\mathcal{R}_A,\Phi} \mathcal{R}_A \to \cM $ of $\Phi_{\cM}$ is an isomorphism.
	
\end{enumerate}

\medskip

Let $\mathbf{\Phi\Gamma}_A$ be the category of $(\Phi,\Gamma)$--modules over $\mathcal{R}_A$ and $\mathbf{Rep}_A(G_{\Q_p} )$ be  the category of continuous representations of $G_{\Q_p}$ on finite projective $A$--modules.   By a result of Kedlaya--Liu \cite{KL15}, there exists a full embedding \[\mathscr{D}_{\mathrm{rig}}(.): \mathbf{Rep}_A(G_{\Q_p} ) \mapsto \mathbf{\Phi\Gamma}_A. \]

Let $\mathcal{U}=\mathrm{Sp}(A) \subset \cC$ be an open connected admissible affinoid enough small containing $f_\alpha$ so that we have a $p$-adic representation 
\[ \rho_{\mathcal{U}} : G_{\Q} \to \GL_2(\cO(\mathcal{U})) \] 
sending $\tr(\rho_{\mathcal{U}}(\Frob_\ell))$ on $\mathbf{T}_\ell \in \cO(\mathcal{U})$ for $\ell \nmid Np$. The existence of $\mathcal{U}$ follows from the fact that $\rho$ is irreducible and $G_{\Q}$ is compact.

So we can attach to $\rho_{\mathcal{U}} $ a $(\Phi,\Gamma)$--module over $\mathcal{R}_A$ that we denote by $\cM_A$ and it is free of rank $2$ over $\mathcal{R}_A$.

We let now $\widetilde{\mathcal{U}}=\mathrm{Sp}(\widetilde{A})$ be the normalization of $ \mathcal{U}$ and it is not connected any more when $\mathcal{U}$ is chosen small enough. In fact, $\widetilde{\mathcal{U}}$ has four irreducible connected components and all of them are smooth. 

We let $\cM_{\widetilde{A}}$ be the $(\Phi,\Gamma)$--module over $\mathcal{R}_{\widetilde{A}}$  defined by the pullback of $\cM_A$. Kedlaya--Pottharst--Xiao \cite[\S.6.4]{KPX12} constructed a triangulation, i.e., an exact sequence of $(\Phi,\Gamma)$--modules
 \begin{equation}\label{triangulation} 0 \to \cM^{+}_{\widetilde{A}} \to \cM_{\widetilde{A}} \overset{\lambda}{\rightarrow} \cM^{-}_{\widetilde{A}} , \end{equation}  such that
 \begin{enumerate}
 \item  both $\cM^{\pm}_{\widetilde{A}}$  are projective $\mathcal{R}_{\widetilde{A}}$--modules of rank $1$.
 
 \item the cokernel of $\lambda$  is supported on finite set of $\mathrm{Sp}(\widetilde{A})$.
 
 \end{enumerate}  
We know from the classification of rank 1 $(\Phi,\Gamma)$--modules that $\cM^{\pm}_{\widetilde{A}} \simeq  \mathcal{R}_{\widetilde{A}}(\delta_{\pm}) \otimes_{\mathrm{Sp}(\widetilde{A})}\mathcal{L}_{\pm}$ with
\begin{enumerate}
\item $\mathcal{L}_{\pm}$ is a line bundle on $\mathrm{Sp}(\widetilde{A})$.

\item $\delta_{+}: \Q_p^\times \to A^\times$ is the character sending $p$ on $\mathbf{U}_p$ and trivial on $\Z^\times_p$.
\item  $\delta_{-}: \Q_p^\times \to A^\times$ is the character sending $p$ on $\mathbf{U}^{-1}_p$ and on $\Z^\times_p$ it is the inverse of the universal weight nebentypus character.
\end{enumerate}
Kedlaya--Pottharst--Xiao showed in \cite[\S.6.4]{KPX12}  that \eqref{triangulation} interpolates the triangulation of the $(\Phi,\Gamma)$--modules attached to classical eigenforms of $\widetilde{\mathcal{U}}$ away from critical points, where the fiber of $\cM^{+}_{\widetilde{A}}$ at any critical point is not saturated anymore but has an index $t^{k-1}$ on its saturation and $k$ is the weight of the critical point. 

Let $\widetilde{\cC}$ be the normalization of $\cC$. Since  $\cM^{\pm}_{\widetilde{A}}$  (resp. $\cM_{A}$) can be glued along a covering of $\widetilde{\cC}$ (resp. $\cC$) to obtain a $(\Phi,\Gamma)$--module $\cM^{\pm}_{\cO_{\widetilde{\cC}}}$ (resp. $\cM_{\cO_{\cC}}$) with coefficients in $\cO_{\widetilde{\cC}}$ (resp. $\cO_{\cC}$), we can ask whether the triangulation over the entire desingularization descends to a triangulation on $\cC$.

\begin{cor}  Assume that $f_\alpha$ is irregular at $p$ and assume further that the hypotheses  \eqref{tag:discriminant} and \eqref{tag:nonvanishing_resultant_Q_PL} of \S\ref{sec:regulators} hold.
The triangulation $0 \to \cM^{+}_{\cO_{\widetilde{\cC}}} \rightarrow \cM_{\cO_{\widetilde{\cC}}} \to \cM^{-}_{\cO_{\widetilde{\cC}}}$ does not descend to a triangulation of $\cM_{\cO_{\cC}}$ on any open admissible neighborhood of  $ f_\alpha$ in $\cC$.
\end{cor}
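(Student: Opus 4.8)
The plan is to argue by contradiction, using the fact established in Theorem~\ref{thm:main_result} that $\cT\simeq \bar\Q_p\lsem X_1,X_2,X_3,X_4 \rsem/(\{X_iX_j\}_{1\leq i<j\leq 4})$ is \emph{not} Gorenstein, whereas any rank-one $(\Phi,\Gamma)$-submodule $\cM^+_{\cO_\cC}$ of $\cM_{\cO_\cC}$ giving a triangulation over a neighborhood $\cU$ of $f_\alpha$ would be ``too rigid'' to exist. First I would suppose, for contradiction, that on some admissible open affinoid $\cU=\mathrm{Sp}(A)\ni f_\alpha$ there is an exact sequence of $(\Phi,\Gamma)$-modules $0\to\cM^+_A\to\cM_A\to\cM^-_A$ with $\cM^\pm_A$ projective of rank one and $\coker$ supported on a finite set. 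Since uniqueness of the triangulation holds generically (at the non-critical classical points of $\cU$ the saturated submodule is forced to be $\mathcal{R}(\delta_+)$-isotypic by Kedlaya--Pottharst--Xiao), the pullback of $\cM^+_A$ to the normalization $\widetilde\cU$ must agree with $\cM^+_{\widetilde A}$ after saturation; in particular $\cM^+_A$ determines, upon passing to the fiber at $f_\alpha$, the \emph{single} ordinary line $V^+$ of $V$ — but there are four distinct Hida families through $f_\alpha$, each with its own residual line $V^+(\cF_i)$, and these four lines are pairwise distinct by Theorem~\ref{thm:uniqueness_residual_slope}(i). This is the geometric source of the obstruction: a triangulation over $\cC$ would single out one residual line, while the four components each impose a different one.

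The key steps, in order, are: (1) translate ``$\cM^+_{\cO_\cC}$ exists on $\cU$'' into a statement about a $G_{\Q_p}$-stable rank-one $\cT$-submodule $\cL^+$ of the completed local $(\Phi,\Gamma)$-module, or equivalently (using Berger--Colmez/Kedlaya--Liu comparison in the ordinary locus) into the existence of a $\cT[G_{\Q_p}]$-stable line in $\rho_\cT|_{G_{\Q_p}}$ with unramified quotient on which $\Frob_p$ acts by $\mathbf{U}_p$; (2) observe that such a $\cT$-line, being a direct summand after inverting $X$, would force the short exact sequence $0\to \cL^+\to \rho_\cT|_{G_{\Q_p}}\to \cL^-\to 0$ of $\cT[G_{\Q_p}]$-modules, and that its reduction modulo $\Gm_\cT$ gives a single line $V^+\subset V$ independent of the component; (3) compare with \S\ref{sec:geometry}: restricting $\rho_{\cA_i}$ along each of the four quotients $\cT\twoheadrightarrow\varLambda=\mathbf{I}_{\cF_i}$ recovers the family ordinary line $\bV^+_{\cF_i}$, whose reduction is $V^+(\cF_i)$; hence the hypothetical $V^+$ would have to equal all four pairwise-distinct $V^+(\cF_i)$, a contradiction. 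Alternatively — and this is cleaner — (3$'$) use Corollary~\ref{non-splittingOhta} together with the non-Gorenstein structure of $\cT$: a descended triangulation would endow $\Hom_\varLambda(\cT,\varLambda)$, which is the $\cT$-module underlying $\cM^-$ up to a twist, with a $\cT$-free rank-one structure as in \eqref{nearbydual}, contradicting Theorem~\ref{thm:freeness} exactly as in the proof of Corollary~\ref{non-splittingOhta}(ii) via Krull--Schmidt on $\cH^\pm/X\cH^\pm$.

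I would write the argument using route (3$'$), since the étale cohomology $\rH^1_{\mathrm{et}}(X_\infty,\Z_p)^{\mathrm{ord}}$ carries both the $(\Phi,\Gamma)$-module $\cM_{\cO_\cC}$ (via $\mathscr{D}_{\mathrm{rig}}$ applied to $\rho_{\gh_{\Z_p}}$ restricted to $G_{\Q_p}$) and the Ohta filtration \eqref{nearbydual}; a descended triangulation $\cM^+_{\cO_\cC}$ would be compatible, after localizing at $\gp_f$ and completing, with the submodule $\cL^+\simeq\gh$ of \eqref{nearbydual}, hence would split the completed sequence \eqref{eq:exact_sequence_Ohta_completed} over $\cT$ (because a rank-one projective sub-$(\Phi,\Gamma)$-module with unramified quotient, after $\mathscr{D}_{\mathrm{rig}}^{-1}$, pins down the $I_p$-invariants, which is precisely $\cL^+$). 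But Corollary~\ref{non-splittingOhta}(ii) says \eqref{eq:exact_sequence_Ohta_completed} does not split over $\cT$; contradiction. \textbf{The main obstacle} will be step (1)/the compatibility in the last paragraph: one must carefully match the $(\Phi,\Gamma)$-module triangulation of Kedlaya--Pottharst--Xiao over the ordinary locus with the ``classical'' ordinary filtration coming from $p$-adic Hodge theory of the universal Galois representation — i.e., that on $\cC^{\mathrm{ord}}$ the submodule $\cM^+$ of the triangulation is $\mathscr{D}_{\mathrm{rig}}$ of the honest unramified-quotient sub-representation, with $\Frob_p$ acting by $\mathbf{U}_p$, so that descent of $\cM^+$ is equivalent to descent of the $\cL^+$ of Ohta. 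This is standard over the ordinary locus (the triangulation parameter $\delta_+$ is unramified of the right Frobenius eigenvalue), but it requires invoking that $f_\alpha$ lies in $\cC^{\mathrm{ord}}$ and that the comparison is functorial in families, which I would cite from \cite{KL15,KPX12} together with the identification of $\cC^{\mathrm{ord}}$ with $\mathrm{Spa}(\gh',\gh')^{\mathrm{rig}}$ recalled in \S\ref{sec:etaleness}.
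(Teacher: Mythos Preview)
Your route (3) is exactly the paper's argument: assuming the triangulation descends to $\cM_{\cO_\cC}$ on a neighborhood of $\mathrm{x}$, one takes the fiber at $\mathrm{x}$, twists by $\delta_{+,x}^{-1}$ (noting $\delta_{+,x}=\delta_{-,x}$ by $p$-irregularity), checks via the Bezout property of $\mathcal{R}_{\Q_p}$ that the resulting complex $\mathcal{R}_{\Q_p}\to\mathcal{R}_{\Q_p}^2\to\mathcal{R}_{\Q_p}$ is short exact, and concludes that a single residual ordinary line $V^+\subset V$ is forced; but the four components through $\mathrm{x}$ have pairwise distinct residual lines by Theorem~\ref{thm:uniqueness_residual_slope}, contradiction. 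The only difference is that the paper stays in the $(\Phi,\Gamma)$-module world at the fiber rather than invoking $\mathscr{D}_{\mathrm{rig}}^{-1}$ on the ordinary locus, which spares you the compatibility check you flag as the ``main obstacle''. You should have stopped there.

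Route (3$'$), which you say you would actually write, has a genuine gap. You claim a descended triangulation would force the completed Ohta sequence \eqref{eq:exact_sequence_Ohta_completed} to \emph{split} over $\cT$, but this does not follow. The $(\Phi,\Gamma)$-module $\cM_A$ is $\mathscr{D}_{\mathrm{rig}}$ of the \emph{free} rank-$2$ representation $\rho_{\cU}\colon G_\Q\to\GL_2(A)$, not of the étale cohomology module $\cH$, which is precisely \emph{not} $\cT$-free by Theorem~\ref{thm:freeness}. A descended triangulation, after undoing $\mathscr{D}_{\mathrm{rig}}$ on the ordinary locus, yields a $\cT[G_{\Q_p}]$-filtration $0\to\cT\to\cT^2\to\cT\to 0$ of $\rho_\cT$; it says nothing about a splitting of $0\to\cT\to\cH\to\Hom_\varLambda(\cT,\varLambda)\to 0$. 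The Ohta sequence already has $\cL^+\simeq\cT$ as a submodule whether or not the triangulation descends; what Corollary~\ref{non-splittingOhta}(ii) rules out is a complementary copy of $\Hom_\varLambda(\cT,\varLambda)$ inside $\cH$, and nothing in your argument produces one. Use route (3).
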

\begin{proof} Assume that the triangulation descends to $0 \to \cM^{+}_{\cO_{\cC}} \rightarrow \cM_{\cO_{\cC}} \to \cM^{-}_{\cO_{\cC}}$. We let now $\cM^{\pm}_{\mathrm{x}}$ (resp. $\cM_{\mathrm{x}}$) be the fiber of $\cM^{\pm}_{\cO_{\cC}}$ (resp. $\cM_{\cO_{\cC}}$) at $\mathrm{x}$ (the point corresponding to $f_\alpha$). In fact, $\cM^{\pm}_{\mathrm{x}} = \mathcal{R}_{\Q_p}(\delta_{\pm,x})$ and $ \cM_{\mathrm{x}}= \mathcal{R}_{\Q_p}(\delta_{+,x})^2$, where $\delta_{\pm,x}$ is the specialization of $\delta_{\pm}$ at $\mathrm{x}$ but here \[ \delta_{+,x}=\delta_{-,x}\] by the $p$ irregularity hypothesis on $f_\alpha$. This comes together with a complex
\begin{equation}\label{filtrationphigamma}
\mathcal{R}_{\Q_p} \to \mathcal{R}_{\Q_p}^2 \to \mathcal{R}_{\Q_p}.
\end{equation}

 given by the twist of the fiber of  \eqref{triangulation} at $\mathrm{x}$   with $\delta^{-1}_{+,x}$.  Since 
 any $(\Phi,\Gamma)$--submodules of $\mathcal{R}_{\Q_p}$ is of the form $t^n \mathcal{R}_{\Q_p}$ for $n \in \N_{\geq 1}$ and $\mathcal{R}_{\Q_p}$ is an adequate Bezout domain, we can use the arguments of \cite[E.g.6.3.14]{KPX12} to show  \eqref{filtrationphigamma} is a short exact sequence.
 
 However, this leads to a contradiction because we have by Theorem \ref{thm:main_result}  four components of $\cC$  crossing at  $\mathrm{x}$,  and their residual triangulations (ordinary lines) are pairwise distinct points in $\mathbf{P}^1(\bar\Q_p)$ by Theorem \ref{thm:uniqueness_residual_slope} (their slopes in an adapted basis of $\rho$ are the simple roots of the polynomial  \eqref{eq:def_polynomial_computing_residual_slopes}).
\end{proof}
\bibliography{bib}
\bibliographystyle{alpha}
\end{document}